\newtheorem{theorem}{Theorem}[section]
\newtheorem{lemma}[theorem]{Lemma}
\newtheorem{proposition}[theorem]{Proposition}
\newtheorem{remark}[theorem]{Remark}
\numberwithin{equation}{section}
\newcommand{\Red}{\textcolor{red}}
\begin{document}
\title[ KdV limit for VPL system]{KdV limit for the Vlasov-Poisson-Landau system}

\author[R.-J. Duan]{Renjun Duan}
\address[R.-J. Duan]{Department of Mathematics, The Chinese University of Hong Kong, Shatin, Hong Kong,
	People's Republic of China}
\email{rjduan@math.cuhk.edu.hk}

\author[D.-C. Yang]{Dongcheng Yang}
\address[D.-C. Yang]{Department of Mathematics, The Chinese University of Hong Kong, Shatin, Hong Kong,
	People's Republic of China}
\email{dcyang@math.cuhk.edu.hk}

\author[H.-J. Yu]{Hongjun Yu}
\address[H.-J. Yu]{School of Mathematical Sciences, South China Normal University, Guangzhou 510631, People's Republic of China}
\email{yuhj2002@sina.com}



\begin{abstract}
Two fundamental models in plasma physics are given by the Vlasov-Poisson-Landau system
and the compressible Euler-Poisson system which both capture the complex dynamics of plasmas under the self-consistent electric field  
interactions at the kinetic and fluid levels, respectively. Although there have been extensive studies on the long wave limit of the Euler-Poisson system  towards Korteweg-de Vries equations,  few results on this topic are known for the Vlasov-Poisson-Landau system due to the  complexity of the system and its underlying multiscale feature. In this article, we derive and justify the Korteweg-de Vries equations from the Vlasov-Poisson-Landau system modelling the motion of ions under the Maxwell-Boltzmann relation. Specifically, under the Gardner-Morikawa transformation 
$$
(t,x,v)\to (\delta^{\frac{3}{2}}t,\delta^{\frac{1}{2}}(x-\sqrt{\frac{8}{3}}t),v)
$$
with $ \varepsilon^{\frac{2}{3}}\leq \delta\leq \varepsilon^{\frac{2}{5}}$ and $\varepsilon>0$ being the Knudsen
number, we construct smooth solutions of the rescaled Vlasov-Poisson-Landau system over an arbitrary finite time interval  that can converge uniformly to  smooth solutions of the Korteweg-de Vries equations as $\delta\to 0$. Moreover, the explicit rate of convergence in $\delta$  is also obtained.
The proof is obtained by an appropriately chosen scaling and the intricate weighted energy method through the micro-macro decomposition around local Maxwellians.
\end{abstract}

\thanks{{ Key words}: Vlasov-Poisson-Landau system; KdV limit; dispersive equation; hydrodynamic limit;
	macro-micro decomposition; weighted energy method.}

\thanks{{ Mathematics Subject Classification 2020}: 35Q84, 35Q53; 82C40, 35B40}

\maketitle

\setcounter{tocdepth}{1}
\tableofcontents
\thispagestyle{empty}

\thispagestyle{empty}

\section{Introduction}\label{sec.1}

\subsection{Formulation of the problem}\label{sec.1.1}
Two fundamental models in plasma physics are given by the Vlasov-Poisson-Landau (VPL in short) system
and the  Euler-Poisson system which both capture the complex dynamics of plasmas under the self-consistent electric field  
interactions at the kinetic and fluid levels, respectively, cf. \cite{Chen-F,Krall-Trivelpiece}. 
It is well known that the  Euler-Poisson system can be derived from the VPL system 
via the formal Hilbert expansion. Moreover, many celebrated nonlinear dispersive equations, such as  Korteweg-de Vries (KdV in short),  
Zakharov-Kuznetsov, Kadomstev-Petviashvili, nonlinear Schr\"{o}dinger, etc, can be formally derived from the Euler-Poisson system 
under various asymptotic limits. A natural and important question is whether or not those dispersive equations can 
be derived from the VPL system. In this paper, we derive rigorously the KdV equation from the VPL system as the first step for this subject.

We consider the spatially one-dimensional VPL system  modelling the motion of ions under a Boltzmann relation, which reads
\begin{equation}
	\label{1.1}
	\left\{
	\begin{array}{rl}
		\partial_{\bar{t}}\overline{F}+\bar{v}_{1}\partial_{\bar{x}}\overline{F}
		-\partial_{\bar{x}}\phi\partial_{\bar{v}_{1}}\overline{F}&=\frac{1}{\varepsilon}Q(\overline{F},\overline{F}),
		\\
		-\partial^{2}_{\bar{x}}\overline{\phi}&=\rho-\rho_\mathbf{e}, \quad \rho=\int_{\mathbb{R}^{3}}\overline{F}\,d\bar{v}.
	\end{array} \right.
\end{equation}
In the above expression, the parameter $\varepsilon>0$ is small and reciprocal to logarithm of the Debye shielding length, and it plays the same role as the Knudsen number in the Boltzmann theory, cf.~\cite{Cercignani, Sone}. The unknowns are  $\overline{F}=\overline{F}(\bar{t},\bar{x},\bar{v})\geq0$
standing for the density distribution function for ions with velocity $\bar{v}=(\bar{v}_{1},\bar{v}_{2},\bar{v}_{3})\in\mathbb{R}^{3}$ at position
$\bar{x}\in\mathbb{R}$ and time $\bar{t}\geq0$. 
The self-consistent electric potential $\overline{\phi}=\overline{\phi}(\bar{t},\bar{x})$  is coupled with the distribution function $\overline{F}$ through the Poisson equation.
The density of electrons $\rho_\mathbf{e}$ are described by the classical Boltzmann relation, cf. \cite{Bastdos-Golse2018,Duan-Yang-Yu-VPL,Guo-Pu}
\begin{equation}
\label{1.2}	
\rho_\mathbf{e}=e^{\overline{\phi}}.
\end{equation}
We remark that one can refer to \cite{Flynn} for a rigorous derivations from a two-species system via the  massless electron limit.

The collision between particles is given by the Landau operator:
\begin{equation}
\label{1.3}
Q(F_{1},F_{2})(\bar{v})=\nabla_{\bar{v}}\cdot\int_{\mathbb{R}^{3}}\Phi(\bar{v}-\bar{v}_{*})\left\{F_{1}(\bar{v}_{*})\nabla_{\bar{v}}F_{2}(\bar{v})
-F_{2}(\bar{v})\nabla_{\bar{v}_{*}}F_{1}(\bar{v}_{*})\right\}\,d\bar{v}_{*},
\end{equation}
Here the famous Landau (Fokker-Planck) kernel $\Phi(\xi)=[\Phi_{ij}(\xi)]$ is given by
\begin{equation*}
\Phi_{ij}(\xi)=\frac{1}{|\xi|}\big(\delta_{ij}-\frac{\xi_i\xi_j}{|\xi|^{2}}\big),\quad 1\leq i,j\leq 3,
\end{equation*}
with $\delta_{ij}$ being the Kronecker delta.

At the formal level, the VPL system \eqref{1.1} tends to the following non-isentropic compressible Euler-Poisson system \eqref{1.4} 
as $\varepsilon\to$,
 which is an important "two-fluid" model for a plasma:
\begin{equation}
\left\{
\begin{array}{rl}
		\label{1.4}
		&\partial_{\bar{t}}\overline{R}+\partial_{\bar{x}}(\overline{R}\overline{U})=0,
		\\
		&\partial_{\bar{t}}(\overline{R}\overline{U})+\partial_{\bar{x}}(\overline{R}\overline{U}^2)+\partial_{\bar{x}}\overline{P}
		+\overline{R}\partial_{\bar{x}}\overline{\Pi}=0,
		\\
		&\partial_{\bar{t}}[\overline{R}(\overline{\varTheta}+\frac{|\overline{R}|^{2}}{2})]+\partial_{\bar{x}}[\overline{R}\overline{U}
		(\overline{\varTheta}+\frac{|\overline{U}|^{2}}{2})
		+\overline{P}\overline{U}]+\overline{R}\overline{U}\partial_{\bar{x}}\overline{\Pi}=0,
		\\
		&-\partial^2_{\bar{x}}\overline{\Pi}=\overline{R}-e^{\overline{\Pi}},
	\end{array} \right.
\end{equation}
where $\overline{R}=\overline{R}(\bar{t},\bar{x})>0$, $\overline{U}=\overline{U}(\bar{t},\bar{x})$, $\overline{\varTheta}
=\overline{\varTheta}(\bar{t},\bar{x})>0$ and $\overline{\Pi}=\overline{\Pi}(\bar{t},\bar{x})$ 
are the density, velocity, temperature of the ions and the electric potential, respectively. The pressure $\overline{P}$ is given by the ideal gas law as $\overline{P}=K\overline{R}\overline{\varTheta}$
with the gas constant $K=\frac{2}{3}$ taken for convenience. 
The rigorous mathematical justification of the hydrodynamic limit from the Vlasov-Poisson-Boltzmann system
to the isentropic compressible Euler-Poisson system was done in \cite{Guo-Jang} by Guo and Jang, see also
some more profound works on this topic \cite{Duan-Yang-Yu-VMB,Duan-Yang-Yu-VPL,Lei-1}

Recall that the problem on the {\it long wave limit} of the Euler-Poisson system can be traced back to \cite{Korteweg,Su-1,Washimi} for formal derivations;
see recent progress \cite{Lannes,Guo-Pu} for rigorous proofs.
Following the idea in \cite{Guo-Pu}, we use the classical Gardner-Morikawa transformation \cite{Su-1}
\begin{equation}
\label{1.5}
x=\delta^{1/2}(\bar{x}-A\bar{t}), \quad \quad t=\delta^{3/2}\bar{t},
\end{equation}
and let $(R,U,\varTheta,\Pi)(t,x)=(\overline{R},\overline{U},\overline{\varTheta},\overline{\Pi})(\bar{t},\bar{x})$ to rewrite
the system \eqref{1.4} as
\begin{equation}
	\left\{
	\begin{array}{rl}
		\label{1.6}
		&\delta\partial_{t}R-A\partial_{x}R+\partial_{x}(RU)=0,
		\\
		&\delta R\partial_{t}U-A R\partial_{x}U+RU\partial_{x}U+\frac{2}{3}R\partial_{x}\varTheta+\frac{2}{3}\varTheta\partial_{x}R+R\partial_{x}\Pi=0,
		\\
		&\delta\partial_{t}\varTheta-A\partial_{x}\varTheta+\frac{2}{3}\varTheta\partial_{x}U
		+U\partial_{x}\varTheta=0,
		\\
		&\delta\partial^2_{x}\Pi=e^{\Pi}-R,
	\end{array} \right.
\end{equation}
where $A>0$ is a constant to be determined and $\delta>0$ is the amplitude of the initial disturbance.
Assume $(R,U,\varTheta,\Pi)(t,x)$ has the following formal expansion:
\begin{equation}
	\label{1.7}
	\left\{
	\begin{array}{rl}
		&R=1+\delta\rho_1+\delta^2\rho_2+\delta^3\rho_3+\cdot\cdot\cdot
		\\
		&U=\delta u^{(1)}_1+\delta^2 u^{(2)}_1+\delta^3 u^{(3)}_1+\cdot\cdot\cdot
		\\
		&\varTheta=\frac{3}{2}+\frac{3}{2}\delta\theta_1+\frac{3}{2}\delta^2\theta_2+\frac{3}{2}\delta^3\theta_3+\cdot\cdot\cdot
		\\
		&\Pi=\delta\phi_1+\delta^2 \phi_2+\delta^3 \phi_3+\cdot\cdot\cdot.
	\end{array} \right.	
\end{equation}
Then, substituting \eqref{1.7} into \eqref{1.6}, one gets a power series of $\delta$, whose coefficients depend on
$(\rho_k,u^{(k)}_1,\theta_k,\phi_k)$ for $k=1,2,\cdot\cdot\cdot$. This leads us to obtain
\begin{equation}
\label{1.8}
		u^{(1)}_{1}=A\rho_{1},\quad
		\theta_{1}=\frac{2}{3}\rho_{1},
		\quad \phi_1=\rho_1,\quad A=\sqrt{\frac{8}{3}},
\end{equation}
where $\rho_1$ satisfies the KdV equation:
\begin{equation}
\label{1.9}	
2\sqrt{\frac{8}{3}}\partial_{t}\rho_{1}+\frac{58}{9}\rho_1\partial_{x}\rho_{1}+\partial^3_{x}\rho_{1}=0.
\end{equation}
For a detailed formal derivation, see Section \ref{sub2.2} later on. Moreover, we also get the following relations:
\begin{equation}
	\left\{
	\begin{array}{rl}
		\label{1.10}
		&u^{(2)}_{1}=\sqrt{\frac{8}{3}}\rho_{2}-\int^{x} [\partial_{t}\rho_{1}+\partial_{x}(\rho_1u^{(1)}_1)](t,\xi)\,d\xi, 
		\\
		&\theta_{2}=\frac{2}{3}\rho_{2}-\frac{1}{9}\rho^2_{1},
		\\
		&\phi_2=\rho_2+\partial^2_{x}\rho_{1}-\frac{1}{2}\rho^2_1,
	\end{array} \right.
\end{equation}
where  $\rho_2$ satisfies the linearized inhomogeneous KdV equation:
\begin{equation}
	\label{1.11}
	2\sqrt{\frac{8}{3}}\partial_{t}\rho_{2}+\frac{58}{9}\partial_{x}(\rho_{1}\rho_2)+\partial^3_{x}\rho_{2}=g(\rho_1),
\end{equation}
with the function $g(\rho_1)$ depending only on $\rho_1$.
Similar relationship also holds for $(\rho_k,u^{(k)}_1,\theta_k,\phi_k)$ for $k=3,4,\cdot\cdot\cdot$.
Interested readers may refer to \cite{Guo-Pu} for a detailed formal derivation.

The well-posedness theory on the Cauchy problem of the KdV equation has been well-studied in \cite{Kenig,Colliander} and the references therein.
From \cite{Kenig}, we have the following proposition on the existence result of the  KdV equation \eqref{1.8}-\eqref{1.9}.
\begin{proposition}\label{prop.1.1}
	Let $s_1\geq2$ be a sufficiently large integer. Then there exists $\tau_1>0$ such that the Cauchy problem on the KdV equation \eqref{1.9} and 
	\eqref{1.8} with initial data $(\rho_1,u^{(1)}_1,\theta_1,\phi_1)(0,x)\in H^{s_1}(\mathbb{R})$ satisfying \eqref{1.8},  admits a unique
	smooth  solution
	\begin{equation}
		\label{1.12}
		(\rho_1,u_1^{(1)},\theta_1,\phi_1)(t,x)\in L^{\infty}(-\tau_1,\tau_1;H^{s_1}(\mathbb{R})).
	\end{equation}
 Furthermore, by using the conservation laws
	of the KdV equation, we can extend the solution to any time interval $[-\tau,\tau]$.
\end{proposition}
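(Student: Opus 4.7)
The plan is to reduce the statement to the classical well-posedness theory of the Korteweg-de Vries equation on the line. Observe first that the algebraic relations in \eqref{1.8} determine $u_1^{(1)}$, $\theta_1$, and $\phi_1$ uniquely as linear functions of $\rho_1$; hence the whole Cauchy problem is equivalent to solving the single scalar equation \eqref{1.9} with initial datum $\rho_1(0,\cdot)\in H^{s_1}(\mathbb{R})$, and recovering the remaining components at the end via \eqref{1.8}, which preserves $H^{s_1}$-regularity and yields \eqref{1.12} from the corresponding bound on $\rho_1$.

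For the local existence step, I would rescale $t$ and $\rho_1$ to cast \eqref{1.9} into the canonical form $\partial_\tau w+w\,\partial_y w+\partial_y^3 w=0$. Then, invoking the standard local well-posedness theorem of Kenig-Ponce-Vega (see \cite{Kenig}), there exists $\tau_1>0$, depending on $\|\rho_1(0,\cdot)\|_{H^{s_1}}$, such that a unique classical solution $\rho_1 \in C([-\tau_1,\tau_1];H^{s_1}(\mathbb{R}))$ exists for any integer $s_1\geq 2$. A direct energy argument, in which the top-order contribution from the transport term is cancelled against the skew-adjoint $\partial_x^3$ after integration by parts, can also be carried out by hand to give the same conclusion for $s_1\geq 2$, but it is more efficient to cite the sharper theory of \cite{Kenig}.

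To extend the solution to an arbitrary interval $[-\tau,\tau]$, I would exploit the well-known infinite hierarchy of conservation laws associated with KdV. The three lowest-order invariants $\int\rho_1\,dx$, $\int\rho_1^2\,dx$, and the Hamiltonian $\int\bigl[\tfrac12(\partial_x\rho_1)^2-c\rho_1^3\bigr]\,dx$ (with $c$ read off from \eqref{1.9}), combined with the Gagliardo-Nirenberg inequality, already provide a uniform-in-time bound on $\|\rho_1(t,\cdot)\|_{H^1}$. Polynomial conserved quantities of higher order, generated via the Lax pair formulation or the Miura transform, then control $\|\rho_1(t,\cdot)\|_{H^s}$ for every integer $s\geq 1$. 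Feeding these a priori bounds into the blow-up alternative from the local theory gives global extension in any Sobolev space $H^{s_1}$, proving \eqref{1.12} on $[-\tau,\tau]$ for arbitrary $\tau>0$.

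The only genuinely non-routine step is the high-regularity a priori estimate: while the lowest conservation laws are elementary, controlling $\|\rho_1\|_{H^{s_1}}$ for large $s_1$ requires either invoking the explicit polynomial hierarchy from inverse scattering or, alternatively, an inductive commutator-type energy identity in which the cubic nonlinearity is carefully absorbed. Since this is precisely what is done in \cite{Kenig} and \cite{Colliander}, the argument here reduces essentially to a direct quotation of their results, which is why the proposition is stated as a consequence of the existing literature rather than re-proved in detail.
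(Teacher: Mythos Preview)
Your proposal is correct and aligns with the paper's treatment: the paper does not give an independent proof of this proposition but simply cites \cite{Kenig} for the local well-posedness and invokes the KdV conservation laws for the global extension, exactly as you outline. Your reduction via the algebraic relations \eqref{1.8} to a single scalar KdV equation, followed by the Kenig--Ponce--Vega theory and the conserved-quantity hierarchy, is precisely the content behind that citation.
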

Based on Proposition \ref{prop.1.1}, the existence result of the linearized inhomogeneous KdV equation \eqref{1.11} can be summarized in the following,
whose proofs is similar to the one  in \cite[Theorem 1.2]{Guo-Pu}.
\begin{proposition}\label{prop.2.1}
	Let $s_1>3$ as in \eqref{1.12} and $s_2\leq s_1-3$ be a sufficiently large integer. 
	Then for any $\tau>0$, the Cauchy problem \eqref{1.11} and \eqref{1.10} with initial data $(\rho_2,u^{(2)}_1,\theta_2,\phi_2)(0,x)\in H^{s_2}(\mathbb{R})$
satisfying \eqref{1.10}, admits a unique smooth solution
	\begin{equation}
		\label{1.13}
		(\rho_2,u_1^{(2)},\theta_2,\phi_2)(t,x)\in L^{\infty}(-\tau,\tau;H^{s_2}(\mathbb{R})).
	\end{equation}
\end{proposition}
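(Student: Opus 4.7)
The plan is to view \eqref{1.11} as a linear dispersive equation for the unknown $\rho_2$, in which the coefficient $\rho_1$ and the source $g(\rho_1)$ are already known from Proposition~\ref{prop.1.1}. Since $s_1\geq s_2+3$, the inputs satisfy $\rho_1\in L^\infty(-\tau,\tau;H^{s_2+3}(\mathbb{R}))$, and since $g(\rho_1)$ is a polynomial expression in $\rho_1$ and its derivatives up to order three (coming from the second-order KdV expansion, mirroring the derivation in \cite{Guo-Pu}), we have $g(\rho_1)\in L^\infty(-\tau,\tau;H^{s_2}(\mathbb{R}))$. The task thus reduces to solving a linear variable-coefficient Airy-type equation for $\rho_2$ on the fixed time interval $[-\tau,\tau]$, and then reading off $(u_1^{(2)},\theta_2,\phi_2)$ algebraically from the closed-form relations \eqref{1.10}.

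To obtain the solution, I first introduce a vanishing-viscosity regularisation by adding $-\mu\partial_x^2\rho_2^{\mu}$ to the left-hand side of \eqref{1.11} and solving the resulting parabolic-dispersive Cauchy problem by standard semigroup/Picard iteration for each fixed $\mu\in(0,1)$. The core of the argument is then a uniform-in-$\mu$ energy estimate in $H^{s_2}$. For $0\le k\le s_2$, apply $\partial_x^k$ to the equation and pair with $\partial_x^k\rho_2^{\mu}$. The third-derivative term contributes nothing by skew-symmetry, and the viscous term is favourable. The variable-coefficient convection $\tfrac{58}{9}\partial_x(\rho_1\rho_2^{\mu})$ is handled through the Kato--Ponce commutator estimate together with the integration-by-parts identity
\[
\int_{\mathbb{R}}\rho_1\,\partial_x(\partial_x^k\rho_2^{\mu})\cdot\partial_x^k\rho_2^{\mu}\,dx=-\tfrac{1}{2}\int_{\mathbb{R}}\partial_x\rho_1\,(\partial_x^k\rho_2^{\mu})^2\,dx,
\]
which is bounded by $\|\partial_x\rho_1\|_{L^\infty}\|\rho_2^{\mu}\|_{H^{s_2}}^2$, controlled via $\rho_1\in H^{s_2+3}\hookrightarrow W^{1,\infty}$. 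Combining with $\|g(\rho_1)\|_{H^{s_2}}\|\rho_2^{\mu}\|_{H^{s_2}}$ for the source and applying Gronwall's inequality on $[-\tau,\tau]$ yields a bound uniform in $\mu$. A standard weak-$*$ compactness argument then produces a limit $\rho_2\in L^\infty(-\tau,\tau;H^{s_2}(\mathbb{R}))$ solving \eqref{1.11}; uniqueness follows from the same energy identity applied to the difference of two solutions, since the equation is linear in $\rho_2$.

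Once $\rho_2$ is in hand, the second and third relations in \eqref{1.10} give $\theta_2$ and $\phi_2$ as polynomial (respectively differential-polynomial) expressions in $\rho_1$ and $\rho_2$, and the required $H^{s_2}$ regularity is immediate, again using $\rho_1\in H^{s_2+3}$ to absorb the second derivative in the formula for $\phi_2$; this is the place where the assumption $s_2\le s_1-3$ is sharp. For $u_1^{(2)}$, the KdV equation \eqref{1.9} rewrites $\partial_t\rho_1+\partial_x(\rho_1 u_1^{(1)})$ as a pure $x$-derivative of an $H^{s_2+1}$ function, so its primitive in $x$ is well defined and enjoys the required regularity under the $H^{s_1}$-decay of $\rho_1$.

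The main obstacle is the derivative bookkeeping in the commutator/source estimates: every derivative of $g(\rho_1)$ and of the coefficient $\rho_1$ in \eqref{1.11} must be absorbed by the $H^{s_2+3}$ regularity of $\rho_1$ supplied by Proposition~\ref{prop.1.1}, and it is precisely the highest-order term $\partial_x^3\rho_1$ appearing inside $g(\rho_1)$ that forces the three-derivative gap $s_2\le s_1-3$. Beyond this accounting, the argument is a direct linear analogue of the KdV well-posedness proof in \cite{Guo-Pu,Kenig}, and no genuinely new ideas are needed.
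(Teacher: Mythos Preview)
Your approach is correct and matches the paper, which gives no self-contained proof but simply defers to \cite[Theorem~1.2]{Guo-Pu}; your vanishing-viscosity regularisation plus $H^{s_2}$ energy estimate with Gronwall is exactly the standard argument used there for the linearised inhomogeneous KdV equation. One small bookkeeping correction: the explicit formula for $g(\rho_1)=h_2^1-\tfrac{1}{A}h_3^1$ derived in Section~\ref{sub2.2} contains the term $-\partial_x^3(\partial_x^2\rho_1)=-\partial_x^5\rho_1$, so $g$ actually involves fifth-order derivatives of $\rho_1$, not just third-order as you assert; this does not affect the method, only the size of the gap $s_1-s_2$ actually needed, and since $s_1$ is in any case taken ``sufficiently large'' the conclusion stands.
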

In order to derive the KdV equation from the VPL system \eqref{1.1}, we have to propose a long wave
scaling for the VPL system. Motivated by \eqref{1.5}, we introduce the scaled transformation
\begin{equation}
\label{1.14}
	x=\delta^{1/2}(\bar{x}-A\bar{t}), \quad \quad t=\delta^{3/2}\bar{t},\quad \quad v=\bar{v}.
\end{equation}
Using \eqref{1.14} and letting $F(t,x,v)=\overline{F}(\bar{t},\bar{x},\bar{v})$ and $\phi(t,x)=\bar{\phi}(\bar{t},\bar{x})$ in \eqref{1.1},
we obtain the rescaled VPL system:
\begin{equation}
	\label{1.15}
	\left\{
	\begin{array}{rl}
		\delta\partial_{t}F-A\partial_{x}F+v_{1}\partial_{x}F-\partial_{x}\phi\partial_{v_{1}}F&=\frac{1}{\delta^{1/2}\varepsilon}Q(F,F),
		\\
		-\delta\partial^{2}_{x}\phi&=\rho-e^{\phi}, \quad \rho=\int_{\mathbb{R}^{3}}F\,dv.
	\end{array} \right.
\end{equation}
Note that the Euler-Poisson system \eqref{1.6} can be formally derived from the rescaled  VPL system \eqref{1.15}
by taking the fluid limit $\varepsilon\to 0$, while the KdV equation \eqref{1.9} can be formally derived from the system \eqref{1.6}
by taking the long wave limit $\delta\to 0$. Our goal in this work is to  perform simultaneously the fluid limit and  long wave limit:
\begin{equation}
	\label{1.16}
\varepsilon\to 0, \quad  \mbox{and}\quad \delta\to 0.
\end{equation}
The relations between the different systems can be described in Figure \ref{fig.asp} below:
\begin{figure}[h]
	\begin{tikzpicture}	
		\node (A) at (0,0) [draw, thick, fill=white, fill opacity=0.7,
		minimum width=2cm,minimum height=0.8cm] {$\mbox{VPL system}$};
		
		\draw[->] (1.1,0) -- (3.2,0); 
		\node[above] at (2.1,0){$\varepsilon \to 0$};
		
		
		\node (B) at (5,0) [draw, thick, fill=white, fill opacity=0.7,
		minimum width=2cm,minimum height=0.8cm] {$\mbox{Euler-Poisson system}$};
		\draw[->] (5,-0.4) -- (5,-2); 
		\node[right] at (5,-1.2){$\delta \to 0$};
		
		
		\node (D) at (5,-2.4) [draw, thick, fill=white, fill opacity=0.7,
		minimum width=2cm,minimum height=0.8cm] {$\mbox{KdV equation}$};
		
		\coordinate (M) at (1.1,-0.4);
		\coordinate (N) at (3.8,-2.0);
		\draw [->] (M) -- (N) {};
		\node[above] at (1.4,-1.9){$ \varepsilon \to 0, \delta\to 0$};
	\end{tikzpicture}
	\caption{Dispersive limits of the VPL system}
	\label{fig.asp}	
\end{figure}
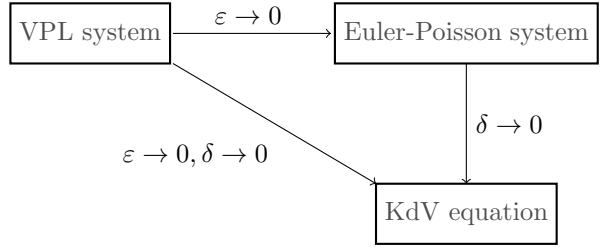

\subsection{Macro-micro decomposition}\label{sec.1.2}
For the above purpose, we present the macro-micro decomposition of the solution for the VPL system  \eqref{1.15}
with respect to the local Maxwellian, that was initiated by Liu-Yu \cite{Liu-Yu} and developed by Liu-Yang-Yu \cite{Liu-Yang-Yu}
for the Boltzmann equation.

First of all, it is well known that the Landau collision operator  admits five collision invariants:
\begin{equation*}
\psi_{0}(v)=1, \quad \psi_{i}(v)=v_{i},~(i=1,2,3),\quad \psi_{4}(v)=\frac{1}{2}|v|^{2},
\end{equation*}
namely, it holds that
\begin{equation*}
	\int_{\mathbb{R}^{3}}\psi_{i}(v)Q(F,F)\,dv=0,\quad \mbox{for $i=0,1,2,3,4$.}
\end{equation*}
For a given solution $F(t,x,v)$ of the VPL system \eqref{1.15}, we introduce five macroscopic (fluid) quantities:
the mass density $\rho(t,x)>0$, momentum $\rho(t,x)u(t,x)$, and
energy density $\mathcal{E}(t,x)+\frac 12|u(t,x)|^2$, given as
\begin{equation}
	\label{1.17}
	\left\{
	\begin{array}{rl}
		\rho(t,x)&\equiv\int_{\mathbb{R}^{3}}\psi_{0}(v)F(t,x,v)\,dv,
		\\
		\rho(t,x) u_{i}(t,x)&\equiv\int_{\mathbb{R}^{3}}\psi_{i}(v)F(t,x,v)\,dv, \quad \mbox{for $i=1,2,3$,}
		\\
		\rho(t,x)(\mathcal{E}(t,x)+\frac{1}{2}|u(t,x)|^{2})&\equiv\int_{\mathbb{R}^{3}}\psi_{4}(v)F(t,x,v)\,dv,
	\end{array} \right.
\end{equation}
and the corresponding local Maxwellian:
\begin{equation}
	\label{1.18}
	M\equiv M_{[\rho,u,\theta]}(t,x,v):=\frac{\rho(t,x)}{\sqrt{(2\pi K\theta(t,x))^{3}}}\exp\big\{-\frac{|v-u(t,x)|^{2}}{2K\theta(t,x)}\big\}.
\end{equation}
Here $u(t,x)=(u_{1},u_{2},u_{3})(t,x)$ is the bulk velocity, $\mathcal{E}(t,x)>0$ is the internal energy which is related to the temperature $\theta(t,x)$ by
$\mathcal{E}=\frac{3}{2}K\theta=\theta$ with $K=\frac{2}{3}$.

Letting the $L^{2}$ inner product in  $v\in\mathbb{R}^{3}$  be defined as
$\langle h,g\rangle =\int_{\mathbb{R}^{3}}h(v)g(v)\,d v$.
Then  the subspace spanned by the collision invariants has the following set of orthogonal basis:
\begin{equation}
	\label{1.19}
	\left\{
	\begin{array}{rl}
		&\chi_{0}(v)=\frac{1}{\sqrt{\rho}}M,
		\quad \chi_{i}(v)=\frac{v_{i}-u_{i}}{\sqrt{K\rho\theta}}M, \quad i=1,2,3,
		\\
		&\chi_{4}(v)=\frac{1}{\sqrt{6\rho}}(\frac{|v-u|^{2}}{K\theta}-3)M,
		\\
		&\langle \chi_{i},\frac{\chi_{j}}{M}\rangle=\delta_{ij},
		\quad \mbox{for ~~$i,j=0,1,2,3,4$}.
	\end{array} \right.
\end{equation}
The macroscopic projection  $P_{0}$ and  the microscopic projection $P_{1}$ can be defined as
\begin{equation}
	\label{1.20}
	P_{0}h\equiv\sum_{i=0}^{4}\langle h,\frac{\chi_{i}}{M}\rangle\chi_{i},\quad P_{1}h\equiv h-P_{0}h,
\end{equation}
respectively, where  $P_{0}$ and  $P_{1}$  are  orthogonal projections satisfying
$$
P_{0}P_{0}=P_{0},\quad
P_{1}P_{1}=P_{1},\quad
P_{1}P_{0}=P_{0}P_{1}=0.
$$
A function $h(v)$ is called microscopic or non-fluid if
\begin{equation}
	\label{1.21}
	\langle h(v),\psi_{i}(v)\rangle=0, \quad \mbox{for $i=0,1,2,3,4$}.
\end{equation}
Using the notations above, the solution $F(t,x,v)$ of the VPL system \eqref{1.15} can be decomposed as
\begin{equation}
	\label{1.22}
	F=M+G, \quad P_{0}F=M, \quad P_{1}F=G,
\end{equation}
where $M=M(t,x,v)$ defined in \eqref{1.18} and $G=G(t,x,v)$
represent the macroscopic  (fluid) and microscopic  (non-fluid) component of the solution, respectively.

Then the first equation of \eqref{1.15} becomes
\begin{equation}
	\label{1.23}
	\delta\partial_t(M+G)-A\partial_x(M+G)+v_{1}\partial_x(M+G)-\partial_x\phi\partial_{v_{1}}(M+G)
	=\frac{1}{\delta^{1/2}\varepsilon}L_{M}G+\frac{1}{\delta^{1/2}\varepsilon}Q(G,G),
\end{equation}
where $L_{M}$ is the linearized  collision operator with respect to the local Maxwellian $M$, given by
\begin{equation}
	\label{1.24}
	L_{M}h:=Q(h,M)+Q(M,h),
\end{equation}
and its null space $\mathcal{N}_M$ is spanned by the macroscopic variables $\chi_{i}(v)~(i=0,1,2,3,4)$.

\subsection{Notation, Weight, norm and energy functional}\label{sec.1.3}
Before we state the main theorem of this paper, we introduce some notations.
We shall use $\langle \cdot , \cdot \rangle$  to denote the standard $L^{2}$ inner product in $\mathbb{R}_{v}^{3}$
with its corresponding $L^{2}$ norm $|\cdot|_2$. We denote $( \cdot , \cdot )$ the $L^{2}$ inner product in
$\mathbb{R}_{x}$ or $\mathbb{R}_{x}\times \mathbb{R}_{v}^{3}$  with its corresponding $L^{2}$ norm $\|\cdot\|$.
Let $\alpha$ and $\beta$ be nonnegative integer and a multi-indices
$\beta=[\beta_1,\beta_2,\beta_3]$, respectively. Denote 
$$
\partial_{\beta}^{\alpha}=\partial_{x}^{\alpha}
\partial_{v_{1}}^{\beta_{1}}\partial_{v_{2}}^{\beta_{2}}\partial_{v_{3}}^{\beta_{3}}.
$$
If each component of $\beta$ is not greater than the corresponding one  of
$\overline{\beta}$, we use the standard notation
$\beta\leq\overline{\beta}$. And $\beta<\overline{\beta}$ means that
$\beta\leq\overline{\beta}$ and $|\beta|<|\overline{\beta}|$.
$C^{\bar\beta}_{\beta}$ is the usual  binomial coefficient.
Throughout the paper, generic positive constants are denoted by either c or C varying from line to line, and they are independent of
the parameters $\delta$ and $\varepsilon$.
Motivated by \cite{Strain-Guo} and \cite{Duan,Duan-Yang-Zhao-M3}, we define a time-velocity weight function
\begin{equation}
	\label{1.25}
	w(\alpha,\beta)(t,v):=\langle v\rangle^{2(l-|\alpha|-|\beta|)}e^{\frac{q_1}{(1+t)^{q_2}}\frac{\langle v\rangle^2}{2}},\quad  l\geq |\alpha|+|\beta|,
\end{equation}
where $\langle v\rangle=\sqrt{1+|v|^{2}}$ and  $0\leq q_1<1$ with $q_2>0$ will be  chosen in the proof later,
see also Theorem \ref{thm1.1}.

Throughout the paper, we fix a normalized global Maxwellian with the fluid constant state $(1,0,3/2)$
\begin{equation}
	\label{1.26}
	\mu=M_{[1,0,\frac{3}{2}]}(v):=(2\pi)^{-\frac{3}{2}}\exp\big(-\frac{|v|^{2}}{2}\big)
\end{equation}
as a reference equilibrium state. With \eqref{1.26},
the Landau collision frequency is given by
\begin{equation}
	\label{1.27}
	\sigma_{ij}(v):=\Phi_{ij}\ast \mu=\int_{{\mathbb R}^3}\Phi_{ij}(v-v_{\ast})\mu(v_{\ast})\,dv_{\ast}.
\end{equation}
Here $[\sigma_{ij}(v)]_{1\leq i,j\leq 3}$ is a positive-definite self-adjoint matrix.
We denote the weighted $L^2$ norms as
$$
|\partial^\alpha_\beta
g|^2_{w}\equiv|w(\alpha,\beta)\partial^\alpha_\beta
g|^2_{2}:=\int_{{\mathbb
		R}^3}w^{2}(\alpha,\beta)|\partial^\alpha_\beta g(x,v)|^2\,dv,
$$
and
$$
\|\partial^\alpha_\beta
g\|^2_{w}\equiv\|w(\alpha,\beta)\partial^\alpha_\beta
g\|^2_{2}:=\int_{\mathbb R}\int_{{\mathbb
		R}^3}w^{2}(\alpha,\beta)|\partial^\alpha_\beta g(x,v)|^2\,dv\,dx.
$$
By the linearization of the nonlinear Landau operator around $\mu$, we define the weighted dissipation norms as  
$$
|g|^2_{\sigma,w}:=\sum_{i,j=1}^3\int_{{\mathbb
		R}^3}w^{2}[\sigma_{ij}\partial_ig\partial_jg+\sigma_{ij}\frac{v_i}{2}\frac{v_j}{2}g^2]\,dv,\ \mbox{and}\ \ \|g\|_{\sigma,w}:=\big\||g|_{\sigma,w}\big\|.
$$
Also $|g|_{\sigma}=|g|_{\sigma,1}$ and $\|g\|_{\sigma}=\|g\|_{\sigma,1}$.
Furthermore, it holds from \cite[Lemma 5]{Strain-Guo} that
\begin{equation}
	\label{1.28}
	|g|_{\sigma,w}\approx |w\langle v\rangle^{-\frac{1}{2}}g|_2+\Big|w\langle v\rangle^{-\frac{3}{2}}\nabla_vg\cdot\frac{v}{|v|}\Big|_2+\Big|w\langle v\rangle^{-\frac{1}{2}}\nabla_vg\times\frac{v}{|v|}\Big|_2.
\end{equation}

Next we introduce the appropriate perturbations. 
In view of $(\rho_1,u_1^{(1)},\theta_1,\phi_1)(t,x)$ and $(\rho_2,u_1^{(2)},\theta_2,\phi_2)(t,x)$
constructed in Proposition \ref{prop.1.1} and Proposition \ref{prop.2.1} respectively, we denote
\begin{equation}
	\left\{
	\begin{array}{rl}
		\label{1.29}
		&\bar{\rho}=1+\delta\rho_1+\delta^2\rho_2,
		\\
		&\bar{u}_1=\delta u^{(1)}_1+\delta^2 u^{(2)}_1,\quad \bar{u}_2=\bar{u}_3=0,
		\\
		&\bar{\theta}=\frac{3}{2}+\frac{3}{2}\delta\theta_1+\frac{3}{2}\delta^2\theta_2,
		\\
		&\bar{\phi}=\delta\phi_1+\delta^2 \phi_2.
	\end{array} \right.
\end{equation}
In terms of  \eqref{1.29}, we define the following local Maxwellian 
\begin{equation}
	\label{1.30}
	\overline{M}:=M_{[\bar{\rho},\bar{u},\bar{\theta}](t,x)}(v)
	=\frac{\bar{\rho}(t,x)}{(2\pi K\bar{\theta}(t,x))^{3/2}}\exp\big(-\frac{|v-\bar{u}(t,x)|^{2}}{2K\bar{\theta}(t,x)}\big).
\end{equation}
With \eqref{1.29}, \eqref{1.26}, \eqref{1.17} and \eqref{1.22} in hand, 
we define the macroscopic perturbation $(\widetilde{\rho},\widetilde{u},\widetilde{\theta},\widetilde{\phi})(t,x)$ and the microscopic perturbation $f(t,x,v)$ by
\begin{equation}
	\label{1.31}
	\left\{
	\begin{array}{rl}
		&(\widetilde{\rho},\widetilde{u},\widetilde{\theta},\widetilde{\phi})(t,x)
		=({\rho}-\bar{\rho},u-\bar{u},\theta-\overline{\theta},\phi-\overline{\phi})(t,x),
		\\
		&f(t,x,v)=\frac{G(t,x,v)-\overline{G}(t,x,v)}{\sqrt{\mu}}.
	\end{array} \right.
\end{equation}
Here the correction term $\overline{G}(t,x,v)$ is defined as
\begin{equation}
	\label{1.32}
	\overline{G}=\delta^{1/2}\varepsilon L_{M}^{-1}\{ P_{1}v_{1}M(\frac{|v-u|^{2}
		\partial_x\bar{\theta}}{2K\theta^{2}}+\frac{(v-u)\cdot\partial_x\bar{u}}{K\theta})\},
\end{equation}
with $L_{M}^{-1}$ being the inverse operator of $L_{M}$.

In order to construct the long-time existence of smooth 
solutions for the VPL system \eqref{1.15} near a local Maxwellian $\overline{M}$ defined in \eqref{1.30},
a key point is to  establish uniform energy estimates on  $(\widetilde{\rho},\widetilde{u},\widetilde{\theta},\widetilde{\phi})(t,x)$
and $f(t,x,v)$ in the high order Sobolev space. To this end, for any small constant $q_1\in(0,1)$ and given constant $q_2>0$ in \eqref{1.25}, 
we define the following instant energy functional $\mathcal{E}_{2,l,q_1}(t)$ as
\begin{eqnarray}
	\label{1.33}
	\mathcal{E}_{2,l,q_1}(t):=&&\mathcal{E}_{2}(t)+\sum_{|\alpha|\leq 1}\|\partial^{\alpha}f(t)\|_{w}^{2}
	+\varepsilon^2\sum_{|\alpha|=2}\|\partial^{\alpha}f(t)\|_{w}^{2}+\sum_{|\alpha|+|\beta|\leq 2,|\beta|\geq1}\|\partial^{\alpha}_{\beta}f(t)\|_{w}^{2},
\end{eqnarray}
where $\mathcal{E}_{2}(t)$ is the instant energy without the weight, given by
\begin{eqnarray}
	\label{1.34}
	\mathcal{E}_{2}(t):=&&\sum_{|\alpha|\leq1}\{\|\partial^{\alpha}(\widetilde{\rho},\widetilde{u},\widetilde{\theta})(t)\|^{2}
	+\|\partial^{\alpha}f(t)\|^{2}+\|\partial^{\alpha}\widetilde{\phi}(t)\|^{2}
	+\delta\|\partial^{\alpha}\partial_x\widetilde{\phi}(t)\|^{2}\}
	\notag\\
	&&\quad+\frac{\varepsilon^2}{\delta}\sum_{|\alpha|=2}\{\|\partial^{\alpha}(\widetilde{\rho},\widetilde{u},\widetilde{\theta})(t)\|^{2}
	+\|\partial^{\alpha}f(t)\|^{2}+
	\|\partial^{\alpha}\widetilde{\phi}(t)\|^{2}+\delta\|\partial^{\alpha}\partial_x\widetilde{\phi}(t)\|^{2}\}.
\end{eqnarray}
Correspondingly, the energy dissipation functional is given by
\begin{equation}
	\label{1.35}
	\mathcal{D}_{2,l,q_1}(t):=\mathcal{D}_{2}(t)+\frac{1}{\delta^{3/2}\varepsilon}\{\sum_{|\alpha|\leq 1}\|\partial^{\alpha}f(t)\|_{\sigma,w}^{2}
	+\varepsilon^2\sum_{|\alpha|=2}
	\|\partial^{\alpha}f(t)\|_{\sigma,w}^{2}
	+\sum_{|\alpha|+|\beta|\leq 2,|\beta|\geq1}\|\partial^{\alpha}_{\beta}f(t)\|_{\sigma,w}^{2}\}, 
\end{equation}
where $	\mathcal{D}_{2}(t)$ is the dissipation rate without the weight, given by
\begin{eqnarray}
	\label{1.36}
	\mathcal{D}_{2}(t):&&=\frac{\varepsilon}{\delta^{1/2}}
	\sum_{1\leq|\alpha|\leq 2}\{\|\partial^{\alpha}(\widetilde{\rho},\widetilde{u},\widetilde{\theta})(t)\|^{2}
	+\|\partial^{\alpha}\widetilde{\phi}(t)\|^{2}
	+\delta\|\partial^{\alpha}\partial_x\widetilde{\phi}(t)\|^{2}\}
	\notag\\
	&&\hspace{1cm}+\frac{1}{\delta^{3/2}\varepsilon}\{\sum_{|\alpha|\leq1}\|\partial^\alpha f(t)\|^2_{\sigma}
	+\frac{\varepsilon^2}{\delta}\sum_{|\alpha|=2}\|\partial^\alpha f(t)\|^2_{\sigma}\}.
\end{eqnarray}
Here we restrict the highest order derivatives in all functionals to the second
order for simplicity. Much higher orders can be considered so that smooth
solutions can be obtained.

For the technical reason, we consider the fluctuation amplitude $\delta$ is a function
of $\varepsilon$ satisfying
\begin{equation}
\label{1.37}
\varepsilon^{\frac{2}{3}}	\leq \delta\leq\widetilde{C}^{-1}\varepsilon^{\frac{2}{5}},
\end{equation}
where $\widetilde{C}>1$ is a given constant deponding on $C_0$ as in \eqref{3.1}.
Throughout the paper, $\varepsilon$ is small enough.

\subsection{Main result}\label{sec.1.4}
With the above preparation, we are now ready to state our main theorem
on the long wave limit from the rescaled  VPL system \eqref{1.15} to the
KdV equation \eqref{1.9}.
\begin{theorem}
\label{thm1.1}
Let  $\tau>0$ be any given time and $(\rho_1,u_1^{(1)},\theta_1,\phi_1)(t,x)$ be a smooth solution for the KdV equation \eqref{1.8}-\eqref{1.9}
given  by Proposition \ref{prop.1.1}. Construct the local Maxwellian 
\begin{equation}
\label{1.38}
M^{\delta}\equiv M_{[1+\delta\rho_1,\delta \hat{u},\frac{3}{2}+\frac{3}{2}\delta\theta_1]}(t,x,v):=\frac{1+\delta\rho_1(t,x)}{\sqrt{(2\pi [1+\delta\theta_1(t,x)])^{3}}}\exp\big\{-\frac{|v-\delta \hat{u}(t,x)|^{2}}{2[1+\delta\theta_1(t,x)]}\big\},
\end{equation}
with $\hat{u}=(u_1^{(1)},0,0)$. Assume that the initial data $F_{0}(x,v)\geq0$ satisfies 
\begin{equation}
	\label{1.39}
	\mathcal{E}_{2,l,q_1}(t)\mid_{t=0}\,\leq\, \delta^4,
\end{equation}
where in \eqref{1.25} we have chosen $q_1\in(0,1)$ small enough and some given $q_2>0$.
Then there exist small constants $\varepsilon_0>0$
and $\delta_0>0$ such that for any $\varepsilon\in(0,\varepsilon_0)$ and any $\delta\in(0,\delta_0)$ satisfying the restriction \eqref{1.37},
the VPL system \eqref{1.15}  admits a unique smooth solution $(F(t,x,v),\phi(t,x))$ for all $t\in[0,\tau]$. Moreover, it holds that
$F(t,x,v)\geq 0$ and
\begin{equation}
	\label{1.40}
	\sup_{0\leq t\leq\tau}\mathcal{E}_{2,l,q_1}(t)
	\leq C\delta^4.
\end{equation}	
In particular, there exists a constant $C>0$ independent of $\delta$ such that
\begin{equation}
	\label{1.41}
\sup_{0\leq t\leq\tau}\{\|\frac{F(t,x,v)-M^{\delta}(t,x,v)}{\sqrt{\mu}}\|_{L^2_xL^2_v}+\|\frac{F(t,x,v)-M^{\delta}(t,x,v)}{\sqrt{\mu}}\|_{L^\infty_xL^2_v}\}\leq C\delta^2,
\end{equation}
and 
\begin{equation}
	\label{1.42}
\sup_{0\leq t\leq\tau}\{\|\phi(t,x)-\delta\phi_{1}(t,x)\|+\|\phi(t,x)-\delta\phi_{1}(t,x)\|_{L_{x}^{\infty}}\}\leq C\delta^2.
\end{equation}
\end{theorem}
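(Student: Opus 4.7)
The plan is to combine a local existence result for the rescaled VPL system \eqref{1.15} with uniform-in-$(\delta,\varepsilon)$ a priori estimates, and close the argument by a standard continuation argument on the interval $[0,\tau]$. The approximate local Maxwellian $\overline M=M_{[\bar\rho,\bar u,\bar\theta]}$ in \eqref{1.30} is built from the KdV solution of Proposition \ref{prop.1.1} and the second-order corrector of Proposition \ref{prop.2.1}, so that $(\bar\rho,\bar u,\bar\theta,\bar\phi)$ solves the Euler-Poisson system \eqref{1.6} up to an $O(\delta^3)$ residual. Subtracting the equation formally satisfied by $\overline M$ from \eqref{1.15} yields coupled evolutions for the macroscopic perturbation $(\widetilde\rho,\widetilde u,\widetilde\theta,\widetilde\phi)$ and for the kinetic remainder $f=(G-\overline G)/\sqrt\mu$ defined in \eqref{1.31}-\eqref{1.32}; the source terms split into (i) the KdV-expansion truncation error, of pointwise size $O(\delta^3)$, (ii) the Chapman-Enskog-type correction generated by $\overline G$ in \eqref{1.32}, of size $O(\delta^{1/2}\varepsilon)$, and (iii) genuinely quadratic nonlinearities in the perturbations.

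The heart of the argument is the weighted energy estimate on $\mathcal{E}_{2,l,q_1}$ with dissipation $\mathcal{D}_{2,l,q_1}$ from \eqref{1.33}-\eqref{1.36}. I would carry it out hierarchically: zeroth-order fluid-kinetic estimate, then $|\alpha|=1$ estimates, then the top-order $|\alpha|=2$ estimate weighted by $\varepsilon^2/\delta$, and finally the weighted microscopic estimates on $\partial^\alpha_\beta f$ using the Landau coercivity \eqref{1.28} and the time-velocity weight \eqref{1.25} with $q_1\in(0,1)$ small, so that the negative contribution of $\partial_t w$ produces an extra $\langle v\rangle$-gain in $\|f\|_{\sigma,w}$ needed to absorb both the transport term $v_1\partial_x f$ and the electric streaming term $\partial_x\phi\,\partial_{v_1}f$. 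On the macroscopic side the linearization around $(1,0,3/2,0)$ is non-dissipative, so fluid gradients are retrieved only through the Poisson dispersion $\delta\partial_x^2\widetilde\phi=e^\phi-e^{\bar\phi}-\widetilde\rho$ and through the Navier-Stokes-type viscosity produced by $\overline G$; this is what forces the degenerate factor $\varepsilon/\delta^{1/2}$ in front of the fluid dissipation in \eqref{1.36}.

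The main obstacle will be reconciling these weak macroscopic dissipations with the targeted size $\mathcal{E}_{2,l,q_1}\le C\delta^4$ and the hierarchy of $\delta,\varepsilon$ weights. Quantitatively, the KdV residual squared contributes $O(\delta^6)$, which is admissible; the $\overline G$-sourced terms contribute $(\delta^{1/2}\varepsilon)^2=\delta\varepsilon^2$, so the constraint $\delta\ge\varepsilon^{2/3}$ in \eqref{1.37} is precisely what makes them $\lesssim\delta^4$. The upper bound $\delta\le\widetilde{C}^{-1}\varepsilon^{2/5}$ is forced by the top-order estimate, where the $\delta^{-1}$ factors arising from the weight $\varepsilon^2/\delta$ in \eqref{1.34} and from the Poisson structure $\delta\partial_x^2\widetilde\phi=\cdots$ must be absorbed by the degenerate dissipation of order $\varepsilon/\delta^{1/2}$; carefully tracking these factors through every derivative count $\partial^\alpha_\beta$ with $|\alpha|+|\beta|\le 2$ is the central technical burden, and is why the second-order fluid norms must be multiplied by the tiny coefficient $\varepsilon^2/\delta$ in the definition of $\mathcal{E}_{2,l,q_1}$.

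Once $\sup_{[0,\tau]}\mathcal{E}_{2,l,q_1}(t)\le C\delta^4$ is established, \eqref{1.41}-\eqref{1.42} follow by decomposing
$F-M^\delta=(M-\overline M)+(\overline M-M^\delta)+\overline G+\sqrt\mu\,f$: the first piece is bounded by $|(\widetilde\rho,\widetilde u,\widetilde\theta)|=O(\delta^2)$ via a Taylor expansion of the Maxwellian; the second is $O(\delta^2)$ by construction of $\overline M$ (differing from $M^\delta$ only through the $\delta^2$ corrector of Proposition \ref{prop.2.1}); the third is $O(\delta^{1/2}\varepsilon)=O(\delta^2)$ under $\delta\ge\varepsilon^{2/3}$; and the fourth is $O(\delta^2)$ directly from \eqref{1.40}. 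Similarly $\phi-\delta\phi_1=\widetilde\phi+\delta^2\phi_2=O(\delta^2)$. The $L^\infty_x$ bounds in \eqref{1.41}-\eqref{1.42} are obtained from the one-dimensional embedding $H^1_x\hookrightarrow L^\infty_x$ applied to $\partial^\alpha$ with $|\alpha|\le 1$ together with the $L^2_v$ control already present in $\mathcal{E}_{2,l,q_1}$; positivity $F\ge 0$ is propagated along the continuation argument by a standard kinetic approximation from the nonnegative initial datum together with the smallness \eqref{1.39}.
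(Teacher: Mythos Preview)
Your proposal is correct and follows the same architecture as the paper: macro-micro decomposition around $\overline M$, hierarchical weighted energy estimates (low-order fluid, low-order microscopic, top-order with the $\varepsilon^2/\delta$ factor, then mixed weighted derivatives), closure via Gronwall on $[0,\tau]$, and the identical four-piece decomposition of $F-M^\delta$ for \eqref{1.41}--\eqref{1.42}. One minor clarification: the time-decaying exponential in $w(\alpha,\beta)$ produces the extra good term $q_1q_2(1+t)^{-(1+q_2)}\|\langle v\rangle\partial^\alpha_\beta f\|_w^2$ (plain weighted $L^2$, not the $\sigma$-norm) whose role is to absorb the electric streaming contribution $\partial_x\phi\,\partial_{v_1}f$, whereas the free transport $v_1\partial_x f$ is handled separately---it vanishes by integration by parts in pure space-derivative estimates and, in mixed-derivative estimates, is controlled through the polynomial part $\langle v\rangle^{2(l-|\alpha|-|\beta|)}$ of the weight hierarchy.
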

Several remarks are in order.

\begin{remark}\label{rmk1.4}
Theorem \ref{thm1.1}  shows that the solutions $F(t,x,v)$ of the VPL system converge to a local Maxwellian
whose fluid quantities are determined by the smooth solution of the KdV equation. This provides the rigorous justification regarding 
hydrodynamic limit of dispersive equations from the kinetic equations under the effect of collisions. 
The new ideas and methods introduced here can be applied to deal with the problem on the dispersive limit of other kinetic models for collisional plasma.
\end{remark}

\begin{remark}\label{rmk1.5}
	We point out that the set of non-negative smooth initial data $F_0$ satisfying \eqref{1.39} is nonempty. 
	In fact, the initial value  can be specifically chosen as
	\begin{equation}
		\label{1.43}
		F_0(x,v)=M_{[\bar{\rho},\bar{u},\bar{\theta}]}(0,x,v), 
	\end{equation}
	with $\phi_0(x)=\bar{\phi}(0,x)$, which automatically satisfies \eqref{1.39}. Here $(\bar{\rho},\bar{u},\bar{\theta},\bar{\phi})(0,x)$ is defined in \eqref{1.29}.
	Indeed, in view of \eqref{1.43} and the decomposition $F(t,x,v)=M_{[\rho,u,\theta]}(t,x,v)+G(t,x,v)$,
	we have $M_{[\rho,u,\theta]}(0,x,v)=M_{[\bar{\rho},\bar{u},\bar{\theta}]}(0,x,v)$ and $G(0,x,v)=0$.
	This implies that $(\rho,u,\theta)(0,x)=(\bar{\rho},\bar{u},\bar{\theta})(0,x)$ and $\overline{G}(0,x,v)+\sqrt{\mu}f(0,x,v)=0$.
	Then it holds that
	$(\widetilde{\rho},\widetilde{u},\widetilde{\theta},\widetilde{\phi})(0,x)
	=({\rho}-\bar{\rho},u-\bar{u},\theta-\bar{\theta},\phi-\bar{\phi})(0,x)=0$
	and 
	$$
	f(0,x,v)=-\frac{\overline{G}(0,x,v)}{\sqrt{\mu}}.
	$$
By these facts, \eqref{1.33}, \eqref{1.34}, \eqref{1.37} and Lemma \ref{lem3.2}, we get
	\begin{eqnarray*}
		\mathcal{E}_{2,l,q_1}(0)=&&
		\sum_{|\alpha|\leq1}(\|\frac{\partial^{\alpha}\overline{G}(0)}{\sqrt{\mu}}\|^{2}+\|\frac{\partial^{\alpha}\overline{G}(0)}{\sqrt{\mu}}\|_{w}^{2})
		+\frac{\varepsilon^{2}}{\delta}\sum_{|\alpha|=2}\|\frac{\partial^{\alpha}\overline{G}(0)}{\sqrt{\mu}}\|^{2}
		\\
		&&
		+\varepsilon^{2}\sum_{|\alpha|=2}\|\frac{\partial^{\alpha}\overline{G}(0)}{\sqrt{\mu}}\|_{w}^{2}
		+\sum_{|\alpha|+|\beta|\leq 2,|\beta|\geq1}\|\partial^{\alpha}_{\beta}(\frac{\overline{G}(0)}{\sqrt{\mu}})\|_{w}^{2}
		\\
		\leq&&C\delta^{3}(\varepsilon+\delta^2)^2\leq \frac{1}{2}\delta^4.
	\end{eqnarray*}
\end{remark}

\subsection{Relevant literature}\label{sec.1.5}
In the following we review some related works on the problems considered in this paper.
There have been extensive mathematical studies on the fluid dynamic limits of the kinetic equations over the years.
In the context of the Boltzmann equation, we may refer readers to two books \cite{Golse,Saint-Raymond} 
for the detailed presentation of limits of the Boltzmann equation to either the 
compressible Euler and Navier-Stokes systems or the incompressible Euler and Navier-Stokes systems.
It has essentially three different mathematical approaches to rigorously justify these formal limits.
For the approach based on the Hilbert or Chapman-Enskog expansions, we refer to \cite{Caflisch,Grad,Guo-Huang-Wang,Guo-Jang-Jiang-2010,Hang-Wang2-Yang,Jiang-Luo,Liu-2014,XinZeng}. For the method employing the abstract Cauchy-Kovalevskaya theorem and the spectral analysis of the semigroup generated by the linearized Boltzmann equation,
we refer to  \cite{Bardos-1991,Nishida,Ukai-Asano}.  For the DiPerna-Lions \cite{Diperna-Lions} renormalized weak solutions,
we refer to  \cite{Bardos,Golse-2002,Golse-2004,Masmoudi-2003}. These methods can be applied to the Boltzmann equation with
the self-consistent electromagnetic field. In particular, we point out that Guo-Jang \cite{Guo-Jang} 
gave the first rigorous proof of fluid limit from the Vlasov-Poisson-Boltzmann system to the compressible Euler-Poisson system in the full space
of three dimensions via the Hilbert expansion and $L^2$-$L^\infty$ interplay technique initiated by Guo \cite{Guo-2008}.
This seems an important progress since two-fluid model can be first rigorously derived from a kinetic plasma model
with the self-consistent potential force. We also mention another recent progress that the same authors of this paper \cite{Duan-Yang-Yu-VMB} 
obtained the fluid limit from the  Vlasov-Maxwell-Boltzmann system to the compressible Euler-Maxwell system 
in the full space of three dimensions via an energy method in terms of the macro-micro decomposition
and a careful analysis of the Burnett function.

However, the corresponding hydrodynamic limits of the Landau equation are much harder, since the Landau collision operator exposes the velocity diffusion property so that some techniques used in Boltzmann equation with angular cutoff can not be directly applied. In the incompressible regime, 
Guo \cite{Guo-CPAM} gave the rigorous proof of the limit to the incompressible Navier-Stokes system for smooth solutions to the Landau equation near global Maxwellians; see also two recent works \cite{CaMi-1} and \cite{Rachid}.
In the compressible regime, the compressible Euler limit for smooth solutions to the Landau equation near global Maxwellians was recently studied by
Duan-Yang-Yu \cite{Duan-Yang-Yu-2} and Lei-Liu-Xiao-Zhao \cite{Lei-1} via different energy methods.
In  the setting of one space dimension, regarding solutions with wave patterns, the only results are \cite{Duan-Yang-Yu} for rarefaction waves and \cite{Yang} for contact waves.
The corresponding result for shock waves is still open as far as we know.

In the presence of the self-consistent potential force, the same authors of this paper \cite{Duan-Yang-Yu-VPL} obtained the
fluid limit from the VPL system to the compressible quasineutral  Euler system in  case of rarefaction waves
via  an intricate weighted energy method capturing the quartic dissipation. On the other hand,
a significant progress on the Landau damping of the VPL system  in the weakly collisional regime was recently obtained by
Chaturvedi-Luk-Nguyen \cite{Chaturvedi} through combining Guo's weighted energy method with the hypocoercive estimates and the commuting-vector-field techinque, see also \cite{Bedrossian} for an earlier study of the relevant toy model. 

In the following we further mention some early and recent results on the Cauchy problem of  the VPL system.
The first global classical solutions of the VPL system  near global Maxwellians in a periodic box was constructed by Guo \cite{Guo-JAMS},
and this result was extended to the whole space by Strain-Zhu \cite{Strain-Zhu}, Duan-Yang-Zhao \cite{Duan-Yang-Zhao} and Wang \cite{Wang}, respectively.
The global classical solutions of the VPL system around local Maxwellians with rarefaction waves either in the one-dimensional line with slab symmetry or in the three-dimensional infinite channel domain was obtained by Duan-Yu \cite{Duan-Yu1, DY31}. In addition to those works, great contributions also
have been done in many other kinds of topics  of the VPL system, for instance \cite{Bedrossian-1,Deng,Dong-Guo,Flynn} and the references therein.
Here we also mention an important and nice work \cite{Guo-2002} for the global classical solutions of the pure Landau equation.

It is well known that KdV equation is an interesting and important dispersive model, which describes the long waves propagating on a shallow water surface.
There are a  numerous literature have been devoted to study the long wave limits towards KdV.
We mention \cite{Chiron} for the nonlinear Schr\"{o}dinger, \cite{Ben} for the general Hyperbolic systems,
\cite{A,Bona,Craig,Schneider} for the water wave. In particular, we highlight the two independent works \cite{Guo-Pu}
and \cite{Lannes}  for the Euler-Poisson system, and a nice work \cite{Han-Kwan} for the Vlasov-Poisson system.
Though, it seems there are few results on the KdV limit in the presence of collisions.
This served as a major inspiration for our work. In the end we would like to mention
\cite{Bona-1975,Colliander,Kato,Kenig} and the references therein for the
well-posedness theory of the KdV equation with data in the $L^2$-based Sobolev spaces $H^{s}(\mathbb{R})$ with $s\in\mathbb{R}$.

Despite the fruitful progress in the hydrodynamic limits as mentioned before, none of these works consider the 
limits on the dispersive equation from the kinetic equation under the effect of the collisions.
The main reason is that  the interspecies collisions give rise to mathematical difficulties.
In this work, we develop a weighted energy method in terms of the macro-micro decomposition around local Maxwellians to derive and justify the KdV equation from the VPL system modelling the motion of ions under the Maxwell-Boltzmann relation. 
This makes us to better understand the dispersive behavior of the solutions to VPL system.

\subsection{Ideas of the proof}\label{sec.1.6}	
We now make some comments on the analysis of this paper. Our strategy is based on an appropriately chosen scaling and the intricate
weighted energy method through the macro-micro decomposition around local Maxwellians
for designing a $\delta$-dependent energy functional and its corresponding  dissipation functional 
such that the uniform estimates can be obtained under the smallness
assumption.

The first important ingredient of the proof is based on a suitable scaling transformation of the independent variables takes the form of
\begin{equation*}
x=\delta^{\frac{1}{2}}(\bar{x}-\sqrt{\frac{8}{3}}\bar{t}), \quad \quad t=\delta^{\frac{3}{2}}\bar{t},\quad \quad v=\bar{v},
\end{equation*}
with the parameter $\delta>0$ depending on the Knudsen
number $\varepsilon>0$.
Under this scaling transformation, the original VPL system \eqref{1.1} becomes the rescaled  VPL system \eqref{1.15}.
The formal Hilbert expansion of the solutions for such system \eqref{1.15} gives rise to the KdV equation as both the fluid limit $\varepsilon\to 0$ and  the long wave limit $\delta\to 0$, see figure \ref{fig.asp}. Therefore, this scaling transformation gives a good regime for deriving
rigorously the KdV equation from the VPL system.

The second important ingredient of the proof is based on the macro-micro decomposition 
with respect to the local Maxwellian $M$ determined by the solution of the rescaled VPL system.
By using this decomposition, we can obtain the rescaled compressible Euler-Poisson equations with some source terms such that we can observe the existence of dispersive behavior, particularly the KdV limit under consideration.  As mentioned in \cite{Duan-Yang-Yu-2}, on the one hand,
the inverse of linearized operator $L^{-1}_M$ in the macroscopic equation  is very complicated  due to the velocity diffusion effect of collision operator.
To this end, we make use of the Burnett functions and velocity-decay properties to handled the terms involving $L^{-1}_M$ so that the estimates can be obtained in a clear way, see the identities \eqref{4.20} and \eqref{4.19} for details. On the other hand,
we need to consider the subtraction of $G(t,x,v)$ by $\overline{G}(t,x,v)$ as \eqref{1.32} to remove
the linear terms in the equation \eqref{2.10}, because those terms induce the energy term $\|(\partial_x\bar{\theta},\partial_x\bar{u})\|^2$ in the $L^2$ estimate
that is  out of control by $O(\delta^4)$ under the condition of \eqref{1.37}. Thus we further expand $G=\overline{G}+\sqrt{\mu}f$ to get the 
equation \eqref{2.39}. Here $f\in (\ker\mathcal{L})^{\perp}$ is purely
microscopic and it is essentially different from the one in \cite{Guo-2002,Guo-JAMS}.
This is convenient for obtaining the  $\delta$-$\varepsilon$-singularity trilinear estimate  because the macroscopic part would not longer appear.

The third important ingredient of the proof is based on a useful time-velocity weight function $w(\alpha,\beta)(t,v)$ given in \eqref{1.25}.
Since the linearized Landau operator with Coulomb interaction has no
spectral gap that results in the very weak velocity dissipation, the large velocity growth in the nonlinear electric potential terms
are hard to control. The factor	$e^{\frac{q_1}{(1+t)^{q_2}}\frac{\langle v\rangle^2}{2}}$ in \eqref{1.25} is used to
induce an extra large velocity dissipation term to overcome those difficulties. Similar mechanism was observed 
in some earlier works \cite{Duan-Yang-Zhao,Duan-Yang-Zhao-M3} as well as in some recent work \cite{Duan-Yang-Yu-VPL,Yang} in a more general way.
The other factor $\langle v\rangle^{2(l-|\alpha|-|\beta|)}$ in \eqref{1.25} is used to take care of the derivative estimates of the free streaming term and the velocity derivative of $f$. This technique was first used in \cite{Guo-JAMS}.
However, the main difference is that we need to be careful about two parameters $\varepsilon$ and $\delta$ in the estimation.
Under such weight function and the parameters $\varepsilon$ and $\delta$,
one has to deal with the complex weight estimates about the linearized operator and the nonlinear collision operator.

The last important ingredient of the proof is based on an energy functional $\mathcal{E}_{2,l,q_1}(t)$ in \eqref{1.33} including the $\delta$ and $\varepsilon$ to the highest-order derivative  $|\alpha|=2$. On the one hand, due to singularity,  the linear term $-\frac{1}{\delta^{3/2}\varepsilon}(\mathcal{L}\partial^{\alpha}f,\frac{\partial^{\alpha}M}{\sqrt{\mu}})$ in \eqref{5.3}
about the second-order space derivatives  cannot be directly estimated. The key technique for handling this term is to use the properties
of the linearized operator $\mathcal{L}$ and the relation between $M$ and $\mu$ as in \eqref{3.24} as well as 
the delicate calculation, see the estimates from \eqref{5.4} to \eqref{5.7}. Because of
the singular factor $\delta^{1/2}\varepsilon^{{-1}}$ in front of the fluid part $\|\partial^{\alpha}(\widetilde{\rho},\widetilde{u},\widetilde{\theta})\|^{2}$ in \eqref{5.7}, one has to multiply the estimate \eqref{5.7} by $\delta^{-1}\varepsilon^2$ so that the fluid term can be controlled by 
$\mathcal{D}_{2,l,q_1}(t)$  in \eqref{1.35}. But such good estimate is not longer holds true for the same term involving
the weight function, see \eqref{6.7AB} for details. On the other hand, by the orthogonal properties of $G$ and $M$, we can observe a cancellation
for the estimate \eqref{5.9A}. However, such a cancellation is not longer holds true for the same term involving
the weight function, see \eqref{6.7A} for details. Hence, one has to multiply the estimates \eqref{6.7AB} and \eqref{6.7A} by $\varepsilon^2$ so that
the energy estimates can be closed. This is why we have to design 
$\varepsilon^2$ for weight and $\delta^{-1}\varepsilon^2$ for without weight in the highest-order derivative 
for energy functional $\mathcal{E}_{2,l,q_1}(t)$ in \eqref{1.33}. In the end, we point out that it is quite nontrivial to estimate the coupling
term $\partial_x\phi$ in the momentum equation as in \eqref{2.13}. For instance, to overcome 
the difficulties caused by the linear term on electric field in \eqref{3.61}, we have to make use of
the structure of the Poisson system and some useful techniques 
to deal with them, see the estimates from \eqref{3.62} to \eqref{3.72}.

Due to the complexity of the system and two free parameters, the calculations in this paper are very complicated.
The new techniques and method introduced here can also be applied to study
the problem on the dispersive limit of other kinetic models for collisional plasma.

The outline of this paper is as follows. In Section \ref{sec.2}, we reformulate the VPL system in terms of the macro-micro decomposition
and give a formal derivation for the KdV equation. In Section \ref{sub3}, we make the {\it a priori} assumption for the solutions and derive some 
basic estimates.  Sections \ref{sec.4}-\ref{sec.6} are the main part of the proof for establishing the {\it a priori}
estimates including low order space derivative estimates, high order space derivative estimates and weighted energy
estimates, see Lemmas \ref{lem4.3}, \ref{lem5.1} and \ref{lem6.1}, respectively. In Section \ref{sec.7},
we are devoted to completing the proof of the main result Theorem \ref{thm1.1} stated in
Section \ref{sec.1}. 
\section{Reformulated system}\label{sec.2}
In this section,  we will reformulate the VPL system \eqref{1.15} in terms of the macro-micro decomposition \eqref{1.22},
and then give a formal derivation for the KdV equation \eqref{1.8}-\eqref{1.9}.
\subsection{Macroscopic-microscopic equations}\label{sub2.1}
Multiplying \eqref{1.23} by the collision invariants $\psi_{i}(v)$ $(i=0,1,2,3,4)$ and integrating the resulting equations with respect to
$v\in\mathbb{R}^{3}$, we obtain the following macroscopic system:
\begin{equation}
	\label{2.9}
	\left\{
	\begin{array}{rl}
		&\delta\partial_t\rho-A\partial_x\rho+\partial_x(\rho u_{1})=0,
		\\
		&\delta\partial_t(\rho u_{1})-A\partial_x(\rho u_{1})
		+\partial_x(\rho u_{1}^{2})+\partial_xp+\rho\partial_x\phi=-\int_{\mathbb{R}^{3}} v^{2}_{1}\partial_xG\,dv,
		\\
		&\delta\partial_t(\rho u_{i})-A\partial_x(\rho u_{i})+\partial_x(\rho u_{1}u_{i})=-\int_{\mathbb{R}^{3}} v_{1}v_{i}\partial_xG\,dv, ~~i=2,3,
		\\
		&\delta\partial_t\{\rho (\theta+\frac{|u|^{2}}{2})\}-A\partial_x\{\rho (\theta+\frac{|u|^{2}}{2})\}
		+\partial_x\{\rho u_{1}(\theta+\frac{|u|^{2}}{2})+pu_{1}\}
		\\
		&\hspace{2cm}+\rho u_{1}\partial_x\phi
		=-\int_{\mathbb{R}^{3}} \frac{1}{2}v_{1}|v|^{2}\partial_xG\,dv,
		\\
		&-\delta\partial^{2}_{x}\phi=\rho-e^{\phi},
	\end{array} \right.
\end{equation}
with the pressure $p=\frac{2}{3}\rho\theta$, where the last identity from the second equation of \eqref{1.15}.

Recall the definition of $P_{1}$ given in \eqref{1.20}, then we  take $P_{1}$ of \eqref{1.23} to obtain the following microscopic equation 
\begin{equation}
\label{2.10}
\partial_tG-A\frac{1}{\delta}\partial_xG+\frac{1}{\delta}P_{1}(v_{1}\partial_xG)+\frac{1}{\delta}P_{1}(v_{1}\partial_xM)-\frac{1}{\delta}\partial_x\phi\partial_{v_{1}}G
=\frac{1}{\delta^{3/2}\varepsilon}L_{M}G+\frac{1}{\delta^{3/2}\varepsilon}Q(G,G).
\end{equation}
Since $L_{M}$ is invertible on $\mathcal{N}_M^\perp$, thus it holds from \eqref{2.10} that
\begin{equation}
	\label{2.11}
	G=\delta^{1/2}\varepsilon L^{-1}_{M}[P_{1}(v_{1}\partial_xM)]+L^{-1}_{M}\Theta,
\end{equation}
with
\begin{equation}
	\label{2.12}
	\Theta:=\delta^{1/2}\varepsilon[\delta\partial_tG-A\partial_xG+ P_{1}(v_{1}\partial_xG)-\partial_x\phi\partial_{v_{1}}G]-Q(G,G).
\end{equation}
Substituting \eqref{2.11} into \eqref{2.9}, we obtain the following fluid-type system:
\begin{equation}
	\label{2.13}
	\left\{
	\begin{array}{rl}
		&\delta\partial_t\rho-A\partial_x\rho+\partial_x(\rho u_{1})=0,
		\\
		&	\delta\partial_t(\rho u_{1})-A\partial_x(\rho u_{1})+\partial_x(\rho u_{1}^{2})+\partial_xp+\rho\partial_x\phi
		=\frac{4}{3}\delta^{1/2}\varepsilon\partial_x(\mu(\theta)\partial_xu_{1})-\partial_x(\int_{\mathbb{R}^{3}} v^{2}_{1}L^{-1}_{M}\Theta \,dv),
		\\
		&	\delta\partial_t(\rho u_{i})-A\partial_x(\rho u_{i})+\partial_x(\rho u_{1}u_{i})=\delta^{1/2}\varepsilon\partial_x(\mu(\theta)\partial_xu_{i})
		-\partial_x(\int_{\mathbb{R}^{3}} v_{1}v_{i}L^{-1}_{M}\Theta \,dv), ~~i=2,3,
		\\
		&	\delta\partial_t\{\rho (\theta+\frac{|u|^{2}}{2})\}-A\partial_x\{\rho (\theta+\frac{|u|^{2}}{2})\}+\partial_x\{\rho u_{1}(\theta+\frac{|u|^{2}}{2})+pu_{1}\}+\rho u_{1}\partial_x\phi=\delta^{1/2}\varepsilon\partial_x(\kappa(\theta)\partial_x\theta)
		\\
		&+\frac{4}{3}\delta^{1/2}\varepsilon\partial_x(\mu(\theta)u_{1}\partial_xu_{1})
		+\delta^{1/2}\varepsilon\partial_x(\mu(\theta)u_{2}\partial_xu_{2}+\mu(\theta)u_{3}\partial_xu_{3})
		-\frac{1}{2}\partial_x(\int_{\mathbb{R}^{3}}v_{1}|v|^{2}L^{-1}_{M}\Theta \,dv),
		\\
		&-\delta\partial^{2}_{x}\phi=\rho-e^{\phi}.
	\end{array} \right.
\end{equation}
Here the viscosity coefficient $\mu(\theta)>0$ and the heat conductivity coefficient $\kappa(\theta)>0$, both are smooth functions depending only on $\theta$,
are represented by 
\begin{eqnarray*}
	\mu(\theta)=&&- K\theta\int_{\mathbb{R}^{3}}\hat{B}_{ij}(\frac{v-u}{\sqrt{K\theta}})
	B_{ij}(\frac{v-u}{\sqrt{K\theta}})dv>0,\quad i\neq j,
	\notag\\
	\kappa(\theta)=&&-K^{2}\theta\int_{\mathbb{R}^{3}}\hat{A}_{j}(\frac{v-u}{\sqrt{K\theta}})
	A_{j}(\frac{v-u}{\sqrt{K\theta}})dv>0,
\end{eqnarray*}
where  $\hat{A}_{j}(\cdot)$ and $\hat{B}_{ij}(\cdot)$ are Burnett functions (cf. \cite{Bardos,Guo-CPAM,Duan-Yang-Yu-2}) and  they are defined as
\begin{equation}
	\label{2.14}
	\hat{A}_{j}(v)=\frac{|v|^{2}-5}{2}v_{j},\quad \mbox{and} \quad \hat{B}_{ij}(v)=v_{i}v_{j}-\frac{1}{3}\delta_{ij}|v|^{2}, \quad \mbox{for} \quad i,j=1,2,3.
\end{equation}
And $A_{j}(\cdot)$ and $B_{ij}(\cdot)$ satisfy $P_{0}A_{j}(\cdot)=0$ and $P_{0}B_{ij}(\cdot)=0$, given by
\begin{equation}
	\label{2.15}
	A_{j}(v)=L^{-1}_{M}[\hat{A}_{j}(v)M]\quad
	\mbox{and} \quad B_{ij}(v)=L^{-1}_{M}[\hat{B}_{ij}(v)M].
\end{equation}
Some elementary properties of the Burnett functions are summarized in \cite[Lemma 2.1]{Duan-Yang-Yu-2}.

To make a conclusion, we have decomposed the rescaled VPL system \eqref{1.15} as the coupling
of the macroscopic equations \eqref{2.9} and the microscopic equation \eqref{2.10}.
\subsection{Formal KdV expansion}\label{sub2.2}
From \cite{Guo-Pu} and \cite{Lannes} that the KdV equation can be derived from the compressible Euler-Poisson system.
Note that system \eqref{2.9} becomes the compressible Euler-Poisson system \eqref{1.6} as $G=0$ and $u_2=u_3=0$.
Thus we make good use of \eqref{1.6} to derive formally the KdV equation \eqref{1.8}-\eqref{1.9}. 
Substituting \eqref{1.7} into \eqref{1.6}, and matching the different powers of $\delta$, we get a cascade of equations.
\\
The coefficients of $\delta$:
\begin{equation}
\left\{
\begin{array}{rl}
	\label{2.16}
&-A\partial_{x}\rho_{1}+\partial_{x}u^{(1)}_{1}=0,
\\
&\partial_{x}\rho_{1}-A\partial_{x}u^{(1)}_{1}+\partial_{x}\theta_{1}+\partial_{x}\phi_{1}=0,
		\\
&\frac{2}{3}\partial_{x}u^{(1)}_{1}-A\partial_{x}\theta_{1}=0,
		\\
&\rho_1-\phi_1=0.
\end{array} \right.	
\end{equation}
The coefficients of $\delta^2$:
\begin{equation}
	\left\{
	\begin{array}{rl}
		\label{2.19}
		&\partial_t\rho_{1}-A\partial_{x}\rho_{2}+\partial_{x}u^{(2)}_{1}+\partial_{x}(\rho_1u^{(1)}_1)=0,
		\\
		&\partial_tu^{(1)}_{1}-A\partial_{x}u^{(2)}_{1}-A\rho_1\partial_{x}u^{(1)}_{1}+u^{(1)}_{1}\partial_{x}u^{(1)}_{1}+\partial_{x}\theta_{2}
		\\
		&\hspace{1cm}+\partial_{x}\rho_{2}+\partial_{x}(\rho_1\theta_{1})+\partial_{x}\phi_{2}+\rho_1\partial_{x}\phi_{1}=0,
		\\
		&\partial_t\theta_{1}-A\partial_{x}\theta_{2}+\frac{2}{3}\partial_{x}u^{(2)}_{1}+\frac{2}{3}\theta_1\partial_{x}u^{(1)}_{1}+u^{(1)}_{1}\partial_{x}\theta_{1}=0,
		\\
		&\partial^2_{x}\phi_{1}-\phi_2-\frac{1}{2}\phi^2_1+\rho_2=0.
	\end{array} \right.
\end{equation}
The coefficients of $\delta^3$:
\begin{equation}
	\left\{
	\begin{array}{rl}
		\label{2.22}
		&\partial_{t}\rho_{2}-A\partial_{x}\rho_{3}+\partial_{x}u^{(3)}_{1}+\partial_{x}(\rho_1u^{(2)}_1)+\partial_{x}(\rho_2u^{(1)}_1)=0,
		\\
		&\partial_{t}u^{(2)}_{1}+\rho_1\partial_{t}u^{(1)}_{1}-A\partial_{x}u^{(3)}_{1}-A\rho_1\partial_{x}u^{(2)}_{1}-A\rho_2\partial_{x}u^{(1)}_{1}
		+\partial_{x}(u^{(1)}_{1}u^{(2)}_{1})
		\\
		&\hspace{1cm}   +\rho_1u^{(1)}_{1}\partial_{x}u^{(1)}_{1}+\partial_{x}\theta_{3}
		+\partial_{x}\rho_{3}+\partial_{x}(\rho_1\theta_{2})+\partial_{x}(\rho_2\theta_{1})
			\\
		&\hspace{2cm}
		+\partial_{x}\phi_{3}+\rho_2\partial_{x}\phi_{1}+\rho_1\partial_{x}\phi_{2}=0,
		\\
		&\partial_{t}\theta_{2}-A\partial_{x}\theta_{3}+\frac{2}{3}\partial_{x}u^{(3)}_{1}+\frac{2}{3}\theta_1\partial_{x}u^{(2)}_{1}
		+\frac{2}{3}\theta_2\partial_{x}u^{(1)}_{1}+u^{(1)}_{1}\partial_{x}\theta_{2}+u^{(2)}_{1}\partial_{x}\theta_{1}=0,
		\\
		&\partial^2_{x}\phi_{2}-\phi_3-\phi_1\phi_2-\frac{1}{6}\phi^3_1+\rho_3=0. 	
	\end{array} \right.
\end{equation}

For \eqref{2.16}, we write it in the matrix form
\begin{equation}
	\label{2.11A}
	\begin{pmatrix}
		-A,~~1,~~0,~~0
		\\
		1,~~-A,~~1,~~1
		\\
		0,~~\frac{2}{3},~~-A,~~0
		\\
		1,~~0,~~0,~~-1
	\end{pmatrix}
	\begin{pmatrix}
		\partial_{x}\rho_{1}
		\\
		\partial_{x}u^{(1)}_{1}
		\\
		\partial_{x}\theta_{1}
		\\
		\partial_{x}\phi_{1}
	\end{pmatrix}
	=
	\begin{pmatrix}
		0
		\\
		0
		\\
		0
		\\
		0
	\end{pmatrix}.
\end{equation}
To get a nontrivial solution $(\rho_1,u^{(1)}_{1},\theta_{1},\phi_{1})$ in \eqref{2.11A}, we  require the determinant
of the coefficient matrix to be zero so that
\begin{align}
	\label{2.17}
	A(\frac{8}{3}-A^2)=0\Leftrightarrow A=\sqrt{\frac{8}{3}}, \quad \mbox{since}~~~ A>0.
\end{align}
On the other hand, equations \eqref{2.16} enables us to assume the relation
\begin{equation}
	\label{2.18}
		u^{(1)}_{1}=A\rho_{1},
		\quad
		\theta_{1}=\frac{2}{3}\rho_{1},
		\quad
		\phi_1=\rho_1.
\end{equation}
Using $\eqref{2.19}_1\times\frac{2}{3}-\eqref{2.19}_3$ and \eqref{2.18}, we obtain
\begin{eqnarray}
	\label{2.21a}
	0&&=\frac{2}{3}(\partial_t\rho_{1}-A\partial_{x}\rho_{2}+\partial_{x}u^{(2)}_{1}+\partial_{x}(\rho_1u^{(1)}_1))
	-(\partial_t\theta_{1}-A\partial_{x}\theta_{2}+\frac{2}{3}\partial_{x}u^{(2)}_{1}+\frac{2}{3}\theta_1\partial_{x}u^{(1)}_{1}+u^{(1)}_{1}\partial_{x}\theta_{1})
	\notag\\
	&&=\frac{2}{3}(\partial_t\rho_{1}-A\partial_{x}\rho_{2}+A\partial_{x}(\rho^2_1))
	-(\frac{2}{3}\partial_t\rho_{1}-A\partial_{x}\theta_{2}+\frac{2}{3}\frac{2}{3}A\rho_{1}\partial_{x}\rho_{1}+\frac{2}{3}A\rho_1\partial_{x}\rho_{1})
	\notag\\
	&&=-\frac{2}{3}A\partial_{x}\rho_{2}+A\partial_{x}\theta_{2}+\frac{2}{9}A\rho_{1}\partial_{x}\rho_{1}.
\end{eqnarray}
Using $\eqref{2.19}_1\times A+\eqref{2.19}_2$ and \eqref{2.18}, then a direct calculation shows that
\begin{equation}
	\label{2.14a}
-\frac{5}{3}\partial_{x}\rho_{2}+\partial_{x}\theta_{2}+\partial_{x}\phi_{2}+\frac{23}{3}\rho_1\partial_{x}\rho_{1}+2A\partial_{t}\rho_{1}=0.
\end{equation}
Differentiating the last equation of $\eqref{2.19}$ and adding it to \eqref{2.14a}, then
using \eqref{2.18} and \eqref{2.21a}, we get
\begin{equation}
	\label{2.20}	
	2\sqrt{\frac{8}{3}}\partial_{t}\rho_{1}+\frac{58}{9}\rho_1\partial_{x}\rho_{1}+\partial^3_{x}\rho_{1}=0,
\end{equation}
which gives \eqref{1.9}. We should point out that $\rho_1$ satisfies the KdV equation.

To derive the equations of $(\rho_2,u_1^{(2)},\theta_2,\phi_2)$, we first use \eqref{2.19}, \eqref{2.21a} and \eqref{2.18} to write
\begin{equation}
		\left\{
	\begin{array}{rl}
	\label{2.21}
		&u^{(2)}_{1}=A\rho_{2}+h^{1}_1,\quad	\quad h^{1}_1=-\int^{x} [\partial_{t}\rho_{1}+\partial_{x}(\rho_1u^{(1)}_1)](t,\xi)\,d\xi, 
		\\
		&\theta_{2}=\frac{2}{3}\rho_{2}-\frac{1}{9}\rho^2_{1},
		\\
		&\phi_2=\rho_2+\partial^2_{x}\phi_{1}-\frac{1}{2}\phi^2_1=\rho_2+\partial^2_{x}\rho_{1}-\frac{1}{2}\rho^2_1.
\end{array} \right.
\end{equation}
By $\eqref{2.22}_2-\rho_1\times\eqref{2.19}_2+A\times\eqref{2.22}_1$ and $	u^{(1)}_{1}=A\rho_{1}$, we get
\begin{eqnarray}
	\label{2.23}
	&&\partial_{t}u^{(2)}_{1}+A\partial_{t}\rho_{2}-\frac{5}{3}\partial_{x}\rho_{3}+2A\partial_{x}(\rho_{1}u^{(2)}_{1})
	+\partial_{x}\theta_{3}
	+\partial_{x}\rho_{1}\theta_{2}+\partial_{x}(\rho_2\theta_{1})
	\notag\\
	&&+\partial_{x}\phi_{3}+\rho_2\partial_{x}\phi_{1}+\frac{5}{3}\partial_{x}\rho_{2}\rho_1
	+A\rho^2_1\partial_{x}u^{(1)}_{1}-\rho_1\partial_{x}(\rho_1\theta_{1})-\rho^2_1\partial_{x}\phi_{1}=0.
\end{eqnarray}
Differentiating the last equation of \eqref{2.22} and adding it to 
\eqref{2.23}, we have
\begin{eqnarray*}
	&&\partial_{t}u^{(2)}_{1}+A\partial_{t}\rho_{2}-\frac{2}{3}\partial_{x}\rho_{3}+2A\partial_{x}(\rho_{1}u^{(2)}_{1})
	+\partial_{x}\theta_{3}
	+\partial_{x}\rho_{1}\theta_{2}+\partial_{x}(\rho_2\theta_{1})-\partial_{x}(\phi_1\phi_2)
	\notag\\
	&&+\rho_2\partial_{x}\phi_{1}+\frac{5}{3}\partial_{x}\rho_{2}\rho_1+\partial^3_{x}\phi_2
	+A\rho^2_1\partial_{x}u^{(1)}_{1}-\rho_1\partial_{x}(\rho_1\theta_{1})-\rho^2_1\partial_{x}\phi_{1}
	-\partial_{x}(\frac{1}{6}\phi^3_1)=0.
\end{eqnarray*}
Substituting \eqref{2.21} into the above identity gives
\begin{equation}
	\label{2.24}
2A\partial_{t}\rho_{2}-\frac{2}{3}\partial_{x}\rho_{3}+\partial_{x}\theta_{3}
	+\frac{20}{3}\partial_{x}(\rho_{1}\rho_2)+\partial^3_{x}\rho_{2}=h^1_2,
\end{equation}
where we have denoted that
\begin{eqnarray*}
h^1_2=&&-A\rho^2_1\partial_{x}u^{(1)}_{1}+\rho_1\partial_{x}(\rho_1\theta_{1})+\rho^2_1\partial_{x}\phi_{1}
	+\partial_{x}(\frac{1}{6}\phi^3_1)-\partial_{t}h^{1}_{1}-2A\partial_{x}(\rho_{1}h^{1}_1)+\frac{1}{9}\rho^2_{1}\partial_{x}\rho_{1}
	\notag\\
	&&-\partial^3_{x}(\partial^2_{x}\rho_{1}-\frac{1}{2}\rho^2_1)+\partial_{x}(\rho_1\partial^2_{x}\rho_{1})-\partial_{x}(\rho_1\frac{1}{2}\rho^2_1).
\end{eqnarray*}
Using $\eqref{2.22}_1\times\frac{2}{3}-\eqref{2.22}_3$ and \eqref{2.21}, we get
\begin{equation}
	\label{2.25}
	\frac{2}{3}\partial_{x}\rho_{3}
	-\partial_{x}\theta_{3}-\frac{2}{9}\partial_{x}(\rho_1\rho_{2})
	:=-\frac{1}{A}h^1_3,
\end{equation}
where we have denoted that
\begin{equation*}
h^1_3=-\frac{2}{3}\partial_{x}(\rho_1h^{1}_1)-\frac{1}{9}\partial_{t}(\rho^2_{1})+\frac{4}{9}\rho_1\partial_{x}(h^{1}_1)-\frac{1}{9}\frac{2}{3}A\rho^2_{1}\partial_{x}\rho_{1}
	-A\rho_{1}\frac{1}{9}\partial_{x}(\rho^2_{1})
	+\frac{2}{3}h^{1}_1\partial_{x}\rho_{1}.
\end{equation*}
Adding \eqref{2.25} to \eqref{2.24}, one has
\begin{equation}
	\label{2.26}
	2\sqrt{\frac{8}{3}}\partial_{t}\rho_{2}+\frac{58}{9}\partial_{x}(\rho_{1}\rho_2)+\partial^3_{x}\rho_{2}=g(\rho_1),
\end{equation}
where $g(\rho_1)=h^1_2-\frac{1}{A}h^1_3$. By \eqref{2.18} and the definition of $g(\rho_1)$, we know that $g(\rho_1)$ depends only on $\rho_1$. 

\subsection{Perturbation equations}\label{sub2.3}
In the following, we deduce the systems for $(\widetilde{\rho},\widetilde{u},\widetilde{\theta},\widetilde{\phi})(t,x)$
and $f(t,x,v)$ given by \eqref{1.31}.
Recall $(\bar{\rho},\bar{u},\bar{\theta},\bar{\phi})$ given in \eqref{1.29},
 it holds from \eqref{2.16}, \eqref{2.19} and a detailed calculation that
\begin{equation}
	\left\{
	\begin{array}{rl}
		\label{2.29}
		&\partial_t\bar{\rho}-A\frac{1}{\delta}\partial_x\bar{\rho}+\frac{1}{\delta}\partial_x(\bar{\rho}\bar{u}_1)=\delta^2\mathcal{R}_1,		
		\\
		&\bar{\rho}\partial_t\bar{u}_{1}-A\frac{1}{\delta}\bar{\rho}\partial_x\bar{u}_{1}+\frac{1}{\delta}\bar{\rho}\bar{u}_{1}\partial_x\bar{u}_{1}
		+\frac{2}{3}\frac{1}{\delta}\partial_x(\bar{\rho}\bar{\theta})
		+\frac{1}{\delta}\bar{\rho}\partial_x\bar{\phi}
		=\delta^2\mathcal{R}_2,	
		\\
		&\partial_t\bar{\theta}-A\frac{1}{\delta}\partial_x\bar{\theta}+\frac{2}{3}\frac{1}{\delta}\bar{\theta}\partial_x\bar{u}_{1}
		+\frac{1}{\delta}\bar{u}_{1}\partial_x\bar{\theta}
		=\delta^2\mathcal{R}_3,
		\\
		&-\delta\partial^2_x\bar{\phi}=\bar{\rho}-e^{\bar{\phi}}-\delta^3\mathcal{R}_4.		
	\end{array} \right.
\end{equation}
In this expression we have used that
\begin{equation}
	\left\{
	\begin{array}{rl}
		\label{2.30}
		\mathcal{R}_1=&\partial_t\rho_{2}+\partial_x(\rho_1u^{(2)}_1)+\partial_x(\rho_2u^{(1)}_1)
		+\delta\partial_x(\rho_2u^{(2)}_1),
		\\
		\mathcal{R}_2=&\rho_1\partial_tu^{(1)}_{1}+\partial_tu^{(2)}_{1}-A\rho_2\partial_xu^{(1)}_{1}-A\rho_1\partial_xu^{(2)}_{1}+\partial_x(u^{(1)}_1u^{(2)}_{1})
		+\rho_1u^{(1)}_1\partial_xu^{(1)}_{1}+\partial_x(\rho_1\theta_{2})
		\\
		&+\partial_x(\rho_2\theta_{1})+\rho_1\partial_x\phi_{2}+\rho_2\partial_x\phi_{1}
		+\delta[\rho_2u^{(1)}_{1t}+\rho_1\partial_tu^{(2)}_{1}-A\rho_2\partial_xu^{(2)}_{1}+ u^{(2)}_1\partial_xu^{(2)}_{1}]
		\\
		&+\delta[\rho_1\partial_x(u^{(1)}_1u^{(2)}_{1})+\rho_2u^{(1)}_1\partial_xu^{(1)}_{1}+\partial_x(\rho_2\theta_{2})+\rho_2\partial_x\phi_{2}]
		\\
		&+\delta^2[\rho_2\partial_tu^{(2)}_{1}+\rho_1u^{(2)}_1\partial_xu^{(2)}_{1}+\rho_2\partial_x(u^{(1)}_1u^{(2)}_{1})]
		+\delta^3\rho_2u^{(2)}_1\partial_xu^{(2)}_{1},
		\\
		\mathcal{R}_3=&\partial_t\theta_{2}+\theta_1\partial_xu^{(2)}_{1}+\theta_2\partial_xu^{(1)}_{1}+\frac{3}{2}u^{(1)}_1\partial_x\theta_{2}
		+\frac{3}{2}u^{(2)}_1\partial_x\theta_{1}+\delta[\theta_2\partial_xu^{(2)}_{1}+\frac{3}{2}u^{(2)}_1\partial_x\theta_{2}],
		\\
		\mathcal{R}_4=&\partial^2_x\phi_{2}-\phi_1\phi_2-\frac{1}{2}\delta\phi^2_2+\frac{1}{\delta^3}\{1+(\delta\phi_1+\delta^2\phi_2)
		+\frac{1}{2!}(\delta\phi_1+\delta^2\phi_2)^2-e^{\delta\phi_1+\delta^2\phi_2}\}.
	\end{array} \right.
\end{equation}
Using \eqref{1.12}, \eqref{1.13}, \eqref{2.20} and \eqref{2.26}, for all $t\in[0,\tau]$, we have
\begin{equation}
\label{2.31}
\|(\mathcal{R}_1,\mathcal{R}_2,\mathcal{R}_3,\mathcal{R}_4)\|^2_{H^k}
+\|\partial_t(\mathcal{R}_1,\mathcal{R}_2,\mathcal{R}_3,\mathcal{R}_4)\|^2_{H^k}\leq C, \quad \mbox{for}~~k\geq 0.
\end{equation}
Subtracting \eqref{2.29} from system \eqref{2.13} and using \eqref{1.31}, after careful computations, we get
\begin{equation}
	\label{2.33}
\left\{
\begin{array}{rl}
&\partial_t\widetilde{\rho}-A\frac{1}{\delta}\partial_x\widetilde{\rho}
		+\frac{1}{\delta}\partial_x(\bar{\rho}\widetilde{u}_{1})
		+\frac{1}{\delta}\partial_x(\widetilde{\rho}u_{1})=-\delta^2\mathcal{R}_1	,
		\\
		&\partial_t\widetilde{u}_{1}-A\frac{1}{\delta}\partial_x\widetilde{u}_{1}
		+\frac{1}{\delta}u_{1}\partial_x\widetilde{u}_{1}+\frac{1}{\delta}\widetilde{u}_{1}\partial_x\bar{u}_{1}+\frac{1}{\delta}\frac{2}{3}\partial_x\widetilde{\theta}
		+\frac{1}{\delta}\frac{2}{3}(\frac{\theta}{\rho}-\frac{\bar{\theta}}{\bar{\rho}})\partial_x\rho+\frac{1}{\delta}\frac{2}{3}\frac{\bar{\theta}}{\bar{\rho}}\partial_x\widetilde{\rho}
		+\frac{1}{\delta}\partial_x\widetilde{\phi}
		\\
		&\hspace{0.5cm}=\frac{\varepsilon}{\delta^{1/2}}\frac{4}{3\rho}\partial_x(\mu(\theta)\partial_xu_{1})-\frac{1}{\delta}\frac{1}{\rho}\partial_x(\int_{\mathbb{R}^{3}} v^{2}_{1}L^{-1}_{M}\Theta\,dv)-\delta^2\frac{1}{\bar{\rho}}\mathcal{R}_2,
		\\
		&\partial_t\widetilde{u}_{i}-A\frac{1}{\delta}\partial_x\widetilde{u}_{i}+\frac{1}{\delta}u_{1}\partial_x\widetilde{u}_{i}
		=\frac{\varepsilon}{\delta^{1/2}}\frac{1}{\rho}\partial_x(\mu(\theta)\partial_xu_{i})-\frac{1}{\delta}\frac{1}{\rho}\partial_x(\int_{\mathbb{R}^3} v_{1}v_{i}L^{-1}_{M}\Theta\,dv), ~~i=2,3,
		\\
		&\partial_t\widetilde{\theta}-A\frac{1}{\delta}\partial_x\widetilde{\theta}
		+\frac{2}{3}\frac{1}{\delta}\bar{\theta}\partial_x\widetilde{u}_{1}+\frac{2}{3}\frac{1}{\delta}\widetilde{\theta}\partial_xu_{1}
		+\frac{1}{\delta}u_1\partial_x\widetilde{\theta}+\frac{1}{\delta}\widetilde{u}_{1}\partial_x\bar{\theta}
		\\
		&\hspace{0.5cm}=\frac{\varepsilon}{\delta^{1/2}}\frac{1}{\rho}\partial_x(\kappa(\theta)\partial_x\theta) 
		+D+\frac{1}{\delta}\frac{1}{\rho}u\cdot\partial_x(\int_{\mathbb{R}^3} v_{1}v L^{-1}_{M}\Theta\, dv)
		\\
		&\hspace{1cm}-\frac{1}{\delta}\frac{1}{\rho}\partial_x(\int_{\mathbb{R}^3}v_{1}\frac{|v|^{2}}{2}L^{-1}_{M}\Theta\, dv)-\delta^2\mathcal{R}_3,
		\\
		&-\delta\partial^{2}_{x}\widetilde{\phi}=\widetilde{\rho}-(e^{\phi}-e^{\bar{\phi}})+\delta^3\mathcal{R}_4,
	\end{array} \right.
\end{equation}
where $D$ is given by
\begin{equation}
	\label{2.35a}
D=\frac{\varepsilon}{\delta^{1/2}}\frac{4}{3\rho}\mu(\theta)(\partial_xu_{1})^2
+\frac{\varepsilon}{\delta^{1/2}}\frac{1}{\rho}\mu(\theta)[(\partial_xu_{2})^2+(\partial_xu_{3})^2].
\end{equation}
On the other hand, we clearly get from \eqref{2.9} and \eqref{2.29} that
\begin{equation}
	\label{2.34}
\left\{
\begin{array}{rl}
&\partial_t\widetilde{\rho}-A\frac{1}{\delta}\partial_x\widetilde{\rho}+\frac{1}{\delta}\partial_x(\widetilde{\rho}u_{1})
+\frac{1}{\delta}\partial_x(\bar{\rho}\widetilde{u}_{1})=-\delta^2\mathcal{R}_1,
		\\
&\partial_t\widetilde{u}_{1}-A\frac{1}{\delta}\partial_x\widetilde{u}_{1}
+\frac{1}{\delta}u_{1}\partial_x\widetilde{u}_{1}+\frac{1}{\delta}\widetilde{u}_{1}\partial_x\bar{u}_{1}+\frac{1}{\delta}\frac{2}{3}\partial_x\widetilde{\theta}
+\frac{1}{\delta}\frac{2}{3}(\frac{\theta}{\rho}-\frac{\bar{\theta}}{\bar{\rho}})\partial_x\rho+\frac{1}{\delta}\frac{2}{3}\frac{\bar{\theta}}{\bar{\rho}}\partial_x\widetilde{\rho}
+\frac{1}{\delta}\partial_x\widetilde{\phi}
\\
&\hspace{0.5cm} =-\frac{1}{\delta}\frac{1}{\rho}\partial_x(\int_{\mathbb{R}^{3}} v^{2}_{1}G\,dv)-\delta^2\frac{1}{\bar{\rho}}\mathcal{R}_2,
		\\
		&\partial_t\widetilde{u}_{i}-A\frac{1}{\delta}\partial_x\widetilde{u}_{i}+\frac{1}{\delta}u_{1}\partial_x\widetilde{u}_{i}
		=-\frac{1}{\delta}\frac{1}{\rho}\partial_x(\int_{\mathbb{R}^3} v_{1}v_{i}G\,dv), ~~i=2,3,
		\\
	&\partial_t\widetilde{\theta}-A\frac{1}{\delta}\partial_x\widetilde{\theta}
+\frac{2}{3}\frac{1}{\delta}\bar{\theta}\partial_x\widetilde{u}_{1}+\frac{2}{3}\frac{1}{\delta}\widetilde{\theta}\partial_xu_{1}
+\frac{1}{\delta}u_1\partial_x\widetilde{\theta}+\frac{1}{\delta}\widetilde{u}_{1}\partial_x\bar{\theta}
		\\
		&\hspace{0.5cm}=\frac{1}{\delta}\frac{1}{\rho}u\cdot\partial_x(\int_{\mathbb{R}^3} v_{1}v G\, dv)
		-\frac{1}{\delta}\frac{1}{\rho}\partial_x(\int_{\mathbb{R}^3}v_{1}\frac{|v|^{2}}{2}G\, dv)-\delta^2\mathcal{R}_3,
		\\
		&-\delta\partial^{2}_{x}\widetilde{\phi}=\widetilde{\rho}-(e^{\phi}-e^{\bar{\phi}})+\delta^3\mathcal{R}_4.
	\end{array} \right.
\end{equation}

Now  we turn to derive the microscopic perturbation equations $f$. Using
\eqref{1.18}, \eqref{1.20} and \eqref{1.32}, we have from a direct calculation that
\begin{equation*}
\frac{1}{\delta}P_{1}(v_{1}\partial_xM)=\frac{1}{\delta}P_{1}\{v_{1}M(\frac{|v-u|^{2}
\partial_x\widetilde{\theta}}{2K\theta^{2}}+\frac{(v-u)\cdot\partial_x\widetilde{u}}{K\theta})\}
+\frac{1}{\delta^{3/2}\varepsilon}L_{M}\overline{G}.
\end{equation*}
Substituting this into \eqref{2.10} leads to
\begin{multline}
\label{2.37}
\partial_tG-A\frac{1}{\delta}\partial_xG+\frac{1}{\delta}P_{1}(v_{1}\partial_xG)-\frac{1}{\delta}\partial_x\phi\partial_{v_{1}}G
-\frac{1}{\delta^{3/2}\varepsilon}L_{M}(\sqrt{\mu}{f})
\\
=\frac{1}{\delta^{3/2}\varepsilon}Q(G,G)-\frac{1}{\delta}P_{1}\{v_{1}M(\frac{|v-u|^{2}
\partial_x\widetilde{\theta}}{2K\theta^{2}}+\frac{(v-u)\cdot\partial_x\widetilde{u}}{K\theta})\}.
\end{multline}
Inspired by \cite{Guo-2002}, we denote
\begin{equation}
	\label{2.38}
	\Gamma(h,g):=\frac{1}{\sqrt{\mu}}Q(\sqrt{\mu}h,\sqrt{\mu}g),
	\quad \mathcal{L}h:=\Gamma(\sqrt{\mu},h)+\Gamma(h,\sqrt{\mu}),
\end{equation}
which together with \eqref{1.24} gives
\begin{equation*}
	\frac{1}{\sqrt{\mu}}L_{M}(\sqrt{\mu}f)
	=\mathcal{L}f+\Gamma(f,\frac{M-\mu}{\sqrt{\mu}})+
	\Gamma(\frac{M-\mu}{\sqrt{\mu}},f).
\end{equation*}
The main reason for denoting \eqref{2.38} is that many known
estimates on $\mathcal{L}$ and $\Gamma$ in \cite{Strain-Guo} can be directly employed; see, for instance Lemma \ref{lem3.3}
and Lemma \ref{lem3.4}.

Hence, from \eqref{2.38}, we further write \eqref{2.37} as
\begin{eqnarray}
\label{2.39}
&&\partial_tf-A\frac{1}{\delta}\partial_xf+\frac{1}{\delta}v_{1}\partial_xf-\frac{1}{\delta}\frac{\partial_x\phi\partial_{v_{1}}(\sqrt{\mu}f)}{\sqrt{\mu}}	-\frac{1}{\delta^{3/2}\varepsilon}\mathcal{L}f
	\notag\\
	=&&\frac{1}{\delta^{3/2}\varepsilon}\{\Gamma(f,\frac{M-\mu}{\sqrt{\mu}})+
	\Gamma(\frac{M-\mu}{\sqrt{\mu}},f)\}+\frac{1}{\delta^{3/2}\varepsilon}\Gamma(\frac{G}{\sqrt{\mu}},\frac{G}{\sqrt{\mu}})
	\notag\\
	&&+\frac{1}{\delta}\frac{P_{0}(v_{1}\sqrt{\mu}\partial_xf)}{\sqrt{\mu}}-\frac{1}{\delta}\frac{1}{\sqrt{\mu}}P_{1}\{v_{1}M(\frac{|v-u|^{2}
		\partial_x\widetilde{\theta}}{2K\theta^{2}}+\frac{(v-u)\cdot\partial_x\widetilde{u}}{K\theta})\}
	\notag\\
	&&+A\frac{1}{\delta}\frac{\partial_x\overline{G}}{\sqrt{\mu}}+\frac{1}{\delta}\frac{\partial_x\phi\partial_{v_{1}}\overline{G}}{\sqrt{\mu}}
	-\frac{1}{\delta}\frac{P_{1}(v_{1}\partial_x\overline{G})}{\sqrt{\mu}}
	-\frac{\partial_t\overline{G}}{\sqrt{\mu}}.
\end{eqnarray}
Likewise, we also write the first equation of \eqref{1.15} as
\begin{multline}
	\label{2.40}
	\frac{\partial_tF}{\sqrt{\mu}}-A\frac{1}{\delta}\frac{\partial_xF}{\sqrt{\mu}}+\frac{1}{\delta}\frac{v_{1}\partial_xF}{\sqrt{\mu}}
	-\frac{1}{\delta}\frac{\partial_x\phi\partial_{v_{1}}F}{\sqrt{\mu}}
	\\
	=\frac{1}{\delta^{3/2}\varepsilon}
	\{\mathcal{L}f+\Gamma(f,\frac{M-\mu}{\sqrt{\mu}})+
	\Gamma(\frac{M-\mu}{\sqrt{\mu}},f)+\Gamma(\frac{G}{\sqrt{\mu}},\frac{G}{\sqrt{\mu}})+\frac{L_M\overline{G}}{\sqrt{\mu}}\}.
\end{multline}

\section{Basic estimates}\label{sub3}
Starting from this section, we perform a rigorous proof of the KdV limit of the VPL system. 
We first make the {\it a priori} assumption for the solutions and then derive some necessary estimates that will be used in the later energy analysis. 
We should emphasize that in all estimates below, all constants $\widetilde{C}>1$ at different places depend on $C_0$ given in \eqref{3.1}
but do not depend on both small parameters $\varepsilon$ and $\delta$.

\subsection{A priori assumption}\label{subs3.1}
The existence and uniqueness of the short-time solution for the VPL system \eqref{1.15}  under the conditions of Theorem \ref{thm1.1}
 can be proved by the following the same strategy as in \cite{Guo-JAMS}  and details of the proof are
 omitted for brevity. In order to extend the short-time solution to any finite time $\tau$ as given in Proposition \ref{prop.1.1},
  we only  close the following {\it a priori} assumption
\begin{equation}
\label{3.1}
\sup_{0\leq t\leq T}\mathcal{E}_{2,l,q_1}(t)\leq C_0\delta^4,
\end{equation}
for arbitrary time $T\in(0,\tau]$, where $C_0>1$ is a constant independent of $\delta$ as given in \eqref{7.10}.

For some constant $\widetilde{C}=\widetilde{C}(C_0)$, we can choose $\delta$ small enough such that 
\begin{equation}
\label{3.2a}
\widetilde{C}\delta^{a}<1, \quad \mbox{for}\quad a>0.
\end{equation}
Using the following 1D embedding inequality
\begin{equation*}
\|g\|_{L^{\infty}(\mathbb{R})}\leq \sqrt{2}\|g\|^{\frac{1}{2}}\|g'\|^{\frac{1}{2}},
\quad \mbox{for any}\quad g=g(x)\in H^{1}(\mathbb{R}),
\end{equation*}
and \eqref{1.33}, \eqref{1.34} as well as \eqref{3.1}, it holds for all $t\in(0,T]$ that
\begin{equation}
\label{3.2}
\|(\widetilde{\rho},\widetilde{u},\widetilde{\theta},\widetilde{\phi})\|_{L^{\infty}}
\leq \widetilde{C}\delta^2,
\quad 
\|\partial_x(\widetilde{\rho},\widetilde{u},\widetilde{\theta})\|_{L^{\infty}}
\leq \widetilde{C}\delta\frac{\delta^{5/4}}{\varepsilon^{1/2}},\quad \|\partial_x\widetilde{\phi}\|_{L^{\infty}}
\leq  \widetilde{C}\delta^{7/4},
\end{equation}
and
\begin{equation}
	\label{3.3}
\|\partial^2_{x}(\widetilde{\rho},\widetilde{u},\widetilde{\theta},\widetilde{\phi})\|
\leq \widetilde{C}\frac{\delta^{5/2}}{\varepsilon}, \quad \|\partial^2_{x}\widetilde{\phi}\|\leq \widetilde{C}\delta^{3/2},
\quad \|\partial^3_{x}\widetilde{\phi}\|\leq \widetilde{C}\frac{\delta^2}{\varepsilon}.
\end{equation}
On the other hand, by \eqref{1.29}, \eqref{1.12} and \eqref{1.13}, we obtain
\begin{equation}
	\label{3.4}
\|(\bar{\rho}-1,\bar{u},\bar{\theta}-\frac{3}{2},\bar{\phi})\|_{L^{\infty}}+\|\partial_x(\bar{\rho},\bar{u},\bar{\theta},\bar{\phi})\|_{H^4}
+\|\partial_t(\bar{\rho},\bar{u},\bar{\theta},\bar{\phi})\|_{H^4}\leq C\delta.
\end{equation}
Using \eqref{3.2}, \eqref{3.4}, \eqref{1.31} and $\widetilde{C}\delta<1$ in \eqref{3.2a},  it holds uniformly in all $(t,x)\in[0,T]\times\mathbb{R}$ that
$$
|\rho(t,x)-1|\leq |\rho(t,x)-\bar{\rho}(t,x)|+|\bar{\rho}(t,x)-1|
\leq C\delta.
$$
Similar estimates also hold for $u(t,x)$ and $\theta(t,x)$. Therefore, for sufficiently small $\delta>0$, we deduce that
\begin{equation}
	\label{3.5}
	|\rho(t,x)-1|+|u(t,x)|+|\theta(t,x)-\frac{3}{2}|<C\delta,\quad
	1<\theta(t,x)<3, \quad  \frac{1}{2}<\rho(t,x)< \frac{3}{2}.
\end{equation}

\subsection{Estimates on correction term $\overline{G}$}
We shall derive some necessary estimates on $\overline{G}$ in \eqref{1.32}.
For this, we first give the following lemma whose proof can be found in \cite[Lemma 6.1]{Duan-Yu1}.
\begin{lemma}\label{lem3.1}
Let $L_{\widehat{M}}$ defined by \eqref{1.24} for any Maxwellian $\widehat{M}=M_{[\widehat{\rho},\widehat{u},\widehat{\theta}]}(v)$  and its null space is denoted by $\ker{L_{\widehat{M}}}$. Suppose that $U(v)$ is any polynomial of $\frac{v-\hat{u}}{\sqrt{K}\hat{\theta}}$ such that
$U(v)\widehat{M}\in(\ker{L_{\widehat{M}}})^{\perp}$. Then, for any $\epsilon\in(0,1)$ and any multi-index $\beta$, there exists  $C_{\beta}>0$ such that
$$
|\partial_{\beta}L^{-1}_{\widehat{M}}(U(v)\widehat{M})|\leq C_{\beta}(\widehat{\rho},\widehat{u},\widehat{\theta})\widehat{M}^{1-\epsilon}.
$$	
Moreover, under the condition \eqref{3.5}, there exists $C_{\beta}>0$ such that
	\begin{equation}
		\label{3.6}
		|\partial_{\beta}A_{j}(\frac{v-u}{\sqrt{K\theta}})|+|\partial_{\beta}B_{ij}(\frac{v-u}{\sqrt{K\theta}})|
		\leq C_{\beta}M^{1-\epsilon},
	\end{equation}
	where $A_{j}(\cdot)$ and $B_{ij}(\cdot)$ are defined in \eqref{2.15}.
\end{lemma}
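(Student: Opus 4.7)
The plan is to reduce the statement to an estimate on a standard linearized Landau operator around the normalized Maxwellian and then invoke a weighted decay property. First I would perform the change of variables $w = (v - \hat{u})/\sqrt{K\hat\theta}$ to rewrite $\widehat{M}(v) = c(\hat\rho,\hat\theta)\,e^{-|w|^2/2}$ and to transform $L_{\widehat M}$ into a positive multiple of a linearized Landau operator $\widetilde{\mathcal L}$ acting on functions of $w$; its null space is spanned by $\{1, w_i, |w|^2\}$ times the Gaussian. Under this change the source $U(v)\widehat M$ becomes $\widetilde U(w)\,e^{-|w|^2/2}$ with $\widetilde U$ still polynomial and still orthogonal to the five collision invariants. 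The assertion is then equivalent to showing that for a polynomial source $P(w)e^{-|w|^2/2}$ in $(\ker \widetilde{\mathcal L})^\perp$, the unique solution $h\in (\ker \widetilde{\mathcal L})^\perp$ of $\widetilde{\mathcal L}h = P(w)e^{-|w|^2/2}$ satisfies $|\partial_\beta h(w)|\le C_\beta\, e^{-(1-\epsilon)|w|^2/2}$ for every multi-index $\beta$.

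The heart of the matter is this pointwise Gaussian decay. I would establish it by conjugation with an exponential weight. Writing $\widetilde{\mathcal L}h = -\nabla_w\cdot(\sigma\nabla_w h)+(\text{lower order})$ with $\sigma=\Phi*\mu$, I set $H=e^{(1-\epsilon)|w|^2/2}h$ and compute the conjugated operator $e^{(1-\epsilon)|w|^2/2}\widetilde{\mathcal L}\,e^{-(1-\epsilon)|w|^2/2}$. The quadratic-in-$w$ drift produced by the conjugation is absorbed by the velocity-moment coercivity of $\widetilde{\mathcal L}$ on $(\ker \widetilde{\mathcal L})^\perp$ encoded by the $|\cdot|_{\sigma,w}$ norm \eqref{1.28}, provided the weight parameter is strictly below $1$. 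This yields a weighted $L^2$ bound $\|e^{(1-\epsilon)|w|^2/2}h\|_{L^2_w}\le C$. To upgrade to $L^\infty$, I invoke the ellipticity of $[\sigma_{ij}]$ (uniformly positive definite after accounting for the anisotropic $\langle w\rangle^{-1},\langle w\rangle^{-3}$ degeneracies along and transverse to $w/|w|$) and apply a local Sobolev/De Giorgi–type embedding to the weighted equation to pass from weighted $L^2$ to weighted $L^\infty$.

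The derivative bounds follow by induction on $|\beta|$. Differentiating $\widetilde{\mathcal L}h=Pe^{-|w|^2/2}$ yields $\widetilde{\mathcal L}\partial_\beta h = \partial_\beta(Pe^{-|w|^2/2}) + [\partial_\beta,\widetilde{\mathcal L}]h$, where the commutator contains only lower-order derivatives of $h$ multiplied by at most polynomial factors in $w$. Relaxing the weight parameter by a definite amount at each step (replacing $\epsilon$ by $\epsilon/2$, then $\epsilon/4$, and so on) and repeating the weighted $L^2$-to-$L^\infty$ scheme on $\partial_\beta h$, the polynomial factors are swallowed by the exponential weight. Transforming back to $v$ and collecting the smooth prefactors in $(\hat\rho,\hat u,\hat\theta)$ gives $|\partial_\beta L_{\widehat M}^{-1}(U\widehat M)|\le C_\beta(\hat\rho,\hat u,\hat\theta)\,\widehat M^{1-\epsilon}$.

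The estimates for $A_j$ and $B_{ij}$ in \eqref{3.6} then follow as immediate specializations: by \eqref{2.15} they are $L_M^{-1}$ applied to the manifestly microscopic polynomial-times-Maxwellian sources $\hat A_j(v)M$ and $\hat B_{ij}(v)M$, and under the uniform bounds $\tfrac12<\rho<\tfrac32$, $1<\theta<3$ from \eqref{3.5} the constant $C_\beta(\rho,u,\theta)$ is controlled. The main obstacle I expect is the first step: producing the weighted $L^2$ coercivity of $\widetilde{\mathcal L}$ with an exponential Gaussian weight. Because the Coulomb-type Landau operator has no spectral gap and only the anisotropic dissipation \eqref{1.28}, one must verify carefully that the quadratic-in-$w$ commutator is actually absorbed on $(\ker\widetilde{\mathcal L})^\perp$, which is precisely what forces the bound to be $\widehat M^{1-\epsilon}$ rather than $\widehat M$ itself and explains the appearance of the parameter $\epsilon\in(0,1)$ in the statement.
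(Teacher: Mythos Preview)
The paper does not give its own proof of this lemma: it simply cites \cite[Lemma~6.1]{Duan-Yu1} and moves on. So there is no in-paper argument to compare against; your proposal is competing with an external reference rather than with anything written here.

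Your outline is reasonable in its broad strokes---the reduction by affine change of variables to a normalized operator, the conjugation by $e^{(1-\epsilon)|w|^2/2}$, and the observation that the mismatch parameter $\epsilon>0$ is exactly what keeps the conjugated potential term from destroying coercivity---and this is indeed the mechanism behind the $\widehat M^{1-\epsilon}$ rather than $\widehat M$ in the conclusion. The induction on $|\beta|$ with relaxed weight exponents is also the right idea for propagating derivative bounds.

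The step I would flag as a genuine gap is the passage from weighted $L^2$ to pointwise $L^\infty$. You invoke ``local Sobolev/De Giorgi--type embedding,'' but the Landau diffusion matrix $\sigma$ degenerates like $\langle w\rangle^{-3}$ radially and $\langle w\rangle^{-1}$ transversally, so elliptic regularity constants on balls centered at large $|w|$ blow up polynomially. A bare $L^2$ bound with Gaussian weight does not automatically survive this degeneration to give a pointwise Gaussian bound; one needs either to iterate the $L^2$ estimate with a family of weights and use Sobolev embedding after accumulating enough derivative control, or to exploit the specific structure of polynomial-times-Gaussian sources more directly (as is done in the cited reference). Your sketch acknowledges the anisotropy but does not explain how the embedding step actually closes despite it. This is not a wrong approach, but it is the place where real work remains.
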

With Lemma \ref{lem3.1} and \eqref{3.1} in hand, we deduce that
\begin{lemma}
\label{lem3.2}
Let $\overline{G}$ defined in \eqref{1.32}, and $w=w(\alpha,\beta)$ in \eqref{1.25} with any small constant $q_1\in(0,1]$ and the constant $q_2>0$. 		
Assume \eqref{3.1} and \eqref{3.2a} hold, then for any $b\geq 0$, and $|\beta|\geq0$ and $|\alpha|\leq 2$, one has
\begin{equation}
	\label{3.7}
\|\langle v\rangle^{b}\partial^{\alpha}_{\beta}(\frac{\overline{G}}{\sqrt{\mu}})\|_w+
	\|\langle v\rangle^{b}\partial^{\alpha}_{\beta}(\frac{\overline{G}}{\sqrt{\mu}})\|_{\sigma,w}
\leq C\delta^{\frac{3}{2}}(\varepsilon+\delta^2).
\end{equation}
\end{lemma}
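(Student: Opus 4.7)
The plan is to exploit the explicit Burnett-function structure of $\overline{G}$ together with the pointwise velocity estimates of Lemma~\ref{lem3.1}. Using the decomposition of $v_1|v-u|^2$ and $v_1(v_i-u_i)$ in terms of the Burnett basis $\hat{A}_j,\hat{B}_{ij}$ at the scaled argument $(v-u)/\sqrt{K\theta}$ modulo $\ker L_M$, and then using \eqref{2.15}, I would first rewrite $\overline{G}/\sqrt{\mu}$ as a finite sum of terms of the schematic form
$$
\delta^{1/2}\varepsilon \; a_\ast(\rho,u,\theta)(t,x)\;\partial_x\bar\chi_\ast(t,x)\;\frac{1}{\sqrt{\mu}}\,\mathcal{B}_\ast\!\left(\tfrac{v-u}{\sqrt{K\theta}}\right),
$$
where $a_\ast$ is smooth in its arguments, $\bar\chi_\ast$ stands for $\bar{u}_i$ or $\bar{\theta}$, and $\mathcal{B}_\ast$ is one of $A_j,B_{ij}$. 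The prefactor $\delta^{1/2}\varepsilon$ is the only small constant visible after this rearrangement; the remainder of the proof is a careful bookkeeping of where the additional $\delta$'s come from.

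For the velocity factor I would invoke Lemma~\ref{lem3.1} with a small $\epsilon>0$: the pointwise bounds \eqref{3.5} on $(\rho,u,\theta)$ yield $|\partial_\beta \mathcal{B}_\ast((v-u)/\sqrt{K\theta})|\leq C_\beta M^{1-\epsilon}$, and since $M$ stays uniformly close to $\mu$ by \eqref{3.5}, one has $|\partial_\beta(\mu^{-1/2}\mathcal{B}_\ast)|\leq C_\beta \mu^{1/2-2\epsilon}$. Because $q_1\in(0,1)$ is small and $q_2>0$, the weight $w^2(\alpha,\beta)\langle v\rangle^{2b}\mu^{1-4\epsilon}$ is uniformly integrable in $v$, and the $\sigma$-norm version is handled by \eqref{1.28} at the cost of a harmless $\langle v\rangle^{-1/2}$ factor. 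So the velocity integration yields a bounded constant and the whole estimate reduces to controlling the $L^2_x$ norm of $\partial^\alpha(a_\ast(\rho,u,\theta)\partial_x\bar\chi_\ast)$ (and of analogous expressions in which spatial derivatives land on the Burnett argument through the $u,\theta$-dependence).

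For the spatial part I would apply Leibniz to $\partial^\alpha$ with $|\alpha|\leq 2$. The leading term, in which all the spatial derivatives fall on $\partial_x\bar\chi_\ast$, is bounded via \eqref{3.4} by $\|\partial_x^{|\alpha|+1}\bar\chi_\ast\|_{L^2_x}\leq C\delta$, producing the main contribution $C\delta^{1/2}\varepsilon\cdot\delta=C\delta^{3/2}\varepsilon$. Any other Leibniz term differentiates $a_\ast$ or the argument of $\mathcal{B}_\ast$ and hence produces at least one derivative of $\rho=\bar\rho+\tilde\rho$, $u=\bar u+\tilde u$, or $\theta=\bar\theta+\tilde\theta$; bar-factors contribute an extra $\delta$ in $L^\infty_x$ through \eqref{3.4}, while tilde-factors are controlled by \eqref{3.2}--\eqref{3.3} and the a~priori bound \eqref{3.1}, using the restriction \eqref{1.37} to absorb the negative powers of $\varepsilon$ that enter the highest-order estimates such as $\|\partial_x^2\tilde{(\rho,u,\theta)}\|\leq\widetilde C\delta^{5/2}/\varepsilon$. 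Pairing one factor in $L^2_x$ with the others in $L^\infty_x$ via the one-dimensional Sobolev embedding gives each sub-leading term a bound of the form $C\delta^{1/2}\varepsilon\cdot\delta\cdot\delta^2=C\delta^{7/2}\varepsilon$ or smaller, and combined with the leading piece this delivers exactly $C\delta^{3/2}(\varepsilon+\delta^2)$.

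The main obstacle will be the case $|\alpha|=2$: here Leibniz produces triple products such as $\partial_x\bar\chi_\ast\cdot(\partial_x\tilde{(\rho,u,\theta)})^2\cdot\mathcal{B}_\ast$ and $\partial_x^2\tilde{(\rho,u,\theta)}\cdot\partial_x\bar\chi_\ast\cdot\mathcal{B}_\ast$ whose $L^2_x$ bounds inherit factors like $\delta^{5/2}/\varepsilon$ or $\delta^{9/4}/\varepsilon^{1/2}$ from the weighted a~priori control \eqref{3.1} in high norms; the delicate point is to distribute $L^2$ and $L^\infty$ norms so that the resulting inverse powers of $\varepsilon$ are fully absorbed by the constraint $\varepsilon^{2/3}\leq\delta\leq\widetilde C^{-1}\varepsilon^{2/5}$, keeping each contribution within the target budget $\delta^{3/2}(\varepsilon+\delta^2)$.
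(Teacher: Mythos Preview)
Your proposal is correct and follows essentially the same route as the paper: the Burnett-function rewriting of $\overline{G}$, the pointwise velocity decay from Lemma~\ref{lem3.1}, the integrability bound you describe (which the paper records as \eqref{3.11}), and Leibniz bookkeeping in $x$ with the bar/tilde splitting. One small overstatement worth correcting: you do not actually need the constraint \eqref{1.37} in this lemma. In the worst $|\alpha|=2$ term, schematically $\delta^{1/2}\varepsilon\cdot\partial_x\bar\chi_\ast\cdot\partial_x^2(u,\theta)$, the explicit prefactor $\varepsilon$ cancels exactly against the $\varepsilon^{-1}$ in $\|\partial_x^2(\widetilde\rho,\widetilde u,\widetilde\theta)\|\leq\widetilde C\,\delta^{5/2}/\varepsilon$, producing a clean contribution $\widetilde C\,\delta^4\leq C\,\delta^{3/2}\cdot\delta^2$ with no residual $\varepsilon$--$\delta$ coupling; the paper makes this cancellation explicit in \eqref{3.15}--\eqref{3.16}.
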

\begin{proof}
With \eqref{2.14} and \eqref{2.15} in hand, it is easy to rewrite $\overline{G}$ in \eqref{1.32} as
\begin{equation}
\label{3.8}
\overline{G}=\delta^{1/2}\varepsilon \{\frac{\sqrt{K}\partial_x\bar{\theta}}{\sqrt{\theta}}A_{1}(\frac{v-u}{\sqrt{K\theta}})
+\partial_x\bar{u}_{1}B_{11}(\frac{v-u}{\sqrt{K\theta}})\}.
\end{equation}	
Then, for $\beta_{1}=(1,0,0)$,  it is straightforward to check that
\begin{equation}
	\label{3.9}
\partial_{\beta_{1}}\overline{G}=\delta^{1/2}\varepsilon\{\frac{\sqrt{K}\partial_x\bar{\theta}}{\sqrt{\theta}}
	\partial_{v_{1}}A_{1}(\frac{v-u}{\sqrt{K\theta}})\frac{1}{\sqrt{K\theta}}
	+\partial_x\bar{u}_{1}\partial_{v_{1}}B_{11}(\frac{v-u}{\sqrt{K\theta}})\frac{1}{\sqrt{K\theta}}\},
\end{equation}	
and
\begin{eqnarray}
	\label{3.10}
	\partial_{x}\overline{G}&&=\delta^{1/2}\varepsilon\Big\{\frac{\sqrt{K}\partial^2_x\bar{\theta}}{\sqrt{\theta}}A_{1}(\frac{v-u}{\sqrt{K\theta}})
	-\frac{\sqrt{K}\partial_x\bar{\theta}\partial_x\theta}{2\sqrt{\theta^{3}}}A_{1}(\frac{v-u}{\sqrt{K\theta}})
	\notag\\
	&&\quad-\frac{\sqrt{K}\partial_x\bar{\theta}}{\sqrt{\theta}}
	\nabla_{v}A_{1}(\frac{v-u}{\sqrt{K\theta}})\cdot\frac{\partial_xu}{\sqrt{K\theta}}
	-\frac{\sqrt{K}\partial_x\bar{\theta}\partial_x\theta}{\sqrt{\theta}}
	\nabla_{v}A_{1}(\frac{v-u}{\sqrt{K\theta}})\cdot\frac{v-u}{2\sqrt{K\theta^{3}}}
	\notag\\
	&&\quad+\partial^2_x\bar{u}_{1}B_{11}(\frac{v-u}{\sqrt{K\theta}})
	-\frac{\partial_x\bar{u}_{1}\partial_xu}{\sqrt{K\theta}}\cdot\nabla_{v}B_{11}(\frac{v-u}{\sqrt{K\theta}})
	-\frac{\partial_x\bar{u}_{1}\partial_x\theta(v-u)}{2\sqrt{K\theta^{3}}}\cdot\nabla_{v}B_{11}(\frac{v-u}{\sqrt{K\theta}})
	\Big\}.
\end{eqnarray}
Let $w(\alpha,\beta)$ defined in \eqref{1.25} with sufficiently small constant $0<q_1\ll1$,
then for any $|\alpha|\geq0$, $|\beta|\geq0$, $b\geq0$ and sufficiently small  $\epsilon>0$, by \eqref{3.5} and \eqref{1.28}, we can deduce that
\begin{equation}
\label{3.11}
|\langle v\rangle^b w(\alpha,\beta)\mu^{-\frac{1}{2}}M^{1-\epsilon}|_2+|\langle v\rangle^b w(\alpha,\beta)\mu^{-\frac{1}{2}}M^{1-\epsilon}|_\sigma\leq C.
\end{equation}
By the similar expansion as \eqref{3.9}, then we use \eqref{3.6}, \eqref{3.11} and \eqref{3.4} to get
\begin{equation}
\label{3.12}
\|\langle v\rangle^{b}\partial _{\beta}(\frac{\overline{G}}{\sqrt{\mu}})\|^2_{w}+\|\langle v\rangle^{b} \partial _{\beta}(\frac{\overline{G}}{\sqrt{\mu}})\|^2_{\sigma,w}
	\leq C\delta\varepsilon^2\|\partial_x(\bar{u},\bar{\theta})\|^2\leq C\delta^3\varepsilon^2.
\end{equation}
If $|\alpha|=1$ and $|\beta|\geq0$, then by the similar calculation as \eqref{3.9}, \eqref{3.10} and \eqref{3.11}, we show that
\begin{eqnarray}
\label{3.13}
&&\|\langle v\rangle^{b}\partial^{\alpha}_{\beta}(\frac{\overline{G}}{\sqrt{\mu}})\|^2_{w}+\|\langle v\rangle^{b}\partial^{\alpha}_{\beta}(\frac{\overline{G}}{\sqrt{\mu}})\|^2_{\sigma,w}
	\notag\\
	&&\leq 
	C\delta\varepsilon^2(\|\partial^2_x(\bar{u},\bar{\theta})\|^2
	+\|\partial_x(\bar{u},\bar{\theta})\|_{L^{\infty}}^2\|\partial_x(u,\theta)\|^2)
	\notag\\
	&&\leq  C\delta\varepsilon^2(\delta^2+\delta^4)\leq  C\delta^3\varepsilon^2,
\end{eqnarray}
where in the second inequality  we have used the fact that
\begin{equation}
\label{3.14}
\|\partial_x(\rho,u,\theta)\|^2\leq 2\|\partial_x(\widetilde{\rho},\widetilde{u},\widetilde{\theta})\|^2
+2\|\partial_x(\bar{\rho},\bar{u},\bar{\theta})\|^2\leq \widetilde{C}\delta^4+C\delta^2\leq C\delta^2,
\end{equation}
due to \eqref{3.4}, \eqref{3.1} and $\widetilde{C}\delta<1$.
If $|\alpha|=2$ and $|\beta|\geq0$, then by a similar calculation as \eqref{3.13}, we obtain 
\begin{eqnarray}
		\label{3.15}
	&&\|\langle v\rangle^{b}\partial^{\alpha}_{\beta}(\frac{\overline{G}}{\sqrt{\mu}})\|^2_{w}+
	\|\langle v\rangle^{b}\partial^{\alpha}_{\beta}(\frac{\overline{G}}{\sqrt{\mu}})\|^2_{\sigma,w}
	\notag\\
	&&\leq 
	C\delta\varepsilon^2\{\|\partial^3_x(\bar{u},\bar{\theta})\|^2
	+\|\partial^2_x(\bar{u},\bar{\theta})\|_{L^{\infty}}^2\|\partial_x(u,\theta)\|^2
\notag\\	
	&&\hspace{1.5cm}+\|\partial_x(\bar{u},\bar{\theta})\|_{L^{\infty}}^2
	(\|\partial^2_x(u,\theta)\|^2+\|\partial_x(u,\theta)\|_{L^{\infty}}^2\|\partial_x(u,\theta)\|^2)\}
	\notag\\
	&&\leq  C\delta\varepsilon^2(\delta^2+\delta^4+\delta^2\|\partial_x(u,\theta)\|^2+\delta^2\|\partial^2_x(u,\theta)\|^2)
	\notag\\
	&&\leq  C\delta^3\varepsilon^2+\widetilde{C}\delta^8\leq  C\delta^3(\varepsilon+\delta^2)^2,
\end{eqnarray}
where in the third inequality we have used \eqref{3.3} and \eqref{3.4} such that
\begin{equation}
	\label{3.16}
\|\partial^2_x(\rho,u,\theta)\|^2\leq 2\|\partial^2_{x}(\widetilde{\rho},\widetilde{u},\widetilde{\theta})\|^2
+2\|\partial^2_x(\bar{\rho},\bar{u},\bar{\theta})\|^2
\leq \widetilde{C}\frac{\delta^5}{\varepsilon^2}+C\delta^2.
\end{equation}
In summary, we combine \eqref{3.12} with \eqref{3.13} and \eqref{3.15} to obtain the desired estimate \eqref{3.7}.  This ends the proof of Lemma \ref{lem3.2}.
\end{proof}
\subsection{Estimates on collision terms}\label{subs3.3}
This subsection is devoted to the estimates of the collision terms.
\subsubsection{Properties of $\mathcal{L}$ and $\Gamma$}\label{seca3.3.1}
For the linearized  Landau  operator $\mathcal{L}$ in \eqref{2.38}, one has the following standard facts \cite{Guo-2002}.
First, $\mathcal{L}$ is self-adjoint and non-positive, and its null space $(\ker\mathcal{L})$ is spanned by the functions $\{\sqrt{\mu},v\sqrt{\mu},|v|^{2}\sqrt{\mu}\}$. Moreover, there exists a constant $c_{1}>0$ such that
\begin{equation}
	\label{3.17}
	-\langle\mathcal{L}g, g \rangle\geq c_{1}|g|^{2}_{\sigma}
\end{equation}
for any $g\in (\ker\mathcal{L})^{\perp}$. Note that $f\in (\ker\mathcal{L})^{\perp}$ in \eqref{2.39} is purely
microscopic since $G$ and $\overline{G}$ are purely microscopic.

In the following, we list some lemmas on the velocity weighted estimates for $\mathcal{L}$ and $\Gamma$ in \eqref{2.38}.
\begin{lemma}\label{lem3.3}\cite[Lemma 9]{Strain-Guo}
	Let $\mathcal{L}$ defined in \eqref{2.38} and
	$w=w(\alpha,\beta)$ in \eqref{1.25} with sufficiently small $q_1\in(0,1]$ and the constant $q_2>0$. 
	For any small constant $\eta>0$, there exists $C_\eta>0$ such that
	\begin{equation}
		\label{3.18}
		-\langle\partial^\alpha_\beta\mathcal{L}g,w^2(\alpha,\beta)\partial^\alpha_\beta g\rangle\geq |w(\alpha,\beta)\partial^\alpha_\beta g|_\sigma^2-\eta\sum_{|\beta_1|=|\beta|}|w(\alpha,\beta_1)\partial^\alpha_{\beta_1} g|_\sigma^2
		-C_\eta\sum_{|\beta_1|<|\beta|}|w(\alpha,\beta_1)\partial^\alpha_{\beta_1} g|_\sigma^2.
	\end{equation}
	If $|\beta|= 0$, then one has
	\begin{equation}
		\label{3.19}
		-\langle\partial^\alpha\mathcal{L}g,w^2(\alpha,0)\partial^\alpha g\rangle\geq |w(\alpha,0)\partial^\alpha g|_\sigma^2-C_\eta|\chi_{\eta}(v)\partial^\alpha g|_2^2,
	\end{equation}
	where $\chi_\eta(v)$ is a general cutoff function depending on $\eta$.
\end{lemma}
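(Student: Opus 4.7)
The plan is to exploit the canonical splitting $\mathcal{L} = -\mathcal{A} + \mathcal{K}$ of the linearized Landau operator, where $\mathcal{A}g := -\partial_{v_i}(\sigma^{ij}\partial_{v_j}g) + \tfrac{1}{4}\sigma^{ij}v_iv_j g$ is the diffusive-confinement part (whose quadratic form is precisely $|g|_\sigma^2$, in line with the equivalence \eqref{1.28}) and $\mathcal{K}$ is a compact integral operator whose kernel decays like a Gaussian in both velocity variables. Applying $\partial^\alpha_\beta$ to $\mathcal{L}g$, pairing with $w^2(\alpha,\beta)\partial^\alpha_\beta g$, and integrating by parts in $v$ on the $\mathcal{A}$-part would yield the main positive contribution $|w(\alpha,\beta)\partial^\alpha_\beta g|_\sigma^2$, plus correction terms coming from the weight $w$ and from the velocity commutator $[\partial_\beta,\mathcal{L}]$.

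For the base case $|\beta|=0$, the operator $\partial^\alpha$ commutes with $\mathcal{L}$, so only weight corrections arise. The diffusive part returns $|w(\alpha,0)\partial^\alpha g|_\sigma^2$ together with extra terms of the form $\tfrac{q_1}{(1+t)^{q_2}}\int\sigma^{ij}v_iv_j w^2|\partial^\alpha g|^2\,dv$ produced when $\partial_{v_i}$ hits the Gaussian factor $e^{q_1\langle v\rangle^2/(2(1+t)^{q_2})}$; since these have the correct sign and $q_1$ is small, they are absorbed into (or reinforce) the main dissipation. The $\mathcal{K}$-piece is treated by the standard velocity cutoff argument: on $|v|\le R$ it is bounded by $C_\eta|\chi_\eta\partial^\alpha g|_2^2$ after choosing $R=R(\eta)$, while on $|v|\ge R$ the Gaussian decay of the kernel of $\mathcal{K}$ dominates the polynomial factor $\langle v\rangle^{2(l-|\alpha|)}$ in $w$ and the remainder is absorbed into the main $\sigma$-norm for $R$ sufficiently large.

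For the general case $|\beta|\geq 1$, I would induct on $|\beta|$. The Leibniz expansion of $\partial_\beta\mathcal{L}g$ produces $\mathcal{L}(\partial^\alpha_\beta g)$ plus commutators of the schematic form $\partial^{\beta_2}\sigma^{ij}\cdot\partial_{v_i}\partial^\alpha_{\beta-\beta_2}\partial_{v_j}g$ for $|\beta_2|\ge 1$, together with analogous terms from the confinement piece $\sigma^{ij}v_iv_j$. Pairing with $w^2\partial^\alpha_\beta g$ and applying Cauchy-Schwarz with an $\eta$-small constant against the principal $\sigma$-norm, the few cross terms that still carry $|\beta|$ velocity derivatives of $g$ are absorbed into $\eta\sum_{|\beta_1|=|\beta|}|w(\alpha,\beta_1)\partial^\alpha_{\beta_1}g|_\sigma^2$, while all remaining commutator contributions have strictly fewer $v$-derivatives and feed into $C_\eta\sum_{|\beta_1|<|\beta|}|w(\alpha,\beta_1)\partial^\alpha_{\beta_1}g|_\sigma^2$. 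The polynomial factor $\langle v\rangle^{2(l-|\alpha|-|\beta|)}$ in the weight compensates the growth in $v$ coming from $\partial^{\beta_2}\sigma^{ij}$, which is why the exponent decreases in $|\beta|$ in the definition \eqref{1.25}.

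The main obstacle is the bookkeeping that guarantees all error constants depend only on $\eta$, $q_1$, $q_2$ and are uniform in the other parameters ($\varepsilon,\delta$). The essential quantitative input is $q_1<1$, which ensures $e^{q_1\langle v\rangle^2/2}\mu(v)^{1/2}$ remains integrable (indeed still Gaussian), so that the weighted Hilbert-Schmidt estimate for $\mathcal{K}$ closes despite the polynomial losses carried by $w$; the second subtle point is that in the $|\beta|\geq 1$ case one can no longer use the cutoff term $|\chi_\eta\partial^\alpha g|_2^2$ because no velocity integration by parts is available for the unweighted $\mathcal{K}$-split, which is precisely why the weaker lower-order $\sigma$-norm control appears in \eqref{3.18} instead of the cutoff $L^2$ control \eqref{3.19}.
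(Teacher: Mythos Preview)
The paper does not supply its own proof of this lemma; it is quoted verbatim as \cite[Lemma 9]{Strain-Guo} and used as a black box. Your outline is essentially the Strain--Guo argument for the weighted coercivity of the linearized Landau operator, so there is nothing to compare against here beyond noting that your sketch is consistent with that reference and with the structure of the weight \eqref{1.25}.
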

\begin{lemma}
	\label{lem3.4} \cite[Lemmas 2.2-2.3]{Wang}
Let $\Gamma(g_1,g_2)$ defined in \eqref{2.38} and $w=w(\alpha,\beta)$ in \eqref{1.25} with sufficiently small $q_1\in(0,1]$ and  $q_2>0$. 
 For any small constant $\epsilon>0$, it holds that
	\begin{equation}
		\label{3.20}
		\langle\partial^\alpha \Gamma(g_1,g_2), g_3\rangle\leq C\sum_{\alpha_1\leq\alpha}|\mu^\epsilon\partial^{\alpha_1}g_1|_2| \partial^{\alpha-\alpha_1}g_2|_\sigma|  g_3|_\sigma,
	\end{equation}
	and
	\begin{equation}
		\label{3.21}
		\langle\partial^\alpha_\beta \Gamma(g_1,g_2), w^2(\alpha,\beta)g_3\rangle\leq
		C\sum_{\alpha_1\leq\alpha}\sum_{\bar{\beta}\leq\beta_1\leq\beta}|\mu^\epsilon\partial^{\alpha_1}_{\bar{\beta}}g_1|_2|w(\alpha,\beta)  \partial^{\alpha-\alpha_1}_{\beta-\beta_1}g_2|_{\sigma}|w(\alpha,\beta)g_3|_{\sigma}.
	\end{equation}
\end{lemma}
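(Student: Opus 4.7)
The proof I propose follows directly the strategy of Wang \cite{Wang} (Lemmas 2.2--2.3), which in turn builds on the arguments of Strain--Guo \cite{Strain-Guo} and the original Coulomb case in Guo \cite{Guo-2002}. The starting point is to write $\Gamma(g_1,g_2)$ in divergence form. Using the definition \eqref{1.3} together with $\nabla_v\sqrt{\mu}=-\tfrac{v}{2}\sqrt{\mu}$, one rewrites
$$
\Gamma(g_1,g_2) = \sum_{i,j} \partial_{v_i}\!\Big[(\Phi_{ij}\ast [\sqrt{\mu}g_1])\,(\partial_{v_j}g_2-\tfrac{v_j}{2}g_2)\Big] - \sum_{i,j} (\Phi_{ij}\ast[\tfrac{v_i}{2}\sqrt{\mu}g_1])(\partial_{v_j}g_2-\tfrac{v_j}{2}g_2),
$$
plus a symmetric term obtained by interchanging the roles of the two arguments in the convolution. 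This exhibits $\Gamma$ as essentially a divergence-form variable-coefficient operator whose ``diffusion matrix'' is $\Phi\ast[\sqrt{\mu}g_1]$, i.e., the same matrix structure that defines $|\cdot|_\sigma$ in \eqref{1.28}.

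For the unweighted inequality \eqref{3.20}, I would apply $\partial^\alpha$ and distribute it by the Leibniz rule; each term has the schematic form $(\Phi_{ij}\ast[\sqrt{\mu}\partial^{\alpha_1}g_1])\cdot(\partial_{v_j}-\tfrac{v_j}{2})\partial^{\alpha-\alpha_1}g_2$. Testing against $g_3$ and integrating by parts in $v_i$ when it is in divergence form, then applying Cauchy--Schwarz in $v$, the differential factors on $g_2$ and $g_3$ combine into the $|\cdot|_\sigma$ norms by \eqref{1.28}; the convolution factor is pointwise dominated (uniformly in $v$) by $C|\mu^\epsilon\partial^{\alpha_1}g_1|_2$ because $\sqrt{\mu}$ absorbs a small exponential $\mu^\epsilon$ while the remaining $\mu^{1/2-\epsilon}$ kills the Coulomb singularity of $\Phi$ at the origin and provides integrability at infinity. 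This is exactly the standard estimate \cite[Lemma~9]{Strain-Guo}.

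For the weighted velocity-derivative estimate \eqref{3.21}, the same scheme is used but now one must also distribute $\partial_\beta$ and carry the weight $w^2(\alpha,\beta)$. The weight is split as $w(\alpha,\beta)\cdot w(\alpha,\beta)$ and placed symmetrically on the ``$g_2$'' and ``$g_3$'' factors, producing the two $|w(\alpha,\beta)\cdot|_\sigma$ norms on the right side. The $\partial_\beta$-derivative may fall either on $\Phi(v-v_*)$, on $\sqrt{\mu}g_1$, on $\sqrt{\mu}g_2$, or through the change of variable on $g_1$; in every case integration by parts moves the derivative onto $g_1$ (consuming at most $|\bar\beta|$ derivatives, which explains the lower bound $\bar\beta\le\beta_1$) or onto $g_2$ (consuming the remaining $\beta-\beta_1$ derivatives, which explains the upper bound $\beta_1\le\beta$). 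Extra polynomial factors of $v$ produced by differentiating $\sqrt{\mu}$ or by the $v_j/2$ factors are harmlessly absorbed into $\mu^\epsilon$.

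The main obstacle, and the only place where one has to be careful relative to the Maxwellian-weighted versions in \cite{Guo-2002}, is that the weight $w(\alpha,\beta)$ in \eqref{1.25} carries the Gaussian factor $e^{q_1 \langle v\rangle^2/(2(1+t)^{q_2})}$. One must verify that this exponential growth is still beaten by the $\mu^\epsilon$ factor extracted on the $g_1$-side of the convolution, uniformly in $t\ge 0$. Since $q_1<1$, the exponent is bounded by $q_1\langle v\rangle^2/2 < \langle v\rangle^2/2$ for all $t\ge 0$, so any fixed $0<\epsilon<(1-q_1)/2$ ensures that $w(\alpha,\beta)\cdot\mu^\epsilon$ remains bounded (indeed integrable after multiplication by any polynomial), which closes the estimate. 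With this choice made once and for all, all remaining steps are weighted Cauchy--Schwarz and reduce to the bounds already established in \cite{Wang}.
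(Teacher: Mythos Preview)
The paper does not give its own proof of this lemma: it is stated as a direct citation of \cite[Lemmas 2.2--2.3]{Wang} and used as a black box. Your sketch correctly outlines the standard argument from \cite{Wang,Strain-Guo,Guo-2002}, including the one nontrivial point specific to this paper's weight, namely that the Gaussian factor $e^{q_1\langle v\rangle^2/(2(1+t)^{q_2})}$ in $w(\alpha,\beta)$ is compatible with extracting $\mu^\epsilon$ on the $g_1$ side provided $q_1$ is small enough; this is exactly why the hypothesis ``sufficiently small $q_1\in(0,1]$'' appears in the statement.
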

\subsubsection{Estimates on linear collision terms}\label{seca.4.3.2}
For later use, we now deduce the estimates of the linear collision terms. First of all, we focus on the weighted derivative estimates.
\begin{lemma}\label{lem3.5}
Assume \eqref{3.1} holds. Let $w=w(\alpha,\beta)$ defined in \eqref{1.25} with sufficiently small $q_1\in(0,1]$ and $q_2>0$.
If $|\alpha|+|\beta|\leq 2$ and $|\beta|\geq1$, one has
	\begin{equation}
		\label{3.22}
		\frac{1}{\delta^{3/2}\varepsilon}|(\partial^\alpha_\beta \Gamma(f,\frac{M-\mu}{\sqrt{\mu}}),w^2(\alpha,\beta)\partial^\alpha_\beta f)|
		+\frac{1}{\delta^{3/2}\varepsilon}|(\partial^\alpha_\beta\Gamma(\frac{M-\mu}{\sqrt{\mu}},f), w^2(\alpha,\beta)\partial^\alpha_\beta f)|
	\leq C\delta\mathcal{D}_{2,l,q_1}(t).
	\end{equation}
If $|\beta|=0$ and $|\alpha|\leq 1$, one has
\begin{equation}
	\label{3.26}
	\frac{1}{\delta^{3/2}\varepsilon}|(\partial^\alpha\Gamma(\frac{M-\mu}{\sqrt{\mu}},f)+\partial^\alpha \Gamma(f,\frac{M-\mu}{\sqrt{\mu}}),w^2(\alpha,0)\partial^\alpha f)|
	\leq C\delta\mathcal{D}_{2,l,q_1}(t).
\end{equation}
\end{lemma}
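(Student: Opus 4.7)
\textbf{Proof plan for Lemma \ref{lem3.5}.} The strategy is to exploit that $h := (M-\mu)/\sqrt\mu$ is small of order $\delta$ (because $(\rho,u,\theta)$ is close to $(1,0,3/2)$ by \eqref{3.2}, \eqref{3.4}) together with the weighted bilinear estimates of Lemma \ref{lem3.4}. Specifically, I would first establish the pointwise-in-$x$ bound, for $|\alpha'|+|\beta'|\leq 2$ and any small $\epsilon>0$,
\begin{equation*}
\bigl|\mu^{\epsilon}\partial^{\alpha'}_{\beta'}h\bigr|_{2} + \bigl|w(\alpha,\beta)\partial^{\alpha'}_{\beta'}h\bigr|_{\sigma} \le C\sum_{|\alpha''|\le |\alpha'|}\bigl|\partial^{\alpha''}(\rho-1,u,\theta-\tfrac{3}{2})\bigr|_{\ell^{\infty}} \cdot (\text{polynomial nonlinearities}),
\end{equation*}
by expanding $M_{[\rho,u,\theta]}$ in Taylor series around $(1,0,\tfrac{3}{2})$; here the Gaussian tail of $M$ and $\mu$ absorbs the weight $w$ cleanly because $q_{1}$ is small. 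This reduces all $v$-integrals to estimates on the fluid unknowns.

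Next, for the first estimate ($|\beta|\ge 1$), I would apply Leibniz to $\partial^{\alpha}_{\beta}\Gamma(f,h)$ and $\partial^{\alpha}_{\beta}\Gamma(h,f)$, pulling each derivative into one of the two factors, and then invoke the weighted estimate \eqref{3.21} pointwise in $x$ to obtain integrands of the form $|\mu^{\epsilon}\partial^{\alpha_{1}}_{\bar\beta}(\cdot)_{1}|_{2}\,|w\partial^{\alpha-\alpha_{1}}_{\beta-\beta_{1}}(\cdot)_{2}|_{\sigma}\,|w\partial^{\alpha}_{\beta}f|_{\sigma}$. Case-splitting by where the $x$-derivatives land:
\begin{itemize}
\item if at most one $x$-derivative falls on $h$, place the $h$-factor in $L^{\infty}_{x}$, gaining $C\delta$ from \eqref{3.2}--\eqref{3.4}, and pair the two remaining $L^{2}_{x}$ factors via Cauchy--Schwarz into $\|\partial^{\alpha_{1}}_{\bar\beta}f\|_{2}\cdot\|\partial^{\alpha}_{\beta}f\|_{\sigma,w}$;
\item if two $x$-derivatives fall on $h$, place the $h$-factor in $L^{2}_{x}$ (using \eqref{3.3}--\eqref{3.16}) and the $f$-factor in $L^{\infty}_{x}L^{2}_{v}$, which is controlled by $\sqrt{\mathcal{E}_{2,l,q_{1}}}\lesssim\delta^{2}$ via the 1D Sobolev embedding $\|g\|_{L^\infty}\le\sqrt{2}\|g\|^{1/2}\|g'\|^{1/2}$.
\end{itemize}
In both subcases the extra $\delta$-powers from the $L^{\infty}_{x}$-factor (or from the a priori smallness of $f$), when combined with the prefactor $\delta^{-3/2}\varepsilon^{-1}$ and the definition of $\mathcal{D}_{2,l,q_{1}}$ in \eqref{1.35}, yield the desired bound $C\delta\,\mathcal{D}_{2,l,q_{1}}(t)$, using the restriction \eqref{1.37} to convert surplus $\delta^{a}\varepsilon^{-b}$ into $O(\delta)$. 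The proof of \eqref{3.26} is analogous, except one uses the unweighted-version estimate \eqref{3.20} instead of \eqref{3.21}, which is legitimate since $|\beta|=0$.

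\textbf{Main obstacle.} The delicate case is $|\alpha|=2,\,|\beta|=0$ (or its weighted analogue $|\alpha|=2,\,|\beta|=0$ appearing via Leibniz), where second-order $x$-derivatives of $(\rho,u,\theta)$ are only controlled in $L^{2}_{x}$ by $\widetilde{C}\delta^{5/2}/\varepsilon$, a norm that is \emph{not} small. Closing the estimate then forces us to route the $f$-factor through $L^{\infty}_{x}L^{2}_{v}$ and pay a full $\sqrt{\mathcal{E}_{2,l,q_{1}}}\lesssim\delta^{2}$, producing net powers $\delta^{5/2}\cdot\delta^{2}/\varepsilon = \delta^{9/2}/\varepsilon$. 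After absorbing the singular prefactor $\delta^{-3/2}\varepsilon^{-1}$ and the targeted $\delta$ on the right, one needs $\delta^{9/2}\varepsilon^{-1}\cdot\delta^{-1/2}\varepsilon^{-1}\lesssim 1$, i.e.\ $\delta^{4}\lesssim\varepsilon^{2}$, which is guaranteed precisely by the lower bound $\delta\ge\varepsilon^{2/3}$ in \eqref{1.37} (with room to spare). This bookkeeping of powers is the whole point of the scaling ansatz, and verifying it across all Leibniz branches is the core technical step.
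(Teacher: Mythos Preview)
Your overall strategy matches the paper's: apply the weighted bilinear estimate \eqref{3.21}, establish the $\delta$-smallness of $h=(M-\mu)/\sqrt\mu$ and its derivatives (the paper records this as \eqref{3.24}), and case-split on how the Leibniz derivatives distribute, using the 1D embedding to place the appropriate factor in $L^\infty_x$.

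However, your ``main obstacle'' is a phantom. In \eqref{3.22} the hypothesis $|\alpha|+|\beta|\le 2$ with $|\beta|\ge 1$ forces $|\alpha|\le 1$, and in \eqref{3.26} the bound $|\alpha|\le 1$ is explicit. Hence at most \emph{one} $x$-derivative can ever land on $h$; your ``two $x$-derivatives on $h$'' branch and the entire $|\alpha|=2$ discussion are vacuous for this lemma. The paper simply observes $|\alpha|\le 1$ at the outset and splits only into $|\alpha-\alpha_1|=0$ and $|\alpha-\alpha_1|=1$. (The genuinely delicate $|\alpha|=2$ case is deferred to Lemmas \ref{lem3.7} and \ref{lem3.8a}, where the bounds are much weaker.) Incidentally, your power-counting for that nonexistent case is also wrong: $\delta^4\lesssim\varepsilon^2$ would need $\delta\lesssim\varepsilon^{1/2}$, and neither inequality in \eqref{1.37} delivers this (the lower bound $\delta\ge\varepsilon^{2/3}$ goes the wrong direction, and the upper bound $\delta\le\varepsilon^{2/5}$ is weaker than $\varepsilon^{1/2}$). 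A minor correction: for \eqref{3.26} you still need the weighted estimate \eqref{3.21} with $\beta=0$, not the unweighted \eqref{3.20}, since the test function carries $w^2(\alpha,0)$.
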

\begin{proof}
We only prove the estimate \eqref{3.22} while the estimate \eqref{3.26} can be handled in the same way. For the first term on the left-hand 
side of \eqref{3.22}, it is clear by \eqref{3.21} that
	\begin{eqnarray}
		\label{3.23}
		&&\frac{1}{\delta^{3/2}\varepsilon}|(\partial^\alpha_\beta \Gamma(f,\frac{M-\mu}{\sqrt{\mu}}),w^2(\alpha,\beta)\partial^\alpha_\beta f)|
		\notag\\
		&&\leq C\sum_{\alpha_1\leq\alpha}\sum_{\bar{\beta}\leq\beta_1\leq\beta}
		\underbrace{\frac{1}{\delta^{3/2}\varepsilon}\int_{\mathbb {R}}|\mu^\epsilon\partial^{\alpha_1}_{\bar{\beta}}f|_2| w(\alpha,\beta) \partial^{\alpha-\alpha_1}_{\beta-\beta_1}(\frac{M-\mu}{\sqrt{\mu}})|_{\sigma}|  w(\alpha,\beta)\partial^\alpha_\beta f|_{\sigma}\,dx}_{J_1}.
	\end{eqnarray}
For any $|\beta'|\geq0$ and $b\geq0$,	by \eqref{1.28} and the expression of $w(\alpha,\beta)$ given in \eqref{1.25}, one has
	\begin{eqnarray*}
		&&| \langle v\rangle^{b} w(\alpha,\beta)\partial _{\beta'}(\frac{M-\mu}{\sqrt{\mu}})|_{\sigma}^2+| \langle v\rangle^{b}w(\alpha,\beta)\partial _{\beta'}(\frac{M-\mu}{\sqrt{\mu}})|_{2}^2
		\notag\\
		&&\quad\leq C\sum_{|\beta'|\leq|\beta''|\leq|\beta'|+1}\int_{{\mathbb R}^3}\mu^{-3q_1}|\partial _{\beta''}(\frac{M-\mu}{\sqrt{\mu}})|^2\,dv.
	\end{eqnarray*}
 Let $C\delta>0$ in \eqref{3.5} be small enough
	and $q_1\in(0,1]$ be suitably small, we can find that there exists a suitably large constant  $R_1>0$ such that
	$$
	\int_{|v|\geq R_1}\mu^{-3q_1}|\partial _{\beta''}(\frac{M-\mu}{\sqrt{\mu}})|^2\,dv\leq C\delta^2,
	$$
	and
	$$
	\int_{|v|\leq R_1}\mu^{-3q_1}|\partial _{\beta''}(\frac{M-\mu}{\sqrt{\mu}})|^2\,dv\leq C(|\rho-1|+|u|+|\theta-\frac{3}{2}|)^2\leq C\delta^2.
	$$
Hence, from	the above bound, we see that for any $|\beta'|\geq0$ and $b\geq0$,
\begin{equation}
		\label{3.24}
		| \langle v\rangle^{b}w(\alpha,\beta)\partial _{\beta'}(\frac{M-\mu}{\sqrt{\mu}})|_{\sigma}+| \langle v\rangle^{b}w(\alpha,\beta)
		\partial _{\beta'}(\frac{M-\mu}{\sqrt{\mu}})|_{2}\leq C\delta.
	\end{equation}

Let's now turn to bound \eqref{3.23}. It is clear that $|\alpha|\leq 1$ since  we only consider the case $|\alpha|+|\beta|\leq 2$ and $|\beta|\geq1$.
If $|\alpha-\alpha_{1}|=0$, then by \eqref{3.24}, \eqref{1.28} and \eqref{1.35}, one has
	\begin{eqnarray*}
		J_1&&\leq C\frac{1}{\delta^{3/2}\varepsilon}\|| w(\alpha,\beta) \partial^{\alpha-\alpha_1}_{\beta-\beta_1}(\frac{M-\mu}{\sqrt{\mu}})|_{\sigma}\|_{L^{\infty}}
		\|\mu^\epsilon\partial^{\alpha_1}_{\bar{\beta}}f\|
		\|w(\alpha,\beta)\partial^{\alpha}_{\beta}f\|_{\sigma}
		\notag\\
		&&\leq C\delta\frac{1}{\delta^{3/2}\varepsilon}\|\partial^{\alpha}_{\bar{\beta}}f\|_{\sigma}\|\partial^{\alpha}_{\beta}f\|_{\sigma,w}
		\notag\\
		&&\leq C\delta\mathcal{D}_{2,l,q_1}(t).
	\end{eqnarray*}
If $|\alpha-\alpha_{1}|=1$, then $|\alpha|=1$, $|\alpha_1|=0$ and $|\alpha_1|+|\bar{\beta}|\leq|\alpha|+|\beta|-1$,  which implies that
	\begin{eqnarray*}
		J_1&&\leq C\frac{1}{\delta^{3/2}\varepsilon}\||\mu^\epsilon\partial^{\alpha_1}_{\bar{\beta}}f|_2\|_{L^{\infty}}\|| w(\alpha,\beta) \partial^{\alpha-\alpha_1}_{\beta-\beta_1}(\frac{M-\mu}{\sqrt{\mu}})|_{\sigma}\|
		\|w(\alpha,\beta)\partial^{\alpha}_{\beta}f\|_{\sigma}
		\notag\\
		&&\leq C\frac{1}{\delta^{3/2}\varepsilon}\|\partial^{\alpha-\alpha_{1}}(\rho,u,\theta)\|\|\mu^\epsilon\partial^{\alpha_1}_{\bar{\beta}}f\|^{\frac{1}{2}}
		\|\mu^\epsilon\partial^{\alpha_1}_{\bar{\beta}}\partial_xf\|^{\frac{1}{2}}\|w(\alpha,\beta)\partial^{\alpha}_{\beta}f\|_{\sigma}
		\notag\\
		&&\leq C\frac{1}{\delta^{3/2}\varepsilon}\|\partial_x(\rho,u,\theta)\|(\|\partial^{\alpha_1}_{\bar{\beta}}f\|_\sigma
		\|\partial^{\alpha_1}_{\bar{\beta}}\partial_xf\|_\sigma+\|\partial^{\alpha}_{\beta}f\|^2_{\sigma,w})
		\notag\\
		&&\leq C\delta\mathcal{D}_{2,l,q_1}(t).
	\end{eqnarray*}
	Here we have used the embedding inequality, \eqref{3.11}, \eqref{3.14} and \eqref{1.35}.
Consequently, putting the above two estimates into \eqref{3.23} gives rise to
	\begin{equation}
		\label{3.25}
		\frac{1}{\delta^{3/2}\varepsilon}|(\partial^\alpha_\beta \Gamma(f,\frac{M-\mu}{\sqrt{\mu}}),w^2(\alpha,\beta)\partial^\alpha_\beta f)|
		\leq C\delta\mathcal{D}_{2,l,q_1}(t).
	\end{equation}

For the second term on the left-hand side of \eqref{3.22}, we use \eqref{3.21} again to obtain
	\begin{eqnarray*}
		&&\frac{1}{\delta^{3/2}\varepsilon}|(\partial^\alpha_\beta \Gamma(\frac{M-\mu}{\sqrt{\mu}},f),w^2(\alpha,\beta)\partial^\alpha_\beta f)|
		\notag\\
		&&\leq C\sum_{\alpha_1\leq\alpha}\sum_{\bar{\beta}\leq\beta_1\leq\beta}\underbrace{\frac{1}{\delta^{3/2}\varepsilon}\int_{\mathbb {R}}|\mu^\epsilon\partial^{\alpha_1}_{\bar{\beta}}(\frac{M-\mu}{\sqrt{\mu}})|_2| w(\alpha,\beta) \partial^{\alpha-\alpha_1}_{\beta-\beta_1}f|_{\sigma}|  w(\alpha,\beta)\partial^\alpha_\beta f|_{\sigma}\,dx}_{J_2}.
	\end{eqnarray*}
The term $J_2$ can be handled in the similar way as $J_1$. If $|\alpha_{1}|=0$, then $w(\alpha,\beta)\leq w(\alpha-\alpha_{1},\beta-\beta_1)$, 
we thus deduce from \eqref{3.24} and \eqref{1.35} that
	\begin{eqnarray*}
		J_2&&\leq C\frac{1}{\delta^{3/2}\varepsilon}\||\mu^\epsilon\partial^{\alpha_1}_{\bar{\beta}}(\frac{M-\mu}{\sqrt{\mu}})|_2\|_{L^\infty}
		\|w(\alpha,\beta)\partial^{\alpha-\alpha_{1}}_{\beta-\beta_{1}}f\|_{\sigma}
		\|w(\alpha,\beta)\partial^\alpha_\beta f\|_{\sigma}
		\notag\\
		&&\leq C\delta\frac{1}{\delta^{3/2}\varepsilon}\|\partial^{\alpha-\alpha_{1}}_{\beta-\beta_{1}}f\|_{\sigma,w}
		\|\partial^\alpha_\beta f\|_{\sigma,w}
		\leq C\delta\mathcal{D}_{2,l,q_1}(t).
	\end{eqnarray*}
If $|\alpha_{1}|=1$, then $w(\alpha,\beta)\leq w(\alpha-\alpha_{1}+\alpha_{2},\beta-\beta_1)$ for $|\alpha_{2}|\leq1$, which implies that 
	\begin{eqnarray*}
		J_2&&\leq C\frac{1}{\delta^{3/2}\varepsilon}\|\mu^\epsilon\partial^{\alpha_1}_{\bar{\beta}}(\frac{M-\mu}{\sqrt{\mu}})\|
		\||w(\alpha,\beta)\partial^{\alpha-\alpha_1}_{\beta-\beta_{1}}f|_{\sigma}\|_{L^\infty}\|w(\alpha,\beta)\partial^{\alpha}_{\beta}f\|_{\sigma}
		\notag\\
		&&\leq C\delta\mathcal{D}_{2,l,q_1}(t).
	\end{eqnarray*}
So, from the above three estimates, we get
	\begin{equation*}
		\frac{1}{\delta^{3/2}\varepsilon}|(\partial^\alpha_\beta \Gamma(\frac{M-\mu}{\sqrt{\mu}},f),w^2(\alpha,\beta)\partial^\alpha_\beta f)|
		\leq C\delta\mathcal{D}_{2,l,q_1}(t),
	\end{equation*}
which combined with \eqref{3.25} immediately gives \eqref{3.22}. 
The estimate \eqref{3.26} can be proved by the same strategy as the proof of \eqref{3.22} and details of the proof are
omitted for brevity. Thus, Lemma \ref{lem3.5} is proved.
\end{proof}
\begin{lemma}\label{lem3.7}
Under the same conditions as in Lemma \ref{lem3.5}. For $|\alpha|=2$, there exists sufficiently small $\eta>0$ such that
\begin{eqnarray}
\label{3.27}	
&& \frac{1}{\delta^{3/2}\varepsilon}|(\partial^\alpha\Gamma(\frac{M-\mu}{\sqrt{\mu}},f)+\partial^\alpha\Gamma(f,\frac{M-\mu}{\sqrt{\mu}}),w^2(\alpha,0)\frac{\partial^\alpha F}{\sqrt{\mu}})|
\notag\\
&&\leq C\eta\frac{1}{\delta^{3/2}\varepsilon}\|\partial^{\alpha}f\|^2_{\sigma,w}+C_\eta(\frac{1}{\varepsilon}+\frac{\delta}{\varepsilon^{2}})\mathcal{D}_{2,l,q_1}(t)
+C_\eta\frac{1}{\varepsilon}\delta^{5/2}.		
\end{eqnarray}
\end{lemma}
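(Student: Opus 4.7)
The plan begins with the decomposition
$$\frac{\partial^\alpha F}{\sqrt{\mu}}=\frac{\partial^\alpha M}{\sqrt{\mu}}+\frac{\partial^\alpha \overline{G}}{\sqrt{\mu}}+\partial^\alpha f,$$
which uses $F=M+\overline{G}+\sqrt{\mu}\,f$ and the fact that $\sqrt{\mu}$ is independent of $x$. Inserting this into the inner product splits the left-hand side of \eqref{3.27} into three pieces, one for each summand. To each of the two $\Gamma$-terms I apply the weighted bilinear estimate \eqref{3.21} with $|\beta|=0$ and expand $\partial^\alpha$ by the Leibniz rule over the two arguments. In every resulting summand, the factor built from $(M-\mu)/\sqrt{\mu}$ is estimated directly from the explicit form of $M$: when no spatial derivative falls on it, \eqref{3.24} supplies the $O(\delta)$ bound; when one or two spatial derivatives fall on it, a direct expansion yields
$$\bigl|w(\alpha,0)\,\partial^j_x\bigl((M-\mu)/\sqrt{\mu}\bigr)\bigr|_\sigma\leq C\bigl(|\partial^j_x(\rho,u,\theta)|+|\partial_x(\rho,u,\theta)|^j\bigr),\quad j=1,2,$$
using \eqref{3.5} and the fact that $M/\sqrt{\mu}$ is dominated by a bounded multiple of a reference Maxwellian.

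Next I treat the three cases for the target $g_3$ in \eqref{3.21}. For $g_3=\partial^\alpha f$ I use Young's inequality to produce the absorbable contribution $C\eta\,\|\partial^\alpha f\|_{\sigma,w}^2/(\delta^{3/2}\varepsilon)$, which is the first term in \eqref{3.27}; the leftover factors, after applying the 1D Sobolev embedding in $x$ together with the a priori bounds \eqref{3.2}--\eqref{3.5}, are subsumed into $C_\eta(1/\varepsilon+\delta/\varepsilon^2)\mathcal{D}_{2,l,q_1}(t)$ by matching the relevant norms of $f$ to the corresponding entries of $\mathcal{D}_{2,l,q_1}$ in \eqref{1.36}. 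For $g_3=\partial^\alpha \overline{G}/\sqrt{\mu}$ I invoke Lemma \ref{lem3.2}, giving $\|\partial^\alpha\overline{G}/\sqrt{\mu}\|_{\sigma,w}\leq C\delta^{3/2}(\varepsilon+\delta^2)$; combined with the singular prefactor $1/(\delta^{3/2}\varepsilon)$ and the $O(\delta)$ factor from $(M-\mu)/\sqrt{\mu}$, this splits (via Young) into a piece absorbable into $\mathcal{D}_{2,l,q_1}(t)$ plus a residual of size $O(\delta^{5/2}/\varepsilon)$ coming from the $\delta^2$ excess in Lemma \ref{lem3.2}, which matches the last term in \eqref{3.27}.

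The main obstacle is the Maxwellian piece $g_3=\partial^\alpha M/\sqrt{\mu}$, because there is no longer any $(M-\mu)$-type smallness available once the two derivatives are absorbed into the target. Bounding $|w(\alpha,0)\partial^\alpha M/\sqrt{\mu}|_\sigma$ pointwise by $C(|\partial^2_x(\rho,u,\theta)|+|\partial_x(\rho,u,\theta)|^2)$ and moving derivatives through the 1D Sobolev embedding with the help of \eqref{3.2}--\eqref{3.3} and \eqref{3.14}--\eqref{3.16}, one reaches a bound of the form $C_\eta(1/\varepsilon+\delta/\varepsilon^2)\mathcal{D}_{2,l,q_1}(t)$: the coefficient $1/\varepsilon$ arises by matching the singular prefactor $1/(\delta^{3/2}\varepsilon)$ against the dissipation weight $\varepsilon/\delta^{1/2}$ attached to $\|\partial^2_x(\widetilde{\rho},\widetilde{u},\widetilde{\theta})\|^2$ in \eqref{1.36}, while $\delta/\varepsilon^2$ reflects the additional $L^\infty$ loss in $\partial_x(\rho,u,\theta)$ from \eqref{3.2} incurred when both derivatives are quadratic on the fluid variables. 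Collecting the three contributions and choosing $\eta$ small enough completes the proof.

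The crucial technical point is the inability to gain any smallness from $(M-\mu)/\sqrt{\mu}$ precisely when the two spatial derivatives are forced onto the target $\partial^\alpha M/\sqrt{\mu}$; the resulting second-order fluid derivatives must then be controlled by the weakest portion of $\mathcal{D}_{2,l,q_1}(t)$, which is what forces the appearance of the large factor $(1/\varepsilon+\delta/\varepsilon^2)$ and is consistent with the later need, noted in Section \ref{sec.1.6}, to multiply the $|\alpha|=2$ estimate by $\varepsilon^2$ in order to close the energy bounds.
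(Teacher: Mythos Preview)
Your overall strategy---Leibniz expansion, the weighted bilinear estimate \eqref{3.21}, Young's inequality, and the 1D Sobolev embedding---is correct and essentially the same as the paper's. The organizational difference is that the paper does \emph{not} split the target $\partial^\alpha F/\sqrt{\mu}$ into its three constituents; instead it bounds $\|\partial^\alpha F/\sqrt{\mu}\|_{\sigma,w}$ as a whole via the auxiliary estimate
\[
\|\partial^{\alpha}F/\sqrt{\mu}\|_{\sigma,w}\leq C\bigl(\|\partial^{\alpha}f\|_{\sigma,w}+\|\partial^{\alpha}(\widetilde{\rho},\widetilde{u},\widetilde{\theta})\|+\delta\bigr)
\]
(this is \eqref{3.30}) and its consequence \eqref{3.32}. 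This avoids repeating the same Leibniz bookkeeping three times and makes the source of the $\frac{\delta}{\varepsilon^2}$ and $\frac{1}{\varepsilon}\delta^{5/2}$ terms transparent: they come directly from \eqref{3.32}. Your route of decomposing $g_3$ is equivalent but more laborious.

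One point in your write-up is inaccurate: the coefficient $1/\varepsilon$ does \emph{not} arise from matching the prefactor $1/(\delta^{3/2}\varepsilon)$ against the fluid weight $\varepsilon/\delta^{1/2}$ attached to $\|\partial^2_x(\widetilde\rho,\widetilde u,\widetilde\theta)\|^2$---that ratio is $1/(\delta\varepsilon^2)$, not $1/\varepsilon$. In the paper the factor $1/\varepsilon$ appears in the Leibniz terms with $|\alpha_1|=1$ or $2$: after putting $\partial^{\alpha-\alpha_1}f$ in $L^\infty_x$ and using $\||g|_{\sigma,w}\|_{L^\infty}\leq C\|g\|_{\sigma,w}^{1/2}\|\partial_x g\|_{\sigma,w}^{1/2}$, a Young split balances $\frac{1}{\delta^{3/2}\varepsilon}\|\partial^{\alpha-\alpha_1}f\|_{\sigma,w}^2$ against $\frac{\varepsilon^2}{\delta^{3/2}\varepsilon}\|\partial_x\partial^{\alpha-\alpha_1}f\|_{\sigma,w}^2$ (both appearing in $\mathcal{D}_{2,l,q_1}$, cf.~\eqref{1.35}), and the disparity in their weights is exactly $\varepsilon^2$, producing the extra $1/\varepsilon$. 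The $\delta/\varepsilon^2$ term, by contrast, comes from pairing the $O(\delta)$ smallness of $(M-\mu)/\sqrt{\mu}$ with the fluid part of $\|\partial^\alpha F/\sqrt{\mu}\|_{\sigma,w}$ as in \eqref{3.32}. You should correct this attribution, but the proof itself goes through.
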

\begin{proof}
For $|\alpha|=2$, we write
	\begin{eqnarray*}
	&&\frac{1}{\delta^{3/2}\varepsilon}(\partial^{\alpha}\Gamma(\frac{M-\mu}{\sqrt{\mu}},f),w^2(\alpha,0)\frac{\partial^{\alpha}F}{\sqrt{\mu}})=
	\frac{1}{\delta^{3/2}\varepsilon}(\Gamma(\frac{M-\mu}{\sqrt{\mu}},\partial^{\alpha}f),w^2(\alpha,0)\frac{\partial^{\alpha}F}{\sqrt{\mu}})
		\notag\\
		&&\hspace{3cm}+\sum_{1\leq\alpha_{1}\leq\alpha}C^{\alpha_{1}}_{\alpha}\frac{1}{\delta^{3/2}\varepsilon}(\Gamma(
		\frac{\partial^{\alpha_{1}}[M-\mu]}{\sqrt{\mu}},\partial^{\alpha-\alpha_{1}}f),w^2(\alpha,0)\frac{\partial^{\alpha}F}{\sqrt{\mu}}).
	\end{eqnarray*}
By a simple computation, one has the following identity
	$$
\partial_{x}M=M\big\{\frac{\partial_{x}\rho}{\rho}+\frac{(v-u)\cdot \partial_{x}u}{K\theta}
	+(\frac{|v-u|^{2}}{2K\theta}-\frac{3}{2})\frac{\partial_{x}\theta}{\theta} \big\}.
	$$
We denote $\partial^{\alpha}=\partial_{xx}$ with $|\alpha|=2$, then it holds that
	\begin{eqnarray}
		\label{3.28}
		\partial^{\alpha}M=&&M\big\{\frac{\partial^{\alpha}\rho}{\rho}
		+\frac{(v-u)\cdot\partial^{\alpha}u}{K\theta}+(\frac{|v-u|^{2}}{2K\theta}-\frac{3}{2})\frac{\partial^{\alpha}\theta}{\theta} \big\}
		\notag\\
		&&+\big\{\partial_{x}(M\frac{1}{\rho})\partial_{x}\rho+\partial_{x}(M\frac{v-u}{K\theta})\cdot \partial_{x}u
		+\partial_{x}(M\frac{|v-u|^{2}}{2K\theta^{2}}-M\frac{3}{2\theta})\partial_{x}\theta\big\}
		\notag\\
		:=&&I_1+I_2.
	\end{eqnarray}
Hence, for $|\alpha|=2$, we deduce from \eqref{3.28}, \eqref{1.28}, \eqref{3.11}, \eqref{3.4}, \eqref{3.2a} and \eqref{3.14} that
	\begin{eqnarray}
	\label{3.29}
\|\frac{\partial^{\alpha}M}{\sqrt{\mu}}\|^2_{\sigma,w}\leq&& C\|\partial^2_x(\rho,u,\theta)\|^2
+C\|\partial_x(\rho,u,\theta)\|_{L^{\infty}}^2\|\partial_x(\rho,u,\theta)\|^2
\notag\\
\leq&& C\|\partial^{\alpha}(\widetilde{\rho},\widetilde{u},\widetilde{\theta})\|^2+C\delta^2.
\end{eqnarray}
In view of the decomposition $F=M+\overline{G}+\sqrt{\mu}f$, we have from \eqref{3.7} and \eqref{3.29} that for $|\alpha|=2$
\begin{eqnarray}
	\label{3.30}
	\|\frac{\partial^{\alpha}F}{\sqrt{\mu}}\|_{\sigma,w}
	&&\leq C(\|\frac{\sqrt{\mu}\partial^{\alpha}f}{\sqrt{\mu}}\|_{\sigma,w}
	+\|\frac{\partial^{\alpha}\overline{G}}{\sqrt{\mu}}\|_{\sigma,w}+\|\frac{\partial^{\alpha}M}{\sqrt{\mu}}\|_{\sigma,w})
	\notag\\
	&&\leq C(\|\partial^{\alpha}f\|_{\sigma,w}
	+\|\partial^{\alpha}(\widetilde{\rho},\widetilde{u},\widetilde{\theta})\|+\delta).
\end{eqnarray}
So, from \eqref{3.21} and \eqref{3.24}, we have
	\begin{eqnarray}
		\label{3.31}
&&\frac{1}{\delta^{3/2}\varepsilon}|(\Gamma(\frac{M-\mu}{\sqrt{\mu}},\partial^{\alpha}f),w^2(\alpha,0)\frac{\partial^{\alpha}F}{\sqrt{\mu}})|
\notag\\
		&&\leq C\frac{1}{\delta^{3/2}\varepsilon}\|\mu^{\epsilon}\frac{M-\mu}{\sqrt{\mu}}\|_{L^\infty}
		\|\partial^{\alpha}f\|_{\sigma,w}\|\frac{\partial^{\alpha}F}{\sqrt{\mu}}\|_{\sigma,w}
		\notag\\
		&&\leq \eta\frac{1}{\delta^{3/2}\varepsilon}\|\partial^{\alpha}f\|^2_{\sigma,w}
		+C_\eta\delta^2\frac{1}{\delta^{3/2}\varepsilon}\|\frac{\partial^{\alpha}F}{\sqrt{\mu}}\|^2_{\sigma,w}
		\notag\\
		&&\leq \eta\frac{1}{\delta^{3/2}\varepsilon}\|\partial^{\alpha}f\|^2_{\sigma,w}+C_\eta\frac{\delta}{\varepsilon^{2}}\mathcal{D}_{2,l,q_1}(t)
		+C_\eta\frac{1}{\varepsilon}\delta^{5/2}.
	\end{eqnarray}
In the last inequality we have used \eqref{3.30}, \eqref{1.35} and the smallness of $\delta$ such that
	\begin{eqnarray}
	\label{3.32}
\delta^2\frac{1}{\delta^{3/2}\varepsilon}\|\frac{\partial^{\alpha}F}{\sqrt{\mu}}\|^2_{\sigma,w}
&&\leq C\delta^2\frac{1}{\delta^{3/2}\varepsilon}(\|\partial^{\alpha}f\|^2_{\sigma,w}
+\|\partial^{\alpha}(\widetilde{\rho},\widetilde{u},\widetilde{\theta})\|^2+\delta^2)
\notag\\
&&\leq C\frac{\delta^2}{\varepsilon^{2}}\frac{1}{\delta^{3/2}\varepsilon}\varepsilon^{2}\|\partial^{\alpha}f\|^2_{\sigma,w}+
C\frac{\delta}{\varepsilon^2}\frac{\varepsilon}{\delta^{1/2}}\|\partial^{\alpha}(\widetilde{\rho},\widetilde{u},\widetilde{\theta})\|^2
+C\frac{1}{\varepsilon}\delta^{5/2}
\notag\\
\notag\\
&&\leq C(\frac{\delta^2}{\varepsilon^{2}}+\frac{\delta}{\varepsilon^{2}})\mathcal{D}_{2,l,q_1}(t)+C\frac{1}{\varepsilon}\delta^{5/2}
\leq C\frac{\delta}{\varepsilon^{2}}\mathcal{D}_{2,l,q_1}(t)+C\frac{1}{\varepsilon}\delta^{5/2}.
\end{eqnarray}
Note from \eqref{3.21} that
	\begin{eqnarray*}
		&&\sum_{1\leq\alpha_{1}\leq\alpha}\frac{1}{\delta^{3/2}\varepsilon}|(\Gamma(\frac{\partial^{\alpha_{1}}[M-\mu]}{\sqrt{\mu}}
		,\partial^{\alpha-\alpha_{1}}f),w^2(\alpha,0)\frac{\partial^{\alpha}F}{\sqrt{\mu}})|
		\notag\\
		&&\leq	
		C\sum_{1\leq\alpha_{1}\leq\alpha}\underbrace{\frac{1}{\delta^{3/2}\varepsilon}\int_{\mathbb{R}}
			|\mu^\epsilon\partial^{\alpha_{1}}(\frac{M-\mu}{\sqrt{\mu}})|_{2}|w(\alpha,0)\partial^{\alpha-\alpha_{1}}f|_{\sigma}
			|w(\alpha,0)\frac{\partial^{\alpha}F}{\sqrt{\mu}}|_{\sigma}\,dx}_{J_3}.
	\end{eqnarray*}
To bound the term $J_3$, we consider the two cases. If $|\alpha_{1}|=1$,  then $w(\alpha,0)\leq w(\alpha-\alpha_{1}+\alpha_{2},0)$ for $|\alpha_{2}|\leq1$, 
hence it follows from  \eqref{3.11}, \eqref{3.14}, \eqref{3.32} and \eqref{1.35} that
	\begin{eqnarray*}	
		J_3\leq&& C\frac{1}{\delta^{3/2}\varepsilon}\|\partial^{\alpha_{1}}(\rho,u,\theta)\|
		\||w(\alpha,0)\partial^{\alpha-\alpha_{1}}f|_{\sigma}\|_{L^{\infty}}\|\frac{\partial^{\alpha}F}{\sqrt{\mu}}\|_{\sigma,w}
		\notag\\
		\leq&& C\delta\frac{1}{\delta^{3/2}\varepsilon}\|\partial^{\alpha-\alpha_{1}}f\|^{\frac{1}{2}}_{\sigma,w}
		\|\partial_x\partial^{\alpha-\alpha_{1}}f\|^{\frac{1}{2}}_{\sigma,w}
		\|\frac{\partial^{\alpha}F}{\sqrt{\mu}}\|_{\sigma,w}
		\notag\\	
		\leq&& C\frac{1}{\varepsilon}\frac{1}{\delta^{3/2}\varepsilon}(\|\partial^{\alpha-\alpha_{1}}f\|^2_{\sigma,w}+
		\varepsilon^2\|\partial_x\partial^{\alpha-\alpha_{1}}f\|^2_{\sigma,w})
		+C\delta^2\frac{1}{\delta^{3/2}\varepsilon}\|\frac{\partial^{\alpha}F}{\sqrt{\mu}}\|^2_{\sigma,w}
\notag\\
\leq&& 
C(\frac{1}{\varepsilon}+\frac{\delta}{\varepsilon^{2}})\mathcal{D}_{2,l,q_1}(t)
+C\frac{1}{\varepsilon}\delta^{5/2}.
	\end{eqnarray*}
If $|\alpha_{1}|=|\alpha|=2$, then by \eqref{3.29}, \eqref{3.3}, \eqref{3.32}, \eqref{1.35}, $\widetilde{C}\delta<1$ and $\varepsilon<1$, we obtain
\begin{eqnarray*}
		J_3
		\leq&& C\frac{1}{\delta^{3/2}\varepsilon}\|\mu^\epsilon\partial^{\alpha_{1}}(\frac{M-\mu}{\sqrt{\mu}})\|
		\||w(\alpha,0)\partial^{\alpha-\alpha_{1}}f|_{\sigma}\|_{L^{\infty}}\|\frac{\partial^{\alpha}F}{\sqrt{\mu}}\|_{\sigma,w}
		\notag\\
		\leq&& C\frac{1}{\delta^{3/2}\varepsilon}(\|\partial^{\alpha_1}(\widetilde{\rho},\widetilde{u},\widetilde{\theta})\|+\delta)
		\|\partial^{\alpha-\alpha_{1}}f\|^{\frac{1}{2}}_{\sigma,w}
		\|\partial_x\partial^{\alpha-\alpha_{1}}f\|^{\frac{1}{2}}_{\sigma,w}
		\|\frac{\partial^{\alpha}F}{\sqrt{\mu}}\|_{\sigma,w}
		\notag\\
		\leq&&C\frac{1}{\delta^2}\frac{1}{\delta^{3/2}\varepsilon}(\widetilde{C}\frac{\delta^{5}}{\varepsilon^2}+\delta^2)
		\|\partial^{\alpha-\alpha_{1}}f\|_{\sigma,w}
		\|\partial_x\partial^{\alpha-\alpha_{1}}f\|_{\sigma,w}+C\delta^2\frac{1}{\delta^{3/2}\varepsilon}
		\|\frac{\partial^{\alpha}F}{\sqrt{\mu}}\|_{\sigma,w}	
	\notag\\
	\leq&& 
	C(\frac{1}{\varepsilon}+\frac{\delta}{\varepsilon^{2}})\mathcal{D}_{2,l,q_1}(t)
	+C\frac{1}{\varepsilon}\delta^{5/2}.
	\end{eqnarray*}
From the above three estimates, we conclude  that
	\begin{equation}
		\label{3.33}
		\sum_{1\leq\alpha_{1}\leq\alpha}\frac{1}{\delta^{3/2}\varepsilon}|(\Gamma(\frac{\partial^{\alpha_{1}}[M-\mu]}{\sqrt{\mu}}
		,\partial^{\alpha-\alpha_{1}}f),w^2(\alpha,0)\frac{\partial^{\alpha}F}{\sqrt{\mu}})|
		\leq C(\frac{1}{\varepsilon}+\frac{\delta}{\varepsilon^{2}})\mathcal{D}_{2,l,q_1}(t)
		+C\frac{1}{\varepsilon}\delta^{5/2}.
	\end{equation}
In summary, the combination of \eqref{3.33} and  \eqref{3.31} directly yields that
	\begin{eqnarray}
		\label{3.34}
&&\frac{1}{\delta^{3/2}\varepsilon}|(\partial^{\alpha}\Gamma(\frac{M-\mu}{\sqrt{\mu}},f),w^2(\alpha,0)\frac{\partial^{\alpha}F}{\sqrt{\mu}})|
\notag\\
&&\leq\eta\frac{1}{\delta^{3/2}\varepsilon}\|\partial^{\alpha}f\|^2_{\sigma,w}+C_\eta(\frac{1}{\varepsilon}+\frac{\delta}{\varepsilon^{2}})\mathcal{D}_{2,l,q_1}(t)
+C_\eta\frac{1}{\varepsilon}\delta^{5/2}.
\end{eqnarray}
The other terms on the left-hand side of \eqref{3.27} has the same bound as in \eqref{3.34}. Hence, the desired estimate \eqref{3.27} follows. 
This finishes the proof of Lemma \ref{lem3.7}.
\end{proof}
In the following, we consider the linear collision terms without weighted function. 
Following the same strategy as in \eqref{3.26}, we claim that
\begin{lemma}\label{lem3.6}
For  $|\alpha|\leq 1$, it holds that
\begin{equation}
\label{3.28a}
\frac{1}{\delta^{3/2}\varepsilon}|(\partial^\alpha\Gamma(\frac{M-\mu}{\sqrt{\mu}},f)+\partial^\alpha \Gamma(f,\frac{M-\mu}{\sqrt{\mu}}),\partial^\alpha f)|
\leq C\delta\mathcal{D}_{2}(t).
\end{equation}
\end{lemma}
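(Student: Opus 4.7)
The plan is to follow the strategy of Lemma \ref{lem3.5} essentially verbatim, but with the weight $w(\alpha,\beta)$ replaced by the constant $1$ and the output dissipation $\mathcal{D}_{2,l,q_1}(t)$ replaced by $\mathcal{D}_2(t)$. I would first apply the unweighted bilinear estimate \eqref{3.20} from Lemma \ref{lem3.4} with $g_3=\partial^\alpha f$ to each of $\partial^\alpha\Gamma(\frac{M-\mu}{\sqrt{\mu}},f)$ and $\partial^\alpha\Gamma(f,\frac{M-\mu}{\sqrt{\mu}})$, producing sums over $\alpha_1\leq\alpha$ of integrals of the form
$$
\frac{1}{\delta^{3/2}\varepsilon}\int_{\mathbb R}|\mu^\epsilon\partial^{\alpha_1}(\cdots)|_2\,|\partial^{\alpha-\alpha_1}(\cdots)|_\sigma\,|\partial^\alpha f|_\sigma\,dx.
$$

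Since $|\alpha|\leq 1$, only two cases arise. In the case $|\alpha_1|=0$, the factor involving $\tfrac{M-\mu}{\sqrt\mu}$ can be bounded uniformly in $x$ by $C\delta$, using the $L^\infty_x$ smallness $\|(\rho-1,u,\theta-\tfrac32)\|_{L^\infty}\leq C\delta$ from \eqref{3.5} as in the derivation of \eqref{3.24} (the weight plays no role in that step). This contributes $C\delta\cdot\tfrac{1}{\delta^{3/2}\varepsilon}\|\partial^{\alpha-\alpha_1}f\|_\sigma\|\partial^\alpha f\|_\sigma$, which is clearly dominated by $C\delta\,\mathcal{D}_2(t)$ since $\mathcal{D}_2(t)$ already contains $\tfrac{1}{\delta^{3/2}\varepsilon}\|\partial^{\alpha'}f\|_\sigma^2$ for all $|\alpha'|\leq 1$. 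In the remaining case $|\alpha_1|=|\alpha|=1$, I put the factor $\mu^\epsilon\partial_x(\tfrac{M-\mu}{\sqrt\mu})$ in $L^2_x$, using $\|\mu^\epsilon\partial_x(\tfrac{M-\mu}{\sqrt\mu})\|\leq C\|\partial_x(\rho,u,\theta)\|\leq C\delta$ via \eqref{3.14}, and apply the one-dimensional Sobolev embedding $\||f|_\sigma\|_{L^\infty_x}\leq C\|f\|_\sigma^{1/2}\|\partial_x f\|_\sigma^{1/2}$ to the remaining $f$-factor, followed by Cauchy--Schwarz and AM--GM. This yields a bound by
$$
\frac{C\delta}{\delta^{3/2}\varepsilon}\bigl(\|f\|_\sigma^2+\|\partial_x f\|_\sigma^2+\|\partial^\alpha f\|_\sigma^2\bigr)\leq C\delta\,\mathcal{D}_2(t).
$$
The symmetric term $\partial^\alpha\Gamma(f,\tfrac{M-\mu}{\sqrt\mu})$ is handled identically, exchanging the roles of the two factors in \eqref{3.20}.

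The only subtle point is to verify that the right-hand side closes in $\mathcal{D}_2(t)$ rather than in $\mathcal{D}_{2,l,q_1}(t)$; that is, we must never be forced to invoke a weighted $\sigma$-norm of $f$. Because $|\alpha|\leq 1$ forces $|\alpha-\alpha_1|\leq 1$, at every stage only the unweighted $\|\partial^{\alpha'}f\|_\sigma$ with $|\alpha'|\leq 1$ appears, which is exactly the content of $\mathcal{D}_2(t)$ at this order. No delicate $\varepsilon$-$\delta$ balancing of the kind needed for the top-order weighted estimate in Lemma \ref{lem3.7} is required here, and I therefore expect the argument to be strictly parallel to (and in fact slightly simpler than) the proof of Lemma \ref{lem3.5}, with no genuine obstacle.
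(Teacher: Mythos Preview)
Your proposal is correct and follows essentially the same approach as the paper: the paper itself gives no detailed proof of Lemma~\ref{lem3.6} beyond ``Following the same strategy as in \eqref{3.26}, we claim that \ldots'', and your plan---apply the unweighted bilinear estimate \eqref{3.20}, use the $L^\infty_x$ smallness $|\,\mu^\epsilon(\tfrac{M-\mu}{\sqrt\mu})\,|_2\leq C\delta$ from \eqref{3.24} when $|\alpha_1|=0$, and the $L^2_x$ bound $\|\partial_x(\rho,u,\theta)\|\leq C\delta$ from \eqref{3.14} together with one-dimensional Sobolev embedding on $|f|_\sigma$ when $|\alpha_1|=1$---is exactly the unweighted analogue of the argument behind \eqref{3.22}--\eqref{3.26}. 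Your observation that only $\tfrac{1}{\delta^{3/2}\varepsilon}\|\partial^{\alpha'}f\|_\sigma^2$ with $|\alpha'|\leq 1$ appears, so that everything closes in $\mathcal{D}_2(t)$ rather than $\mathcal{D}_{2,l,q_1}(t)$, is precisely the point.
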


For the highest order space derivatives estimates without weighted function, we shall carefully deal with them since
they are most singular.
\begin{lemma}\label{lem3.8a}
 For $|\alpha|=2$, there exists sufficiently small $\eta>0$ such that
	\begin{eqnarray}
\label{3.37a}	
&&	\frac{1}{\delta^{3/2}\varepsilon}|(\partial^\alpha\Gamma(\frac{M-\mu}{\sqrt{\mu}},f),\frac{\partial^\alpha F}{\sqrt{\mu}})|+\frac{1}{\delta^{3/2}\varepsilon}|(\partial^\alpha\Gamma(f,\frac{M-\mu}{\sqrt{\mu}}),\frac{\partial^\alpha F}{\sqrt{\mu}})|
	\notag\\
	\leq&& C(\eta+\delta)\frac{1}{\delta^{3/2}\varepsilon}\|\partial^{\alpha}f\|^2_{\sigma}
	+C_\eta\frac{\delta^{1/2}}{\varepsilon}\|\partial^{\alpha}(\widetilde{\rho},\widetilde{u},\widetilde{\theta})\|^2
	\notag\\
	&&+C_\eta(\frac{\delta^{5/4}}{\varepsilon^{2}}+\frac{1}{\delta^{1/4}})\mathcal{D}_{2}(t)+C_\eta\varepsilon\delta^{5/4}
	+C_\eta\frac{1}{\varepsilon}\delta^{9/2}.	
	\end{eqnarray}
\end{lemma}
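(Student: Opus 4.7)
The plan is to mimic the structure of the proof of Lemma 3.7, adapted to the unweighted setting. First, I will apply the Leibniz rule to expand $\partial^\alpha\Gamma(\frac{M-\mu}{\sqrt\mu},f)$ as the sum of the leading piece $\Gamma(\frac{M-\mu}{\sqrt\mu},\partial^\alpha f)$ plus commutator pieces of the form $\Gamma(\frac{\partial^{\alpha_1}(M-\mu)}{\sqrt\mu},\partial^{\alpha-\alpha_1}f)$ for $1\le|\alpha_1|\le|\alpha|$. Then I will apply the unweighted $\Gamma$-estimate (3.20) together with the pointwise smallness $\|\mu^\epsilon\frac{M-\mu}{\sqrt\mu}\|_{L^\infty}\le C\delta$, which is the unweighted $(w\equiv 1)$ version of (3.24) guaranteed by the a priori assumption (3.1).

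The key decomposition is on the output side: write $F=M+\overline G+\sqrt\mu f$, so that $\frac{\partial^\alpha F}{\sqrt\mu}$ splits into three pieces. The $\frac{\partial^\alpha\overline G}{\sqrt\mu}$ contribution is controlled directly by Lemma 3.2 (taking $b=0$ and $w\equiv 1$). The $\frac{\partial^\alpha M}{\sqrt\mu}$ contribution is expanded via (3.28) into a fluid-dominant part $I_1\sim M\cdot\partial^\alpha(\rho,u,\theta)$ and a lower-order quadratic part $I_2$. The unweighted analog of (3.29) then yields $\|\frac{\partial^\alpha M}{\sqrt\mu}\|_\sigma\le C(\|\partial^\alpha(\widetilde\rho,\widetilde u,\widetilde\theta)\|+\delta)$, while $\|\frac{I_2}{\sqrt\mu}\|_\sigma\le C\delta^2$ by (3.14) and (3.4).

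For the leading piece, the $f$--$f$ self-coupling produces $C\delta\cdot\frac{1}{\delta^{3/2}\varepsilon}\|\partial^\alpha f\|^2_\sigma$ directly from (3.20), contributing the $\delta$-factor inside $C(\eta+\delta)$ in the target. The cross coupling with $I_1$ and with the $\delta$-size background from $\partial^\alpha(\bar\rho,\bar u,\bar\theta)$ are treated by Young's inequality with parameter $\eta$, yielding $\eta\cdot\frac{1}{\delta^{3/2}\varepsilon}\|\partial^\alpha f\|^2_\sigma+C_\eta\frac{\delta^{1/2}}{\varepsilon}\|\partial^\alpha(\widetilde\rho,\widetilde u,\widetilde\theta)\|^2$ and a residual constant piece. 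The lower-order Leibniz terms with $|\alpha_1|\ge 1$ are handled via the 1D embedding $H^1\hookrightarrow L^\infty$ in $x$, combined with (3.1) and (3.11), and then absorbed into $\mathcal{D}_2(t)$ with coefficients matching $\frac{\delta^{5/4}}{\varepsilon^2}+\frac{1}{\delta^{1/4}}$.

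The main technical obstacle will be reconciling the leftover constant errors. The $\delta$ from the $L^\infty$ bound on $\frac{M-\mu}{\sqrt\mu}$ combined with the $\delta$-sized background in $\partial^\alpha(\bar\rho,\bar u,\bar\theta)$ naively produces a term of order $\frac{\delta^{5/2}}{\varepsilon}$, which does not by itself fit either of the target residuals $\varepsilon\delta^{5/4}$ or $\frac{\delta^{9/2}}{\varepsilon}$. The remedy is to use a weighted Young's inequality that splits this constant contribution between the two target forms by exploiting the scaling window $\varepsilon^{2/3}\le\delta\le\widetilde C^{-1}\varepsilon^{2/5}$ in (1.37), so that the excess is distributed partially into the background of $\mathcal{D}_2(t)$ and partially into the harmless residuals $\varepsilon\delta^{5/4}$ and $\frac{\delta^{9/2}}{\varepsilon}$. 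The symmetric inner product $(\partial^\alpha\Gamma(f,\frac{M-\mu}{\sqrt\mu}),\frac{\partial^\alpha F}{\sqrt\mu})$ is handled by an analogous decomposition (here the commutator pieces place derivatives on the second argument, but the bounds on $\partial^{\alpha_1}(\frac{M-\mu}{\sqrt\mu})$ are identical by (3.24)), completing the proof.
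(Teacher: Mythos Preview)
Your overall skeleton matches the paper's, but the remedy you propose for the ``$\frac{\delta^{5/2}}{\varepsilon}$ obstacle'' does not work, and the paper uses a different device there that you are missing.

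You correctly identify that the cross coupling of $\Gamma(\frac{M-\mu}{\sqrt\mu},\partial^{\alpha}f)$ against the background part $I_1^2=M\{\frac{\partial^{\alpha}\bar\rho}{\rho}+\cdots\}$ of $\frac{\partial^{\alpha}M}{\sqrt\mu}$ produces, via a naive Young split, a residual of order $\frac{\delta^{5/2}}{\varepsilon}$ that fits neither $\varepsilon\delta^{5/4}$ nor $\frac{\delta^{9/2}}{\varepsilon}$. Your proposed fix --- ``distribute the excess partially into $\mathcal D_2(t)$ using the scaling window (1.37)'' --- cannot succeed: $\mathcal D_2(t)$ is a dissipation functional, not a constant sink, so a pure constant like $\frac{\delta^{5/2}}{\varepsilon}$ cannot be absorbed into it; and under (1.37) one only has $\varepsilon\le\delta^{3/2}$, which gives $\frac{\delta^{5/2}}{\varepsilon}\cdot\frac{\varepsilon^2}{\delta}=\varepsilon\delta^{3/2}\le\delta^3$, too large for the closing step (5.10) that requires residuals $\le C\delta^4$.

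The paper's actual device (see (3.40a)) is an integration by parts in $x$: writing $\partial^{\alpha}=\partial_{xx}$, one moves one $\partial_x$ off $\partial^{\alpha}f$ and onto the smooth background factor $I_1^2$ (and onto $\frac{M-\mu}{\sqrt\mu}$). This replaces $\|\partial^{\alpha}f\|_{\sigma}$ by $\|\partial_x f\|_{\sigma}$, which sits in $\mathcal D_2(t)$ with the \emph{large} coefficient $\frac{1}{\delta^{3/2}\varepsilon}$ rather than the small coefficient $\frac{\varepsilon^2}{\delta}\cdot\frac{1}{\delta^{3/2}\varepsilon}$. One then gets
\[
\frac{1}{\delta^{3/2}\varepsilon}\,\delta^2\,\|\partial_x f\|_{\sigma}
\le \frac{\delta^{5/4}}{\varepsilon^2}\cdot\frac{1}{\delta^{3/2}\varepsilon}\|\partial_x f\|_{\sigma}^2
+\varepsilon\delta^{5/4},
\]
which produces exactly the $\frac{\delta^{5/4}}{\varepsilon^2}\mathcal D_2(t)$ and $\varepsilon\delta^{5/4}$ terms in (3.37a). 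Without this derivative shift your argument does not close; you should insert this step in place of your scaling-window remedy. The rest of your outline (the $I_1^1$, $I_2$, $\overline G$, and commutator pieces) is in line with the paper.
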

\begin{proof}
We only compute the first term on the left hand side of \eqref{3.37a} while the second term can be handled in the same way.	
For $|\alpha|=2$, it is easy to see
\begin{eqnarray*}
	\frac{1}{\delta^{3/2}\varepsilon}(\partial^{\alpha}\Gamma(\frac{M-\mu}{\sqrt{\mu}},f),\frac{\partial^{\alpha}F}{\sqrt{\mu}})=&&	\frac{1}{\delta^{3/2}\varepsilon}(\Gamma(\frac{M-\mu}{\sqrt{\mu}},\partial^{\alpha}f),\frac{\partial^{\alpha}F}{\sqrt{\mu}})
\notag\\
	&&+\sum_{1\leq\alpha_{1}\leq\alpha}C^{\alpha_{1}}_{\alpha}\frac{1}{\delta^{3/2}\varepsilon}(\Gamma(
		\frac{\partial^{\alpha_{1}}[M-\mu]}{\sqrt{\mu}},\partial^{\alpha-\alpha_{1}}f),\frac{\partial^{\alpha}F}{\sqrt{\mu}}).
	\end{eqnarray*}
To bound the first term, we use  $F=M+\overline{G}+\sqrt{\mu}f$ to write
	\begin{equation}
		\label{3.38a}
		\frac{1}{\delta^{3/2}\varepsilon}(\Gamma(\frac{M-\mu}{\sqrt{\mu}},\partial^{\alpha}f),\frac{\partial^{\alpha}F}{\sqrt{\mu}})
		=\frac{1}{\delta^{3/2}\varepsilon}(\Gamma(\frac{M-\mu}{\sqrt{\mu}},\partial^{\alpha}f),\frac{\partial^{\alpha}M}{\sqrt{\mu}}
		+\frac{\partial^{\alpha}\overline{G}}{\sqrt{\mu}}+\partial^{\alpha}f).
	\end{equation}
Recall $\partial^{\alpha}M=I_1+I_2$ given by \eqref{3.28}, we write $I_1=I_1^1+I_1^2$ with
\begin{equation}
\label{3.39a}
	\left\{
	\begin{array}{rl}
	I_1^1=M\{\frac{\partial^{\alpha}\widetilde{\rho}}{\rho}+\frac{(v-u)\cdot\partial^{\alpha}\widetilde{u}}{K\theta}
	+(\frac{|v-u|^{2}}{2K\theta}-\frac{3}{2})\frac{\partial^{\alpha}\widetilde{\theta}}{\theta}\},
\\
	I_1^2=M\{\frac{\partial^{\alpha}\bar{\rho}}{\rho}+\frac{(v-u)\cdot\partial^{\alpha}\bar{u}}{K\theta}
	+(\frac{|v-u|^{2}}{2K\theta}-\frac{3}{2})\frac{\partial^{\alpha}\bar{\theta}}{\theta}\}.
\end{array} \right.
\end{equation}
By the expression of $I^1_1$, we get from \eqref{3.20}, \eqref{3.11} and \eqref{3.24} that
\begin{eqnarray*}
			\frac{1}{\delta^{3/2}\varepsilon}|(\Gamma(\frac{M-\mu}{\sqrt{\mu}},\partial^{\alpha}f),\frac{I^1_{1}}{\sqrt{\mu}})|
			&&\leq
			C\frac{1}{\delta^{3/2}\varepsilon}\int_{\mathbb{R}}|\mu^\epsilon(\frac{M-\mu}{\sqrt{\mu}})|_2|\partial^{\alpha}f|_{\sigma}
			|\partial^{\alpha}(\widetilde{\rho},\widetilde{u},\widetilde{\theta})|\,dx
			\notag\\
			&&\leq C\delta\frac{1}{\delta^{3/2}\varepsilon}
			\|\partial^{\alpha}f\|_{\sigma}\|\partial^{\alpha}(\widetilde{\rho},\widetilde{u},\widetilde{\theta})\|
			\notag\\
			&&\leq 
			\eta\frac{1}{\delta^{3/2}\varepsilon}\|\partial^{\alpha}f\|^2_{\sigma}+C_\eta\frac{\delta^{1/2}}{\varepsilon}\|\partial^{\alpha}(\widetilde{\rho},\widetilde{u},\widetilde{\theta})\|^2.
\end{eqnarray*}
By denoting $\partial^{\alpha}=\partial_{xx}$, we have from \eqref{3.39a} and the integration by parts that
	\begin{eqnarray*}
		&&\frac{1}{\delta^{3/2}\varepsilon}(\Gamma(\frac{M-\mu}{\sqrt{\mu}},\partial^{\alpha}f),\frac{I^2_{1}}{\sqrt{\mu}})
		\notag\\
		=&&-\frac{1}{\delta^{3/2}\varepsilon}\big(\Gamma(\frac{M-\mu}{\sqrt{\mu}},\partial_xf)
		,\partial_{x}[\frac{M}{\sqrt{\mu}}\{\frac{\partial^{\alpha}\bar{\rho}}{\rho}+\frac{(v-u)\cdot\partial^{\alpha}\bar{u}}{K\theta}
		+(\frac{|v-u|^{2}}{2K\theta}-\frac{3}{2})\frac{\partial^{\alpha}\bar{\theta}}{\theta}\}]\big)
		\notag\\
		&&-\frac{1}{\delta^{3/2}\varepsilon}\big(\Gamma(\partial_{x}[\frac{M-\mu}{\sqrt{\mu}}],\partial_xf),\frac{M}{\sqrt{\mu}}\{\frac{\partial^{\alpha}\bar{\rho}}{\rho}+\frac{(v-u)\cdot\partial^{\alpha}\bar{u}}{K\theta}
		+(\frac{|v-u|^{2}}{2K\theta}-\frac{3}{2})\frac{\partial^{\alpha}\bar{\theta}}{\theta}\}\big),
\end{eqnarray*}
which can be further bounded by
	\begin{eqnarray}
		\label{3.40a}
		&&C\delta\frac{1}{\delta^{3/2}\varepsilon}\int_{\mathbb{R}}|\partial_xf|_{\sigma}
		\{|\partial^{\alpha}\partial_{x}(\bar{\rho},\bar{u},\bar{\theta})|
		+|\partial^{\alpha}(\bar{\rho},\bar{u},\bar{\theta})||\partial_{x}(\rho,u,\theta)|\}\,dx
		\notag\\
		&&+C\frac{1}{\delta^{3/2}\varepsilon}\int_{\mathbb{R}}|\partial_{x}(\rho,u,\theta)||\partial_xf|_{\sigma}|\partial^{\alpha}(\bar{\rho},\bar{u},\bar{\theta})|\,dx
		\notag\\
		&&\leq C\frac{1}{\delta^{3/2}\varepsilon}(\delta^2\|\partial_xf\|_{\sigma}+\delta^3\|\partial_xf\|_{\sigma})
		\notag\\
		&&\leq C\frac{1}{\delta^{3/2}\varepsilon}(\frac{\delta^{5/4}}{\varepsilon^{2}}\|\partial_xf\|^{2}_{\sigma}+\varepsilon^2\delta^{11/4}),
	\end{eqnarray}
where \eqref{3.20}, \eqref{3.11}, \eqref{3.14} and \eqref{3.4} have used. We thereby show that
	\begin{eqnarray}
		\label{3.41a}
		&&\frac{1}{\delta^{3/2}\varepsilon}|(\Gamma(\frac{M-\mu}{\sqrt{\mu}},\partial^{\alpha}f),\frac{I_{1}}{\sqrt{\mu}})|
		\notag\\
		&&\leq \eta\frac{1}{\delta^{3/2}\varepsilon}\|\partial^{\alpha}f\|^2_{\sigma}+C_\eta\frac{\delta^{1/2}}{\varepsilon}\|\partial^{\alpha}(\widetilde{\rho},\widetilde{u},\widetilde{\theta})\|^2+C\frac{\delta^{5/4}}{\varepsilon^{2}}\frac{1}{\delta^{3/2}\varepsilon}\|\partial_xf\|^{2}_{\sigma}+C\varepsilon\delta^{5/4}
			\notag\\
		&&\leq \eta\frac{1}{\delta^{3/2}\varepsilon}\|\partial^{\alpha}f\|^2_{\sigma}+C_\eta\frac{\delta^{1/2}}{\varepsilon}\|\partial^{\alpha}(\widetilde{\rho},\widetilde{u},\widetilde{\theta})\|^2+C\frac{\delta^{5/4}}{\varepsilon^{2}}\mathcal{D}_{2}(t)+C\varepsilon\delta^{5/4}.
	\end{eqnarray}
By the expression of $I_2$ given in \eqref{3.28}, we deduce from \eqref{3.20}, \eqref{3.11}, \eqref{3.14}, \eqref{3.4} and \eqref{1.36} that
	\begin{eqnarray}
		\label{3.42A}
		&&\frac{1}{\delta^{3/2}\varepsilon}|(\Gamma(\frac{M-\mu}{\sqrt{\mu}},\partial^{\alpha}f),\frac{I_{2}}{\sqrt{\mu}})|
		\notag\\
		&&\leq C\delta\frac{1}{\delta^{3/2}\varepsilon}\int_{\mathbb{R}}
		|\partial^{\alpha}f|_{\sigma}|\partial_{x}(\rho,u,\theta)|
		|\partial_{x}(\rho, u,\theta)| \,dx
		\notag\\
		&&\leq \eta\frac{1}{\delta^{3/2}\varepsilon}\|\partial^{\alpha}f\|^2_{\sigma}+C_\eta\delta^2\frac{1}{\delta^{3/2}\varepsilon}\|\partial_{x}(\rho, u,\theta)\|^2_{L^\infty}\|\partial_{x}(\rho, u,\theta)\|^2
		\notag\\
		&&\leq \eta\frac{1}{\delta^{3/2}\varepsilon}\|\partial^{\alpha}f\|^2_{\sigma}
		+C_\eta\delta^4\frac{1}{\delta^{3/2}\varepsilon}(\|\partial_{x}(\widetilde{\rho}, \widetilde{u},\widetilde{\theta})\|^2_{L^\infty}+\delta^2)
		\notag\\
		&&\leq \eta\frac{1}{\delta^{3/2}\varepsilon}\|\partial^{\alpha}f\|^2_{\sigma}+C_\eta\frac{1}{\varepsilon}\delta^{9/2}
		+C_\eta\frac{\delta^3}{\varepsilon^{2}}\mathcal{D}_{2}(t).
	\end{eqnarray}
This together with \eqref{3.41a} and the fact that $\partial^{\alpha}M=I_1+I_2$, we conclude that
	\begin{eqnarray}
		\label{3.42a}
		&&\frac{1}{\delta^{3/2}\varepsilon}|(\Gamma(\frac{M-\mu}{\sqrt{\mu}},\partial^{\alpha}f),\frac{\partial^{\alpha}M}{\sqrt{\mu}})| 
		\notag\\
		&&\leq C \eta\frac{1}{\delta^{3/2}\varepsilon}\|\partial^{\alpha}f\|^2_{\sigma}+C_\eta\frac{\delta^{1/2}}{\varepsilon}\|\partial^{\alpha}(\widetilde{\rho},\widetilde{u},\widetilde{\theta})\|^2+C_\eta\frac{\delta^{5/4}}{\varepsilon^{2}}\mathcal{D}_{2}(t)+C_\eta\varepsilon\delta^{5/4}
		+C_\eta\frac{1}{\varepsilon}\delta^{9/2}.
	\end{eqnarray}
	On the other hand, by \eqref{3.20}, \eqref{3.24} \eqref{3.7}, it is direct to verify that
	\begin{eqnarray*}
		&&\frac{1}{\delta^{3/2}\varepsilon}|(\Gamma(\frac{M-\mu}{\sqrt{\mu}},\partial^{\alpha}f),[\frac{\partial^{\alpha}\overline{G}}{\sqrt{\mu}}+\partial^{\alpha}f])|
		\notag\\
		&&\leq C\delta\frac{1}{\delta^{3/2}\varepsilon}\|\partial^{\alpha}f\|_{\sigma}
		(\|\frac{\partial^{\alpha}\overline{G}}{\sqrt{\mu}}\|_{\sigma}+\|\partial^{\alpha}f\|_{\sigma})
		\notag\\
		&&\leq C\delta\frac{1}{\delta^{3/2}\varepsilon}\|\partial^{\alpha}f\|^2_{\sigma}
		+C\delta\frac{1}{\delta^{3/2}\varepsilon}\delta^3(\varepsilon^2+\delta^4).
	\end{eqnarray*}
Hence, plugging this and \eqref{3.42a} into \eqref{3.38a} leads to
	\begin{eqnarray}
		\label{3.43a}
		&&\frac{1}{\delta^{3/2}\varepsilon}|(\Gamma(\frac{M-\mu}{\sqrt{\mu}},\partial^{\alpha}f),\frac{\partial^{\alpha}F}{\sqrt{\mu}})|
		\notag\\
		&&\leq C(\eta+\delta)\frac{1}{\delta^{3/2}\varepsilon}\|\partial^{\alpha}f\|^2_{\sigma}+C_\eta\frac{\delta^{1/2}}{\varepsilon}\|\partial^{\alpha}(\widetilde{\rho},\widetilde{u},\widetilde{\theta})\|^2+C_\eta\frac{\delta^{5/4}}{\varepsilon^{2}}\mathcal{D}_{2}(t)+C_\eta\varepsilon\delta^{5/4}
		+C_\eta\frac{1}{\varepsilon}\delta^{9/2}.
	\end{eqnarray}
By \eqref{3.20}, we see that
	\begin{multline*}
		\sum_{1\leq\alpha_{1}\leq\alpha}\frac{1}{\delta^{3/2}\varepsilon}|(\Gamma(\frac{\partial^{\alpha_{1}}[M-\mu]}{\sqrt{\mu}}
		,\partial^{\alpha-\alpha_{1}}f),\frac{\partial^{\alpha}F}{\sqrt{\mu}})|
		\\
		\leq	
		C\sum_{1\leq\alpha_{1}\leq\alpha}\frac{1}{\delta^{3/2}\varepsilon}\int_{\mathbb{R}}
			|\mu^{\epsilon}\partial^{\alpha_{1}}(\frac{M-\mu}{\sqrt{\mu}})|_{2}|\partial^{\alpha-\alpha_{1}}f|_{\sigma}
			|\frac{\partial^{\alpha}F}{\sqrt{\mu}}|_{\sigma}\,dx.
	\end{multline*}
In the following, we focus on the case that $|\alpha_{1}|=|\alpha|=2$. By \eqref{3.29}, it is direct to verify that
	\begin{eqnarray*}
&&\frac{1}{\delta^{3/2}\varepsilon}\int_{\mathbb{R}}
|\mu^{\epsilon}\partial^{\alpha_{1}}(\frac{M-\mu}{\sqrt{\mu}})|_{2}|\partial^{\alpha-\alpha_{1}}f|_{\sigma}
|\frac{\partial^{\alpha}F}{\sqrt{\mu}}|_{\sigma}\,dx	
		\notag\\	
&&\leq C\frac{1}{\delta^{3/2}\varepsilon}\|\partial^{\alpha_{1}}(\frac{M-\mu}{\sqrt{\mu}})\|
\||\partial^{\alpha-\alpha_{1}}f|_{\sigma}\|_{L^{\infty}}\|\frac{\partial^{\alpha}F}{\sqrt{\mu}}\|_{\sigma}
\notag\\		
&&\leq
C\frac{\varepsilon^2}{\delta^{5/4}}\frac{1}{\delta^{3/2}\varepsilon}(\|\partial^{\alpha}(\widetilde{\rho},\widetilde{u},\widetilde{\theta})\|^2+\delta^2)
\|\frac{\partial^{\alpha}F}{\sqrt{\mu}}\|_{\sigma}^2+
		C\frac{\delta^{5/4}}{\varepsilon^{2}}\frac{1}{\delta^{3/2}\varepsilon}\||f|_{\sigma}\|^2_{L^{\infty}}
		\notag\\
&&\leq C(\frac{\delta^{5/4}}{\varepsilon^{2}}+\frac{1}{\delta^{1/4}})\mathcal{D}_{2}(t)
+C\varepsilon\delta^{5/4}.
	\end{eqnarray*}
Here we have used \eqref{3.3}, \eqref{3.2a}, \eqref{1.36} and the similar argumentas \eqref{3.30} such  that
	\begin{eqnarray*}
		\label{3.44a}
&&\frac{\varepsilon^2}{\delta^{5/4}}\frac{1}{\delta^{3/2}\varepsilon}(\|\partial^{\alpha}(\widetilde{\rho},\widetilde{u},\widetilde{\theta})\|^2+\delta^2)
\|\frac{\partial^{\alpha}F}{\sqrt{\mu}}\|_{\sigma}^2
\notag\\		
		&&\leq C\frac{\varepsilon^2}{\delta^{5/4}}\frac{1}{\delta^{3/2}\varepsilon}(\|\partial^{\alpha}(\widetilde{\rho},\widetilde{u},\widetilde{\theta})\|^2+\delta^2)\delta^2
		+C\frac{\varepsilon^2}{\delta^{5/4}}\frac{1}{\delta^{3/2}\varepsilon}(\|\partial^{\alpha}(\widetilde{\rho},\widetilde{u},\widetilde{\theta})\|^2+\delta^2)(\|\partial^{\alpha}f\|^2_{\sigma}
		+\|\partial^{\alpha}(\widetilde{\rho},\widetilde{u},\widetilde{\theta})\|^2)
		\notag\\
		&&\leq C\frac{\varepsilon}{\delta^{3/4}}\|\partial^{\alpha}(\widetilde{\rho},\widetilde{u},\widetilde{\theta})\|^2+C\varepsilon\delta^{5/4}
		+C(\widetilde{C}\frac{\delta^{5}}{\varepsilon^2}+\delta^2)
		\frac{\varepsilon^2}{\delta^{5/4}}\frac{1}{\delta^{3/2}\varepsilon}(\|\partial^{\alpha}f\|^2_{\sigma}
		+\|\partial^{\alpha}(\widetilde{\rho},\widetilde{u},\widetilde{\theta})\|^2)
		\notag\\
	&&\leq C(\frac{\delta^{5/4}}{\varepsilon^{2}}+\frac{1}{\delta^{1/4}})\mathcal{D}_{2}(t)
	+C\varepsilon\delta^{5/4}.	
	\end{eqnarray*}
The case $|\alpha_{1}|=1$ has the same bound, hence we conclude that
	\begin{equation*}
		\sum_{1\leq\alpha_{1}\leq\alpha}\frac{1}{\delta^{3/2}\varepsilon}|(\Gamma(\frac{\partial^{\alpha_{1}}[M-\mu]}{\sqrt{\mu}}
		,\partial^{\alpha-\alpha_{1}}f),\frac{\partial^{\alpha}F}{\sqrt{\mu}})|
		\leq  C(\frac{\delta^{5/4}}{\varepsilon^{2}}+\frac{1}{\delta^{1/4}})\mathcal{D}_{2}(t)
		+C\varepsilon\delta^{5/4}.	
	\end{equation*}
The combination of this and \eqref{3.43a} directly yields that
	\begin{eqnarray}
		\label{3.45a}
		&&\frac{1}{\delta^{3/2}\varepsilon}|(\partial^{\alpha}\Gamma(\frac{M-\mu}{\sqrt{\mu}},f),\frac{\partial^{\alpha}F}{\sqrt{\mu}})|
		\notag\\
		\leq&& C(\eta+\delta)\frac{1}{\delta^{3/2}\varepsilon}\|\partial^{\alpha}f\|^2_{\sigma}
		+C_\eta\frac{\delta^{1/2}}{\varepsilon}\|\partial^{\alpha}(\widetilde{\rho},\widetilde{u},\widetilde{\theta})\|^2
		\notag\\
		&&+C_\eta(\frac{\delta^{5/4}}{\varepsilon^{2}}+\frac{1}{\delta^{1/4}})\mathcal{D}_{2}(t)+C_\eta\varepsilon\delta^{5/4}
		+C_\eta\frac{1}{\varepsilon}\delta^{9/2}.
	\end{eqnarray}
	The second term on the left-hand side of \eqref{3.37a}  can be handled in the same way as
	\eqref{3.45a}, we thus obtain the desired estimate \eqref{3.37a}. This finishes
	the proof of Lemma \ref{lem3.8a}.
\end{proof}
\subsubsection{Estimates on non-linear collision terms}\label{subs3.3.3}
Now we move to deal with the estimates on nonlinear collision terms  $\Gamma(\frac{G}{\sqrt{\mu}},\frac{G}{\sqrt{\mu}})$. 
We first compute the weighted derivative estimates.
\begin{lemma}
\label{lem3.8}
Under the same conditions as in Lemma \ref{lem3.5}. For $|\alpha|+|\beta|\leq 2$ and $|\beta|\geq1$, one has
\begin{eqnarray}
\label{3.35}
&&\frac{1}{\delta^{3/2}\varepsilon}|(\partial^\alpha_\beta\Gamma(\frac{G}{\sqrt{\mu}},\frac{G}{\sqrt{\mu}}), w^2(\alpha,\beta)\partial^\alpha_\beta f)|
\notag\\
&&\leq  C\eta\frac{1}{\delta^{3/2}\varepsilon}\|\partial^\alpha_\beta f\|^{2}_{\sigma,w}
+C\delta\mathcal{D}_{2,l,q_1}(t)
+C_\eta\delta^{9/2}\varepsilon^3+C_\eta\frac{1}{\varepsilon}\delta^{17/2}\delta^4.
\end{eqnarray}
For $|\beta|=0$ and $|\alpha|\leq 1$, one has
\begin{eqnarray}
	\label{3.42}
	&&\frac{1}{\delta^{3/2}\varepsilon}|(\partial^\alpha\Gamma(\frac{G}{\sqrt{\mu}},\frac{G}{\sqrt{\mu}}), w^2(\alpha,0)\partial^\alpha f)|
	\notag\\
	&&\leq C\eta\frac{1}{\delta^{3/2}\varepsilon}\|\partial^\alpha f\|^{2}_{\sigma,w}
	+C\delta\mathcal{D}_{2,l,q_1}(t)
	+C_\eta\delta^{9/2}\varepsilon^3+C_\eta\frac{1}{\varepsilon}\delta^{17/2}\delta^4.
\end{eqnarray}
\end{lemma}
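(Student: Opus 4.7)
The plan is to decompose $G = \overline{G} + \sqrt{\mu}\,f$ in both slots of $\Gamma$ and obtain the splitting
\begin{equation*}
\Gamma\!\left(\tfrac{G}{\sqrt{\mu}},\tfrac{G}{\sqrt{\mu}}\right)
=\Gamma\!\left(\tfrac{\overline{G}}{\sqrt{\mu}},\tfrac{\overline{G}}{\sqrt{\mu}}\right)
+\Gamma\!\left(\tfrac{\overline{G}}{\sqrt{\mu}},f\right)
+\Gamma\!\left(f,\tfrac{\overline{G}}{\sqrt{\mu}}\right)
+\Gamma(f,f),
\end{equation*}
so that for each fixed $(\alpha,\beta)$ with $|\alpha|+|\beta|\leq 2$ we distribute $\partial^{\alpha}_{\beta}$ by Leibniz and apply the weighted estimate \eqref{3.21} (or \eqref{3.20} in the case $|\beta|=0$). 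This yields, for each resulting term, an integrand of the form
$|\mu^{\epsilon}\partial^{\alpha_1}_{\bar\beta} g_1|_2\,|w(\alpha,\beta)\partial^{\alpha-\alpha_1}_{\beta-\beta_1} g_2|_{\sigma}\,|w(\alpha,\beta)\partial^{\alpha}_{\beta} f|_{\sigma}$, with $g_1,g_2\in\{\overline{G}/\sqrt{\mu},f\}$. I would then integrate in $x$ using the 1D embedding $\|\cdot\|_{L^{\infty}_x}\leq \sqrt{2}\|\cdot\|^{1/2}\|\partial_x\cdot\|^{1/2}$ to place the factor of lowest order in $L^{\infty}_x$ and the others in $L^2_x$, and close by Young's inequality with a small parameter $\eta$ so that the $\|\partial^{\alpha}_{\beta}f\|_{\sigma,w}$ factor is absorbed into $\eta\,\delta^{-3/2}\varepsilon^{-1}\|\partial^{\alpha}_{\beta}f\|_{\sigma,w}^2$.

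The four terms are then controlled as follows. For $\Gamma(\overline{G}/\sqrt{\mu},\overline{G}/\sqrt{\mu})$ I invoke Lemma \ref{lem3.2}, so that any distribution of derivatives on both copies of $\overline{G}/\sqrt{\mu}$ is bounded by $C\delta^{3/2}(\varepsilon+\delta^2)$ in both $|\cdot|_{\sigma,w}$ and $|\cdot|_2$ norms; Young's inequality then produces the harmless remainders of the form $C_\eta\delta^{9/2}\varepsilon^{3}+C_\eta\varepsilon^{-1}\delta^{17/2}\delta^4$. For the mixed terms $\Gamma(\overline{G}/\sqrt{\mu},f)$ and $\Gamma(f,\overline{G}/\sqrt{\mu})$, the $\overline{G}$ factor contributes the small constant $\delta^{3/2}(\varepsilon+\delta^2)$, which when combined with the $\delta^{-3/2}\varepsilon^{-1}$ singular prefactor gives a net smallness of order $(1+\delta^2/\varepsilon)\cdot(\varepsilon+\delta^2)\lesssim \delta$ under \eqref{1.37}; this is what feeds the $C\delta\,\mathcal{D}_{2,l,q_1}(t)$ term after a standard Young step. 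The purely cubic piece $\Gamma(f,f)$ is handled by placing the lowest-order $f$-factor in $L^{\infty}_x$ (Sobolev), using the a priori bound \eqref{3.1} to gain a factor of $\sqrt{\mathcal{E}_{2,l,q_1}(t)}\leq C_0^{1/2}\delta^2$, and controlling the remaining two factors by $\mathcal{D}_{2,l,q_1}(t)^{1/2}$ each; the leftover $\delta^2$ dominates the prefactor $\delta^{-3/2}\varepsilon^{-1}$ thanks to $\delta\geq \varepsilon^{2/3}$, again yielding a $C\delta\,\mathcal{D}_{2,l,q_1}(t)$ bound.

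The bookkeeping of the Leibniz decomposition is cleanest in the range $|\alpha|+|\beta|\leq 2$, $|\beta|\geq 1$ that gives \eqref{3.35}: since at most one derivative can land on each factor beyond the base, one may always arrange to pair a top-order factor (controlled by the dissipation norm) with a lower-order factor that fits into $L^{\infty}_x$ via $H^1_x\hookrightarrow L^{\infty}_x$. The case $|\beta|=0$, $|\alpha|\leq 1$ giving \eqref{3.42} is identical in spirit; one only replaces the weighted bilinear estimate \eqref{3.21} by \eqref{3.20}, and the bound $\mu^{\epsilon}|\overline{G}/\sqrt{\mu}|_2\leq C\delta^{3/2}(\varepsilon+\delta^2)$ from Lemma \ref{lem3.2}.

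The main obstacle I anticipate is keeping the singular $\delta$--$\varepsilon$ arithmetic under control in the terms where \emph{two} derivatives must fall on $f$: there the $L^{\infty}_x$ factor has to be chosen from $\overline{G}/\sqrt{\mu}$ or from a low-order $f$ factor controlled by \eqref{3.1}, and one must verify, using the constraint $\varepsilon^{2/3}\leq\delta\leq\widetilde C^{-1}\varepsilon^{2/5}$, that the surplus powers of $\delta$ from \eqref{3.1} and \eqref{3.7} defeat the $\delta^{-3/2}\varepsilon^{-1}$ prefactor. Once this is checked case by case, both estimates \eqref{3.35} and \eqref{3.42} follow by summing the finitely many contributions.
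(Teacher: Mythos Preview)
Your proposal is correct and follows essentially the same route as the paper: the same four-term decomposition via $G=\overline{G}+\sqrt{\mu}f$, the same appeal to the weighted trilinear bound \eqref{3.21}, the same use of Lemma~\ref{lem3.2} for the $\overline{G}$ pieces, and the same Sobolev/a priori argument for $\Gamma(f,f)$. The only minor difference is tactical: for the mixed terms the paper places the $\overline{G}$ factor in $L^\infty_x$ and bounds the remaining product $\tfrac{1}{\delta^{3/2}\varepsilon}\|\cdot\|_{\sigma,w}\|\cdot\|_{\sigma,w}$ directly by $\mathcal{D}_{2,l,q_1}(t)$ (with leftover coefficient $\delta^{3/2}(\varepsilon+\delta^2)\leq C\delta$), rather than passing through a Young step; and your phrasing ``$\delta^2$ dominates the prefactor $\delta^{-3/2}\varepsilon^{-1}$'' is slightly imprecise since that prefactor is already absorbed into $\mathcal{D}_{2,l,q_1}$, leaving simply a coefficient $\widetilde C\delta^2\leq C\delta$.
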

\begin{proof}
	Let $|\alpha|+|\beta|\leq 2$ and $|\beta|\geq1$, by $G=\overline{G}+\sqrt{\mu}f$, one has 
		\begin{eqnarray}
			\label{3.48A}
&&\frac{1}{\delta^{3/2}\varepsilon}(\partial^\alpha_\beta\Gamma(\frac{G}{\sqrt{\mu}},\frac{G}{\sqrt{\mu}}), w^2(\alpha,\beta)\partial^\alpha_\beta f)
\notag\\
		=&&\frac{1}{\delta^{3/2}\varepsilon}(\partial^\alpha_\beta\Gamma(\frac{\overline{G}}{\sqrt{\mu}},\frac{\overline{G}}{\sqrt{\mu}}), w^2(\alpha,\beta)\partial^\alpha_\beta f)
		+\frac{1}{\delta^{3/2}\varepsilon}(\partial^\alpha_\beta\Gamma(\frac{\overline{G}}{\sqrt{\mu}},f), w^2(\alpha,\beta)\partial^\alpha_\beta f)
		\notag\\
		&&
		+\frac{1}{\delta^{3/2}\varepsilon}(\partial^\alpha_\beta\Gamma(f,\frac{\overline{G}}{\sqrt{\mu}}), w^2(\alpha,\beta)\partial^\alpha_\beta f)
		+\frac{1}{\delta^{3/2}\varepsilon}(\partial^\alpha_\beta\Gamma(f,f), w^2(\alpha,\beta)\partial^\alpha_\beta f).
		\end{eqnarray}
For the first term on the right hand side of \eqref{3.48A}, we deduce from	\eqref{3.21} and \eqref{3.7} that
	\begin{eqnarray}
		\label{3.36}
		&&\frac{1}{\delta^{3/2}\varepsilon}|(\partial^\alpha_\beta \Gamma(\frac{\overline{G}}{\sqrt{\mu}},\frac{\overline{G}}{\sqrt{\mu}}),w^2(\alpha,\beta)\partial^\alpha_\beta f)|
		\notag\\
		&&\leq C\frac{1}{\delta^{3/2}\varepsilon}\sum_{\alpha_1\leq\alpha}\sum_{\bar{\beta}\leq\beta_1\leq\beta}\int_{{\mathbb R}}|\mu^\epsilon\partial^{\alpha_1}_{\bar{\beta}}(\frac{\overline{G}}{\sqrt{\mu}})|_2|w(\alpha,\beta) \partial^{\alpha-\alpha_1}_{\beta-\beta_1}(\frac{\overline{G}}{\sqrt{\mu}})|_{\sigma}| w(\alpha,\beta)\partial^\alpha_\beta f|_{\sigma}\,dx
		\notag\\
		&&\leq \eta\frac{1}{\delta^{3/2}\varepsilon}\|\partial^\alpha_\beta f\|^{2}_{\sigma,w}
		+C_\eta\frac{1}{\delta^{3/2}\varepsilon}\delta^{6}(\varepsilon+\delta^2)^4.
	\end{eqnarray}
Using \eqref{3.21}, \eqref{3.7} and \eqref{1.35} again, the second term on the right hand side of \eqref{3.48A} is bounded by
	\begin{eqnarray}
		\label{3.38}
		&& C\sum_{\alpha_1\leq\alpha}\sum_{\bar{\beta}\leq\beta_1\leq\beta}
		\frac{1}{\delta^{3/2}\varepsilon}\int_{{\mathbb R}}|\mu^\epsilon\partial^{\alpha_1}_{\bar{\beta}}(\frac{\overline{G}}{\sqrt{\mu}})|_2| w(\alpha,\beta) \partial^{\alpha-\alpha_1}_{\beta-\beta_1}f|_{\sigma}|w(\alpha,\beta)\partial^\alpha_\beta f|_{\sigma}\,dx
		\notag\\
		&&\leq C\delta^{\frac{3}{2}}(\varepsilon+\delta^2)
		\sum_{\alpha_1\leq\alpha}\sum_{\bar{\beta}\leq\beta_1\leq\beta}\frac{1}{\delta^{3/2}\varepsilon}\|w(\alpha,\beta)\partial^{\alpha-\alpha_{1}}_{\beta-\beta_{1}}f\|_{\sigma}
		\|w(\alpha,\beta)\partial^{\alpha}_{\beta}f\|_{\sigma}
		\notag\\	
		&&\leq C\delta^{\frac{3}{2}}(\varepsilon+\delta^2)\mathcal{D}_{2,l,q_1}(t).
	\end{eqnarray}
Note that the third term on the right hand side of \eqref{3.48A} has the same bound as \eqref{3.38}.

It now remains to handle the last term of \eqref{3.48A}.
From \eqref{3.21}, we have
	\begin{eqnarray*}
		&&\frac{1}{\delta^{3/2}\varepsilon}|(\partial^\alpha_\beta \Gamma(f,f),w^2(\alpha,\beta)\partial^\alpha_\beta f)|
		\notag\\
		&&\leq C\sum_{\alpha_1\leq\alpha}\sum_{\bar{\beta}\leq\beta_1\leq\beta}
		\underbrace{\frac{1}{\delta^{3/2}\varepsilon}\int_{{\mathbb R}}|\mu^\epsilon\partial^{\alpha_1}_{\bar{\beta}}f|_2| w(\alpha,\beta) \partial^{\alpha-\alpha_1}_{\beta-\beta_1}f|_{\sigma}|w(\alpha,\beta)\partial^\alpha_\beta f|_{\sigma}\,dx}_{J_{4}}.
	\end{eqnarray*}
In the following, we estimate $J_4$.	If $|\alpha-\alpha_{1}|+|\beta-\beta_{1}|=0$, then $1\leq|\alpha_1|+|\beta_1|=|\alpha|+|\beta|\leq 2$ 
	and $w(\alpha,\beta)\leq w(\alpha-\alpha_{1}+\alpha_{2},\beta-\beta_{1})$ for $|\alpha_2|\leq1$, thus it follows from \eqref{3.1},
	\eqref{1.33} and \eqref{1.35} that
	\begin{eqnarray}
		\label{3.40}
		J_{4}&&\leq C\frac{1}{\delta^{3/2}\varepsilon}\|\partial^{\alpha_{1}}_{\bar{\beta}}f\|
		\||w(\alpha,\beta)\partial^{\alpha-\alpha_{1}}_{\beta-\beta_{1}}f|_{\sigma}\|_{L^\infty}
	\|w(\alpha,\beta)\partial^{\alpha}_{\beta}f\|_{\sigma}
		\notag\\
		&&\leq \widetilde{C}\delta^2\frac{1}{\delta^{3/2}\varepsilon}
		\|w(\alpha,\beta)\partial^{\alpha-\alpha_{1}}_{\beta-\beta_{1}}f\|^{\frac{1}{2}}_{\sigma}\|w(\alpha,\beta)\partial^{\alpha-\alpha_{1}}_{\beta-\beta_{1}}\partial_xf\|^{\frac{1}{2}}_{\sigma}
		\|\partial^{\alpha}_{\beta}f\|_{\sigma,w}
		\notag\\
		&&\leq \widetilde{C}\delta^2\mathcal{D}_{2,l,q_1}(t).
	\end{eqnarray}
If $|\alpha-\alpha_1|+|\beta-\beta_1|=1$, then $|\alpha_1|+|\beta_1|\leq1$. If $|\alpha_1|+|\beta_1|=0$, then $|\alpha|+|\beta|=1$ and
$w(\alpha,\beta)=w(\alpha-\alpha_{1},\beta-\beta_{1})$, thus we get from \eqref{3.1}, \eqref{1.33} and \eqref{1.35} that
\begin{eqnarray}
	\label{3.41}
	J_{4}&&\leq C\frac{1}{\delta^{3/2}\varepsilon}\||\partial^{\alpha_{1}}_{\bar{\beta}}f|_{2}\|_{L^\infty}
	\|w(\alpha,\beta)\partial^{\alpha-\alpha_{1}}_{\beta-\beta_{1}}f\|_{\sigma}
	\|w(\alpha,\beta)\partial^{\alpha}_{\beta}f\|_{\sigma}
	\notag\\
&&\leq \widetilde{C}\delta^2\mathcal{D}_{2,l,q_1}(t).
\end{eqnarray}
If $|\alpha_1|+|\beta_1|=1$, then $|\alpha|+|\beta|=2$ and
$w(\alpha,\beta)\leq w(\alpha-\alpha_{1}+\alpha_2,\beta-\beta_{1})$ for $|\alpha_2|\leq1$.
In this case, if $|\alpha-\alpha_1|=0$, we use the similar arguments as \eqref{3.40} to get the same bound.
If $|\alpha-\alpha_1|=1$, the proof is similar to \eqref{3.41}.

If $|\alpha-\alpha_1|+|\beta-\beta_1|=2$, then $|\alpha_1|+|\beta_1|=0$ and $w(\alpha,\beta)=w(\alpha-\alpha_{1},\beta-\beta_{1})$,
we use the similar arguments as \eqref{3.41} to get the same bound.

As a consequence, we combine the above bounds with $\widetilde{C}\delta<1$ to obtain
	\begin{equation*}
		\frac{1}{\delta^{3/2}\varepsilon}|(\partial^\alpha_\beta \Gamma(f,f),w^2(\alpha,\beta)\partial^\alpha_\beta f)|
		\leq C\delta\mathcal{D}_{2,l,q_1}(t).
	\end{equation*}
In summary, plugging this, \eqref{3.36} and \eqref{3.38} into \eqref{3.48A} gives \eqref{3.35}. 
The proof of \eqref{3.42} follows along the same lines as the proof of \eqref{3.35},
and we omit the proof for brevity. Hence, the proof of Lemma \ref{lem3.8} is completed.
\end{proof}
\begin{lemma}
	\label{lem3.10}
	Under the same conditions as in Lemma \ref{lem3.5}. For $|\alpha|= 2$, one has
	\begin{eqnarray}
		\label{3.43}	
		&&\frac{1}{\delta^{3/2}\varepsilon}|\partial^{\alpha}(\Gamma(\frac{G}{\sqrt{\mu}},\frac{G}{\sqrt{\mu}}),w^2(\alpha,0)\frac{\partial^{\alpha}F}{\sqrt{\mu}})|
		\notag\\
		&&\leq \widetilde{C}\delta\frac{1}{\delta^{3/2}\varepsilon}\|\partial^{\alpha}f\|^2_{\sigma,w}+\widetilde{C}(1+\frac{\delta^2}{\varepsilon^{2}})\mathcal{D}_{2,l,q_1}(t)
		+C\frac{1}{\varepsilon}\delta^{7/2}+C\delta^{3/2}\varepsilon^3.
	\end{eqnarray}
\end{lemma}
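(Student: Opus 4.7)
The proof parallels Lemma~\ref{lem3.8} but incorporates the macroscopic structure from $F$. The plan is to first write $G = \overline{G} + \sqrt{\mu}f$ and use the bilinearity of $\Gamma$ to split the integrand into four pieces: $\Gamma(\overline{G}/\sqrt{\mu}, \overline{G}/\sqrt{\mu})$, the two mixed terms $\Gamma(\overline{G}/\sqrt{\mu}, f)$ and $\Gamma(f, \overline{G}/\sqrt{\mu})$, and $\Gamma(f,f)$. Next, applying the Leibniz rule to $\partial^{\alpha}$ and the weighted estimate \eqref{3.21} of Lemma~\ref{lem3.4}, we reduce matters to bounding $L^2$ products of these pieces against $w^2(\alpha,0)\partial^{\alpha}F/\sqrt{\mu}$. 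The test function is handled through the decomposition $F = M + \overline{G} + \sqrt{\mu}f$; specifically we use the weighted analogue of \eqref{3.30}:
\begin{equation*}
\|w(\alpha,0)\,\partial^\alpha F/\sqrt{\mu}\|_{\sigma} \leq C\bigl(\|\partial^{\alpha} f\|_{\sigma,w} + \|\partial^\alpha(\widetilde{\rho},\widetilde{u},\widetilde{\theta})\| + \delta\bigr),
\end{equation*}
where the residual $\delta$ comes from $\partial^{\alpha}M$ via \eqref{3.28}-\eqref{3.29} combined with the uniform bounds on $(\bar{\rho},\bar{u},\bar{\theta})$ from \eqref{3.4}.

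For the four bilinear pieces, one factor will be placed in the weighted $\sigma$-norm so as to generate either $\|\partial^{\alpha}f\|_{\sigma,w}^2$ (absorbed by the first term on the right-hand side via Young's inequality) or the dissipation $\mathcal{D}_{2,l,q_1}(t)$, while the other factor is bounded pointwise in $x$ by 1D Sobolev embedding or by Lemma~\ref{lem3.2}. Concretely, in the pure $\overline{G}$--$\overline{G}$ term both factors are controlled by Lemma~\ref{lem3.2}, producing a remainder of order $\delta^{3}(\varepsilon+\delta^2)^2$; after multiplication by the singular prefactor $1/(\delta^{3/2}\varepsilon)$ and the $\delta$ residual in the test function bound, this yields the irreducible constants $C\delta^{7/2}/\varepsilon$ and $C\delta^{3/2}\varepsilon^3$. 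In each mixed $\overline{G}$--$f$ piece, Lemma~\ref{lem3.2} provides the factor $\delta^{3/2}(\varepsilon+\delta^2)$ on the $\overline{G}$ side while the $f$-side is absorbed by $\mathcal{D}_{2,l,q_1}(t)$. The $\Gamma(f,f)$ piece is the most singular: at each Leibniz splitting we place the lower-order factor in $L^\infty_x L^2_{\sigma,w}(\mathbb{R}^3_v)$ via Sobolev embedding and control it by $C\delta^2$ through the a priori assumption \eqref{3.1}; the remaining factor contributes $\|\partial^{\alpha}f\|_{\sigma,w}^2$, and the singular $1/(\delta^{3/2}\varepsilon)$ times $\delta^2$ produces exactly the $\widetilde{C}\delta\,(\delta^{3/2}\varepsilon)^{-1}\|\partial^{\alpha}f\|_{\sigma,w}^2$ term in the claim.

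The main obstacle is the cross-interaction between the weighted dissipation norm and the macroscopic contribution in $\partial^{\alpha}F/\sqrt{\mu}$. As in the derivation of \eqref{3.32}, when the macroscopic factor $\|\partial^\alpha(\widetilde{\rho},\widetilde{u},\widetilde{\theta})\|$ arising from the test function bound is multiplied against the $L^\infty$ bound coming from $\mathcal{E}_{2,l,q_1}(t)\leq C_0\delta^4$, one must pay the factor $\delta^2/\varepsilon^2$ in front of $\mathcal{D}_{2,l,q_1}$, yielding the $\widetilde{C}(1+\delta^2/\varepsilon^2)\mathcal{D}_{2,l,q_1}(t)$ term. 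The scaling restriction \eqref{1.37} is crucial since it guarantees $\delta^2/\varepsilon^2$ stays bounded, so the estimate closes. A subsidiary care is required in the $\Gamma(f,f)$ Leibniz expansion when both derivatives fall on the same factor, forcing the Sobolev embedding onto the factor with fewer derivatives and requiring the standard monotonicity of the weight $w(\alpha,0) \leq w(\alpha-\alpha_1+\alpha_2,0)$ for $|\alpha_2|\leq 1$ already exploited in Lemma~\ref{lem3.8}.
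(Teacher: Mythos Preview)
Your proposal is correct and follows essentially the same route as the paper: decompose $G=\overline{G}+\sqrt{\mu}f$, apply \eqref{3.21} with the Leibniz rule, use Lemma~\ref{lem3.2} for the $\overline{G}$ factors, the weighted form of \eqref{3.30} for the test function $\partial^{\alpha}F/\sqrt{\mu}$, and split the $\Gamma(f,f)$ term by placing the lower-order factor in $L^\infty_x$ via Sobolev together with the a priori bound \eqref{3.1} and the weight monotonicity $w(\alpha,0)\le w(\alpha-\alpha_1+\alpha_2,0)$. The paper organizes the case analysis slightly differently (calling the mixed and $f$--$f$ integrals $J_5$ and $J_6$ and invoking \eqref{3.32} and \eqref{3.55A} explicitly), but the mechanism is identical.

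One small correction to your closing remark: the restriction \eqref{1.37} does \emph{not} make $\delta^2/\varepsilon^2$ bounded (in fact $\delta\ge\varepsilon^{2/3}$ forces $\delta^2/\varepsilon^2\ge\varepsilon^{-2/3}\to\infty$). The lemma deliberately keeps the factor $(1+\delta^2/\varepsilon^2)$ in front of $\mathcal{D}_{2,l,q_1}(t)$; the actual closing occurs later in the weighted energy estimate (Lemma~\ref{lem6.1}), where \eqref{3.43} is multiplied by $\varepsilon^2$ before being combined with the other estimates. So your proof of the lemma is fine, but the sentence about \eqref{1.37} ensuring boundedness of $\delta^2/\varepsilon^2$ should be dropped.
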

\begin{proof}
	For $|\alpha|=2$, since $G=\overline{G}+\sqrt{\mu}f$, the term on the left-hand side of \eqref{3.43} is equivalent to
	\begin{equation}
		\label{3.44}
		\frac{1}{\delta^{3/2}\varepsilon}|(\partial^{\alpha}\Gamma(\frac{\overline{G}}{\sqrt{\mu}},\frac{\overline{G}}{\sqrt{\mu}})+\partial^{\alpha}\Gamma(\frac{\overline{G}}{\sqrt{\mu}},f)+\partial^{\alpha}\Gamma(f,\frac{\overline{G}}{\sqrt{\mu}})+\partial^{\alpha}\Gamma(f,f),w^2(\alpha,0)\frac{\partial^{\alpha}F}{\sqrt{\mu}})|.
	\end{equation}
Using \eqref{3.21}, \eqref{3.7} and \eqref{3.32}, we arrive at
	\begin{eqnarray*}
		&&\frac{1}{\delta^{3/2}\varepsilon}|(\partial^{\alpha}\Gamma(\frac{\overline{G}}{\sqrt{\mu}},
		\frac{\overline{G}}{\sqrt{\mu}}),w^2(\alpha,0)\frac{\partial^{\alpha}F}{\sqrt{\mu}})|
		\notag\\
		&&\leq C\frac{1}{\delta^3}\frac{1}{\delta^{3/2}\varepsilon}\sum_{\alpha_{1}\leq\alpha}\int_{\mathbb{R}}|\mu^\epsilon\partial^{\alpha_{1}}(\frac{\overline{G}}{\sqrt{\mu}})|^2_{2}
		|w(\alpha,0)\partial^{\alpha-\alpha_{1}}(\frac{\overline{G}}{\sqrt{\mu}})|_{\sigma}^2\,dx
		+C\delta^3\frac{1}{\delta^{3/2}\varepsilon}\|\frac{\partial^{\alpha}F}{\sqrt{\mu}}\|^2_{\sigma,w}
		\notag\\
		&&\leq C\frac{1}{\delta^3}\frac{1}{\delta^{3/2}\varepsilon}\delta^{6}(\varepsilon+\delta^2)^4+
		C\frac{\delta^2}{\varepsilon^{2}}\mathcal{D}_{2,l,q_1}(t)+C\frac{1}{\varepsilon}\delta^{7/2}.
	\end{eqnarray*}
Note from \eqref{3.21} that
	\begin{equation*}
		\frac{1}{\delta^{3/2}\varepsilon}|(\partial^{\alpha}\Gamma(\frac{\overline{G}}{\sqrt{\mu}},f),w^2(\alpha,0)\frac{\partial^{\alpha}F}{\sqrt{\mu}})|
		\leq C\sum_{\alpha_{1}\leq\alpha}\underbrace{\frac{1}{\delta^{3/2}\varepsilon}\int_{\mathbb{R}}|\mu^\epsilon\partial^{\alpha_{1}}(\frac{\overline{G}}{\sqrt{\mu}})|_{2}
			|w(\alpha,0)\partial^{\alpha-\alpha_{1}}f|_{\sigma}|w(\alpha,0)\frac{\partial^{\alpha}F}{\sqrt{\mu}}|_{\sigma}\,dx}_{J_5}.
	\end{equation*}
	In the following, we compute $J_5$.
If $|\alpha_{1}|\leq1$, then $w(\alpha,0)\leq w(\alpha-\alpha_{1},0)$, hence it holds  from  \eqref{3.7}, \eqref{3.32} and \eqref{1.35} that
	\begin{eqnarray*}
		J_5&&\leq C\frac{1}{\delta^{3/2}\varepsilon}\||\mu^\epsilon\partial^{\alpha_{1}}(\frac{\overline{G}}{\sqrt{\mu}})|_2\|_{L^{\infty}}
		\|w(\alpha,0)\partial^{\alpha-\alpha_{1}}f\|_{\sigma}\|w(\alpha,0)\frac{\partial^{\alpha}F}{\sqrt{\mu}}\|_{\sigma}
		\notag\\
		&&\leq C\frac{1}{\delta^3}\frac{1}{\delta^{3/2}\varepsilon}\delta^3(\varepsilon+\delta^2)^2\|\partial^{\alpha-\alpha_{1}}f\|^2_{\sigma,w}
		+C\delta^3\frac{1}{\delta^{3/2}\varepsilon}\|\frac{\partial^{\alpha}F}{\sqrt{\mu}}\|^2_{\sigma,w}
		\notag\\
	&&\leq C(1+\frac{\delta^2}{\varepsilon^{2}})\mathcal{D}_{2,l,q_1}(t)
	+C\frac{1}{\varepsilon}\delta^{7/2}.
	\end{eqnarray*}
If $|\alpha_{1}|=|\alpha|=2$, then $w(\alpha,0)\leq w(\alpha-\alpha_{1}+\alpha_{2},0)$ for $|\alpha_{1}|\leq 1$, which yields that
	\begin{eqnarray*}
		J_5&&\leq C\frac{1}{\delta^{3/2}\varepsilon}\|\mu^\epsilon\partial^{\alpha_{1}}(\frac{\overline{G}}{\sqrt{\mu}})\|\||w(\alpha,0)\partial^{\alpha-\alpha_{1}}f|_{\sigma}\|_{L^{\infty}}	\|w(\alpha,0)\frac{\partial^{\alpha}F}{\sqrt{\mu}}\|_{\sigma}
\notag\\
&&\leq C(1+\frac{\delta^2}{\varepsilon^{2}})\mathcal{D}_{2,l,q_1}(t)
+C\frac{1}{\varepsilon}\delta^{7/2}.
	\end{eqnarray*}
Collecting the above estimates, we get
	\begin{equation*}
		\frac{1}{\delta^{3/2}\varepsilon}|(\partial^{\alpha}\Gamma(\frac{\overline{G}}{\sqrt{\mu}},f),w^2(\alpha,0)\frac{\partial^{\alpha}F}{\sqrt{\mu}})|
			\leq C(1+\frac{\delta^2}{\varepsilon^{2}})\mathcal{D}_{2,l,q_1}(t)
		+C\frac{1}{\varepsilon}\delta^{7/2}.
	\end{equation*}
Similar estimates also hold for the third term on the left-hand side of \eqref{3.44}. For the last term in \eqref{3.44}, by \eqref{3.21}, one has
	\begin{equation*}
		\frac{1}{\delta^{3/2}\varepsilon}|(\partial^{\alpha}\Gamma(f,f),w^2(\alpha,0)\frac{\partial^{\alpha}F}{\sqrt{\mu}})|\leq C\sum_{\alpha_{1}\leq\alpha}\underbrace{\frac{1}{\delta^{3/2}\varepsilon}\int_{\mathbb{R}}|\mu^{\epsilon}\partial^{\alpha_{1}}f|_{2}
			|w(\alpha,0)\partial^{\alpha-\alpha_{1}}f|_{\sigma}|w(\alpha,0)\frac{\partial^{\alpha}F}{\sqrt{\mu}}|_{\sigma}\,dx}_{J_6}.
	\end{equation*}
In the following, we estimate $J_6$. If $|\alpha_{1}|=0$, then by \eqref{3.1}, \eqref{1.33} and \eqref{3.32}, one obtains
	\begin{eqnarray*}
		J_6&&\leq C\frac{1}{\delta^{3/2}\varepsilon}\|\mu^{\epsilon}\partial^{\alpha_{1}}f\|_{L^{\infty}}\|\partial^{\alpha-\alpha_{1}}f\|_{\sigma,w}\|\frac{\partial^{\alpha}F}{\sqrt{\mu}}\|_{\sigma,w}
		\notag\\
		&&\leq C\frac{1}{\delta^3}\frac{1}{\delta^{3/2}\varepsilon}\|\mu^{\epsilon}\partial^{\alpha_{1}}f\|^2_{L^{\infty}}\|\partial^{\alpha-\alpha_{1}}f\|^2_{\sigma,w}
			+C\delta^3\frac{1}{\delta^{3/2}\varepsilon}\|\frac{\partial^{\alpha}F}{\sqrt{\mu}}\|^2_{\sigma,w}
				\notag\\
			&&\leq \widetilde{C}\delta\frac{1}{\delta^{3/2}\varepsilon}\|\partial^{\alpha}f\|^2_{\sigma,w}
			+C\frac{\delta^2}{\varepsilon^{2}}\mathcal{D}_{2,l,q_1}(t)
			+C\frac{1}{\varepsilon}\delta^{7/2}.
	\end{eqnarray*}
If  $1\leq|\alpha_{1}|\leq|\alpha|$, then $w(\alpha,0)\leq w(\alpha-\alpha_{1}+\alpha_{2},0)$ for $|\alpha_{2}|\leq 1$, which implies that
	\begin{eqnarray*}
		J_6&&\leq C\frac{1}{\delta^{3/2}\varepsilon}\|\mu^{\epsilon}\partial^{\alpha_{1}}f\|
		\||\partial^{\alpha-\alpha_{1}}f|_{\sigma,w}\|_{L^{\infty}}\|\frac{\partial^{\alpha}F}{\sqrt{\mu}}\|_{\sigma,w}
	\notag\\
&&\leq C\frac{1}{\delta^3}\frac{1}{\delta^{3/2}\varepsilon}\|\mu^{\epsilon}\partial^{\alpha_{1}}f\|^2
\||\partial^{\alpha-\alpha_{1}}f|_{\sigma,w}\|^2_{L^{\infty}}
+C\delta^3\frac{1}{\delta^{3/2}\varepsilon}
\|\frac{\partial^{\alpha}F}{\sqrt{\mu}}\|^2_{\sigma,w}				
	\notag\\
	&&\leq \widetilde{C}(1+\frac{\delta^2}{\varepsilon^{2}})\mathcal{D}_{2,l,q_1}(t)
	+C\frac{1}{\varepsilon}\delta^{7/2}.	
	\end{eqnarray*}
Here we have used $|\alpha_{1}|=|\alpha|=2$, \eqref{3.1}, \eqref{1.33} and \eqref{1.34} such that
\begin{equation}
\label{3.55A}
\|\mu^{\epsilon}\partial^{\alpha_{1}}f\|^2\leq \|\partial^{\alpha}f\|^{2}\leq \widetilde{C}\frac{\delta^5}{\varepsilon^2}.
\end{equation}
From the above bound on $J_6$, we  obtain	
	\begin{equation*}
		\frac{1}{\delta^{3/2}\varepsilon}|(\partial^{\alpha}\Gamma(f,f),w^2(\alpha,0)\frac{\partial^{\alpha}F}{\sqrt{\mu}})|
	\leq \widetilde{C}\delta\frac{1}{\delta^{3/2}\varepsilon}\|\partial^{\alpha}f\|^2_{\sigma,w}+\widetilde{C}(1+\frac{\delta^2}{\varepsilon^{2}})\mathcal{D}_{2,l,q_1}(t)+C\frac{1}{\varepsilon}\delta^{7/2}.	
	\end{equation*}
Hence, plugging all the above estimates into \eqref{3.44} gives
	\begin{eqnarray}
		\label{3.55a}	
		&&\frac{1}{\delta^{3/2}\varepsilon}|\partial^{\alpha}(\Gamma(\frac{G}{\sqrt{\mu}},\frac{G}{\sqrt{\mu}}),w^2(\alpha,0)\frac{\partial^{\alpha}F}{\sqrt{\mu}})|
		\notag\\
		&&\leq \widetilde{C}\delta\frac{1}{\delta^{3/2}\varepsilon}\|\partial^{\alpha}f\|^2_{\sigma,w}+\widetilde{C}(1+\frac{\delta^2}{\varepsilon^{2}})\mathcal{D}_{2,l,q_1}(t)
		+C\frac{1}{\varepsilon}\delta^{7/2}+C\delta^{3/2}\varepsilon^3,
	\end{eqnarray}
which gives the desired estimate \eqref{3.43} and the Lemma \ref{lem3.10} follows.
\end{proof}
Next we  focus on the estimates of the non-linear collision terms without the velocity weight.
\begin{lemma}
	\label{lem3.9}
	For $|\alpha|\leq 1$, there exists sufficiently small $\eta>0$ such that
	\begin{equation}
		\label{3.56a}
\frac{1}{\delta^{3/2}\varepsilon}|(\partial^\alpha\Gamma(\frac{G}{\sqrt{\mu}},\frac{G}{\sqrt{\mu}}), \partial^\alpha f)|
\leq C\eta\frac{1}{\delta^{3/2}\varepsilon}\|\partial^\alpha f\|^{2}_{\sigma}
		+C\delta\mathcal{D}_{2}(t)
		+C_\eta\delta^{9/2}\varepsilon^3+C_\eta\frac{1}{\varepsilon}\delta^{17/2}\delta^4.
	\end{equation}
For $|\alpha|= 2$, it holds that
\begin{eqnarray}
	\label{3.57a}
	&&\frac{1}{\delta^{3/2}\varepsilon}|\partial^{\alpha}(\Gamma(\frac{G}{\sqrt{\mu}},\frac{G}{\sqrt{\mu}}),\frac{\partial^{\alpha}F}{\sqrt{\mu}})|
	\notag\\
	&&\leq \widetilde{C}\delta\frac{1}{\delta^{3/2}\varepsilon}\|\partial^{\alpha}f\|^2_{\sigma}+\widetilde{C}(1+\frac{\delta^2}{\varepsilon^{2}})\mathcal{D}_{2}(t)
	+C\frac{1}{\varepsilon}\delta^{7/2}+C\delta^{3/2}\varepsilon^3.
\end{eqnarray}	
\end{lemma}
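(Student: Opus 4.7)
The proof plan is to mirror the structure of Lemmas \ref{lem3.8} and \ref{lem3.10} but replace the weighted nonlinear estimate \eqref{3.21} by its unweighted counterpart \eqref{3.20}, since the test functions are now $\partial^\alpha f$ (for $|\alpha|\le 1$) and $\partial^\alpha F/\sqrt{\mu}$ (for $|\alpha|=2$) without the factor $w^2(\alpha,0)$. The starting point is the decomposition $G=\overline{G}+\sqrt{\mu}f$ on both entries, giving the four bilinear pieces
\begin{equation*}
\Gamma\bigl(\tfrac{G}{\sqrt{\mu}},\tfrac{G}{\sqrt{\mu}}\bigr)=\Gamma\bigl(\tfrac{\overline{G}}{\sqrt{\mu}},\tfrac{\overline{G}}{\sqrt{\mu}}\bigr)+\Gamma\bigl(\tfrac{\overline{G}}{\sqrt{\mu}},f\bigr)+\Gamma\bigl(f,\tfrac{\overline{G}}{\sqrt{\mu}}\bigr)+\Gamma(f,f),
\end{equation*}
and each piece is treated by \eqref{3.20} together with the correction bound \eqref{3.7} and the a priori assumption \eqref{3.1}.

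For \eqref{3.56a} with $|\alpha|\le 1$, the pure correction term is bounded after Cauchy–Schwarz by $\eta\,(\delta^{3/2}\varepsilon)^{-1}\|\partial^\alpha f\|_\sigma^2$ plus a constant remainder of order $(\delta^{3/2}\varepsilon)^{-1}\delta^6(\varepsilon+\delta^2)^4$, which matches the last two terms of the target. The two mixed terms containing one $\overline{G}/\sqrt{\mu}$ and one $f$ are handled by placing $|\mu^\epsilon\partial^{\alpha_1}(\overline{G}/\sqrt{\mu})|_2$ (or its counterpart) in $L^\infty_x$ using \eqref{3.7} and absorbing the remaining $\sigma$-norms into $\mathcal{D}_2(t)$ via \eqref{1.36}; this yields an $O(\delta^{3/2}(\varepsilon+\delta^2))\mathcal{D}_2(t)$ contribution, well inside $C\delta\mathcal{D}_2(t)$. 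The purely nonlinear piece $\Gamma(f,f)$ is split case by case according to the distribution of derivatives exactly as in \eqref{3.40}–\eqref{3.41}: for each sub-case the lower-order factor is placed in $L^\infty_x$ via the one-dimensional embedding and its $L^2$ norm is then controlled by $\mathcal{E}_2(t)^{1/2}\le C_0^{1/2}\delta^2$ through \eqref{3.1}, after which the remaining $\sigma$-norms give an overall $\widetilde{C}\delta\,\mathcal{D}_2(t)$.

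For \eqref{3.57a} with $|\alpha|=2$, we additionally use $F=M+\overline{G}+\sqrt{\mu}f$ on the test function and control $\|\partial^\alpha F/\sqrt{\mu}\|_\sigma$ through the analogue of \eqref{3.30} without weight, producing the combination $\|\partial^\alpha f\|_\sigma+\|\partial^\alpha(\widetilde{\rho},\widetilde{u},\widetilde{\theta})\|+\delta$. The correction–correction piece and the two mixed pieces are then handled as in Lemma \ref{lem3.10}, where the $\|\partial^\alpha F/\sqrt{\mu}\|_\sigma^2$ remainder is repackaged by the square-then-Cauchy trick \eqref{3.32} (again without weight) and produces the $C(1+\delta^2/\varepsilon^2)\mathcal{D}_2(t)+C\varepsilon^{-1}\delta^{7/2}$ piece of the right-hand side. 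The main obstacle is the top-order pure nonlinearity $\Gamma(\partial^\alpha f,f)$ and its twin: here both factors carry derivatives, and one factor must stay in $L^2$ because placing two $x$-derivatives in $L^\infty_x$ is not permitted; this forces the use of the a priori bound \eqref{3.55A}, namely $\|\partial^\alpha f\|^2\le \widetilde{C}\delta^5/\varepsilon^2$, which after Cauchy–Schwarz yields precisely the coefficient $\widetilde{C}\delta\cdot(\delta^{3/2}\varepsilon)^{-1}\|\partial^\alpha f\|_\sigma^2$ in the target and is absorbable for small $\delta$. Summing all contributions produces \eqref{3.57a}, the final $C\delta^{3/2}\varepsilon^3$ term coming from the purely-correction remainder in the same way as in \eqref{3.55a}.
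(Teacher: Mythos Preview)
Your proposal is correct and takes essentially the same approach as the paper: the paper's proof simply states that \eqref{3.56a} and \eqref{3.57a} follow by the same arguments as \eqref{3.35} and \eqref{3.55a} respectively, which is precisely what you carry out by replacing the weighted estimate \eqref{3.21} with its unweighted counterpart \eqref{3.20} and tracking the resulting bounds through the decomposition $G=\overline{G}+\sqrt{\mu}f$.
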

\begin{proof}
	The proof of \eqref{3.56a} and \eqref{3.57a} are similar to that of  \eqref{3.35} and \eqref{3.55a}, respectively.
\end{proof}
\subsection{Estimates on fluid quantities with temporal derivatives}
In what follows we turn to derive the
estimates of the fluid quantities involving with the temporal derivatives, which can be bounded by the pure space
derivatives of the solution.
\begin{lemma}\label{lem3.11}
It holds that
	\begin{equation}
	\label{3.45}
	\|(\partial_t\widetilde{\rho},\partial_t\widetilde{u},\partial_t\widetilde{\theta})\|^{2}
	\leq  C\frac{1}{\delta^2}\mathcal{E}_{2}(t)+C\delta\varepsilon^2+C\delta^4,
\end{equation}
and
\begin{eqnarray}
		\label{3.46}
		\|\partial_x\partial_t(\widetilde{\rho},\widetilde{u},\widetilde{\theta})\|^{2}\leq&&
		C\frac{1}{\delta^2}(\|\partial^2_x(\widetilde{\rho},
		\widetilde{u},\widetilde{\theta},\widetilde{\phi})\|^{2}+\|\langle v\rangle^{-\frac{1}{2}}\partial^2_xf\|^{2})
		+C\delta\varepsilon^2+\widetilde{C}\delta^4.
	\end{eqnarray}
\end{lemma}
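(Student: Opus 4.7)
\medskip

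\textbf{Proof proposal.} The plan is to view the perturbation system \eqref{2.34} as an algebraic relation that directly expresses each component of $\partial_t(\widetilde{\rho},\widetilde{u},\widetilde{\theta})$ as $\frac{1}{\delta}$ times a combination of spatial quantities, and then take $L^2$ norms. Specifically, from each equation of \eqref{2.34} I would solve for $\partial_t\widetilde{\rho}$, $\partial_t\widetilde{u}_i$, $\partial_t\widetilde{\theta}$ as $\frac{1}{\delta}$ times (i) first-order $x$-derivatives of $(\widetilde{\rho},\widetilde{u},\widetilde{\theta},\widetilde{\phi})$, (ii) quadratic transport/pressure products such as $u_1\partial_x\widetilde{u}_i$, $\widetilde{u}_1\partial_x\bar{u}_1$, $(\theta/\rho-\bar\theta/\bar\rho)\partial_x\rho$, and (iii) $x$-derivatives of the velocity moments $\int v_1^2 G\,dv$, $\int v_1v_iG\,dv$, $\int v_1|v|^2 G\,dv$, plus the remainder $-\delta^2\mathcal{R}_j/\bar{\rho}$ of size $O(\delta^2)$ in $L^2$ thanks to \eqref{2.31}.

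For \eqref{3.45}, the linear first-order spatial contributions produce $\frac{1}{\delta^2}\|\partial_x(\widetilde{\rho},\widetilde{u},\widetilde{\theta},\widetilde{\phi})\|^2 \le C\frac{1}{\delta^2}\mathcal{E}_{2}(t)$. Nonlinear products are controlled by combining the $L^\infty$ bounds \eqref{3.2}, \eqref{3.4}, \eqref{3.5} with the $L^2$ bounds coming from $\mathcal{E}_2(t)$, and they carry extra $\delta$-smallness. The velocity-moment terms are split via $G=\overline{G}+\sqrt{\mu}f$: the $\overline{G}$ contribution is handled by Lemma \ref{lem3.2}, producing $\frac{1}{\delta^2}\cdot C\delta^3\varepsilon^2 = C\delta\varepsilon^2$; the $\sqrt{\mu}f$ contribution is bounded by $\|\partial_x f\|^2 \le \mathcal{E}_2(t)$ via Cauchy--Schwarz
\[
\Bigl|\int_{\mathbb{R}^3} v_1^2 \sqrt{\mu}\,\partial_x f\,dv\Bigr|^2 \le \Bigl(\int_{\mathbb{R}^3} v_1^4\mu\,dv\Bigr)|\partial_x f|_2^2 \le C|\partial_x f|_2^2.
\]
Finally $\|\delta^2\mathcal{R}_j\|^2\le C\delta^4$ by \eqref{2.31}. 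Summing yields \eqref{3.45}.

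For \eqref{3.46}, I would differentiate \eqref{2.34} once more in $x$ and repeat. The linear part now produces $\frac{1}{\delta^2}\|\partial_x^2(\widetilde{\rho},\widetilde{u},\widetilde{\theta},\widetilde{\phi})\|^2$. For the velocity-moment term, $\partial_x\partial_x\int v_1^2G\,dv$ splits via $G=\overline{G}+\sqrt{\mu}f$: the $\overline{G}$ piece gives $O(\delta\varepsilon^2)$ by Lemma \ref{lem3.2} applied with $|\alpha|=2$, and the $f$ piece gives the weighted contribution via the Cauchy--Schwarz inequality
\[
\Bigl|\int_{\mathbb{R}^3} v_1^2\sqrt{\mu}\,\partial_x^2 f\,dv\Bigr|^2 \le \Bigl(\int_{\mathbb{R}^3} v_1^4\mu\langle v\rangle\,dv\Bigr)\,|\langle v\rangle^{-1/2}\partial_x^2 f|_2^2 \le C|\langle v\rangle^{-1/2}\partial_x^2 f|_2^2,
\]
producing exactly $\frac{1}{\delta^2}\|\langle v\rangle^{-1/2}\partial_x^2 f\|^2$ after integrating in $x$. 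Differentiating the quadratic products (e.g.~$\partial_x(u_1\partial_x\widetilde{u}_i) = \partial_x u_1\cdot\partial_x\widetilde{u}_i + u_1\partial_x^2\widetilde{u}_i$, and similar for $(\theta/\rho-\bar\theta/\bar\rho)\partial_x\rho$) generates terms bounded by combinations such as $\|(u,\bar u,\theta-3/2,\bar\rho-1)\|_{L^\infty}^2\|\partial_x^2(\widetilde\rho,\widetilde u,\widetilde\theta)\|^2$ and $\|\partial_x(\widetilde\rho,\widetilde u,\widetilde\theta)\|_{L^\infty}^2\|\partial_x(\widetilde\rho,\widetilde u,\widetilde\theta,\bar\rho,\bar u,\bar\theta)\|^2$. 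Using \eqref{3.2}--\eqref{3.4} together with the a priori assumption \eqref{3.1}, all such quadratic contributions are absorbed into $\widetilde{C}\delta^4$ (or into $\frac{1}{\delta^2}\|\partial_x^2(\cdots)\|^2$ after accounting for the $\delta^2$ smallness of the background). The source term $\partial_x(\delta^2\mathcal{R}_j)$ contributes $O(\delta^4)$ through \eqref{2.31}. Collecting everything yields \eqref{3.46}.

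The main technical obstacle will be the bookkeeping: ensuring that after differentiating the nonlinear terms in $x$, the dangerous combinations (those containing the highest-order spatial derivative $\partial_x^2$) carry a coefficient that is either a pure constant admitting a $1/\delta^2$ factor from the equation, or contains enough $\delta$-smallness from the $L^\infty$ bounds \eqref{3.2}--\eqref{3.4} to be absorbed. In particular, one must check that no product of the form $1/\delta\cdot\partial_x^2(\cdots)\cdot\partial_x(\cdots)$ survives without absorption, which is precisely where the a priori bound $\widetilde{C}\delta^4$ (rather than $C\delta^4$) enters the right-hand side of \eqref{3.46}.
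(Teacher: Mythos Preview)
Your proposal is correct and follows essentially the same strategy as the paper's proof. The paper phrases the argument via the energy method (taking the inner product of each equation in \eqref{2.34} with the corresponding $\partial_t\widetilde{u}_i$ and then absorbing $\eta\|\partial_t\widetilde{u}_i\|^2$), while you solve for $\partial_t(\widetilde\rho,\widetilde u,\widetilde\theta)$ and take $L^2$ norms directly; the two approaches are equivalent here and rely on the same ingredients---\eqref{2.31}, \eqref{3.4}, \eqref{3.7}, \eqref{3.1}, \eqref{3.2a} and the splitting $G=\overline{G}+\sqrt{\mu}f$.
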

\begin{proof}
By taking  the inner product of the second equation of \eqref{2.34} with  $\partial_t\widetilde{u}_1$, then using \eqref{3.4},
 \eqref{2.31}, \eqref{3.7},
 \eqref{1.34},  \eqref{3.1} and \eqref{3.2a}, we arrive at
\begin{eqnarray}
	\label{3.61B}
	(\partial_t\widetilde{u}_{1},\partial_t\widetilde{u}_{1})
	=&&-(-A\frac{1}{\delta}\partial_x\widetilde{u}_{1}
	+\frac{1}{\delta}u_{1}\partial_x\widetilde{u}_{1}+\frac{1}{\delta}\widetilde{u}_{1}\partial_x\bar{u}_{1}+\frac{1}{\delta}\frac{2}{3}\partial_x\widetilde{\theta}
	+\frac{1}{\delta}\frac{2}{3}(\frac{\theta}{\rho}-\frac{\bar{\theta}}{\bar{\rho}})\partial_x\rho+\frac{1}{\delta}\frac{2}{3}\frac{\bar{\theta}}{\bar{\rho}}\partial_x\widetilde{\rho},\partial_t\widetilde{u}_{1})
	\notag\\
	&&-\frac{1}{\delta}(\partial_x\widetilde{\phi},\partial_t\widetilde{u}_{1})
	-\frac{1}{\delta}(\frac{1}{\rho}\int_{\mathbb{R}^3} v^2_{1}\partial_xG\,dv,\partial_t\widetilde{u}_{1})-(\delta^2\frac{1}{\bar{\rho}}\mathcal{R}_2,\partial_t\widetilde{u}_{1})
	\notag\\
	\leq&&C \eta\|\partial_t\widetilde{u}_{1}\|^{2}+C_\eta\frac{1}{\delta^2}(\|\partial_x(\widetilde{\rho},
	\widetilde{u},\widetilde{\theta},\widetilde{\phi})\|^{2}
	+\|\langle v\rangle^{-\frac{1}{2}}\partial_xf\|^{2})+C_\eta
	\|(\widetilde{\rho},\widetilde{u},\widetilde{\theta})\|^{2}
	\notag\\
	&&+C_\eta\frac{1}{\delta^2}\delta^3(\varepsilon+\delta^2)^2+C_\eta\delta^4
		\notag\\
	\leq&&C \eta\|\partial_t\widetilde{u}_{1}\|^{2}+C_\eta\frac{1}{\delta^2}\mathcal{E}_{2}(t)
	+C_\eta\delta\varepsilon^2+C_\eta\delta^4.
\end{eqnarray}	
By choosing sufficiently small $\eta>0$, we have
\begin{equation*}
	\|\partial_t\widetilde{u}_{1}\|^{2}
	\leq C\frac{1}{\delta^2}\mathcal{E}_{2}(t)
	+C\delta\varepsilon^2+C\delta^4.
\end{equation*}	
Similar estimates also hold for $\partial_t\widetilde{u}_{2}$, $\partial_t\widetilde{u}_{3}$, 
$\partial_t\widetilde{\rho}$ and $\partial_t\widetilde{\theta}$. Thus, the 	estimate \eqref{3.45} holds.

Differentiating the second equation of \eqref{2.34} with respect to $x$ and
taking the inner product of the resulting equation with $\partial_x\partial_t\widetilde{u}_1$, one obtains
\begin{eqnarray}
	\label{3.47}
		(\partial_x\partial_t\widetilde{u}_{1},\partial_x\partial_t\widetilde{u}_{1})
		=&&-(\partial_x[-A\frac{1}{\delta}\partial_x\widetilde{u}_{1}
		+\frac{1}{\delta}u_{1}\partial_x\widetilde{u}_{1}+\frac{1}{\delta}\widetilde{u}_{1}\partial_x\bar{u}_{1}+\frac{1}{\delta}\frac{2}{3}\partial_x\widetilde{\theta}
		+\frac{1}{\delta}\frac{2}{3}(\frac{\theta}{\rho}-\frac{\bar{\theta}}{\bar{\rho}})\partial_x\rho+\frac{1}{\delta}\frac{2}{3}\frac{\bar{\theta}}{\bar{\rho}}\partial_x\widetilde{\rho}],\partial_x\partial_t\widetilde{u}_{1})
		\notag\\
		&&-\frac{1}{\delta}(\partial^2_x\widetilde{\phi},\partial_x\partial_t\widetilde{u}_{1})
		-\frac{1}{\delta}(\partial_x[\frac{1}{\rho}\int_{\mathbb{R}^3} v^2_{1}\partial_xG\,dv],\partial_x\partial_t\widetilde{u}_{1})-(\delta^2	\partial_x[\frac{1}{\bar{\rho}}\mathcal{R}_2],
		\partial_x\partial_t\widetilde{u}_{1}).
	\end{eqnarray}
In view of  \eqref{3.1}, \eqref{3.4}, \eqref{3.2a} and \eqref{3.7}, it is direct to verify that
\begin{eqnarray*}
	\frac{1}{\delta}|(\partial_x[u_{1}\partial_x\widetilde{u}_{1}],\partial_x\partial_t\widetilde{u}_{1})|
	&&\leq  \eta\|\partial_x\partial_t\widetilde{u}_{1}\|^{2}+C_\eta\frac{1}{\delta^2}(\|\partial_xu_{1}\|_{L^\infty}^2\|\partial_x\widetilde{u}_{1}\|^2+
	\|u_{1}\partial^2_x\widetilde{u}_{1}\|^2)
	\notag\\
	&&\leq  \eta\|\partial_x\partial_t\widetilde{u}_{1}\|^{2}+C_\eta\widetilde{C}\delta^2\|\partial_xu_{1}\|\|\partial^2_xu_{1}\|
	+C_\eta\frac{1}{\delta^2}\|\partial^2_x\widetilde{u}_{1}\|^2
	\notag\\
	&&\leq  \eta\|\partial_x\partial_t\widetilde{u}_{1}\|^{2}+C_\eta\frac{1}{\delta^2}\|\partial^2_x\widetilde{u}_{1}\|^2
	+C_\eta\widetilde{C}\delta^4,
\end{eqnarray*}
and
\begin{equation*}
\frac{1}{\delta}|(\partial_x[\frac{1}{\rho}\int_{\mathbb{R}^3} v^2_{1}\partial_xG\,dv],\partial^{\alpha}\partial_t\widetilde{u}_{1})|
\leq  \eta\|\partial_x\partial_t\widetilde{u}_{1}\|^{2}
+C_\eta\|\langle v\rangle^{-\frac{1}{2}}\partial_xf\|^{2}
+C_\eta\delta\varepsilon^2+C_\eta\widetilde{C}\delta^4.
\end{equation*}
The other terms in \eqref{3.47} can be treated in the similar way. Hence, by choosing sufficiently small $\eta>0$, we arrive at
\begin{equation*}
\|\partial_x\partial_t\widetilde{u}_{1}\|^2
\leq C\frac{1}{\delta^2}(\|\partial^2_x(\widetilde{\rho},
\widetilde{u},\widetilde{\theta},\widetilde{\phi})\|^{2}+\|\langle v\rangle^{-\frac{1}{2}}\partial^2_xf\|^{2})+C\delta\varepsilon^2+\widetilde{C}\delta^4.
	\end{equation*}
Similar estimates also hold for $\partial_x\partial_t\widetilde{u}_{2}$, $\partial_x\partial_t\widetilde{u}_{3}$ and $\partial_x\partial_t\widetilde{\theta}$. 
Thus, the estimate \eqref{3.46} holds and then the proof of Lemma \ref{lem3.11} is completed.
\end{proof}
\begin{lemma}\label{lem3.12}
	For $|\alpha|\leq 1$, it holds that
	\begin{equation}
		\label{3.48}
	\|\partial^{\alpha}\partial_t\widetilde{\phi}\|^2+\delta\|\partial^{\alpha}\partial_x\partial_t\widetilde{\phi}\|^2
	\leq C\frac{1}{\delta^2}\|\partial^{\alpha}\partial_x(\widetilde{\rho},\widetilde{u})\|^{2}
	+\widetilde{C}\delta^4.	
	\end{equation}
\end{lemma}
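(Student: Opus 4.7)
The plan is to derive an elliptic identity for $\partial_t\widetilde{\phi}$ from the Poisson equation and the continuity equation, and then run a direct $L^2$ energy estimate. Differentiating the last equation of \eqref{2.34} in $t$, using
$$\partial_t(e^{\phi}-e^{\bar{\phi}})=e^{\phi}\partial_t\widetilde{\phi}+(e^{\phi}-e^{\bar{\phi}})\partial_t\bar{\phi},$$
and then substituting the first equation of \eqref{2.34} for $\partial_t\widetilde{\rho}$, I would obtain
$$-\delta\partial_x^2\partial_t\widetilde{\phi}+e^{\phi}\partial_t\widetilde{\phi}
=\frac{A}{\delta}\partial_x\widetilde{\rho}-\frac{1}{\delta}\partial_x(\widetilde{\rho}u_1)-\frac{1}{\delta}\partial_x(\bar{\rho}\widetilde{u}_1)-\delta^2\mathcal{R}_1-(e^{\phi}-e^{\bar{\phi}})\partial_t\bar{\phi}+\delta^3\partial_t\mathcal{R}_4.$$
This turns the lemma into an elliptic estimate for a uniformly coercive second-order operator, since \eqref{3.5} gives $e^{\phi}\geq c>0$.

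Next I would apply $\partial^{\alpha}$ with $|\alpha|\leq 1$ to the identity above and take the $L^2$ inner product with $\partial^{\alpha}\partial_t\widetilde{\phi}$. Integration by parts on the diffusion term yields the good dissipation $\delta\|\partial^{\alpha}\partial_x\partial_t\widetilde{\phi}\|^2$, while the multiplicative term yields $(e^{\phi}\partial^{\alpha}\partial_t\widetilde{\phi},\partial^{\alpha}\partial_t\widetilde{\phi})\gtrsim \|\partial^{\alpha}\partial_t\widetilde{\phi}\|^2$ together with a commutator $[\partial^\alpha,e^\phi]\partial_t\widetilde\phi$ that, for $|\alpha|=1$, is absorbed by $\eta$-Cauchy-Schwarz using $\|\partial_x e^\phi\|_{L^\infty}\lesssim \|\partial_x\phi\|_{L^\infty}\leq C\delta$ from \eqref{3.2} and \eqref{3.4}.

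For the right-hand side I would treat each term by $\eta$-Cauchy-Schwarz. The leading linear source $\frac{A}{\delta}\partial^\alpha\partial_x\widetilde{\rho}$ contributes exactly the target bound $C\delta^{-2}\|\partial^\alpha\partial_x\widetilde{\rho}\|^2$. The nonlinear fluxes $\frac{1}{\delta}\partial^\alpha\partial_x(\widetilde{\rho}u_1)$ and $\frac{1}{\delta}\partial^\alpha\partial_x(\bar{\rho}\widetilde{u}_1)$ are expanded by Leibniz; each factor of the perturbation carries a derivative (giving $\partial^\alpha\partial_x(\widetilde{\rho},\widetilde{u})$ modulo derivative transfer) while the other factor is an $L^\infty$-small background term controlled by \eqref{3.2}--\eqref{3.4} and the a priori assumption \eqref{3.1}, so that they fit into $C\delta^{-2}\|\partial^\alpha\partial_x(\widetilde{\rho},\widetilde{u})\|^2+\widetilde{C}\delta^4$. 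The remainders $(e^\phi-e^{\bar\phi})\partial_t\bar\phi$, $\delta^2\mathcal{R}_1$, $\delta^3\partial_t\mathcal{R}_4$ are of size $\widetilde{C}\delta^4$ thanks to \eqref{3.2}, \eqref{3.4} and \eqref{2.31}. Absorbing the $\eta$-terms yields \eqref{3.48}.

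The main subtlety is bookkeeping of $\delta$-powers in the nonlinear fluxes: the prefactor $\delta^{-1}$ amplifies every term, so one has to carefully split a product like $\partial_x(\widetilde{\rho}u_1)=\widetilde{\rho}\partial_xu_1+u_1\partial_x\widetilde{\rho}$ to ensure the derivative always lands on the factor whose $L^2$-norm is already tolerated by the right-hand side of \eqref{3.48}, while the untouched factor is controlled in $L^\infty$ with a gain of $\delta$ to compensate the $\delta^{-1}$ prefactor. A secondary point is that for $|\alpha|=1$, the term $\partial_x(\bar{\rho}\widetilde{u}_1)$ produces a contribution $\partial_x\bar{\rho}\cdot\partial_x\widetilde{u}_1$ whose $L^2$-norm must be bounded via $\|\partial_x\bar\rho\|_{L^\infty}\leq C\delta$ from \eqref{3.4}; this is precisely what keeps the bound linear in $\|\partial^\alpha\partial_x(\widetilde{\rho},\widetilde{u})\|^2$ without spawning uncontrolled lower-order pieces.
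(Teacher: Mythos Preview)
Your proposal is correct and follows essentially the same approach as the paper: differentiate the Poisson equation in $t$, test against $\partial^{\alpha}\partial_t\widetilde{\phi}$, and use coercivity of $e^{\phi}$ together with the continuity equation to control the source. The only cosmetic difference is that the paper first bounds the left-hand side by $\|\partial^{\alpha}\partial_t\widetilde{\rho}\|^2$ (keeping $\partial_t\widetilde{\rho}$ as the source) and then separately estimates $\|\partial^{\alpha}\partial_t\widetilde{\rho}\|^2$ via the first equation of \eqref{2.34}, whereas you substitute the continuity equation into the elliptic identity from the start; for $|\alpha|=1$ the paper also explicitly invokes the already-proved $|\alpha|=0$ bound \eqref{3.50} to close the commutator term, which your write-up leaves implicit.
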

\begin{proof}
Differentiating the last equation of \eqref{2.34} with respect to $t$, one has
\begin{equation}
\label{3.49}
-\delta\partial^{2}_{x}\partial_t\widetilde{\phi}
+e^{\phi}\partial_t\widetilde{\phi}=\partial_t\widetilde{\rho}
-(e^{\phi}-e^{\bar{\phi}})\partial_t\bar{\phi}+\delta^3\partial_t\mathcal{R}_{4}.
\end{equation}
By taking the inner product of \eqref{3.49} with $\partial_t\widetilde{\phi}$, we get 
\begin{eqnarray*}
	(e^{\phi}\partial_t\widetilde{\phi},\partial_t\widetilde{\phi})-\delta(\partial^{2}_{x}\partial_t\widetilde{\phi},\partial_t\widetilde{\phi})
	&&=(\partial_t\widetilde{\rho},\partial_t\widetilde{\phi})-((e^{\phi}-e^{\bar{\phi}})\partial_t\bar{\phi},\partial_t\widetilde{\phi})
	+\delta^3(\partial_t\mathcal{R}_{4},\partial_t\widetilde{\phi})
	\notag\\
	&&\leq \eta\|\partial_t\widetilde{\phi}\|^2+C_\eta\|\partial_t\widetilde{\rho}\|^2+C_\eta\|\partial_t\bar{\phi}\|^2\|\widetilde{\phi}\|^2_{L^\infty}
	+C_\eta\delta^6\|\partial_t\mathcal{R}_{4}\|^2
	\notag\\
	&&\leq \eta\|\partial_t\widetilde{\phi}\|^2+C_\eta\|\partial_t\widetilde{\rho}\|^2+C_\eta\widetilde{C}\delta^6+C_\eta\delta^6.
\end{eqnarray*}
Here we have used \eqref{2.31}, \eqref{3.1}, \eqref{3.4}, $e^\phi\approx 1$ and  $e^{\bar{\phi}}\approx 1$.

Hence, by taking $\eta>0$ small enough, we have
\begin{equation}
	\label{3.50}
	\|\partial_t\widetilde{\phi}\|^2+\delta\|\partial_x\partial_t\widetilde{\phi}\|^2
	\leq C\|\partial_t\widetilde{\rho}\|^2+\widetilde{C}\delta^6.
\end{equation}
By taking  the inner product of the first equation of \eqref{2.34} with  $\partial_t\widetilde{\rho}$, we claim that
\begin{equation}
	\label{3.64a}
	\|\partial_t\widetilde{\rho}\|^{2}\leq
	C\frac{1}{\delta^2}\|\partial_x(\widetilde{\rho},\widetilde{u})\|^{2}+\widetilde{C}\delta^4.
\end{equation}
This and \eqref{3.50} together, gives the estimate \eqref{3.48} for the case $|\alpha|=0$.

Differentiating \eqref{3.49} with respect to $x$, one obtains
\begin{equation}
	\label{3.51}
	-\delta\partial^{3}_{x}\partial_t\widetilde{\phi}
	+e^{\phi}\partial_{x}\partial_t\widetilde{\phi}+e^{\phi}\partial_{x}\phi\partial_t\widetilde{\phi}=\partial_{x}\partial_t\widetilde{\rho}
	-\partial_{x}[(e^{\phi}-e^{\bar{\phi}})\partial_t\bar{\phi}]+\delta^3\partial_{x}\partial_t\mathcal{R}_{4}.
\end{equation}
By taking the inner product of \eqref{3.51} with $\partial_{x}\partial_t\widetilde{\phi}$, we see that
\begin{eqnarray*}
	&&(e^{\phi}\partial_{x}\partial_t\widetilde{\phi},\partial_{x}\partial_t\widetilde{\phi})-\delta(\partial^{3}_{x}\partial_t\widetilde{\phi}
	,\partial_{x}\partial_t\widetilde{\phi})+
	(e^{\phi}\partial_{x}\phi\partial_t\widetilde{\phi},\partial_{x}\partial_t\widetilde{\phi})
	\notag\\
	&&=(\partial_{x}\partial_t\widetilde{\rho},\partial_{x}\partial_t\widetilde{\phi})
	-(\partial_{x}[(e^{\phi}-e^{\bar{\phi}})\partial_t\bar{\phi}],\partial_{x}\partial_t\widetilde{\phi})
	+\delta^3(\partial_{x}\partial_t\mathcal{R}_{4},\partial_{x}\partial_t\widetilde{\phi}).
\end{eqnarray*}
Thanks to \eqref{3.4}, \eqref{3.2} and \eqref{3.2a}, one has
\begin{equation}
	\label{3.57}
	\|\partial_x\phi\|_{L^{\infty}}\leq \|\partial_x\bar{\phi}\|_{L^{\infty}}+\|\partial_x\widetilde{\phi}\|_{L^{\infty}}
	\leq C\delta+\widetilde{C}\delta\delta^{3/4}\leq C\delta,
\end{equation}
which combine with $e^\phi\approx 1$ and  \eqref{3.50} implies that
\begin{eqnarray*}
|(e^{\phi}\partial_{x}\phi\partial_t\widetilde{\phi},\partial_{x}\partial_t\widetilde{\phi})|
&&\leq \eta\|\partial_{x}\partial_t\widetilde{\phi}\|^2+C_\eta\|\partial_{x}\phi\|^2_{L^\infty}\|\partial_t\widetilde{\phi}\|^2
\notag\\
&&\leq \eta\|\partial_{x}\partial_t\widetilde{\phi}\|^2+C_\eta\delta^2\|\partial_t\widetilde{\rho}\|^2+C_\eta\delta^6.
\end{eqnarray*}
Hence, by taking $\eta>0$ small enough, we claim that
\begin{eqnarray}
\label{3.52}
\|\partial_x\partial_t\widetilde{\phi}\|^2+\delta\|\partial^2_x\partial_t\widetilde{\phi}\|^2
&&\leq C\|\partial_x\partial_t\widetilde{\rho}\|^2
+C\delta^2\|\partial_t\widetilde{\rho}\|^2+\widetilde{C}\delta^6.
\end{eqnarray}
To further bound \eqref{3.52}, we apply  $\partial_{x}$ to the first equation of \eqref{2.34} and
take the inner product of the resulting equation with $\partial_x\partial_t\widetilde{\rho}$ to obtain
\begin{equation*}
	\|\partial_x\partial_t\widetilde{\rho}\|^{2}\leq
	C\frac{1}{\delta^2}\|\partial^2_x(\widetilde{\rho},\widetilde{u})\|^{2}+\widetilde{C}\delta^4.
\end{equation*}
Combining this, \eqref{3.64a} and \eqref{3.52}, we obtain \eqref{3.48} for the case $|\alpha|=1$. We then end up the proof of Lemma \ref{lem3.12}.
\end{proof}
\begin{lemma}\label{lem3.13}
For $|\alpha|\leq 1$, one has
	\begin{eqnarray}	
		\label{3.53}
&&\delta^{1/2}\varepsilon\frac{d}{dt}(\partial^{\alpha}\widetilde{u}_1,\partial_x\partial^{\alpha}\widetilde{\rho})
+c\frac{\varepsilon}{\delta^{1/2}}(\|\partial^{\alpha}\partial_x\widetilde{\rho}\|^2+\|\partial^{\alpha}\partial_x\widetilde{\phi}\|^2
+\delta\|\partial^{\alpha}\partial^2_x\widetilde{\phi}\|^2)
\notag\\
&&\leq C\frac{\varepsilon}{\delta^{1/2}}(\|\partial^{\alpha}\partial_x(\widetilde{u},\widetilde{\theta})\|^2+\|\langle v\rangle^{-\frac{1}{2}}\partial^{\alpha}\partial_xf\|^2)
+C\delta^{5/2}\varepsilon^3+C\delta^5\varepsilon.
	\end{eqnarray}
\end{lemma}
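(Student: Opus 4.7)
The plan is to recover the dissipation on $\partial_x\widetilde\rho$, $\partial_x\widetilde\phi$ and $\partial_x^2\widetilde\phi$, which are absent from the basic $L^2$ energy, by the usual cross-term trick: apply $\partial^{\alpha}$ to the momentum equation \eqref{2.34}$_2$ and take the $L^2_x$ inner product with $\delta^{1/2}\varepsilon\,\partial_x\partial^{\alpha}\widetilde\rho$. The time derivative on $\widetilde u_1$ is moved outside via $(\partial_t\partial^\alpha\widetilde u_1,\partial_x\partial^\alpha\widetilde\rho)=\tfrac{d}{dt}(\partial^\alpha\widetilde u_1,\partial_x\partial^\alpha\widetilde\rho)+(\partial_x\partial^\alpha\widetilde u_1,\partial_t\partial^\alpha\widetilde\rho)$, after which $\partial_t\widetilde\rho$ is eliminated with the continuity equation \eqref{2.34}$_1$.

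The two decisive contributions come from the pressure $\tfrac{1}{\delta}\tfrac{2}{3}\tfrac{\bar\theta}{\bar\rho}\partial_x\widetilde\rho$ and the potential $\tfrac{1}{\delta}\partial_x\widetilde\phi$. Since $\bar\theta\approx 3/2$ and $\bar\rho\approx 1$ for small $\delta$ by \eqref{3.4}, the pressure term paired with $\delta^{1/2}\varepsilon\,\partial_x\partial^\alpha\widetilde\rho$ immediately produces the coercive contribution $c\tfrac{\varepsilon}{\delta^{1/2}}\|\partial_x\partial^\alpha\widetilde\rho\|^2$. For the potential term, I substitute $\partial^\alpha\widetilde\rho$ from $\partial^\alpha$ of the Poisson equation $\widetilde\rho=-\delta\partial_x^2\widetilde\phi+(e^\phi-e^{\bar\phi})-\delta^3\mathcal R_4$, expand $e^\phi-e^{\bar\phi}=e^{\bar\phi}\widetilde\phi+O(\widetilde\phi^2)$ using $e^{\bar\phi}\approx 1$ and \eqref{3.2}, and integrate by parts once in $x$ to obtain
\begin{equation*}
\tfrac{\varepsilon}{\delta^{1/2}}\bigl(\partial_x\partial^\alpha\widetilde\phi,\partial_x\partial^\alpha\widetilde\rho\bigr)
\;=\;\tfrac{\varepsilon}{\delta^{1/2}}\bigl(\partial_x\partial^\alpha\widetilde\phi,\,-\delta\partial_x^3\partial^\alpha\widetilde\phi+e^{\bar\phi}\partial_x\partial^\alpha\widetilde\phi+\cdots\bigr)
\;\geq\;c\tfrac{\varepsilon}{\delta^{1/2}}\bigl(\delta\|\partial_x^2\partial^\alpha\widetilde\phi\|^2+\|\partial_x\partial^\alpha\widetilde\phi\|^2\bigr),
\end{equation*}
where the ellipsis and the lower-order nonlinear pieces $O(\widetilde\phi^2)+\delta^3\mathcal R_4$ are absorbed by $\eta$-small Cauchy–Schwarz using \eqref{3.2}, \eqref{3.4}, \eqref{2.31} and \eqref{1.37}.

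The remaining pieces are routine. The streaming term $-A\delta^{-1}\partial_x\widetilde u_1$ and the analogous cross term $-A\delta^{-1/2}\varepsilon(\partial_x\partial^\alpha\widetilde u_1,\partial_x\partial^\alpha\widetilde\rho)$ generated by substituting $\partial_t\widetilde\rho$ via continuity are handled by Cauchy–Schwarz to produce $\eta\tfrac{\varepsilon}{\delta^{1/2}}\|\partial_x\partial^\alpha\widetilde\rho\|^2+C_\eta\tfrac{\varepsilon}{\delta^{1/2}}\|\partial_x\partial^\alpha\widetilde u_1\|^2$, the first being absorbed into the dissipation just obtained. The temperature term furnishes the $\|\partial_x\partial^\alpha\widetilde\theta\|^2$ part of the right-hand side. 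The nonlinear convective terms $\delta^{-1}u_1\partial_x\widetilde u_1$, $\delta^{-1}\widetilde u_1\partial_x\bar u_1$, $\delta^{-1}(\tfrac{\theta}{\rho}-\tfrac{\bar\theta}{\bar\rho})\partial_x\rho$, the microscopic moment $\delta^{-1}\rho^{-1}\partial_x\!\int_{\mathbb R^3}v_1^2G\,dv$ with $G=\overline G+\sqrt\mu f$ (contributing the $\|\langle v\rangle^{-1/2}\partial_x\partial^\alpha f\|^2$ piece via \eqref{3.7} and $L^\infty_xL^2_v$ duality), the commutators produced when $\partial^\alpha$ falls on variable coefficients, and the source $\delta^2\bar\rho^{-1}\mathcal R_2$ with \eqref{2.31}, all yield contributions controlled by the right-hand side of \eqref{3.53} together with error terms of order $\delta^{5/2}\varepsilon^3+\delta^5\varepsilon$ once \eqref{1.37} and the $L^\infty$–$L^2$ Sobolev embedding on $\mathbb R$ are used.

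The main obstacle is the potential cross term: naively the identity $\partial_x\widetilde\rho=-\delta\partial_x^3\widetilde\phi+\cdots$ introduces a third-order derivative $\partial_x^3\widetilde\phi$ that lies \emph{outside} the energy $\mathcal E_{2}(t)$; the whole argument only closes because this derivative, after integration by parts onto $\partial_x\partial^\alpha\widetilde\phi$, reappears with a favourable sign as the dissipation $\delta\|\partial_x^2\partial^\alpha\widetilde\phi\|^2$. Making this sign-tracking rigorous — including keeping the boundary terms at $|x|\to\infty$ zero in $H^{s_1}(\mathbb R)$ via Proposition \ref{prop.1.1} and the a priori assumption \eqref{3.1} — together with checking that the linearisation error $e^\phi-e^{\bar\phi}-e^{\bar\phi}\widetilde\phi=O(\widetilde\phi^2)$ is truly harmless under \eqref{1.37}, is the only non-bookkeeping step. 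With those in hand, choosing $\eta$ small yields \eqref{3.53}.
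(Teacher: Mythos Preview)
Your proposal is correct and follows essentially the same route as the paper: test the $\partial^\alpha$ of the momentum equation \eqref{2.34}$_2$ against $\partial_x\partial^\alpha\widetilde\rho$, move the time derivative outside and eliminate $\partial_t\widetilde\rho$ via continuity, then substitute the Poisson equation into $(\partial_x\partial^\alpha\widetilde\phi,\partial_x\partial^\alpha\widetilde\rho)$ and integrate by parts so that the third-order term returns with the good sign as $\delta\|\partial_x^2\partial^\alpha\widetilde\phi\|^2$. The only cosmetic differences are that the paper tests with $\delta\,\partial_x\partial^\alpha\widetilde\rho$ and multiplies by $\varepsilon/\delta^{1/2}$ at the end (rather than carrying $\delta^{1/2}\varepsilon$ throughout), and it writes $\partial_x(e^\phi-e^{\bar\phi})=e^\phi\partial_x\widetilde\phi+(e^\phi-e^{\bar\phi})\partial_x\bar\phi$ instead of Taylor-expanding $e^\phi-e^{\bar\phi}$; both give the same coercive term since $e^\phi\approx 1$.
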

\begin{proof}	
After applying  $\partial^{\alpha}$ with  $|\alpha|\leq 1$ to the second equation of \eqref{2.34}, we take the inner product of the resulting equation with $\delta\partial^{\alpha}\partial_x\widetilde{\rho}$ to get	
\begin{eqnarray}
	\label{3.54}
	&&\delta(\partial^{\alpha}\partial_t\widetilde{u}_{1},\partial^{\alpha}\partial_x\widetilde{\rho})
	+(\frac{2\bar{\theta}}{3\bar{\rho}}\partial^{\alpha}\partial_x\widetilde{\rho},\partial^{\alpha}\partial_x\widetilde{\rho})+
	\sum_{1\leq \alpha_{1}\leq \alpha}C_{\alpha}^{\alpha_{1}}(\partial^{\alpha_1}(\frac{2\bar{\theta}}{3\bar{\rho}})\partial^{\alpha-\alpha_{1}}\partial_x\widetilde{\rho}
	,\partial^{\alpha}\partial_x\widetilde{\rho})
	\notag\\
	=&&-(\partial^{\alpha}[-A\partial_x\widetilde{u}_{1}
	+u_{1}\partial_x\widetilde{u}_{1}+\widetilde{u}_{1}\partial_x\bar{u}_{1}+\frac{2}{3}\partial_x\widetilde{\theta}
	+\frac{2}{3}(\frac{\theta}{\rho}-\frac{\bar{\theta}}{\bar{\rho}})\partial_x\rho],\partial^{\alpha}\partial_x\widetilde{\rho})
	\notag\\
	&&-(\partial^{\alpha}\partial_x\widetilde{\phi},\partial^{\alpha}\partial_x\widetilde{\rho})
	-(\partial^{\alpha}[\frac{1}{\rho}\int_{\mathbb{R}^3} v^2_{1}\partial_xG\,dv],\partial^{\alpha}\partial_x\widetilde{\rho})-(\delta^3\partial^{\alpha}[\frac{1}{\bar{\rho}}\mathcal{R}_2],\partial^{\alpha}\partial_x\widetilde{\rho}).
\end{eqnarray}
Here if $|\alpha|=0$, the last term on the Left hand side of \eqref{3.54} vanishes and if  $|\alpha|=1$, this term exists.
By integration by parts, and using \eqref{3.4}, \eqref{2.31},
\eqref{1.33} and  \eqref{3.1}, we arrive at
	\begin{eqnarray*}
	\delta(\partial^{\alpha}\partial_t\widetilde{u}_{1},\partial^{\alpha}\partial_x\widetilde{\rho})
		=&&\delta\frac{d}{dt}(\partial^{\alpha}\widetilde{u}_1,\partial_x\partial^{\alpha}\widetilde{\rho})+\delta(\partial^{\alpha}\partial_x\widetilde{u},
		\partial^{\alpha}\partial_{t}\widetilde{\rho})
		\notag\\
		=&&	\delta\frac{d}{dt}(\partial^{\alpha}\widetilde{u}_1,\partial_x\partial^{\alpha}\widetilde{\rho})
		-(\partial^{\alpha}\partial_x\widetilde{u}_1,\partial^{\alpha}[-A\partial_x\widetilde{\rho}+\partial_x(\widetilde{\rho}u_{1})+\partial_x(\bar{\rho}\widetilde{u}_{1})+\delta^3\mathcal{R}_1])
		\notag\\
		\geq&&\delta\frac{d}{dt}(\partial^{\alpha}\widetilde{u}_1,\partial_x\partial^{\alpha}\widetilde{\rho})-C\eta\|\partial^{\alpha}\partial_x\widetilde{\rho}\|^2
		-C_\eta\|\partial^{\alpha}\partial_x\widetilde{u}_1\|^2-\widetilde{C}\delta^6.
	\end{eqnarray*}
In the second equality we have used the first equation of \eqref{2.34}. Applying  $\partial^{\alpha}\partial_x$ to the last equation of \eqref{2.34}, one gets
\begin{equation}
	\label{3.71A}
-(\partial^{\alpha}\partial_x\widetilde{\phi},\partial^{\alpha}\partial_x\widetilde{\rho})
=(\partial^{\alpha}\partial_x\widetilde{\phi},\delta\partial^{\alpha}\partial^3_x\widetilde{\phi}
-\partial^{\alpha}[e^{\phi}\partial_x\widetilde{\phi}]
-\partial^{\alpha}[(e^{\phi}-e^{\bar{\phi}})\partial_x\bar{\phi}]+\delta^3\partial^{\alpha}\partial_x\mathcal{R}_{4}).
\end{equation}
In the following, we estimate \eqref{3.71A}. By \eqref{3.1}, \eqref{3.2} and \eqref{3.4}, one has
\begin{eqnarray*}
	-(\partial^{\alpha}\partial_x\widetilde{\phi},\partial^{\alpha}[e^{\phi}\partial_x\widetilde{\phi}])
	&&=-(\partial^{\alpha}\partial_x\widetilde{\phi},
	e^{\phi}\partial^{\alpha}\partial_x\widetilde{\phi})-\sum_{1\leq \alpha_{1}\leq \alpha}C_{\alpha}^{\alpha_{1}}(\partial^{\alpha}\partial_x\widetilde{\phi},
	e^{\phi}\partial^{\alpha_{1}}\phi\partial^{\alpha-\alpha_{1}}\partial_x\widetilde{\phi})
	\notag\\
	&&\leq
	-(\partial^{\alpha}\partial_x\widetilde{\phi},e^{\phi}\partial^{\alpha}\partial_x\widetilde{\phi})
	+C\eta\|\partial^{\alpha}\partial_x\widetilde{\phi}\|^2
	+C_\eta\widetilde{C}\delta^6.
\end{eqnarray*}
The other terms in \eqref{3.71A} can be estimated easily, hence we get
\begin{eqnarray*}
-(\partial^{\alpha}\partial_x\widetilde{\phi},\partial^{\alpha}\partial_x\widetilde{\rho})\leq -\delta(\partial^{\alpha}\partial^2_x\widetilde{\phi},\partial^{\alpha}\partial^2_x\widetilde{\phi})
	-(\partial^{\alpha}\partial_x\widetilde{\phi},e^{\phi}\partial^{\alpha}\partial_x\widetilde{\phi})
	+C\eta\|\partial^{\alpha}\partial_x\widetilde{\phi}\|^2
	+C_\eta\widetilde{C}\delta^6.
\end{eqnarray*}
Note that the other terms in \eqref{3.54} can be estimated by \eqref{3.4},
\eqref{2.31}, \eqref{3.7}, \eqref{1.34},  \eqref{3.1} and \eqref{3.2a}, then we claim that
	$$
C(\delta^2+\eta)\|\partial^{\alpha}\partial_x\widetilde{\rho}\|^2+
	C_\eta\{\|\partial^{\alpha}\partial_x(\widetilde{u},\widetilde{\theta})\|^2+\|\langle v\rangle^{-\frac{1}{2}}\partial^{\alpha}\partial_xf\|^2\}
	+C_\eta \delta^3\varepsilon^2+C_\eta\widetilde{C}\delta^6.
	$$
Hence, for $|\alpha|\leq1$, substituting all the above estimates into \eqref{3.54} and choosing  a small $\eta>0$, one gets
	\begin{eqnarray*}	
		&&\delta\frac{d}{dt}(\partial^{\alpha}\widetilde{u}_1,\partial_x\partial^{\alpha}\widetilde{\rho})
		+c(\|\partial^{\alpha}\partial_x\widetilde{\rho}\|^2+\|\partial^{\alpha}\partial_x\widetilde{\phi}\|^2
		+\delta\|\partial^{\alpha}\partial^2_x\widetilde{\phi}\|^2)
		\notag\\
		&&\leq C(\|\partial^{\alpha}\partial_x(\widetilde{u},\widetilde{\theta})\|^2+\|\langle v\rangle^{-\frac{1}{2}}\partial^{\alpha}\partial_xf\|^2)
		+C\delta^3\varepsilon^2+ \widetilde{C}\delta^6.
	\end{eqnarray*}
Multiplying this by  $\frac{\varepsilon}{\delta^{1/2}}$ and using \eqref{3.2a},
we can prove \eqref{3.53} holds.  Thus, Lemma \ref{lem3.13} is proved. 
\end{proof}
\subsection{Estimates on nonlinear terms with $\partial_{x}\phi$}\label{subs3.5}
We are now ready to estimate the nonlinear terms involving with the electric fields.
\subsubsection{Estimates without weight functions}\label{subs3.5.1}
We first treat such terms without the weighted function.
\begin{lemma}\label{lem3.14}
 For $|\alpha|=2$, one has
	\begin{multline}
		\label{3.55}
		\frac{1}{\delta}	(\frac{\partial^{\alpha}(\partial_x\phi\partial_{v_{1}}F)}{\sqrt{\mu}},\frac{\partial^{\alpha}F}{\sqrt{\mu}})
		\leq -\frac{1}{2}\frac{d}{dt}(\frac{1}{K\theta}e^{\phi}\partial^{\alpha}\widetilde{\phi},\partial^{\alpha}\widetilde{\phi})
		-\frac{1}{2}\delta\frac{d}{dt}(\frac{1}{K\theta}\partial^{\alpha}\partial_x\widetilde{\phi},\partial^{\alpha}\partial_x\widetilde{\phi})
	\\
	+C(\frac{1}{\varepsilon}+\frac{\delta^{3/2}}{\varepsilon^2})\mathcal{D}_{2,l,q_1}(t)+C\frac{1}{\delta^2}\mathcal{E}_{2}(t)
	+C\delta^{\frac{1}{2}}\varepsilon^2+C\delta^2+C\varepsilon^{1/2}\delta^{1/2}\|\langle v\rangle \partial^{\alpha}f\|^2.
\end{multline}	
\end{lemma}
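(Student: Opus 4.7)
The plan is to isolate the most singular contribution coming from $\partial^\alpha\partial_x\phi=\partial_x^3\phi$ and to convert it, via the Poisson equation together with the continuity equation, into the favorable time-derivative structure displayed on the right-hand side, handling all remaining pieces by the \emph{a priori} estimates of Section~\ref{sub3}. To begin, I would Leibniz-expand
$\partial^\alpha(\partial_x\phi\,\partial_{v_1}F)=\sum_{\alpha_1\le\alpha}C^{\alpha_1}_\alpha\,\partial^{\alpha_1}\partial_x\phi\cdot\partial^{\alpha-\alpha_1}\partial_{v_1}F$
and treat the three types of terms separately. When $|\alpha_1|\le 1$, only up to $\partial_x^2\phi$ sits on the electric field; using $\|\partial_x\phi\|_{L^\infty}\le C\delta$ and the bounds on $\phi$ from \eqref{3.2}-\eqref{3.4}, together with the decomposition $F=M+\overline G+\sqrt\mu f$ from \eqref{1.22}, \eqref{1.31} and the correction estimates in Lemma~\ref{lem3.2}, these terms contribute only to the error budgets $C\delta^{-2}\mathcal E_2$, $C(\varepsilon^{-1}+\delta^{3/2}\varepsilon^{-2})\mathcal D_{2,l,q_1}$, $C\delta^{1/2}\varepsilon^2$ and $C\delta^2$.

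Next I would attack the singular term with $\alpha_1=\alpha$, namely $\tfrac{1}{\delta}\int\partial_x^3\phi\int\partial_{v_1}F\,\partial^\alpha F/\mu\,dv\,dx$. Writing $\partial_{v_1}F=\partial_{v_1}M+\partial_{v_1}G$ with $\partial_{v_1}M=-\tfrac{v_1-u_1}{K\theta}M$ and using the decomposition \eqref{3.28} of $\partial^\alpha M$, the $v$-moment reduces, by explicit Gaussian integration of $(v-u)_jM^2/\mu$ against $1,(v-u)_k,|v-u|^2$ and by the orthogonality of $G$ to the collision invariants together with Lemma~\ref{lem3.2} for $\overline G$, to the leading factor $-\tfrac{\rho}{K\theta}\partial^\alpha u_1$; the microscopic remainders pair with $\partial_x^3\phi$, bounded by $\|\partial_x^3\widetilde\phi\|\le \widetilde C\delta^2/\varepsilon$ from \eqref{3.3}, and after a Young inequality with weight $\varepsilon^{1/2}/\delta^{1/2}$ produce exactly the last right-hand-side term $C\varepsilon^{1/2}\delta^{1/2}\|\langle v\rangle\partial^\alpha f\|^2$. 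The leading piece is then $-\tfrac{1}{\delta}\int\tfrac{\rho\,\partial_x^3\phi\,\partial^\alpha u_1}{K\theta}\,dx$, which after one integration by parts in $x$ and the substitution $\rho\,\partial^\alpha\partial_x u_1=\partial^\alpha\partial_x(\rho u_1)+[\text{commutators}]$ is reshaped via the continuity equation $\partial_x(\rho u_1)=A\partial_x\rho-\delta\partial_t\rho$ from \eqref{2.9}$_1$ to give $A\partial^\alpha\partial_x\rho-\delta\partial^\alpha\partial_t\rho$. Replacing $\rho=e^\phi-\delta\partial_x^2\phi$ through Poisson, the $A\partial^\alpha\partial_x\rho$ contribution yields the key exact cancellation $A\int\partial_x^3\phi\,\partial_x^4\phi\,dx=\tfrac{A}{2}\int\partial_x[(\partial_x^3\phi)^2]\,dx=0$, and the $e^\phi$-part drops to $\partial_x^2\phi$-level after one more IBP, bounded by $C\delta^2$. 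Finally, combining $-\delta\partial^\alpha\partial_t\rho$ with the time-differentiated Poisson relation $\partial_t\rho=e^\phi\partial_t\phi-\delta\partial_t\partial_x^2\phi$ and performing two further integrations by parts in $x$ produces exactly
\[
-\tfrac{1}{2}\tfrac{d}{dt}\bigl(\tfrac{e^\phi}{K\theta}\partial^\alpha\widetilde\phi,\partial^\alpha\widetilde\phi\bigr)-\tfrac{1}{2}\delta\tfrac{d}{dt}\bigl(\tfrac{1}{K\theta}\partial^\alpha\partial_x\widetilde\phi,\partial^\alpha\partial_x\widetilde\phi\bigr),
\]
the cross-terms involving $\bar\phi$, $\partial_t(e^\phi/K\theta)$, and the quadratic residue $e^\phi-e^{\bar\phi}-e^{\bar\phi}\widetilde\phi$ being absorbed through Lemmas~\ref{lem3.11}-\ref{lem3.13}.

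The main obstacle will be the bookkeeping: each IBP (whether in $x$ or in $v_1$) generates commutators in $\rho,u,\theta,\phi$ or in $M^2/\mu$, and every such commutator must be traced and matched against one of the four error pieces on the right-hand side. The structural cancellation $\int\partial_x^3\phi\,\partial_x^4\phi\,dx=0$, the emergence of the $1/(K\theta)$ weight from the Gaussian moment $\int(v_1-u_1)^2M^2/\mu\,dv$, and the linearization $e^\phi-e^{\bar\phi}\approx e^{\bar\phi}\widetilde\phi$ are the three structural ingredients that make the inequality closable; combined with the $\delta$-$\varepsilon$ smallness given by \eqref{3.2a} and the scaling restriction \eqref{1.37}, they allow each remainder to fit into exactly one of the prescribed budgets $C\delta^{-2}\mathcal E_2$, $C(\varepsilon^{-1}+\delta^{3/2}\varepsilon^{-2})\mathcal D_{2,l,q_1}$, $C\delta^{1/2}\varepsilon^2+C\delta^2$, or $C\varepsilon^{1/2}\delta^{1/2}\|\langle v\rangle\partial^\alpha f\|^2$.
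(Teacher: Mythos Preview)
Your overall strategy---Leibniz-expanding, isolating the $|\alpha_1|=|\alpha|$ piece, and converting it via the continuity equation plus the Poisson relation into the two time-derivative terms---matches the paper's proof. Two points of your bookkeeping are off, however, and one of them is a genuine gap.

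First, the weighted term $C\varepsilon^{1/2}\delta^{1/2}\|\langle v\rangle\partial^\alpha f\|^2$ does \emph{not} come from the $\alpha_1=\alpha$ piece as you claim; it comes from the $|\alpha_1|=0$ piece. When no derivative falls on $\partial_x\phi$, integrating by parts in $v_1$ gives $\tfrac{1}{2\delta}\int\partial_x\phi\,v_1\,(\partial^\alpha F)^2/\mu$, and on the $\sqrt\mu f$ part this is $\tfrac{1}{2\delta}\int\partial_x\phi\,v_1\,|\partial^\alpha f|^2$. The bound $\|\partial_x\phi\|_{L^\infty}\le C\delta$ removes the $1/\delta$ but leaves $\int|v_1|\,|\partial^\alpha f|^2$ with \emph{no} small prefactor, which fits none of the budgets you listed for the $|\alpha_1|\le 1$ case. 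The paper handles this by a H\"older interpolation (see \eqref{3.58}): one writes the $v$-integral as $\big(\int\langle v\rangle^{-1}|\partial^\alpha f|^2\big)^{1/3}\big(\int\langle v\rangle^{2}|\partial^\alpha f|^2\big)^{2/3}$ and applies Young's inequality to obtain $\tfrac{1}{\delta\varepsilon}\|\partial^\alpha f\|_\sigma^2$ (absorbed into $\tfrac{\delta^{3/2}}{\varepsilon^2}\mathcal D_{2}$) plus precisely $C\varepsilon^{1/2}\delta^{1/2}\|\langle v\rangle\partial^\alpha f\|^2$. Without this interpolation step your treatment of the $|\alpha_1|=0$ case does not close. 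Conversely, the $\alpha_1=\alpha$ piece paired with the microscopic part of $F$ is in the paper bounded by $C\tfrac{\delta^{1/2}}{\varepsilon}\mathcal D_{2,l,q_1}+C\delta^2$ (see the estimates following \eqref{3.74}), not by the weighted norm.

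Second, your proposal to compute the $v$-moment $\int\partial_{v_1}M\cdot\partial^\alpha M\,\mu^{-1}\,dv$ by ``explicit Gaussian integration of $(v-u)_jM^2/\mu$'' is awkward: $M^2/\mu$ is not a clean Gaussian and its moments are messy. The paper instead splits $\tfrac{1}{\mu}=\big(\tfrac{1}{\mu}-\tfrac{1}{M}\big)+\tfrac{1}{M}$ (see \eqref{3.61}); the first bracket carries a smallness factor $O(\delta)$ by \eqref{3.24}, while on the second $\partial_{v_1}M/M=-(v_1-u_1)/(K\theta)$ reduces the integral to exact $M$-moments, giving $-\tfrac{1}{K\theta}\big(\partial^\alpha(\rho u_1)-u_1\partial^\alpha\rho\big)$ directly (see \eqref{3.62}). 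This is the clean route to the $1/(K\theta)$ weight and to the leading term that you then correctly process via continuity and Poisson.
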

\begin{proof}
Let $|\alpha|=2$, we write
\begin{equation}
	\label{3.56}
	\frac{1}{\delta}	(\frac{\partial^{\alpha}(\partial_x\phi\partial_{v_{1}}F)}{\sqrt{\mu}},\frac{\partial^{\alpha}F}{\sqrt{\mu}})
	=	\frac{1}{\delta}(\frac{\partial_x\phi\partial_{v_{1}}\partial^{\alpha}F}{\sqrt{\mu}},\frac{\partial^{\alpha}F}{\sqrt{\mu}})
	+	\frac{1}{\delta}\sum_{1\leq\alpha_{1}\leq \alpha}C^{\alpha_1}_\alpha(\frac{\partial^{\alpha_1}\partial_x\phi\partial_{v_{1}}\partial^{\alpha-\alpha_{1}}F}{\sqrt{\mu}},
	\frac{\partial^{\alpha}F}{\sqrt{\mu}}).
\end{equation}
By integration by parts and \eqref{1.26}, we see that
\begin{equation*}
		\frac{1}{\delta}|(\frac{\partial_x\phi\partial_{v_{1}}\partial^{\alpha}F}{\sqrt{\mu}},\frac{\partial^{\alpha}F}{\sqrt{\mu}})|
	=\frac{1}{\delta}|\int_{\mathbb R}\int_{{\mathbb R}^3}\partial_x\phi v_1\frac{(\partial^{\alpha}F)^2}{2\mu}\,dv\,dx|.
\end{equation*}
Since $F=M+\overline{G}+\sqrt{\mu}f$, we first use \eqref{3.29}, \eqref{3.7}, \eqref{3.57} and \eqref{1.36} to get
\begin{eqnarray*}
&&\frac{1}{\delta}|\int_{\mathbb R}\int_{{\mathbb R}^3}\partial_x\phi v_1\frac{|\partial^{\alpha}(M+\overline{G})|^2}{2\mu}\,dv\,dx|	
\notag\\	
&&\leq C\frac{1}{\delta}\|\partial_x\phi\|_{L^{\infty}}
(\|\langle v\rangle^{\frac{1}{2}}\frac{\partial^{\alpha}M}{\sqrt{\mu}}\|^2+\|\langle v\rangle^{\frac{1}{2}}\frac{\partial^{\alpha}\overline{G}}{\sqrt{\mu}}\|^2)
\notag\\
&&\leq C\frac{\delta^{1/2}}{\varepsilon}\mathcal{D}_{2}(t)+C\delta^2.
\end{eqnarray*}
On the other hand, we use the H\"{o}lder inequality, \eqref{3.57} and \eqref{1.36} to get
\begin{eqnarray}
	\label{3.58}
&&\frac{1}{\delta}|\int_{\mathbb R}\int_{{\mathbb R}^3}\partial_x\phi v_1(\partial^{\alpha}f)^2\,dv\,dx|
\notag\\
&&\leq C\frac{1}{\delta}\int_{\mathbb R}|\partial_x\phi|(\int_{{\mathbb R}^3}\langle v\rangle^{-1} |\partial^{\alpha}f|^2\,dv)^{1/3}
(\int_{{\mathbb R}^3}\langle v\rangle^{2} |\partial^{\alpha}f|^2\,dv)^{2/3}\,dx
\notag\\	
&&\leq C\frac{1}{\delta}\int_{\mathbb R}\{\frac{1}{\varepsilon}\int_{{\mathbb R}^3}\langle v\rangle^{-1} |\partial^{\alpha}f|^2\,dv+
\varepsilon^{1/2}|\partial_x\phi|^{3/2}\int_{{\mathbb R}^3}\langle v\rangle^{2} |\partial^{\alpha}f|^2\,dv\}\,dx
\notag\\
&&\leq C\frac{1}{\delta\varepsilon}\|\partial^{\alpha}f\|_\sigma^2
+C\varepsilon^{1/2}\frac{1}{\delta}\|\partial_x\phi\|^{3/2}_{L^{\infty}}\|\langle v\rangle \partial^{\alpha}f\|^2
\notag\\
&&\leq C\frac{\delta^{3/2}}{\varepsilon^{2}}\mathcal{D}_{2}(t)
+C\varepsilon^{1/2}\delta^{1/2}\|\langle v\rangle \partial^{\alpha}f\|^2.
\end{eqnarray}
Therefore, we have from the above three estimates that 
\begin{equation}
	\label{3.59}
	\frac{1}{\delta}|(\frac{\partial_x\phi\partial_{v_{1}}\partial^{\alpha}F}{\sqrt{\mu}},\frac{\partial^{\alpha}F}{\sqrt{\mu}})|\leq
	C(\frac{\delta^{1/2}}{\varepsilon}+\frac{\delta^{3/2}}{\varepsilon^{2}})\mathcal{D}_{2}(t)+C\varepsilon^{1/2}\delta^{1/2}\|\langle v\rangle \partial^{\alpha}f\|^2+C\delta^2.
\end{equation}
For $1\leq |\alpha_{1}|\leq |\alpha|$, we employ $F=M+\overline{G}+\sqrt{\mu}f$ again to  decompose
\begin{eqnarray}
	\label{3.60}
	&&\frac{1}{\delta}(\frac{\partial^{\alpha_1}\partial_x\phi\partial_{v_{1}}\partial^{\alpha-\alpha_{1}}F}{\sqrt{\mu}},\frac{\partial^{\alpha}F}{\sqrt{\mu}})
	\notag\\
	=&&\frac{1}{\delta}(\frac{\partial^{\alpha_1}\partial_x\phi\partial_{v_{1}}\partial^{\alpha-\alpha_{1}}M}{\sqrt{\mu}},\frac{\partial^{\alpha}M}{\sqrt{\mu}})
	+\frac{1}{\delta}(\frac{\partial^{\alpha_1}\partial_x\phi\partial_{v_{1}}\partial^{\alpha-\alpha_{1}}M}{\sqrt{\mu}},\frac{\partial^{\alpha}(\overline{G}+\sqrt{\mu}f)}{\sqrt{\mu}})
	\notag\\
	&&+\frac{1}{\delta}(\frac{\partial^{\alpha_1}\partial_x\phi\partial_{v_{1}}\partial^{\alpha-\alpha_{1}}\overline{G}}{\sqrt{\mu}},\frac{\partial^{\alpha}F}{\sqrt{\mu}})
	+\frac{1}{\delta}(\frac{\partial^{\alpha_1}\partial_x\phi\partial_{v_{1}}\partial^{\alpha-\alpha_{1}}(\sqrt{\mu}f)}{\sqrt{\mu}},\frac{\partial^{\alpha}F}{\sqrt{\mu}}).
\end{eqnarray}
In the following, we compute \eqref{3.60} term by term. The calculations for the first term of \eqref{3.60} can be divided into two cases.
One is the case  $|\alpha_{1}|=1$, and the other is the case $|\alpha_{1}|=|\alpha|=2$. For the latter one we split it into two parts as follows
\begin{eqnarray}
	\label{3.61}
	&&\frac{1}{\delta}(\frac{\partial^{\alpha_1}\partial_x\phi\partial_{v_{1}}\partial^{\alpha-\alpha_{1}}M}{\sqrt{\mu}},\frac{\partial^{\alpha}M}{\sqrt{\mu}})
	\notag\\
	&&=\frac{1}{\delta}(\partial^{\alpha}\partial_x\phi\partial_{v_{1}}M,[\frac{1}{\mu}-\frac{1}{M}]\partial^{\alpha}M)
	+\frac{1}{\delta}(\partial^{\alpha}\partial_x\phi\partial_{v_{1}}M,\frac{1}{M}\partial^{\alpha}M).
\end{eqnarray}
We first compute the second term on the right hand side of \eqref{3.61}, since the first term is more easier and 
is thus left to the end. In view of \eqref{1.17} and \eqref{1.18}, we have from a direct computation that
\begin{eqnarray}
	\label{3.62}
	\frac{1}{\delta}(\partial^{\alpha}\partial_x\phi\partial_{v_{1}}M,\frac{1}{M}\partial^{\alpha}M)&&=
	-\frac{1}{\delta}(\partial^{\alpha}\partial_x\phi\frac{v_1-u_1}{K\theta},\partial^{\alpha}M)
	\notag\\
	&&=-\frac{1}{\delta}(\frac{1}{K\theta}\partial^{\alpha}\partial_x\phi,\partial^{\alpha}(\rho u_1))
	+\frac{1}{\delta}(\frac{1}{K\theta}\partial^{\alpha}\partial_x\phi,u_1\partial^{\alpha}\rho)
	\notag\\
	&&:=I_3+I_4.
\end{eqnarray}
To bound the term $I_4$, we use the integration by parts, \eqref{3.1}, \eqref{3.4} and \eqref{1.36} to obtain
\begin{eqnarray}
	\label{3.79A}
	I_4&&=\frac{1}{\delta}(\partial^{\alpha}\partial_x\phi,\frac{1}{K\theta}u_1\partial^{\alpha}\bar{\rho})
	+\frac{1}{\delta}(\frac{1}{K\theta}\partial^{\alpha}\partial_x\phi,u_1\partial^{\alpha}\widetilde{\rho})
	\notag\\
	&&\leq -\frac{1}{\delta}(\partial^{\alpha}\phi,\partial_x[\frac{1}{K\theta}u_1\partial^{\alpha}\bar{\rho}])
	+C\frac{1}{\delta}\|u_1\|_{L^\infty}\|\partial^{\alpha}\widetilde{\rho}\|\|\partial^{\alpha}\partial_x\phi\|
	\notag\\
	&&\leq C(\|\partial_x(u_1,\theta)\|^2+\|\partial^{\alpha}\phi\|^2+\|\partial_x\partial^{\alpha}\bar{\rho}\|^2)
	+C\frac{1}{\delta^{1/2}}(
	\|\partial^{\alpha}\widetilde{\rho}\|^2+\delta\|\partial^{\alpha}\partial_x\phi\|^2)
	\notag\\
	&&\leq C(\frac{\delta^{1/2}}{\varepsilon}+\frac{1}{\varepsilon})\mathcal{D}_{2}(t)+C\delta^2.
\end{eqnarray}
For the term $I_3$, we use the integration by parts to write
\begin{eqnarray}
	\label{3.63}	
	I_3=&&\frac{1}{\delta}(\partial_x[\frac{1}{K\theta}]\partial^{\alpha}\phi,\partial^{\alpha}(\rho u_1))+
	\frac{1}{\delta}(\frac{1}{K\theta}\partial^{\alpha}\phi,\partial^{\alpha}\partial_x(\rho u_1))
	\notag\\
	=&&\frac{1}{\delta}(\partial_x[\frac{1}{K\theta}]\partial^{\alpha}\phi,\partial^{\alpha}(\rho u_1))
	+A\frac{1}{\delta}(\frac{1}{K\theta}\partial^{\alpha}\phi,\partial^{\alpha}\partial_x\rho)-(\frac{1}{K\theta}\partial^{\alpha}\phi,\partial^{\alpha}\partial_t\rho),
\end{eqnarray}
where in the last equality we have used the first equation of \eqref{2.13}.
For the second term on the right hand side of \eqref{3.63}, we use the second equation of \eqref{1.15} to write
\begin{equation}
\label{3.64}
A\frac{1}{\delta}(\frac{1}{K\theta}\partial^{\alpha}\phi,\partial^{\alpha}\partial_x\rho)=
A\frac{1}{\delta}(\frac{1}{K\theta}\partial^{\alpha}\phi,\partial^{\alpha}(e^{\phi}\partial_x\phi))
-A\frac{1}{\delta}(\frac{1}{K\theta}\partial^{\alpha}\phi,\delta\partial^{\alpha}\partial^{3}_{x}\phi).
\end{equation}
Note from \eqref{3.3}, \eqref{3.4} and \eqref{3.2a} that for $|\alpha|=2$,
\begin{equation}
	\label{3.79a}
	\|\partial^{\alpha}\phi\|^2\leq C\|\partial^{\alpha}\bar{\phi}\|^2+C\|\partial^{\alpha}\widetilde{\phi}\|^2
	\leq C\delta^2+\widetilde{C}\delta^3\leq C\delta^2,
\end{equation}
which together with the integration by parts and \eqref{1.36} gives
	\begin{eqnarray*}
		A\frac{1}{\delta}(\frac{1}{K\theta}\partial^{\alpha}\phi,e^{\phi}\partial^{\alpha}\partial_x\phi)&&=
		-\frac{1}{2}A\frac{1}{\delta}(\partial_x[\frac{1}{K\theta}e^{\phi}]\partial^{\alpha}\phi,\partial^{\alpha}\phi)
		\notag\\
		&&\leq C\frac{1}{\delta}\|\partial_x(\theta,\phi)\|_{L^\infty}\|\partial^{\alpha}\phi\|^2
		\leq C\frac{\delta^{1/2}}{\varepsilon}\mathcal{D}_{2}(t)+C\delta^2.
\end{eqnarray*}
Hence, we obtain that the first term on the right hand side of \eqref{3.64} is bounded by
\begin{eqnarray}
	\label{3.80a}	 
	A\frac{1}{\delta}(\frac{1}{K\theta}\partial^{\alpha}\phi,\partial^{\alpha}(e^{\phi}\partial_x\phi))
	&&=A\frac{1}{\delta}(\frac{1}{K\theta}\partial^{\alpha}\phi,e^{\phi}\partial^{\alpha}\partial_x\phi)
	+A\frac{1}{\delta}\sum_{1\leq\alpha_{1}\leq\alpha}
	C^{\alpha_{1}}_\alpha(\frac{1}{K\theta}\partial^{\alpha}\phi,\partial^{\alpha_{1}}[e^{\phi}]\partial^{\alpha-\alpha_{1}}\partial_x\phi)
	\notag\\
	&&\leq C\frac{\delta^{1/2}}{\varepsilon}\mathcal{D}_{2}(t)+C\delta^2.
\end{eqnarray}
For the second term on the right hand side of \eqref{3.64}, we have
\begin{eqnarray*}
-A(\frac{1}{K\theta}\partial^{\alpha}\phi,\partial^{\alpha}\partial^{3}_{x}\phi)
&&=A(\frac{1}{K\theta}\partial^{\alpha}\partial_x\phi,\partial^{\alpha}\partial^{2}_{x}\phi)
+A(\partial_x[\frac{1}{K\theta}]\partial^{\alpha}\phi,\partial^{\alpha}\partial^{2}_{x}\phi)
\notag\\
&&=-\frac{3}{2}A(\partial_{x}[\frac{1}{K\theta}]\partial^{\alpha}\partial_x\phi,\partial^{\alpha}\partial_{x}\phi)
-A(\partial^2_x[\frac{1}{K\theta}]\partial^{\alpha}\phi,\partial^{\alpha}\partial_{x}\phi).
\end{eqnarray*}
By \eqref{3.2}, \eqref{3.4}, \eqref{1.36} and $\widetilde{C}\delta^{1/4}<1$ in \eqref{3.2a}, we show that
\begin{eqnarray*}
|(\partial_{x}[\frac{1}{K\theta}]\partial^{\alpha}\partial_x\phi,\partial^{\alpha}\partial_{x}\phi)|
&&\leq C \|\partial_x\theta\|_{L^\infty}\|\partial^{\alpha}\partial_{x}\bar{\phi}\|^2
+C \|\partial_x\theta\|_{L^\infty}\|\partial^{\alpha}\partial_{x}\widetilde{\phi}\|^2
\notag\\
&&\leq C\delta^2\|\partial_x\theta\|_{L^\infty}+
(C\delta+\widetilde{C}\delta\frac{\delta^{5/4}}{\varepsilon^{1/2}})\|\partial^{\alpha}\partial_{x}\widetilde{\phi}\|^2
\notag\\
&&\leq C\delta^3+C\delta\|\partial_x\widetilde{\theta}\|^2_{L^\infty}+
C(1+\frac{\delta}{\varepsilon^{1/2}})\delta\|\partial^{\alpha}\partial_{x}\widetilde{\phi}\|^2
\notag
\\
&&\leq C(1+\frac{\delta}{\varepsilon^{1/2}})\frac{\delta^{1/2}}{\varepsilon}\mathcal{D}_{2}(t)+C\delta^3.
\end{eqnarray*}
We get clearly from the embedding inequality, \eqref{3.3} and \eqref{3.2a} that
\begin{equation}
	\label{3.65}
\|\partial^2_x\widetilde{\phi}\|_{L^\infty}\leq 2\|\partial^2_x\widetilde{\phi}\|^{1/2} \|\partial^3_x\widetilde{\phi}\|^{1/2}
\leq \widetilde{C}\delta^{3/4}\frac{\delta}{\varepsilon^{1/2}}\leq C\delta^{1/2}\frac{\delta}{\varepsilon^{1/2}},
\end{equation}
which together with \eqref{1.36} gives 
\begin{equation*}
|(\partial^2_x\widetilde{\theta}\partial^{\alpha}\widetilde{\phi},\partial^{\alpha}\partial_{x}\widetilde{\phi})|
\leq \|\partial^{\alpha}\widetilde{\phi}\|_{L^\infty}\|\partial^2_x\widetilde{\theta}\|\|\partial^{\alpha}\partial_{x}\widetilde{\phi}\|
\leq C\frac{\delta}{\varepsilon^{1/2}}\frac{\delta^{1/2}}{\varepsilon}\mathcal{D}_{2}(t).
\end{equation*}
By this, \eqref{3.2}, \eqref{3.4}, \eqref{3.1} and \eqref{1.36}, we get
\begin{eqnarray*}
&&|A(\partial^2_x[\frac{1}{K\theta}]\partial^{\alpha}\phi,\partial^{\alpha}\partial_{x}\phi)|	
\notag\\
&&\leq C\|\partial^{\alpha}\phi\|_{L^\infty}(\|\partial^2_x\theta\|\|\partial^{\alpha}\partial_{x}\phi\|
+\|\partial_x\theta\|_{L^\infty}\|\partial_x\theta\|\|\partial^{\alpha}\partial_{x}\phi\|)
\notag\\
&&\leq C(1+\frac{\delta}{\varepsilon^{1/2}})\frac{\delta^{1/2}}{\varepsilon}\mathcal{D}_{2}(t)
+C\delta^2.
\end{eqnarray*}
It follows from the above estimates that
\begin{equation*}
-A(\frac{1}{K\theta}\partial^{\alpha}\phi,\partial^{\alpha}\partial^{3}_{x}\phi)
\leq C(\frac{\delta^{1/2}}{\varepsilon}+\frac{\delta^{3/2}}{\varepsilon^{3/2}})\mathcal{D}_{2}(t)+C\delta^2.		
\end{equation*}
So, plugging this and \eqref{3.80a} into \eqref{3.64} leads to
\begin{equation}
\label{3.66}
	A\frac{1}{\delta}(\frac{1}{K\theta}\partial^{\alpha}\phi,\partial^{\alpha}\partial_x\rho)
\leq C(\frac{\delta^{1/2}}{\varepsilon}+\frac{\delta^{3/2}}{\varepsilon^{3/2}})\mathcal{D}_{2}(t)+C\delta^2.	
\end{equation}
For the first term on the right hand side of \eqref{3.63}, we use the similar calculation as \eqref{3.79A} to obtain
\begin{equation}
	\label{3.78a}
	\frac{1}{\delta}|(\partial_x[\frac{1}{K\theta}]\partial^{\alpha}\phi,\partial^{\alpha}(\rho u_1))|
	\leq C(\frac{\delta^{1/2}}{\varepsilon}+\frac{1}{\varepsilon})\mathcal{D}_{2}(t)+C\delta^2.
\end{equation}
For the last term in \eqref{3.63},  similar to \eqref{3.64},  it can be written as
\begin{equation}
	\label{3.87B}
-(\frac{1}{K\theta}\partial^{\alpha}\phi,\partial^{\alpha}\partial_t\rho)=
-(\frac{1}{K\theta}\partial^{\alpha}\phi,\partial^{\alpha}(e^{\phi}\partial_t\phi))
+(\frac{1}{K\theta}\partial^{\alpha}\phi,\delta\partial^{\alpha}\partial^{2}_{x}\partial_t\phi).
\end{equation}
To bound the first term on the right hand side of \eqref{3.87B}, we split it into four parts as follows
\begin{eqnarray}
	\label{3.87A}
	-(\frac{1}{K\theta}\partial^{\alpha}\phi,\partial^{\alpha}(e^{\phi}\partial_t\phi))
	=&&-(\frac{1}{K\theta}\partial^{\alpha}\phi,\partial^{\alpha}(e^{\phi}\partial_t\bar{\phi}))
	-(\frac{1}{K\theta}\partial^{\alpha}\bar{\phi},\partial^{\alpha}(e^{\phi}\partial_t\widetilde{\phi}))
	\notag\\
	&&-(\frac{1}{K\theta}\partial^{\alpha}\widetilde{\phi},e^{\phi}\partial^{\alpha}\partial_t\widetilde{\phi})
	-\sum_{1\leq\alpha_{1}\leq\alpha}C^{\alpha_{1}}_{\alpha}(\frac{1}{K\theta}\partial^{\alpha}\widetilde{\phi},\partial^{\alpha_1}
	(e^{\phi})\partial^{\alpha-\alpha_{1}}\partial_t\widetilde{\phi}).
\end{eqnarray}
In the following, we estimate \eqref{3.87A} term by term.
Using \eqref{3.79a}, \eqref{3.1} and \eqref{3.4}, the first term of \eqref{3.87A} can be estimated as follows
$$
|(\frac{1}{K\theta}\partial^{\alpha}\phi,\partial^{\alpha}(e^{\phi}\partial_t\bar{\phi}))|
\leq C\|\partial^{\alpha}\phi\|^2+C\|\partial^{\alpha}(e^{\phi}\partial_t\bar{\phi})\|^2
\leq C\delta^2.		
$$
For the second term of \eqref{3.87A}, by integration by parts, \eqref{3.4}, \eqref{3.48}, \eqref{1.34} and \eqref{1.36}, we get
\begin{eqnarray*}
	|(\frac{1}{K\theta}\partial^{\alpha}\bar{\phi},\partial^{\alpha}(e^{\phi}\partial_t\widetilde{\phi}))|
	&&=|(\partial^{\alpha}[\frac{1}{K\theta}\partial^{\alpha}\bar{\phi}],e^{\phi}\partial_t\widetilde{\phi})|
	\notag\\
	&&\leq C\|\partial^{\alpha}[\frac{1}{K\theta}\partial^{\alpha}\bar{\phi}]\|^2+C\|\partial_t\widetilde{\phi}\|^2
	\notag\\
	&&\leq  C\frac{\delta^{1/2}}{\varepsilon}\mathcal{D}_{2}(t)+C\frac{1}{\delta^2}\mathcal{E}_{2}(t)+C\delta^2.
\end{eqnarray*}
For the third term of \eqref{3.87A}, we use \eqref{3.45}, \eqref{3.48}, \eqref{3.4} and \eqref{1.36} to obtain
\begin{eqnarray*}
-(\frac{1}{K\theta}\partial^{\alpha}\widetilde{\phi},e^{\phi}\partial^{\alpha}\partial_t\widetilde{\phi})
&&=-\frac{1}{2}\frac{d}{dt}(\frac{1}{K\theta}e^{\phi}\partial^{\alpha}\widetilde{\phi},\partial^{\alpha}\widetilde{\phi})	
+\frac{1}{2}(\partial_t[\frac{1}{K\theta}e^{\phi}]\partial^{\alpha}\widetilde{\phi},\partial^{\alpha}\widetilde{\phi})
\notag\\	
&&
\leq -\frac{1}{2}\frac{d}{dt}(\frac{1}{K\theta}e^{\phi}\partial^{\alpha}\widetilde{\phi},\partial^{\alpha}\widetilde{\phi})
+C\delta\|\partial_t(\phi,\theta)\|^2+C\frac{1}{\delta}\|\partial^{\alpha}\widetilde{\phi}\|^2_{L^\infty}\|\partial^{\alpha}\widetilde{\phi}\|^2
\notag\\	
&&
\leq -\frac{1}{2}\frac{d}{dt}(\frac{1}{K\theta}e^{\phi}\partial^{\alpha}\widetilde{\phi},\partial^{\alpha}\widetilde{\phi})
+C\frac{1}{\delta}\mathcal{E}_{2}(t)+C\delta^2\varepsilon^2+C\delta^3
+ C\frac{\delta^{1/2}}{\varepsilon}\mathcal{D}_{2}(t).
\end{eqnarray*}
Similarly, the last term in \eqref{3.87A} can be controlled by $C\frac{\delta^{1/2}}{\varepsilon}\mathcal{D}_{2}(t)+C\frac{1}{\delta^2}\mathcal{E}_{2}(t)+C\delta^2$.
So, plugging the above estimates into \eqref{3.87A} gives
\begin{equation}
	\label{3.69}
	-(\frac{1}{K\theta}\partial^{\alpha}\phi,\partial^{\alpha}(e^{\phi}\partial_t\phi))
	\leq -\frac{1}{2}\frac{d}{dt}(\frac{1}{K\theta}e^{\phi}\partial^{\alpha}\widetilde{\phi},\partial^{\alpha}\widetilde{\phi})
	+C\frac{\delta^{1/2}}{\varepsilon}\mathcal{D}_{2}(t)
	+C\frac{1}{\delta^2}\mathcal{E}_{2}(t)+C\delta^2.
\end{equation}
To bound the second term on the right hand side of \eqref{3.87B}, we first use the integration by parts, \eqref{3.4}, \eqref{3.48}, \eqref{3.1}, \eqref{3.2a} and \eqref{1.36} to obtain
\begin{eqnarray*}
\delta(\frac{1}{K\theta}\partial^{\alpha}\bar{\phi},\partial^{\alpha}\partial^{2}_{x}\partial_t\widetilde{\phi})
&&=\delta(\partial_{x}[\partial_{x}(\frac{1}{K\theta})\partial^{\alpha}\bar{\phi}],\partial^{\alpha}\partial_t\widetilde{\phi})
-\delta(\partial^{\alpha}[\frac{1}{K\theta}\partial^{\alpha}\partial_{x}\bar{\phi}],\partial_{x}\partial_t\widetilde{\phi})
\notag\\
&&\leq C\frac{1}{\delta}\|\partial_{x}[\partial_{x}(\frac{1}{K\theta})\partial^{\alpha}\bar{\phi}]\|^2
+\|\partial^{\alpha}[\frac{1}{K\theta}\partial^{\alpha}\partial_{x}\bar{\phi}]\|^2
+C\delta^2(\|\partial_x\partial_t\widetilde{\phi}\|^2+\delta\|\partial^{\alpha}\partial_t\widetilde{\phi}\|^2)
\notag\\
&&\leq  C\frac{\delta^{1/2}}{\varepsilon}\mathcal{D}_{2}(t)+C\delta^2.
\end{eqnarray*}
Note from \eqref{3.4}, \eqref{3.45}, \eqref{3.46}, \eqref{3.48}, \eqref{3.1} and \eqref{3.2a} that
\begin{eqnarray}
	\label{3.67}
	\|\partial_t(\rho,u,\theta,\phi)\|_{L^\infty}&&\leq C\|\partial_t(\bar{\rho},\bar{u},\bar{\theta},\bar{\phi})\|_{L^\infty}
	+C\|\partial_t(\widetilde{\rho},\widetilde{u},\widetilde{\theta},\widetilde{\phi})\|_{L^\infty}
	\notag\\
	&&\leq C\delta+C\|\partial_t(\widetilde{\rho},\widetilde{u},\widetilde{\theta},\widetilde{\phi})\|^{\frac{1}{2}}
	\|\partial_x\partial_t(\widetilde{\rho},\widetilde{u},\widetilde{\theta},\widetilde{\phi})\|^{\frac{1}{2}}
	\notag\\
	&&\leq C\delta+\widetilde{C}(\delta+\delta^{1/2}\varepsilon)^{\frac{1}{2}}(\frac{1}{\varepsilon}\delta^{3/2}
	+\delta^{1/2}\varepsilon)^{\frac{1}{2}}
	\notag\\
	&&\leq \widetilde{C}(\delta+\delta^{1/2}\varepsilon+\frac{1}{\varepsilon^{1/2}}\delta^{5/4}).
\end{eqnarray}
Using \eqref{3.67}, \eqref{3.4}, \eqref{3.1}, \eqref{3.65}, \eqref{3.48} and \eqref{1.36}, we claim that
\begin{eqnarray*}
	&&\delta(\frac{1}{K\theta}\partial^{\alpha}\widetilde{\phi},\partial^{\alpha}\partial^{2}_{x}\partial_t\widetilde{\phi})
	=-\delta(\frac{1}{K\theta}\partial^{\alpha}\partial_x\widetilde{\phi},\partial^{\alpha}\partial_x\partial_t\widetilde{\phi})
	-\delta(\partial_x[\frac{1}{K\theta}]\partial^{\alpha}\widetilde{\phi},\partial^{\alpha}\partial_x\partial_t\widetilde{\phi})
	\notag\\
	&&= -\frac{1}{2}\delta\frac{d}{dt}(\frac{1}{K\theta}\partial^{\alpha}\partial_x\widetilde{\phi},\partial^{\alpha}\partial_x\widetilde{\phi})
	+\frac{1}{2}\delta(\partial_t[\frac{1}{K\theta}]\partial^{\alpha}\partial_x\widetilde{\phi},\partial^{\alpha}\partial_x\widetilde{\phi})
	+\delta(\partial_x[\partial_x(\frac{1}{K\theta})\partial^{\alpha}\widetilde{\phi}],\partial^{\alpha}\partial_t\widetilde{\phi})
	\notag\\
	&&\leq -\frac{1}{2}\delta\frac{d}{dt}(\frac{1}{K\theta}\partial^{\alpha}\partial_x\widetilde{\phi},\partial^{\alpha}\partial_x\widetilde{\phi})
	+C\delta\|\partial_t\theta\|_{L^{\infty}}\|\partial^{\alpha}\partial_x\widetilde{\phi}\|^2
	\notag\\
	&&\hspace{0.5cm}+C\frac{1}{\delta}\|\partial_x[\partial_x(\frac{1}{K\theta})\partial^{\alpha}\widetilde{\phi}]\|^2
	+C\delta^3\|\partial^{\alpha}\partial_t\widetilde{\phi}\|^2
	\notag\\
	&&\leq -\frac{1}{2}\delta\frac{d}{dt}(\frac{1}{K\theta}\partial^{\alpha}\partial_x\widetilde{\phi},\partial^{\alpha}\partial_x\widetilde{\phi})
	+C(\frac{\delta^{1/2}}{\varepsilon}+\frac{\delta^{3/2}}{\varepsilon^2})\mathcal{D}_{2}(t)+C\delta^2.
\end{eqnarray*}
Summarizing the above estimates, we arrive at
\begin{eqnarray*}
\delta(\frac{1}{K\theta}\partial^{\alpha}\phi,\partial^{\alpha}\partial^{2}_{x}\partial_t\phi)
=&&\delta(\frac{1}{K\theta}\partial^{\alpha}\bar{\phi},\partial^{\alpha}\partial^{2}_{x}\partial_t\widetilde{\phi})
+\delta(\frac{1}{K\theta}\partial^{\alpha}\widetilde{\phi},\partial^{\alpha}\partial^{2}_{x}\partial_t\widetilde{\phi})
+\delta(\frac{1}{K\theta}\partial^{\alpha}\phi,\partial^{\alpha}\partial^{2}_{x}\partial_t\bar{\phi})
\notag\\
\leq&& -\frac{1}{2}\delta\frac{d}{dt}(\frac{1}{K\theta}\partial^{\alpha}\partial_x\widetilde{\phi},\partial^{\alpha}\partial_x\widetilde{\phi})
+C(\frac{\delta^{1/2}}{\varepsilon}+\frac{\delta^{3/2}}{\varepsilon^2})\mathcal{D}_{2}(t)+C\delta^2,
\end{eqnarray*}
which  and \eqref{3.69} as well as \eqref{3.87B} together gives rise to
\begin{eqnarray*}
	\label{3.70}
	-(\frac{1}{K\theta}\partial^{\alpha}\phi,\partial^{\alpha}\partial_t\rho)
	\leq&& -\frac{1}{2}\frac{d}{dt}(\frac{1}{K\theta}e^{\phi}\partial^{\alpha}\widetilde{\phi},\partial^{\alpha}\widetilde{\phi})
	-\frac{1}{2}\delta\frac{d}{dt}(\frac{1}{K\theta}\partial^{\alpha}\partial_x\widetilde{\phi},\partial^{\alpha}\partial_x\widetilde{\phi})
	\notag\\
	&&+C(\frac{\delta^{1/2}}{\varepsilon}+\frac{\delta^{3/2}}{\varepsilon^2})\mathcal{D}_{2}(t)+C\frac{1}{\delta^2}\mathcal{E}_{2}(t)+C\delta^2.
\end{eqnarray*}
In summary, plugging this, \eqref{3.66} and \eqref{3.78a} into \eqref{3.63} leads to
\begin{eqnarray*}	
	I_3
	\leq&& -\frac{1}{2}\frac{d}{dt}(\frac{1}{K\theta}e^{\phi}\partial^{\alpha}\widetilde{\phi},\partial^{\alpha}\widetilde{\phi})
	-\frac{1}{2}\delta\frac{d}{dt}(\frac{1}{K\theta}\partial^{\alpha}\partial_x\widetilde{\phi},\partial^{\alpha}\partial_x\widetilde{\phi})
	\notag\\
&&+C(\frac{1}{\varepsilon}+\frac{\delta^{3/2}}{\varepsilon^2})\mathcal{D}_{2}(t)+C\frac{1}{\delta^2}\mathcal{E}_{2}(t)+C\delta^2.	
\end{eqnarray*}
Combining the estimates on  $I_3$ and  $I_4$ together, we get from \eqref{3.62} that
\begin{eqnarray}
\label{3.71}
\frac{1}{\delta}(\partial^{\alpha}\partial_x\phi\partial_{v_{1}}M,\frac{1}{M}\partial^{\alpha}M)
\leq&& -\frac{1}{2}\frac{d}{dt}(\frac{1}{K\theta}e^{\phi}\partial^{\alpha}\widetilde{\phi},\partial^{\alpha}\widetilde{\phi})
-\frac{1}{2}\delta\frac{d}{dt}(\frac{1}{K\theta}\partial^{\alpha}\partial_x\widetilde{\phi},\partial^{\alpha}\partial_x\widetilde{\phi})
\notag\\
&&+C(\frac{1}{\varepsilon}+\frac{\delta^{3/2}}{\varepsilon^2})\mathcal{D}_{2}(t)+C\frac{1}{\delta^2}\mathcal{E}_{2}(t)+C\delta^2.
\end{eqnarray}
This finishes the estimate on the second term of \eqref{3.61}.

In the following, we are going to compute the first term on the right hand side of \eqref{3.61}.
Recall that $\partial^{\alpha}M=I_1+I_2$ with $I_1=I_1^1+I_1^2$ given by \eqref{3.28} and \eqref{3.39a} respectively.
By the expression of $I_1^1$, and using \eqref{3.24}, \eqref{3.11}, \eqref{3.4} and \eqref{1.36}, we get
\begin{multline*}
	\frac{1}{\delta}|(\partial^{\alpha}\partial_x\phi\partial_{v_{1}}M,[\frac{1}{\mu}-\frac{1}{M}]I_1^1)|	
	\leq C\|\partial^{\alpha}\partial_x\phi\|\|\partial^{\alpha}(\widetilde{\rho},\widetilde{u},\widetilde{\theta})\|
	\\
	\leq C\delta^{\frac{1}{2}}\|\partial^{\alpha}\partial_x\phi\|^2+C\delta^{-\frac{1}{2}}\|\partial^{\alpha}(\widetilde{\rho},\widetilde{u},\widetilde{\theta})\|^2
		\leq C\frac{1}{\varepsilon}\mathcal{D}_{2}(t)+C\delta^{5/2}.	
\end{multline*}
After the integration by parts, we use the expression of $I_1^2$,  \eqref{3.24}, \eqref{3.11}, \eqref{3.4} and \eqref{1.36} to obtain
\begin{eqnarray*}
	\frac{1}{\delta}|(\partial^{\alpha}\partial_x\phi\partial_{v_{1}}M,[\frac{1}{\mu}-\frac{1}{M}]I_1^2)|	
	&&=\frac{1}{\delta}|(\partial^{\alpha}\phi,\partial_x[\partial_{v_{1}}M(\frac{1}{\mu}-\frac{1}{M})I_1^2])|
	\notag\\
	&&\leq C\|\partial^{\alpha}\phi\|(\|\partial_x(\rho,u,\theta)\|+\|\partial^{\alpha}\partial_x(\bar{\rho},\bar{u},\bar{\theta})\|)
	\notag\\
	&&\leq C\frac{1}{\varepsilon}\mathcal{D}_{2}(t)+C\delta^2.	
\end{eqnarray*}
The above two estimates and the fact that $I_1=I_1^1+I_1^2$ yield
\begin{equation*}
	\frac{1}{\delta}|(\partial^{\alpha}\partial_x\phi\partial_{v_{1}}M,[\frac{1}{\mu}-\frac{1}{M}]I_1)|	
	\leq C\frac{1}{\varepsilon}\mathcal{D}_{2}(t)+C\delta^2.	
\end{equation*}
Similar estimate also holds for $\frac{1}{\delta}|(\partial^{\alpha}\partial_x\phi\partial_{v_{1}}M,[\frac{1}{\mu}-\frac{1}{M}]I_2)|$.
Hence, the first term on the right hand side of \eqref{3.61} are bounded by
\begin{equation*}
\frac{1}{\delta}|(\partial^{\alpha}\partial_x\phi\partial_{v_{1}}M,[\frac{1}{\mu}-\frac{1}{M}]\partial^{\alpha}M)|
\leq C\frac{1}{\varepsilon}\mathcal{D}_{2}(t)+C\delta^2.	
\end{equation*}
In summary, for the case $|\alpha_{1}|=|\alpha|=2$, we plug this and \eqref{3.71} into \eqref{3.61} to get
\begin{eqnarray}
	\label{3.72}
\frac{1}{\delta}(\frac{\partial^{\alpha_1}\partial_x\phi\partial_{v_1}\partial^{\alpha-\alpha_{1}}M}{\sqrt{\mu}},\frac{\partial^{\alpha}M}{\sqrt{\mu}})
\leq&& -\frac{1}{2}\frac{d}{dt}(\frac{1}{K\theta}e^{\phi}\partial^{\alpha}\widetilde{\phi},\partial^{\alpha}\widetilde{\phi})
-\frac{1}{2}\delta\frac{d}{dt}(\frac{1}{K\theta}\partial^{\alpha}\partial_x\widetilde{\phi},\partial^{\alpha}\partial_x\widetilde{\phi})
\notag\\
&&+C(\frac{1}{\varepsilon}+\frac{\delta^{3/2}}{\varepsilon^2})\mathcal{D}_{2}(t)+C\frac{1}{\delta^2}\mathcal{E}_{2}(t)+C\delta^2.
\end{eqnarray}
Next we still need to consider the case $|\alpha_{1}|=1$. It holds from \eqref{3.29}, \eqref{3.79a}, \eqref{3.4} and \eqref{1.36} that
\begin{eqnarray*}
	\frac{1}{\delta}(\frac{\partial^{\alpha_1}\partial_x\phi\partial_{v_1}\partial^{\alpha-\alpha_{1}}M}{\sqrt{\mu}},\frac{\partial^{\alpha}M}{\sqrt{\mu}})
	&&\leq C\frac{1}{\delta^2}\|\partial^{\alpha-\alpha_{1}}(\rho,u,\theta)\|^2_{L^\infty}\|\partial^{\alpha_1}\partial_x\phi\|^2+C\|\frac{\partial^{\alpha}M}{\sqrt{\mu}}\|^2
	\notag\\
	&&\leq C\frac{\delta^{1/2}}{\varepsilon}\mathcal{D}_{2}(t)+C\delta^2.
\end{eqnarray*}
As a consequence, we combine this and \eqref{3.72} to get the estimate on the first term of  \eqref{3.60} as follows
\begin{eqnarray}
\label{3.73}
&&\frac{1}{\delta}\sum_{1\leq\alpha_{1}\leq\alpha}C^{\alpha_1}_\alpha(\frac{\partial^{\alpha_1}\partial_x\phi\partial_{v_1}\partial^{\alpha-\alpha_{1}}M}{\sqrt{\mu}},\frac{\partial^{\alpha}M}{\sqrt{\mu}})
\notag\\
\leq&& -\frac{1}{2}\frac{d}{dt}(\frac{1}{K\theta}e^{\phi}\partial^{\alpha}\widetilde{\phi},\partial^{\alpha}\widetilde{\phi})
-\frac{1}{2}\delta\frac{d}{dt}(\frac{1}{K\theta}\partial^{\alpha}\partial_x\widetilde{\phi},\partial^{\alpha}\partial_x\widetilde{\phi})
\notag\\
&&+C(\frac{1}{\varepsilon}+\frac{\delta^{3/2}}{\varepsilon^2})\mathcal{D}_{2}(t)+C\frac{1}{\delta^2}\mathcal{E}_{2}(t)+C\delta^2.
\end{eqnarray}
The second and third terms on the right hand side of \eqref{3.60} are bounded by
\begin{equation}
	\label{3.94A}
 C\frac{1}{\varepsilon}\mathcal{D}_{2}(t)+C\delta^{2}+C\varepsilon^2\delta^{1/2}.
\end{equation}
Here we have used the following estimates that for $|\alpha_1|=|\alpha|=2$,
\begin{eqnarray*}
&&\frac{1}{\delta}|(\frac{\partial^{\alpha_1}\partial_x\phi\partial_{v_{1}}\partial^{\alpha-\alpha_{1}}M}{\sqrt{\mu}},
\frac{\partial^{\alpha}(\overline{G}+\sqrt{\mu}f)}{\sqrt{\mu}})|
\notag\\
\leq&& C\delta^{\frac{1}{2}}\|\partial^{\alpha_1}\partial_x\phi\|^2+C\delta^{-\frac{5}{2}}\|\frac{\partial^{\alpha}\overline{G}}{\sqrt{\mu}}\|^2
+C\delta^{-\frac{5}{2}}\|\partial^{\alpha}f\|^2_{\sigma}
\notag\\
\leq&& C\frac{1}{\varepsilon}\frac{\varepsilon}{\delta^{1/2}}\delta\|\partial^{\alpha_1}\partial_x\widetilde{\phi}\|^2+C\delta^{5/2}+C\delta^{\frac{1}{2}}(\varepsilon+\delta^2)^2
+C\frac{1}{\varepsilon}\frac{1}{\delta^{3/2}\varepsilon}\frac{\varepsilon^2}{\delta}\|\partial^{\alpha}f\|^2_{\sigma}
\notag\\
\leq&& C\frac{1}{\varepsilon}\mathcal{D}_{2}(t)+C\delta^{2}+C\varepsilon^2\delta^{1/2}.
\end{eqnarray*}
It remains to estimate the last term of \eqref{3.60}. First of all, we write
\begin{equation}
	\label{3.74}
\frac{1}{\delta}(\frac{\partial^{\alpha_1}\partial_x\phi\partial_{v_{1}}\partial^{\alpha-\alpha_{1}}(\sqrt{\mu}f)}{\sqrt{\mu}},\frac{\partial^{\alpha}F}{\sqrt{\mu}})
	=\frac{1}{\delta}(\partial^{\alpha_1}\partial_x\phi\partial_{v_{1}}\partial^{\alpha-\alpha_{1}}f,\frac{\partial^{\alpha}F}{\sqrt{\mu}})
	-\frac{1}{\delta}(\frac{v_1}{2}\partial^{\alpha_1}\partial_{x}\phi\partial^{\alpha-\alpha_{1}}f,\frac{\partial^{\alpha}F}{\sqrt{\mu}}).
\end{equation}
To bound the first term on the right hand side of \eqref{3.74}, we consider two cases $|\alpha_{1}|=1$
and $|\alpha_{1}|=|\alpha|=2$. For the former case, we use \eqref{3.65}, \eqref{3.4}, \eqref{1.28}, \eqref{3.30} and \eqref{1.35} to get
\begin{eqnarray*}
\frac{1}{\delta}|(\partial^{\alpha_1}\partial_x\phi\partial_{v_{1}}\partial^{\alpha-\alpha_{1}}f,\frac{\partial^{\alpha}F}{\sqrt{\mu}})|
	&&\leq C\frac{1}{\delta}\|\partial^{\alpha_1}\partial_x\phi\|_{L^\infty}\|\langle v\rangle^{2}\langle v\rangle^{-\frac{3}{2}}\partial_{v_{1}}\partial^{\alpha-\alpha_{1}}f\|
	\|\langle v\rangle^{-\frac{1}{2}}\frac{\partial^{\alpha}F}{\sqrt{\mu}}\|
	\notag\\
	&&\leq C(1+\delta^{1/2}\frac{1}{\varepsilon^{1/2}})\|\langle v\rangle^{2}\partial^{\alpha-\alpha_{1}}f\|_\sigma
	\|\frac{\partial^{\alpha}F}{\sqrt{\mu}}\|_\sigma
	\notag\\
	&&\leq  C(1+\frac{\delta}{\varepsilon})\|w(\alpha-\alpha_{1},0)\partial^{\alpha-\alpha_{1}}f\|_\sigma^2+C
	\|\frac{\partial^{\alpha}F}{\sqrt{\mu}}\|^2_\sigma
	\notag\\
	&&\leq  C(\delta^{3/2}\varepsilon+\delta^{5/2})\frac{1}{\delta^{3/2}\varepsilon}\|\partial^{\alpha-\alpha_{1}}f\|_{\sigma,w}^2+
	C(\|\partial^{\alpha}f\|^2_{\sigma}
	+\|\partial^{\alpha}(\widetilde{\rho},\widetilde{u},\widetilde{\theta})\|^2+\delta^2).
	\notag\\
		&&\leq  C\frac{\delta^{1/2}}{\varepsilon}\mathcal{D}_{2,l,q_1}(t)+C\delta^2.
\end{eqnarray*}
Here we have used  $\langle v\rangle^2\leq \langle v\rangle^{2(l-|\alpha-\alpha_{1}|)}$ with $|\alpha-\alpha_{1}|=1$ and $l\geq 2$
such that $\langle v\rangle^2\leq w(\alpha-\alpha_{1},0)$. For the latter case, the similar calculation yields that
\begin{eqnarray*}
		&&\frac{1}{\delta}|(\partial^{\alpha_1}\partial_x\phi\partial_{v_{1}}\partial^{\alpha-\alpha_{1}}f,\frac{\partial^{\alpha}F}{\sqrt{\mu}})|
		\notag\\
		&&\leq C\frac{1}{\delta}\|\partial^{\alpha_1}\partial_{x}\phi\|\||\langle v\rangle^{2}\langle v\rangle^{-\frac{3}{2}}\partial_{v_{1}}\partial^{\alpha-\alpha_{1}}f|_2\|_{L^\infty}
		\|\langle v\rangle^{-\frac{1}{2}}\frac{\partial^{\alpha}F}{\sqrt{\mu}}\|
		\notag\\
		&&\leq C\frac{\delta^{1/2}}{\varepsilon}\mathcal{D}_{2,l,q_1}(t)+C\delta^2.
	\end{eqnarray*}
Combining the above two estimates, we get
\begin{equation*}
\frac{1}{\delta}\sum_{1\leq\alpha_{1}\leq\alpha}|(\partial^{\alpha_1}\partial_x\phi\partial_{v_{1}}\partial^{\alpha-\alpha_{1}}f,\frac{\partial^{\alpha}F}{\sqrt{\mu}})|\leq C\frac{\delta^{1/2}}{\varepsilon}\mathcal{D}_{2,l,q_1}(t)+C\delta^2.
\end{equation*}
Similar estimates also hold for the last term of \eqref{3.74}. We thus conclude that
\begin{equation*}
\frac{1}{\delta}\sum_{1\leq\alpha_{1}\leq\alpha}|(\frac{\partial^{\alpha_1}\partial_x\phi\partial_{v_{1}}\partial^{\alpha-\alpha_{1}}(\sqrt{\mu}f)}{\sqrt{\mu}},\frac{\partial^{\alpha}F}{\sqrt{\mu}})|		
\leq C\frac{\delta^{1/2}}{\varepsilon}\mathcal{D}_{2,l,q_1}(t)+C\delta^2.	
	\end{equation*}
Therefore, plugging this, \eqref{3.73} and \eqref{3.94A} into \eqref{3.60}, we show that
\begin{eqnarray*}
&&\sum_{1\leq \alpha_{1}\leq \alpha}C^{\alpha_1}_\alpha
\frac{1}{\delta}(\frac{\partial^{\alpha_1}\partial_x\phi\partial_{v_{1}}\partial^{\alpha-\alpha_{1}}F}{\sqrt{\mu}},\frac{\partial^{\alpha}F}{\sqrt{\mu}})
\notag\\
&&\leq -\frac{1}{2}\frac{d}{dt}(\frac{1}{K\theta}e^{\phi}\partial^{\alpha}\widetilde{\phi},\partial^{\alpha}\widetilde{\phi})
-\frac{1}{2}\delta\frac{d}{dt}(\frac{1}{K\theta}\partial^{\alpha}\partial_x\widetilde{\phi},\partial^{\alpha}\partial_x\widetilde{\phi})
\notag\\
&&\hspace{0.5cm}+C(\frac{1}{\varepsilon}+\frac{\delta^{3/2}}{\varepsilon^2})\mathcal{D}_{2,l,q_1}(t)+C\frac{1}{\delta^2}\mathcal{E}_{2}(t)
+C\delta^{\frac{1}{2}}\varepsilon^2+C\delta^2.
\end{eqnarray*}
In summary, we combine this, \eqref{3.59} and  \eqref{3.56} to get \eqref{3.55}, and complete the proof of Lemma \ref{lem3.14}.
\end{proof}
\subsubsection{Estimates with weight functions}\label{subs3.5.2}
In the following we are going to completing the estimates on the electric fields with weight functions.
We first prove the weighted mixed derivative estimates.
\begin{lemma}\label{lem3.15}
Under the same  conditions as in Lemma \ref{lem3.5}. 
For  $|\alpha|+|\beta|\leq 2$ and $|\beta|\geq 1$, one has
\begin{equation}
		\label{3.75}
		\frac{1}{\delta}|(\partial_{\beta}^{\alpha}[\frac{\partial_x\phi\partial_{v_1}(\sqrt{\mu}f)}{\sqrt{\mu}}],w^2(\alpha,\beta)\partial_{\beta}^{\alpha}f)|
		\leq  C\delta^{1/2}\mathcal{D}_{2,l,q_1}(t)
		+C\varepsilon^{1/2}\delta^{1/2}\|\langle v\rangle \partial_{\beta}^\alpha f\|_w^2.
	\end{equation}
For $|\beta|=0$ and $|\alpha|\leq 1$, one has 	
\begin{equation}
	\label{3.83}
	\frac{1}{\delta}|(\partial^{\alpha}[\frac{\partial_x\phi\partial_{v_1}(\sqrt{\mu}f)}{\sqrt{\mu}}],w^2(\alpha,0)\partial^{\alpha}f)|
	\leq  C\delta^{1/2}\mathcal{D}_{2,l,q_1}(t)
	+C\varepsilon^{1/2}\delta^{1/2}\|\langle v\rangle \partial^\alpha f\|_w^2.
\end{equation}
\end{lemma}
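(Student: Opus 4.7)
The plan is to expand $\partial^\alpha_\beta\bigl[\partial_x\phi\,\partial_{v_1}(\sqrt{\mu}f)/\sqrt{\mu}\bigr]$ via Leibniz' rule, exploiting two structural facts: $\partial_x\phi$ is independent of $v$ (so all $\beta$-derivatives fall on the $f$-factor), and the pointwise identity
$$
\frac{\partial_{v_1}(\sqrt{\mu}h)}{\sqrt{\mu}}=\partial_{v_1}h-\frac{v_1}{2}h.
$$
This produces a sum over $\alpha_1\leq\alpha$ whose top-order piece ($\alpha_1=0$) is $\partial_x\phi\,\bigl[\partial_{v_1}\partial^\alpha_\beta f-\tfrac{v_1}{2}\partial^\alpha_\beta f\bigr]$, plus commutator pieces from $\partial_\beta(v_1/2)$ that carry strictly fewer $v$-derivatives on $f$, plus remainders with $|\alpha_1|\geq 1$ for which at least one $v$-derivative remains on $f$.

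For the top-order piece, I would integrate by parts in $v_1$ on the $\partial_{v_1}\partial^\alpha_\beta f$ contribution, transferring the $v_1$-derivative onto $w^2(\alpha,\beta)\partial^\alpha_\beta f$; the surviving weight derivative $\partial_{v_1}w^2$ is bounded by $C\langle v\rangle w^2$, and combines with the $\tfrac{v_1}{2}\partial^\alpha_\beta f$ piece to produce an integrand of the form $|\partial_x\phi|\,\langle v\rangle\,w^2(\alpha,\beta)|\partial^\alpha_\beta f|^2$. This is precisely the situation already treated in the estimate \eqref{3.58}: splitting
$$
\langle v\rangle\,w^2\,|\partial^\alpha_\beta f|^2=\bigl(\langle v\rangle^{-1}w^2|\partial^\alpha_\beta f|^2\bigr)^{1/3}\bigl(\langle v\rangle^{2}w^2|\partial^\alpha_\beta f|^2\bigr)^{2/3}
$$
via Hölder and then applying Young gives a dissipative piece controlled by $C\varepsilon^{-1}\delta^{-1}\|\partial^\alpha_\beta f\|_{\sigma,w}^2$, which is absorbed into $\mathcal{D}_{2,l,q_1}(t)$ through its factor $\delta^{-3/2}\varepsilon^{-1}$, and a large-velocity piece $C\varepsilon^{1/2}\delta^{-1}\|\partial_x\phi\|_{L^\infty}^{3/2}\|\langle v\rangle\partial^\alpha_\beta f\|_w^2$. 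Inserting the a priori bound $\|\partial_x\phi\|_{L^\infty}\leq \widetilde{C}\delta^{7/4}$ from \eqref{3.2} yields exactly the $C\varepsilon^{1/2}\delta^{1/2}\|\langle v\rangle\partial^\alpha_\beta f\|_w^2$ term in the statement.

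For the lower-order remainders with $|\alpha_1|\geq 1$, I would place $\partial^{\alpha_1}\partial_x\phi$ in $L^2_x$ (using $\|\partial^{\alpha_1}\partial_x\widetilde\phi\|$ from \eqref{3.2}--\eqref{3.3} and $\|\partial^{\alpha_1}\partial_x\bar\phi\|$ from \eqref{3.4}) and the remaining $v$-derivative of $f$ in $L^\infty_x L^2_v$ via the 1D Sobolev embedding, together with the fact that at least one $v$-derivative is still present (so these quantities are bounded by $\|\cdot\|_{\sigma,w}$-norms in $\mathcal{D}_{2,l,q_1}$). The restriction \eqref{1.37} between $\delta$ and $\varepsilon$ is used to convert the resulting $\delta/\varepsilon$ or $\delta^{3/2}/\varepsilon^2$ prefactors into a clean $\delta^{1/2}$. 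The commutator terms from $\partial_\beta(v_1/2)$, as well as part (ii) (where no $\beta$-derivatives fall on $f$), are treated by the same splitting and are strictly easier since the situation is symmetric to the above.

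The main technical obstacle is delivering the specific prefactors $\delta^{1/2}$ and $\varepsilon^{1/2}\delta^{1/2}$ rather than weaker ones like $\delta^{3/2}/\varepsilon^2$: this forces the use of the pointwise $L^\infty$ bound $\|\partial_x\phi\|_{L^\infty}\lesssim\delta^{7/4}$ (not merely $\|\partial_x^2\widetilde\phi\|$) in the top-order step and a careful accounting, via \eqref{1.37} and $\widetilde C\delta^a<1$, of every $\delta$/$\varepsilon$ exchange in the remainder pieces so that none of them exceeds $\delta^{1/2}\mathcal{D}_{2,l,q_1}(t)$.
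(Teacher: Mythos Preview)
Your proposal is correct and follows essentially the same route as the paper: the identity $\partial_{v_1}(\sqrt{\mu}f)/\sqrt{\mu}=\partial_{v_1}f-\tfrac{v_1}{2}f$, integration by parts in $v_1$ on the top-order piece together with $|\partial_{v_1}w^2|\leq C\langle v\rangle w^2$, and the H\"older/Young splitting of \eqref{3.58} to produce the $\varepsilon^{1/2}\delta^{1/2}\|\langle v\rangle\partial^\alpha_\beta f\|_w^2$ term. Two small points: (i) the bound you cite as $\|\partial_x\phi\|_{L^\infty}\leq\widetilde{C}\delta^{7/4}$ is only for $\partial_x\widetilde\phi$; since $\partial_x\bar\phi$ contributes $O(\delta)$ one actually has $\|\partial_x\phi\|_{L^\infty}\leq C\delta$ (see \eqref{3.57}), which still gives $\delta^{-1}\cdot\delta^{3/2}=\delta^{1/2}$ as required; (ii) for the remainder with $|\alpha_1|=1$ the paper places $\partial^{\alpha_1}\partial_x\phi$ in $L^\infty_x$ via \eqref{3.65} and uses the weight shift $\langle v\rangle^2 w(\alpha,\beta)=w(0,\beta)$ to absorb the two $f$-factors directly into $\mathcal{D}_{2,l,q_1}$, rather than your $L^2_x$/$L^\infty_x$ distribution---either works, but the paper's choice avoids invoking \eqref{1.37} at this step.
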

\begin{proof}
	Let $|\alpha|+|\beta|\leq 2$ and $|\beta|\geq 1$, it is clear that
	\begin{eqnarray}
		\label{3.76}
		&&\frac{1}{\delta}(\partial_{\beta}^{\alpha}[\frac{\partial_x\phi\partial_{v_1}(\sqrt{\mu}f)}{\sqrt{\mu}}],w^2(\alpha,\beta)\partial_{\beta}^{\alpha}f)
		\notag\\
		&&=\frac{1}{\delta}(\partial^\alpha_\beta[\partial_x\phi\partial_{v_1}f],w^2(\alpha,\beta)\partial_{\beta}^\alpha f)-\frac{1}{\delta}(\partial^\alpha_\beta[\frac{v_1}{2}\partial_x\phi f],w^2(\alpha,\beta)\partial_{\beta}^\alpha f).
	\end{eqnarray}
For the first term on the right hand side of \eqref{3.76}, we write
	\begin{eqnarray}
		\label{3.77}
		\frac{1}{\delta}(\partial^\alpha_\beta[\partial_x\phi\partial_{v_1}f],w^2(\alpha,\beta)\partial_{\beta}^\alpha f)
		=&&\frac{1}{\delta}(\partial_x\phi\partial_{v_1}\partial^\alpha_\beta f,w^2(\alpha,\beta)\partial_{\beta}^\alpha f)
		\notag\\
		&&+\frac{1}{\delta}\sum_{1\leq\alpha_{1}\leq\alpha}C^{\alpha_{1}}_\alpha(\partial^{\alpha_{1}}\partial_x\phi\partial_{v_1}\partial^{\alpha-\alpha_{1}}_\beta f,w^2(\alpha,\beta)\partial_{\beta}^\alpha f).
	\end{eqnarray}
	Here if $|\alpha|=0$, the last term of \eqref{3.77} vanishes, and if  $|\alpha|= 1$, it exists and then it can be bounded by
	\begin{eqnarray}
		\label{3.78}
		&& C\frac{1}{\delta}\|\partial^{\alpha}\partial_x\phi\|_{L^{\infty}}
		\|\langle v\rangle^{2}\langle v\rangle^{-\frac{3}{2}}w(\alpha,\beta)\partial_{v_1}\partial_\beta f\|
		\|\langle v\rangle^{-\frac{1}{2}}w(\alpha,\beta)\partial_{\beta}^\alpha f\|
		\notag\\
		&&\leq  C\frac{1}{\delta}(\delta+ \delta^{1/2}\frac{\delta}{\varepsilon^{1/2}})\|\langle v\rangle^{2}w(\alpha,\beta)\partial_\beta f\|_{\sigma}
		\|\partial_{\beta}^\alpha f\|_{\sigma,w}
		\notag\\
			\notag\\
		&&\leq C(\delta^{3/2}\varepsilon+\delta^{2}\varepsilon^{1/2})\frac{1}{\delta^{3/2}\varepsilon}
		\|\partial_\beta f\|_{\sigma,w}\|\partial_{\beta}^\alpha f\|_{\sigma,w}
		\notag\\
		&&\leq C\delta\mathcal{D}_{2,l,q_1}(t),
	\end{eqnarray}
	where we have used \eqref{1.28}, \eqref{3.65} and \eqref{3.4}, as well as $\langle v\rangle^{2}\langle v\rangle^{2(l-|\alpha|-|\beta|)}=\langle v\rangle^{2(l-|\beta|)}$ with $|\alpha|= 1$ and
	$$
	\langle v\rangle^{2}w(\alpha,\beta)= w(0,\beta).
	$$
	By the expression of $w(\alpha,\beta)$ in \eqref{1.25}, it is easy to check that
	\begin{equation}
		\label{3.79}
		|\partial_{v_1}w(\alpha,\beta)|=|2(l-|\alpha|-|\beta|)\frac{v_1}{\langle v\rangle^{2}}w(\alpha,\beta)+
		q_1(1+t)^{-q_2}v_1 w(\alpha,\beta)|\leq C\langle v\rangle w(\alpha,\beta).
	\end{equation}
Performing the similar calculation as \eqref{3.58} and using \eqref{3.79}, we get
	\begin{eqnarray*}
\frac{1}{\delta}|(\partial_x\phi\partial_{v_1}\partial^\alpha_\beta f,w^2(\alpha,\beta)\partial_{\beta}^\alpha f)|
		&&=\frac{1}{\delta}|\frac{1}{2}\int_{\mathbb R}\int_{{\mathbb R}^3}\partial_x\phi\partial_{v_1}(w^2(\alpha,\beta))
		(\partial_{\beta}^\alpha f)^2\,dv\,dx|
		\notag\\
		&&\leq C\frac{1}{\delta}\int_{\mathbb R}\int_{{\mathbb R}^3}|\partial_x\phi\langle v\rangle w^2(\alpha,\beta)
		(\partial_{\beta}^\alpha f)^2|\,dv\,dx
	\notag\\	
	&&\leq C\frac{1}{\varepsilon\delta}\| w(\alpha,\beta)\partial_{\beta}^\alpha f\|_\sigma^2
	+C\varepsilon^{1/2}\frac{1}{\delta}\|\partial_x\phi\|^{3/2}_{L^{\infty}}\|\langle v\rangle w(\alpha,\beta)\partial_{\beta}^\alpha f\|^2
	\notag\\
	&&\leq C\delta^{1/2}\mathcal{D}_{2,l,q_1}(t)
	+C\varepsilon^{1/2}\delta^{1/2}\|\langle v\rangle \partial_{\beta}^\alpha f\|_w^2,	
	\end{eqnarray*}
which together with \eqref{3.78} and \eqref{3.77}, gives
	\begin{equation}
		\label{3.80}
\frac{1}{\delta}|(\partial^\alpha_\beta(\partial_x\phi\partial_{v_1}f),w^2(\alpha,\beta)\partial_{\beta}^\alpha f)|
		\leq C\delta^{1/2}\mathcal{D}_{2,l,q_1}(t)
		+C\varepsilon^{1/2}\delta^{1/2}\|\langle v\rangle \partial_{\beta}^\alpha f\|_w^2	.
	\end{equation}
For the last term of \eqref{3.76}, we write
	\begin{eqnarray}
		\label{3.81}	
\frac{1}{\delta}(\partial^\alpha_\beta[\frac{v_1}{2}\partial_x\phi f],w^2(\alpha,\beta)\partial_{\beta}^\alpha f)
		=&&\frac{1}{\delta}(\frac{v_1}{2}\partial^\alpha[\partial_x\phi \partial_{\beta} f],w^2(\alpha,\beta)\partial_{\beta}^\alpha f)
		\notag\\
		&&+ \frac{1}{\delta}C^{\beta-e_1}_{\beta}\frac{1}{2}(\delta^{e_1}_{\beta}\partial^{\alpha}[\partial_x\phi \partial_{\beta-e_1} f],w^2(\alpha,\beta)\partial_{\beta}^\alpha f).
	\end{eqnarray}
	Here $\delta^{e_1}_{\beta}=1$ if $e_1\leq\beta$ or  $\delta^{e_1}_{\beta}=0$ otherwise.
	To bound the first term on the right hand side of \eqref{3.81}, we follow the similar computations used in \eqref{3.80} to get
	\begin{equation*}
		\frac{1}{\delta}|(\frac{v_1}{2}\partial^\alpha[\partial_x\phi \partial_{\beta} f],w^2(\alpha,\beta)\partial_{\beta}^\alpha f)|
	\leq C\delta^{1/2}\mathcal{D}_{2,l,q_1}(t)
	+C\varepsilon^{1/2}\delta^{1/2}\|\langle v\rangle \partial_{\beta}^\alpha f\|_w^2.
	\end{equation*}
	For the last term in \eqref{3.81}, we use the similar calculations as \eqref{3.78} to obtain
	\begin{eqnarray*}
		\frac{1}{\delta}(\partial^{\alpha}[\partial_x\phi \partial_{\beta-e_1} f],w^2(\alpha,\beta)\partial_{\beta}^\alpha f)
		&&=\frac{1}{\delta}(\partial_x\phi \partial^{\alpha}_{\beta-e_1}f,w^2(\alpha,\beta)\partial_{\beta}^\alpha f)
		+\frac{1}{\delta}\sum_{1\leq \alpha_{1}\leq\alpha}C^{\alpha_{1}}_\alpha(\partial^{\alpha_1}\partial_x\phi
		\partial^{\alpha-\alpha_{1}}_{\beta-e_i}f,w^2(\alpha,\beta)\partial_{\beta}^\alpha f)
		\notag\\
		&&\leq C\delta^{1/2}\mathcal{D}_{2,l,q_1}(t).
	\end{eqnarray*}
Hence,	plugging the above two estimates into \eqref{3.81}, we obtain
	\begin{equation*}
		\frac{1}{\delta}|(\partial^\alpha_\beta(\frac{v_1}{2}\partial_x\phi f),w^2(\alpha,\beta)\partial_{\beta}^\alpha f)|
		\leq C\delta^{1/2}\mathcal{D}_{2,l,q_1}(t)
		+C\varepsilon^{1/2}\delta^{1/2}\|\langle v\rangle \partial_{\beta}^\alpha f\|_w^2.
	\end{equation*}
In summary, we plug	this and \eqref{3.80} into \eqref{3.76} to get \eqref{3.75}. 
The proof of the estimate \eqref{3.83} is similar to \eqref{3.75} and details of the proof are
omitted for brevity. Hence, Lemma \ref{lem3.15} is proved.
\end{proof}
\begin{lemma}\label{lem3.17}
	Under the same conditions as in Lemma \ref{lem3.5}. For $|\alpha|=2$, one has
	\begin{eqnarray}
		\label{3.84}
		&&\frac{1}{\delta}|(\frac{\partial^{\alpha}(\partial_x\phi\partial_{v_1}F)}{\sqrt{\mu}},w^2(\alpha,0)\frac{\partial^{\alpha}F}{\sqrt{\mu}})|
		\notag\\
		&&\leq
	C(\frac{\delta^{1/2}}{\varepsilon^2}+\frac{1}{\delta\varepsilon})\mathcal{D}_{2,l,q_1}(t)+C\varepsilon^{1/2}\delta^{1/2}\|\langle v\rangle \partial^{\alpha}f\|_w^2
	+C\delta.
	\end{eqnarray}
\end{lemma}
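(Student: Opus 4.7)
The plan is to parallel the unweighted computation of Lemma~\ref{lem3.14}, but dispense with the $-\tfrac{1}{2}\frac{d}{dt}$ cancellation exploited there: the velocity weight $w^2(\alpha,0)$ blocks the Poisson substitution that produced the time-derivative terms in \eqref{3.55}, so instead every contribution will be absorbed into $\mathcal{D}_{2,l,q_1}(t)$, into the boundary term $\varepsilon^{1/2}\delta^{1/2}\|\langle v\rangle\partial^\alpha f\|_w^2$, or into the $O(\delta)$ remainder. Leibniz's rule reduces the problem to estimating
\begin{equation*}
\tfrac{1}{\delta}\bigl(\partial_x\phi\,\tfrac{\partial_{v_1}\partial^\alpha F}{\sqrt{\mu}},w^2(\alpha,0)\tfrac{\partial^\alpha F}{\sqrt{\mu}}\bigr)+\sum_{1\leq|\alpha_1|\leq 2}C^{\alpha_1}_\alpha\tfrac{1}{\delta}\bigl(\partial^{\alpha_1}\partial_x\phi\,\tfrac{\partial_{v_1}\partial^{\alpha-\alpha_1}F}{\sqrt{\mu}},w^2(\alpha,0)\tfrac{\partial^\alpha F}{\sqrt{\mu}}\bigr).
\end{equation*}

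For the top-derivative piece, integration by parts in $v_1$ together with $\partial_{v_1}\mu^{-1/2}=(v_1/2)\mu^{-1/2}$ and the pointwise bound $|\partial_{v_1}w(\alpha,0)|\lesssim \langle v\rangle w(\alpha,0)$ from \eqref{3.79} reduces matters to controlling $\tfrac{1}{\delta}\int_\mathbb{R}\int_{\mathbb{R}^3}|\partial_x\phi|\,\langle v\rangle\, w^2(\alpha,0)|\partial^\alpha F/\sqrt{\mu}|^2\,dv\,dx$. I then decompose $F=M+\overline{G}+\sqrt{\mu}f$. The $M$ and $\overline{G}$ contributions are estimated by $\|\partial_x\phi\|_{L^\infty}\lesssim \delta$ from \eqref{3.57} times weighted $\sigma$-norms controlled by \eqref{3.29}, \eqref{3.30}, \eqref{3.7}, giving $\tfrac{\delta^{1/2}}{\varepsilon}\mathcal{D}_{2,l,q_1}(t)+C\delta^2$. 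The $\sqrt{\mu}f$ contribution is treated by the same $L^3$--$L^{3/2}$ interpolation as in \eqref{3.58}, but now carrying the weight $w$; this produces simultaneously a $\tfrac{1}{\delta\varepsilon}\|\partial^\alpha f\|_{\sigma,w}^2\lesssim \tfrac{1}{\delta\varepsilon}\mathcal{D}_{2,l,q_1}(t)$ piece and the boundary term $\varepsilon^{1/2}\delta^{1/2}\|\langle v\rangle\partial^\alpha f\|_w^2$ appearing on the right-hand side of \eqref{3.84}.

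For the mixed lower-derivative terms, I split $F=M+\overline{G}+\sqrt{\mu}f$ once more. When $|\alpha_1|=1$, the factor $\|\partial^{\alpha_1}\partial_x\phi\|_{L^\infty}$ is bounded via Sobolev embedding and \eqref{3.3} by $C\delta+\widetilde{C}\delta^{3/2}/\varepsilon^{1/2}$, and since $|\alpha-\alpha_1|=1$ the remaining $\partial^{\alpha-\alpha_1}F/\sqrt{\mu}$ factor is placed in $L^2_{x,v}$ using \eqref{3.29}, \eqref{3.7} for the $M,\overline{G}$ parts and the comparison $\langle v\rangle w(\alpha,0)\leq w(\alpha-\alpha_1,0)$ for the $f$-part; Cauchy--Schwarz and Young's inequality absorb the outcome into $\tfrac{1}{\delta\varepsilon}\mathcal{D}_{2,l,q_1}(t)+C\delta$. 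When $|\alpha_1|=|\alpha|=2$, $\partial^3_x\phi$ must be placed in $L^2_x$ with the singular coefficient $\widetilde{C}\delta^2/\varepsilon$ from \eqref{3.3}, while the zeroth-order factor $\partial_{v_1}F/\sqrt{\mu}$ is handled in $L^\infty_x L^2_v$ via its explicit form (the Maxwellian part is $O(1)$, $\overline{G}$ is controlled by \eqref{3.7}, and the $f$-part by $\|f\|_{L^\infty_xL^2_v}\lesssim \delta$ from the a priori assumption \eqref{3.1}); Young's inequality combined with $\|\partial^\alpha F/\sqrt{\mu}\|_{\sigma,w}^2\lesssim \|\partial^\alpha f\|_{\sigma,w}^2+\|\partial^\alpha(\widetilde{\rho},\widetilde{u},\widetilde{\theta})\|^2+\delta^2$ as in \eqref{3.30} then produces exactly the $\tfrac{\delta^{1/2}}{\varepsilon^2}\mathcal{D}_{2,l,q_1}(t)+C\delta$ contribution.

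The main obstacle is precisely the case $|\alpha_1|=|\alpha|=2$: the high-order potential $\partial^3_x\phi$ is only $L^2_x$-controlled with the rather singular coefficient $\delta^2/\varepsilon$, while $\partial_{v_1}F/\sqrt{\mu}$ with $|\alpha-\alpha_1|=0$ has no derivative smallness and must be absorbed through its weighted $L^\infty_x$ norm. Under the restriction \eqref{1.37}, in which $\varepsilon$ and $\delta$ are tightly coupled via $\varepsilon^{2/3}\leq\delta\leq\varepsilon^{2/5}$, tracking $\delta$- and $\varepsilon$-powers so that the bookkeeping output lands precisely on $\tfrac{\delta^{1/2}}{\varepsilon^2}+\tfrac{1}{\delta\varepsilon}$ rather than on a worse combination is the essential delicate step; all remaining pieces are routine via Lemmas~\ref{lem3.3}--\ref{lem3.4}, the collision estimates \eqref{3.29}--\eqref{3.30}, the correction bound \eqref{3.7}, and the a priori assumption \eqref{3.1}.
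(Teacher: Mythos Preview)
Your overall architecture matches the paper's proof closely: Leibniz expansion, integration by parts in $v_1$ on the diagonal term, the $L^3$--$L^{3/2}$ interpolation for the weighted $f$-piece producing the $\varepsilon^{1/2}\delta^{1/2}\|\langle v\rangle\partial^\alpha f\|_w^2$ boundary term, and the $F=M+\overline G+\sqrt{\mu}f$ splitting on the commutator terms. For the $|\alpha_1|=1$ pieces and for the $\overline G$, $\sqrt{\mu}f$ parts of $|\alpha_1|=2$ your sketch is fine.

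There is, however, a genuine gap in the case $|\alpha_1|=|\alpha|=2$ with the Maxwellian part $\partial_{v_1}M$. Your scheme places $\partial_x^3\phi$ in $L^2_x$, bounds $\partial_{v_1}M/\sqrt{\mu}$ by $O(1)$ in $L^\infty_xL^2_v$, and then invokes $\|\partial^\alpha F/\sqrt{\mu}\|_{\sigma,w}^2\lesssim \|\partial^\alpha f\|_{\sigma,w}^2+\|\partial^\alpha(\widetilde\rho,\widetilde u,\widetilde\theta)\|^2+\delta^2$. The last $\delta^2$ comes from the background piece $\partial^\alpha(\bar\rho,\bar u,\bar\theta)$ of $\partial^\alpha M$. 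Pairing $\partial_x^3\widetilde\phi$ against that constant contribution gives, after your Young split,
\[
\tfrac{1}{\delta}\|\partial_x^3\widetilde\phi\|\cdot O(1)\cdot C\delta \;\sim\; C\|\partial_x^3\widetilde\phi\|.
\]
Under \eqref{1.37} one only has $\|\partial_x^3\widetilde\phi\|\leq\widetilde C\delta^2/\varepsilon$, which is \emph{not} $\leq C\delta$ since $\delta\geq\varepsilon^{2/3}\gg\varepsilon$; and routing $\|\partial_x^3\widetilde\phi\|^2$ through the dissipation costs a coefficient $\tfrac{1}{\delta^{3/2}\varepsilon}$, which at $\delta=\varepsilon^{2/3}$ equals $\varepsilon^{-2}$ and strictly exceeds both $\tfrac{\delta^{1/2}}{\varepsilon^2}$ and $\tfrac{1}{\delta\varepsilon}$. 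So the bookkeeping cannot close as you describe.

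The paper resolves exactly this term by an extra integration by parts in $x$: writing $\partial^\alpha M=I_1^1+I_1^2+I_2$ as in \eqref{3.28}--\eqref{3.39a}, the dangerous pairing is $\tfrac{1}{\delta}(\partial_x^3\widetilde\phi\,\partial_{v_1}M/\sqrt{\mu},\,w^2 I_1^2/\sqrt{\mu})$ with $I_1^2$ carrying $\partial^\alpha(\bar\rho,\bar u,\bar\theta)$. Since $I_1^2$ depends smoothly on the background, one shifts a $\partial_x$ off $\partial_x^3\widetilde\phi$ onto the remaining factor and uses the much better bound $\|\partial_x^2\widetilde\phi\|\leq\widetilde C\delta^{3/2}$ from \eqref{3.3}; this yields $\widetilde C\delta^{3/2}\leq C\delta$ via \eqref{3.2a}. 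The $I_1^1$ and $I_2$ pieces, which carry $\partial^\alpha(\widetilde\rho,\widetilde u,\widetilde\theta)$ or lower-order products, are then absorbed into $\tfrac{1}{\delta\varepsilon}\mathcal D_{2,l,q_1}(t)$ exactly along the lines you indicate. You need to insert this $x$-integration-by-parts step; the rest of your outline is correct.
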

\begin{proof}	
	Let $|\alpha|=2$, thanks to $F=M+\overline{G}+\sqrt{\mu}f$, we have
	\begin{eqnarray}
		\label{3.85}
\frac{1}{\delta}(\frac{\partial^{\alpha}(\partial_x\phi\partial_{v_1}F)}{\sqrt{\mu}},w^2(\alpha,0)\frac{\partial^{\alpha}F}{\sqrt{\mu}})
		=&&\frac{1}{\delta}(\frac{\partial_x\phi\partial_{v_1}\partial^{\alpha}F}{\sqrt{\mu}},w^2(\alpha,0)\frac{\partial^{\alpha}F}{\sqrt{\mu}})
		\notag\\
		&&+\frac{1}{\delta}\sum_{1\leq\alpha_{1}\leq \alpha}C^{\alpha_1}_\alpha(\frac{\partial^{\alpha_1}\partial_x\phi\partial_{v_1}\partial^{\alpha-\alpha_{1}}\overline{G}}{\sqrt{\mu}},
		w^2(\alpha,0)\frac{\partial^{\alpha}F}{\sqrt{\mu}})
		\notag\\
		&&+\frac{1}{\delta}\sum_{1\leq\alpha_{1}\leq \alpha}C^{\alpha_1}_\alpha(\frac{\partial^{\alpha_1}\partial_x\phi\partial_{v_1}\partial^{\alpha-\alpha_{1}}M}{\sqrt{\mu}},
		w^2(\alpha,0)\frac{\partial^{\alpha}F}{\sqrt{\mu}})
		\notag\\
		&&+\frac{1}{\delta}\sum_{1\leq \alpha_{1}\leq \alpha}C^{\alpha_1}_\alpha(\frac{\partial^{\alpha_1}\partial_x\phi\partial_{v_1}\partial^{\alpha-\alpha_{1}}[\sqrt{\mu}f]}{\sqrt{\mu}},
		w^2(\alpha,0)\frac{\partial^{\alpha}F}{\sqrt{\mu}}).
	\end{eqnarray}
In the following, we compute \eqref{3.85} term by term.
For the first term on the right hand side of  \eqref{3.85}, by integration by parts and \eqref{3.79}, we get
\begin{equation*}
	\frac{1}{\delta}|(\frac{\partial_x\phi\partial_{v_{1}}\partial^{\alpha}F}{\sqrt{\mu}},w^2(\alpha,0)\frac{\partial^{\alpha}F}{\sqrt{\mu}})|
	\leq C	\frac{1}{\delta}\int_{\mathbb R}\int_{{\mathbb R}^3}|\partial_x\phi \langle v\rangle w^2(\alpha,0)\frac{(\partial^{\alpha}F)^2}{\mu}|\,dv\,dx.
\end{equation*}
By the similar calculation as \eqref{3.58} and \eqref{1.35}, we get
\begin{eqnarray*}
&&\frac{1}{\delta}\int_{\mathbb R}\int_{{\mathbb R}^3}|\partial_x\phi \langle v\rangle w^2(\alpha,0)(\partial^{\alpha}f)^2|\,dv\,dx	
	\notag\\
	&&\leq C\delta^{1/2}\frac{1}{\varepsilon\delta^{3/2}}\|\partial^{\alpha}f\|_{\sigma,w}^2
	+C\varepsilon^{1/2}\delta^{1/2}\|\langle v\rangle \partial^{\alpha}f\|_w^2
	\notag\\
	&&\leq C\frac{\delta^{1/2}}{\varepsilon^2}\mathcal{D}_{2,l,q_1}(t)+C\varepsilon^{1/2}\delta^{1/2}\|\langle v\rangle \partial^{\alpha}f\|_w^2.
\end{eqnarray*}
By \eqref{3.57}, \eqref{3.29}, \eqref{3.7} and \eqref{1.35}, we get
\begin{eqnarray*}
	&&\frac{1}{\delta}\int_{\mathbb R}\int_{{\mathbb R}^3}|\partial_x\phi \langle v\rangle w^2(\alpha,0)\frac{(\partial^{\alpha}M+\partial^{\alpha}\overline{G})^2}{\mu}|\,dv\,dx
	\notag\\	
	&&\leq \frac{1}{\delta}\|\partial_x\phi\|_{L^{\infty}}\|\langle v\rangle^{\frac{1}{2}} w(\alpha,0)\frac{\partial^{\alpha}(M+\overline{G})}{\sqrt{\mu}}\|^2
	\notag\\
	&&\leq C\frac{\delta^{1/2}}{\varepsilon}\mathcal{D}_{2,l,q_1}(t)+C\delta^2.
\end{eqnarray*}
So, from the above three estimates and the fact that $F=M+\overline{G}+\sqrt{\mu}f$, one has
\begin{equation*}
\frac{1}{\delta}|(\frac{\partial_x\phi\partial_{v_1}\partial^{\alpha}F}{\sqrt{\mu}},w^2(\alpha,0)\frac{\partial^{\alpha}F}{\sqrt{\mu}})|\leq
C\frac{\delta^{1/2}}{\varepsilon^2}\mathcal{D}_{2,l,q_1}(t)+C\delta^2+C\varepsilon^{1/2}\delta^{1/2}\|\langle v\rangle \partial^{\alpha}f\|_w^2.
\end{equation*}	
For the second term on the right hand side of  \eqref{3.85}, we use \eqref{1.28}, \eqref{3.7}, \eqref{3.4}, \eqref{3.30} and \eqref{1.35} to get
\begin{eqnarray*}
	&&\frac{1}{\delta}\sum_{1\leq\alpha_{1}\leq \alpha}C^{\alpha_1}_\alpha(\frac{\partial^{\alpha_1}\partial_x\phi\partial_{v_{1}}\partial^{\alpha-\alpha_{1}}\overline{G}}{\sqrt{\mu}},
	w^2(\alpha,0)\frac{\partial^{\alpha}F}{\sqrt{\mu}})
	\notag\\
	&&\leq 	C\frac{1}{\delta^2}\sum_{1\leq\alpha_{1}\leq \alpha}\|\langle v\rangle^{\frac{1}{2}}	w^2(\alpha,0)\frac{\partial_{v_{1}}\partial^{\alpha-\alpha_{1}}\overline{G}}{\sqrt{\mu}}
\|_{L^\infty}^2\|\partial^{\alpha_1}\partial_x\phi\|^2+C\|\langle v\rangle^{-\frac{1}{2}}\frac{\partial^{\alpha}F}{\sqrt{\mu}}\|^2
	\notag\\
	&&\leq C\frac{\delta^{1/2}}{\varepsilon}\mathcal{D}_{2,l,q_1}(t)+C\delta^2.
\end{eqnarray*}
For the third term on the right hand side of  \eqref{3.85}, we consider two cases that $|\alpha_{1}|=1$ and $|\alpha_{1}|=|\alpha|=2$.
For the former case, we perform the similar calculation as the above to claim that
\begin{equation*}
	\frac{1}{\delta}(\frac{\partial^{\alpha_1}\partial_x\phi\partial_{v_{1}}\partial^{\alpha-\alpha_{1}}M}{\sqrt{\mu}},
	w^2(\alpha,0)\frac{\partial^{\alpha}F}{\sqrt{\mu}})
\leq C\frac{\delta^{1/2}}{\varepsilon}\mathcal{D}_{2,l,q_1}(t)+C\delta^2.
\end{equation*}
For the latter case, we have to split it as follows
\begin{equation}
	\label{3.106A}
\frac{1}{\delta}(\frac{\partial^{\alpha_1}\partial_x\phi\partial_{v_{1}}
	\partial^{\alpha-\alpha_{1}}M}{\sqrt{\mu}},w^2(\alpha,0)\frac{\partial^{\alpha}(\overline{G}+\sqrt{\mu}f)}{\sqrt{\mu}})	
	+\frac{1}{\delta}(\frac{\partial^{\alpha_1}\partial_x\phi\partial_{v_{1}}
		\partial^{\alpha-\alpha_{1}}M}{\sqrt{\mu}},w^2(\alpha,0)\frac{\partial^{\alpha}M}{\sqrt{\mu}}).
\end{equation}
In view of $|\alpha_{1}|=|\alpha|=2$, \eqref{3.7}, \eqref{3.4}, \eqref{3.11} and \eqref{1.35}, the first term of 
\eqref{3.106A} can be controlled by
\begin{eqnarray*}
&& C\|\partial^{\alpha_1}\partial_x\phi\|^2+
		C\frac{1}{\delta^2}\|w^2(\alpha,0)\frac{\partial_{v_{1}}
			\partial^{\alpha-\alpha_{1}}M}{\sqrt{\mu}}\frac{\partial^{\alpha}(\overline{G}+\sqrt{\mu}f)}{\sqrt{\mu}}\|^2		
	\notag\\
	&&\leq 	
	C\frac{1}{\delta^{1/2}\varepsilon}\mathcal{D}_{2,l,q_1}(t)+C\varepsilon^2\delta+C\delta^2.
\end{eqnarray*}
To bound the second term of \eqref{3.106A}, we recall $\partial^{\alpha}M=I^1_1+I^2_1+I_2$ given by \eqref{3.28} and \eqref{3.39a}.
By the expression of $I^2_1$ in \eqref{3.39a}, 
we get from the integration by parts, \eqref{3.11}, \eqref{3.3}, \eqref{3.1} and \eqref{3.4} that
\begin{eqnarray*}
\frac{1}{\delta}|(\frac{\partial^{\alpha_1}\partial_x\widetilde{\phi}\partial_{v_{1}}
		\partial^{\alpha-\alpha_{1}}M}{\sqrt{\mu}},w^2(\alpha,0)\frac{I^2_1}{\sqrt{\mu}})|
&&=\frac{1}{\delta}|(\partial^{\alpha_1}\widetilde{\phi},\partial_x[\frac{\partial_{v_{1}}
		\partial^{\alpha-\alpha_{1}}M}{\sqrt{\mu}}w^2(\alpha,0)\frac{I^2_1}{\sqrt{\mu}}])|
\notag\\
&&\leq C\frac{1}{\delta}\|\partial^{\alpha_1}\widetilde{\phi}\|(\|\partial_x(\rho,u,\theta)\|\|\partial^{\alpha}(\bar{\rho},\bar{u},\bar{\theta})\|_{L^\infty}
+\|\partial^{\alpha}\partial_{x}(\bar{\rho},\bar{u},\bar{\theta})\|)	
\notag\\
&&\leq \widetilde{C}\delta^{3/2}.
\end{eqnarray*}
On the other hand, by the expression of $I^1_1$ in \eqref{3.39a} and $I_2$ in \eqref{3.28}, we claim that
\begin{eqnarray*}
	&&\frac{1}{\delta}|(\frac{\partial^{\alpha_1}\partial_x\widetilde{\phi}\partial_{v_{1}}
		\partial^{\alpha-\alpha_{1}}M}{\sqrt{\mu}},w^2(\alpha,0)\frac{I^1_1+I_2}{\sqrt{\mu}})|
	\notag\\
	&&\leq C\frac{1}{\delta}\|\partial^{\alpha_1}\partial_x\widetilde{\phi}\|(\|\partial^{\alpha}(\widetilde{\rho},\widetilde{u},\widetilde{\theta})\|+
	\|\partial_x(\rho,u,\theta)\|_{L^\infty}\|\partial_x(\rho,u,\theta)\|)	
	\notag\\
	&&\leq C\frac{1}{\delta^{3/2}}(\delta\|\partial^{\alpha}\partial_x\widetilde{\phi}\|^2+\|\partial^{\alpha}(\widetilde{\rho},\widetilde{u},\widetilde{\theta})\|^2)
	+C\delta^{1/2}\|\partial_x(\rho,u,\theta)\|^2_{L^\infty}
	\notag\\
	&&\leq C\frac{1}{\delta\varepsilon}\mathcal{D}_{2,l,q_1}(t)+C\delta^{5/2}.	
\end{eqnarray*}
By $|\alpha_{1}|=|\alpha|=2$, \eqref{3.11} \eqref{3.4}, \eqref{3.29} and \eqref{1.35}, we get
\begin{eqnarray*}
	\frac{1}{\delta}(\frac{\partial^{\alpha_1}\partial_x\bar{\phi}\partial_{v_{1}}
		\partial^{\alpha-\alpha_{1}}M}{\sqrt{\mu}},w^2(\alpha,0)\frac{\partial^{\alpha}M}{\sqrt{\mu}})
	&&\leq C\frac{1}{\delta}\|\partial^{\alpha_1}\partial_x\bar{\phi}\|(\|\partial^{\alpha}(\widetilde{\rho},\widetilde{u},\widetilde{\theta})\|+\delta)	
	\notag\\
	&&
	\leq  C\frac{1}{\delta\varepsilon}\mathcal{D}_{2,l,q_1}(t)+C\delta.
\end{eqnarray*}
Hence, from the above three bounds, we obtain
\begin{eqnarray*}
	\frac{1}{\delta}(\frac{\partial^{\alpha_1}\partial_x\phi\partial_{v_{1}}
		\partial^{\alpha-\alpha_{1}}M}{\sqrt{\mu}},w^2(\alpha,0)\frac{\partial^{\alpha}M}{\sqrt{\mu}})
	&&\leq C\frac{1}{\delta}\|\partial^{\alpha_1}\partial_x\bar{\phi}\|(\|\partial^{\alpha}(\widetilde{\rho},\widetilde{u},\widetilde{\theta})\|+\delta)	
	\notag\\
	&&
	\leq  C\frac{1}{\delta\varepsilon}\mathcal{D}_{2,l,q_1}(t)+C\delta.
\end{eqnarray*}
Collecting the above estimates, we claim that
\begin{equation*}
	\frac{1}{\delta}\sum_{1\leq \alpha_{1}\leq \alpha}C^{\alpha_1}_\alpha|(\frac{\partial^{\alpha_1}\partial_x\phi\partial_{v_1}\partial^{\alpha-\alpha_{1}}M}{\sqrt{\mu}},w^2(\alpha,0)\frac{\partial^{\alpha}F}{\sqrt{\mu}})|
	\leq C\frac{1}{\delta\varepsilon}\mathcal{D}_{2,l,q_1}(t)+C\delta.
\end{equation*}
The last term of \eqref{3.85} can be treated in the similar way as \eqref{3.74}, then it can be controlled by $C\frac{1}{\delta\varepsilon}\mathcal{D}_{2,l,q_1}(t)+C\delta^2$.

In summary, collecting all the above estimates, we get the desired estimate \eqref{3.84}. 
This completes the proof of Lemma \ref{lem3.17}.	
\end{proof}

\section{Zeroth and first order space derivative estimates}\label{sec.4} 
In the following sections \ref{sec.4}-\ref{sec.6}, we are devote to obtaining the desired {\it a priori} estimate
\eqref{1.40} on the solution step by step in a series of lemmas in order to close the {\it a priori} assumption \eqref{3.1}.
In the current section, we shall obtain the space derivative estimates for both the fluid and non-fluid parts up to one-order.
\subsection{Estimates on the fluid part}\label{sec.4.1} 
We start from the space derivative estimates on the fluid part $(\widetilde{\rho},\widetilde{u},\widetilde{\theta},\widetilde{\phi})$.
The main result is given in Lemma \ref{lem4.1} below. 
\begin{lemma}\label{lem4.1}
Let $(F(t,x,v),\phi(t,x))$ be the smooth solution  to the Cauchy problem on the VPL system \eqref{1.15} and \eqref{1.39} for $0\leq t\leq T$. Assume
that \eqref{3.1} holds, we have the following energy estimate
\begin{eqnarray}
	\label{4.1}
	&&\frac{1}{2}\sum_{|\alpha|\leq 1}\frac{d}{dt}\int_{\mathbb{R}}( \frac{2\bar{\theta}}{3\bar{\rho}^{2}}|\partial^{\alpha}\widetilde{\rho}|^{2}+|\partial^{\alpha}\widetilde{u}|^{2}
	+\frac{1}{\bar{\theta}}|\partial^{\alpha}\widetilde{\theta}|^{2}+
	\frac{1}{\rho}e^{\phi}|\partial^{\alpha}\widetilde{\phi}|^2
	+\delta\frac{1}{\rho}|\partial^{\alpha}\partial_x\widetilde{\phi}|^2)\,dx+\frac{d}{dt}E_\alpha(t)
	\notag\\
	&&+\kappa_1\delta^{1/2}\varepsilon\sum_{|\alpha|\leq 1}\frac{d}{dt}(\partial^{\alpha}\widetilde{u}_1,\partial^{\alpha}\partial_x\widetilde{\rho})
	+c\kappa_1\sum_{|\alpha|\leq 1}\frac{\varepsilon}{\delta^{1/2}}(\|\partial^{\alpha}\partial_x(\widetilde{\rho},\widetilde{u},\widetilde{\theta})\|^2
	+\|\partial^{\alpha}\partial_x\widetilde{\phi}\|^2
	+\delta\|\partial^{\alpha}\partial^2_x\widetilde{\phi}\|^2)
	\notag\\
	\leq&& 
	C\frac{\varepsilon}{\delta^{1/2}}\sum_{|\alpha|\leq 1}\|\partial^{\alpha}\partial_xf\|_\sigma^{2}
	+C\delta^{3}\frac{1}{\delta^{3/2}\varepsilon}(\|f\|^2_{\sigma}+\|\partial_xf\|^2_{\sigma})+C\mathcal{E}_{2}(t)+C\delta^4+C\delta\varepsilon^2.
\end{eqnarray}
Here $\kappa_1>0$ is small constant and $E_\alpha(t)$ is given by
\begin{eqnarray}
	\label{4.33}
	E_\alpha(t)
	=&&\delta^{1/2}\varepsilon\sum_{|\alpha|\leq 1}\sum^{3}_{i=1}\int_{\mathbb{R}}\int_{\mathbb{R}^{3}}
	\big\{\partial^{\alpha}[\frac{1}{\rho}\partial_{x}(K\theta B_{1i}(\frac{v-u}{\sqrt{K\theta}})\frac{\sqrt{\mu}}{M}f)]
	\partial^{\alpha}\widetilde{u}_i\big\}\,dv\,dx
	\notag\\
	&&+\delta^{1/2}\varepsilon\sum_{|\alpha|\leq 1}\int_{\mathbb{R}}\int_{\mathbb{R}^{3}}
	\big\{\partial^{\alpha}[\frac{1}{\rho}\partial_{x}
	((K\theta)^{\frac{3}{2}}A_{1}(\frac{v-u}{\sqrt{K\theta}})\frac{\sqrt{\mu}}{M}f)]\frac{1}{\bar{\theta}}\partial^{\alpha}\widetilde{\theta}\big\}\,dv\,dx
	\notag\\	
	&&+\delta^{1/2}\varepsilon\sum_{|\alpha|\leq 1}\sum^{3}_{j=1}\int_{\mathbb{R}}\int_{\mathbb{R}^{3}}
	\big\{\partial^{\alpha}[\frac{1}{\rho}
	\partial_{x}u_j K\theta B_{1j}(\frac{v-u}{\sqrt{K\theta}})\frac{\sqrt{\mu}}{M}f]\frac{1}{\bar{\theta}}\partial^{\alpha}\widetilde{\theta}\big\}\,dv\,dx.
\end{eqnarray}		
\end{lemma}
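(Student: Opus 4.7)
The plan is to derive a weighted $L^2$ energy identity at the level $|\alpha|\le 1$ directly from the fluid-type system \eqref{2.33}, and then to inject the density/potential dissipation via Lemma \ref{lem3.13}. First, for each fixed $\alpha$ with $|\alpha|\le 1$, apply $\partial^\alpha$ to the four macroscopic equations in \eqref{2.33} and take $L^2_x$ inner products with the symmetrizing multipliers
$\tfrac{2\bar{\theta}}{3\bar{\rho}^2}\partial^\alpha\widetilde{\rho}$, $\partial^\alpha\widetilde{u}_i$ $(i=1,2,3)$, and $\tfrac{1}{\bar{\theta}}\partial^\alpha\widetilde{\theta}$ respectively. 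Adding them up produces the time derivative
\[
\tfrac{1}{2}\tfrac{d}{dt}\!\int_{\mathbb R}\Big(\tfrac{2\bar{\theta}}{3\bar{\rho}^{2}}|\partial^{\alpha}\widetilde{\rho}|^{2}+|\partial^{\alpha}\widetilde{u}|^{2}+\tfrac{1}{\bar{\theta}}|\partial^{\alpha}\widetilde{\theta}|^{2}\Big)\,dx,
\]
plus commutator/lower order terms which are absorbed using \eqref{3.4}, \eqref{3.1}, \eqref{3.2a}. The convection coefficients $-A/\delta$ cancel after integration by parts; the cross terms $\tfrac{1}{\delta}\tfrac{2}{3}\partial_x\widetilde{\theta}\cdot\partial^\alpha\widetilde{u}_1$ and $\tfrac{1}{\delta}\tfrac{2}{3}\bar{\theta}\partial_x\widetilde{u}_1\cdot\tfrac{1}{\bar{\theta}}\partial^\alpha\widetilde{\theta}$ cancel via the symmetrization. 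The viscous/heat-conduction terms $\tfrac{\varepsilon}{\delta^{1/2}}\tfrac{1}{\rho}\partial_x(\mu(\theta)\partial_x u_i)$ and $\tfrac{\varepsilon}{\delta^{1/2}}\tfrac{1}{\rho}\partial_x(\kappa(\theta)\partial_x\theta)$ produce, after integration by parts, the positive dissipations $c\tfrac{\varepsilon}{\delta^{1/2}}\|\partial^\alpha\partial_x(\widetilde{u},\widetilde{\theta})\|^2$ up to commutators absorbed by $\eta$-small arguments and $\mathcal{E}_2(t)$. The forcing terms $-\delta^2\mathcal{R}_i$ together with \eqref{2.31} contribute $O(\delta^4)$.

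The second step treats the electric coupling. The term $\tfrac{1}{\delta}(\partial^\alpha\partial_x\widetilde{\phi},\partial^\alpha\widetilde{u}_1)$ is rewritten via the Poisson equation $-\delta\partial_x^2\widetilde{\phi}=\widetilde{\rho}-(e^\phi-e^{\bar\phi})+\delta^3\mathcal{R}_4$ and the continuity equation (the first line of \eqref{2.33}) so as to produce the time derivative
\[
\tfrac{1}{2}\tfrac{d}{dt}\!\int_{\mathbb R}\Big(\tfrac{1}{\rho}e^{\phi}|\partial^{\alpha}\widetilde{\phi}|^2+\delta\tfrac{1}{\rho}|\partial^{\alpha}\partial_x\widetilde{\phi}|^2\Big)dx,
\]
modulo lower order commutators controlled by $\mathcal{E}_2(t)+\delta^4$, using the bounds $e^\phi\approx 1$, \eqref{3.57} and $\|\partial_x\rho\|_{L^\infty}\lesssim\delta$. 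This is the same mechanism used throughout \S\ref{subs3.5} to create the potential energy.

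The third step extracts the dissipation hidden in the non-fluid source terms $-\tfrac{1}{\delta}\tfrac{1}{\rho}\partial_x\int v_iv_j L_M^{-1}\Theta\,dv$. Using the Burnett-function identities \eqref{2.14}--\eqref{2.15} and the decomposition $G=\overline{G}+\sqrt{\mu}f$ with $\overline{G}$ as in \eqref{1.32}, one can write these contributions as (i) a time derivative piece that builds up the correction $E_\alpha(t)$ defined in \eqref{4.33}, (ii) a good viscous part that combines with the already extracted $\tfrac{\varepsilon}{\delta^{1/2}}\|\partial_x(\widetilde{u},\widetilde{\theta})\|^2$, and (iii) remainder terms of the form $\tfrac{\varepsilon}{\delta^{1/2}}\|\partial^\alpha\partial_xf\|_\sigma^2$ plus terms controlled by Lemma \ref{lem3.2} and the smallness of $\delta$. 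The crucial algebraic step is the identity (referred to in the authors' comments as \eqref{4.20}--\eqref{4.19}) that avoids handling $L_M^{-1}$ directly, replacing it by integrals against $A_j, B_{ij}$ and the known smoothness of $M$.

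Finally, the estimate produced so far dissipates $\partial_x(\widetilde{u},\widetilde{\theta})$ and $\partial^\alpha\widetilde{\phi},\partial^\alpha\partial_x\widetilde{\phi}$ but not $\partial_x\widetilde{\rho}$. To recover this missing density dissipation I would add $\kappa_1$ times the conclusion of Lemma \ref{lem3.13} for $|\alpha|\le 1$, which provides the time derivative of $\delta^{1/2}\varepsilon(\partial^\alpha\widetilde{u}_1,\partial_x\partial^\alpha\widetilde{\rho})$ together with the positive dissipation $c\tfrac{\varepsilon}{\delta^{1/2}}(\|\partial^\alpha\partial_x\widetilde{\rho}\|^2+\|\partial^\alpha\partial_x\widetilde{\phi}\|^2+\delta\|\partial^\alpha\partial_x^2\widetilde{\phi}\|^2)$ at the cost of $\tfrac{\varepsilon}{\delta^{1/2}}(\|\partial^\alpha\partial_x(\widetilde{u},\widetilde{\theta})\|^2+\|\langle v\rangle^{-1/2}\partial^\alpha\partial_xf\|^2)$ and source terms $\delta^{5/2}\varepsilon^3+\delta^5\varepsilon$. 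Choosing $\kappa_1>0$ sufficiently small lets the $\partial_x(\widetilde{u},\widetilde{\theta})$ error be absorbed by the already-present viscous dissipation; the remaining microscopic error is collected into the $\tfrac{\varepsilon}{\delta^{1/2}}\|\partial^\alpha\partial_xf\|_\sigma^2$ term on the right-hand side, noting $|\langle v\rangle^{-1/2}\partial^\alpha\partial_xf|_2\lesssim |\partial^\alpha\partial_xf|_\sigma$ by \eqref{1.28}. The bilinear collision contribution from $G$ is reduced via $G=\overline{G}+\sqrt{\mu}f$ and Lemma \ref{lem3.2} to the $\delta^3\tfrac{1}{\delta^{3/2}\varepsilon}(\|f\|_\sigma^2+\|\partial_xf\|_\sigma^2)$ term appearing on the right of \eqref{4.1}.

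The main obstacle I anticipate is the bookkeeping of the singular $\tfrac{1}{\delta}$ and $\tfrac{1}{\delta^{1/2}\varepsilon}$ factors when integrating by parts in the $L_M^{-1}\Theta$ terms: to produce the time derivative building $E_\alpha(t)$ without dumping uncontrollable singularities one must carefully split $\Theta$ into the macroscopic-streaming part (which yields the viscous dissipation up to $E_\alpha(t)$) and the nonlinear/electric part (which is controlled by the a priori assumption \eqref{3.1} together with the restriction \eqref{1.37}). Once this separation is executed and the corresponding bounds from Lemmas \ref{lem3.2}--\ref{lem3.13} are invoked, summing over $|\alpha|\le 1$ yields precisely \eqref{4.1}.
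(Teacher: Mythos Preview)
Your proposal is correct and follows essentially the same approach as the paper: symmetrized energy estimates on the fluid system \eqref{2.33} with the multipliers $\tfrac{2\bar\theta}{3\bar\rho^2}\partial^\alpha\widetilde\rho$, $\partial^\alpha\widetilde u_i$, $\tfrac{1}{\bar\theta}\partial^\alpha\widetilde\theta$; conversion of the electric term into the time derivative of the potential energy via the Poisson and continuity equations; extraction of $E_\alpha(t)$ from the $L_M^{-1}\Theta$ terms through the Burnett identities \eqref{4.19}--\eqref{4.20} and the splitting of $\Theta$ in \eqref{2.12}; and finally injection of the $\widetilde\rho$ and $\widetilde\phi$ dissipation by adding $\kappa_1$ times Lemma \ref{lem3.13}. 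One small correction: the electric coupling step (your second step) produces only the \emph{energy} terms $\tfrac{1}{\rho}e^\phi|\partial^\alpha\widetilde\phi|^2+\delta\tfrac{1}{\rho}|\partial^\alpha\partial_x\widetilde\phi|^2$, not dissipation of $\widetilde\phi$; all of the $\widetilde\phi$ dissipation in \eqref{4.1} comes from Lemma \ref{lem3.13}, so before that step the estimate lacks dissipation of both $\widetilde\rho$ and $\widetilde\phi$.
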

\begin{proof}
	The proof of \eqref{4.1} can be divided into four steps. In the first three steps we make the direct energy estimates on 
	$(\widetilde{\rho},\widetilde{u},\widetilde{\theta},\widetilde{\phi})$
	in term of the system \eqref{2.33} and then combine
	those estimates to obtain the desired estimate \eqref{4.1} in the last step.

\medskip
\noindent{\it Step 1. Estimate on $\|\partial^{\alpha}\widetilde{\rho}(t,\cdot)\|^2$} for $|\alpha|\leq 1$. 
	
For  $|\alpha|\leq 1$, by taking $\partial^{\alpha}$ derivatives of the first equation  of \eqref{2.33} and then taking the inner product of the resulting equation with $\frac{2\bar{\theta}}{3\bar{\rho}^{2}}\partial^{\alpha}\widetilde{\rho}$, we obtain
\begin{eqnarray}
\label{4.2}
		&&\frac{1}{2}\frac{d}{dt}(\partial^{\alpha}\widetilde{\rho},\frac{2\bar{\theta}}{3\bar{\rho}^{2}}\partial^{\alpha}\widetilde{\rho})
		-\frac{1}{2}(\partial^{\alpha}\widetilde{\rho},\partial_t(\frac{2\bar{\theta}}{3\bar{\rho}^{2}})\partial^{\alpha}\widetilde{\rho})
		-A\frac{1}{\delta}(\partial^{\alpha}\partial_x\widetilde{\rho},\frac{2\bar{\theta}}{3\bar{\rho}^{2}}\partial^{\alpha}\widetilde{\rho})
		+\frac{1}{\delta}(\bar{\rho}\partial^{\alpha}\partial_x\widetilde{u}_{1},\frac{2\bar{\theta}}{3\bar{\rho}^{2}}\partial^{\alpha}\widetilde{\rho})
		\notag\\
		&&+\frac{1}{\delta}\sum_{1\leq \alpha_{1}\leq \alpha}C^{\alpha_{1}}_\alpha(\partial^{\alpha_1}\bar{\rho}\partial^{\alpha-\alpha_{1}}\partial_x\widetilde{u}_{1},
		\frac{2\bar{\theta}}{3\bar{\rho}^{2}}\partial^{\alpha}\widetilde{\rho})
		+\frac{1}{\delta}(\partial^{\alpha}(\widetilde{u}_1\partial_x\bar{\rho}),\frac{2\bar{\theta}}{3\bar{\rho}^{2}}\partial^{\alpha}\widetilde{\rho})
		\notag\\
=&&-\frac{1}{\delta}(\partial^{\alpha}(\widetilde{\rho}\partial_xu_{1}),\frac{2\bar{\theta}}{3\bar{\rho}^{2}}\partial^{\alpha}\widetilde{\rho})
-\frac{1}{\delta}(\partial^{\alpha}(u_1\partial_x\widetilde{\rho}),\frac{2\bar{\theta}}{3\bar{\rho}^{2}}\partial^{\alpha}\widetilde{\rho})
-(\delta^2\partial^{\alpha}\mathcal{R}_1,\frac{2\bar{\theta}}{3\bar{\rho}^{2}}\partial^{\alpha}\widetilde{\rho}).
\end{eqnarray}	
Here, if $|\alpha|=0$, the fifth term on the left hand side of \eqref{4.2} vanishes and if  $|\alpha|=1$, it exists.
We now estimate \eqref{4.2} term by term. Using the embedding inequality, \eqref{3.4} and \eqref{1.34}, we obtain
\begin{equation*}
|(\partial^{\alpha}\widetilde{\rho},\partial_t(\frac{2\bar{\theta}}{3\bar{\rho}^{2}})\partial^{\alpha}\widetilde{\rho})|	
\leq C\|\partial_t(\bar{\rho},\bar{\theta})\|_{L^{\infty}}\|\partial^{\alpha}\widetilde{\rho}\|^2
	\leq C\delta\mathcal{E}_{2}(t).
\end{equation*}
After the integration by parts, we use the embedding inequality, \eqref{3.4} and \eqref{1.34} to get
\begin{equation*}
-A\frac{1}{\delta}(\partial^{\alpha}\partial_x\widetilde{\rho},\frac{2\bar{\theta}}{3\bar{\rho}^{2}}\partial^{\alpha}\widetilde{\rho})
=\frac{1}{2}A\frac{1}{\delta}(\partial^{\alpha}\widetilde{\rho},\partial_x[\frac{2\bar{\theta}}{3\bar{\rho}^{2}}]\partial^{\alpha}\widetilde{\rho})
\leq C\frac{1}{\delta}\|\partial_x(\bar{\rho},\bar{\theta})\|_{L^{\infty}}\|\partial^{\alpha}\widetilde{\rho}\|^2
\leq C\mathcal{E}_{2}(t).
\end{equation*}	
Similarly, the last two terms on the left hand side of \eqref{4.2}  can be  bounded by
$C\mathcal{E}_{2}(t)$. 

For the first term on the right hand side of \eqref{4.2}, one obtains
\begin{equation}
	\label{4.3A}
	\frac{1}{\delta}(\partial^{\alpha}(\widetilde{\rho}\partial_xu_1),\frac{2\bar{\theta}}{3\bar{\rho}^{2}}\partial^{\alpha}\widetilde{\rho})
	=\frac{1}{\delta}(\widetilde{\rho}\partial_x\partial^{\alpha}u_1,\frac{2\bar{\theta}}{3\bar{\rho}^{2}}\partial^{\alpha}\widetilde{\rho})
	+\frac{1}{\delta}(\partial^{\alpha}\widetilde{\rho}\partial_xu_1,\frac{2\bar{\theta}}{3\bar{\rho}^{2}}\partial^{\alpha}\widetilde{\rho}),
\end{equation}
where if $|\alpha|=0$, the last term in \eqref{4.3A} vanishes and if  $|\alpha|=1$,
it exists. Then, we use the embedding inequality, \eqref{3.4}, \eqref{3.2}, \eqref{3.1} and \eqref{1.34} to get
\begin{eqnarray}
	\label{4.3}
\frac{1}{\delta}|(\partial^{\alpha}\widetilde{\rho}\partial_xu_1,\frac{2\bar{\theta}}{3\bar{\rho}^{2}}\partial^{\alpha}\widetilde{\rho})|&&\leq C\|\partial^{\alpha}\widetilde{\rho}\|^2
	+C\frac{1}{\delta^2}\|\partial_xu_1\|^2_{L^{\infty}}\|\partial^{\alpha}\widetilde{\rho}\|^2
	\notag\\
	&&\leq C\|\partial^{\alpha}\widetilde{\rho}\|^2
	+C\frac{1}{\delta^2}(\delta^2+\|\partial_x\widetilde{u}_1\|_{L^{\infty}}^2)\|\partial^{\alpha}\widetilde{\rho}\|^2
	\notag\\
	&&\leq  C\|\partial^{\alpha}\widetilde{\rho}\|^2
	+\widetilde{C}\frac{1}{\varepsilon}\delta^{5/2}\|\partial^{\alpha}\widetilde{\rho}\|^2
	\notag\\
	&&\leq  C\mathcal{E}_{2}(t),
\end{eqnarray}
where in the last inequality we have required that
\begin{equation}
	\label{4.5}
	\delta \leq \widetilde{C}^{-1}\varepsilon^{\frac{2}{5}}	
	\Longrightarrow \widetilde{C}\frac{1}{\varepsilon}\delta^{5/2}\leq 1, \quad \mbox{for}~~\widetilde{C}>1.
\end{equation}
For the first term in \eqref{4.3A}, we use the embedding inequality, \eqref{3.4}, \eqref{3.1}, \eqref{4.5} and \eqref{1.34} to obtain
\begin{eqnarray*}
	\frac{1}{\delta}|(\widetilde{\rho}\partial^{\alpha}\partial_xu_1,\frac{2\bar{\theta}}{3\bar{\rho}^{2}}\partial^{\alpha}\widetilde{\rho})|
	&&\leq C\frac{1}{\delta}(\|\widetilde{\rho}\|_{L^{\infty}}\|\partial^{\alpha}\partial_x\widetilde{u}_1\|\|\partial^{\alpha}\widetilde{\rho}\|
	+\|\partial^{\alpha}\partial_x\bar{u}_1\|_{L^{\infty}}\|\widetilde{\rho}\|\|\partial^{\alpha}\widetilde{\rho}\|)
	\notag\\
	&&\leq \widetilde{C}\delta\|\partial^{\alpha}\partial_x\widetilde{u}_1\|\|\partial^{\alpha}\widetilde{\rho}\|
	+C\|\widetilde{\rho}\|\|\partial^{\alpha}\widetilde{\rho}\|
	\notag\\
	&&\leq \eta\frac{\varepsilon}{\delta^{1/2}}\|\partial^{\alpha}\partial_x\widetilde{u}_1\|^2
	+C_\eta\widetilde{C}\frac{1}{\varepsilon}\delta^{1/2}\delta^2\|\partial^{\alpha}\widetilde{\rho}\|^2+C\mathcal{E}_{2}(t)
	\notag\\
	&&\leq \eta\frac{\varepsilon}{\delta^{1/2}}\|\partial^{\alpha}\partial_x\widetilde{u}_1\|^2+C_\eta\mathcal{E}_{2}(t).
\end{eqnarray*}
Plugging this and \eqref{4.3} into \eqref{4.3A} and using \eqref{4.5}, we get
\begin{equation}
	\label{4.6}
	\frac{1}{\delta}|(\partial^{\alpha}(\widetilde{\rho}\partial_xu_1),\frac{2\bar{\theta}}{3\bar{\rho}^{2}}\partial^{\alpha}\widetilde{\rho})|
	\leq C\eta\frac{\varepsilon}{\delta^{1/2}}\|\partial^{\alpha}\partial_x\widetilde{u}_1\|^2
	 +C_\eta\mathcal{E}_{2}(t).
\end{equation}
Note from \eqref{3.2}, \eqref{3.4} and \eqref{4.5} that
\begin{equation}
	\label{4.17a}
	\|\partial_x(\rho,u,\theta,\phi)\|_{L^\infty}\leq C\delta.
\end{equation}
Similar to \eqref{4.6}, the second term on the right hand side of \eqref{4.2} can be estimated as follows
\begin{eqnarray}
\label{4.4}
\frac{1}{\delta}(\partial^{\alpha}(u_1\partial_x\widetilde{\rho}),\frac{2\bar{\theta}}{3\bar{\rho}^{2}}\partial^{\alpha}\widetilde{\rho})&&=	
\frac{1}{\delta}(u_1\partial^{\alpha}\partial_x\widetilde{\rho},\frac{2\bar{\theta}}{3\bar{\rho}^{2}}\partial^{\alpha}\widetilde{\rho})
+\frac{1}{\delta}(\partial^{\alpha}u_1\partial_x\widetilde{\rho},
\frac{2\bar{\theta}}{3\bar{\rho}^{2}}\partial^{\alpha}\widetilde{\rho})
\notag\\
&&=	-\frac{1}{2}
\frac{1}{\delta}(\partial_x[\frac{2\bar{\theta}}{3\bar{\rho}^{2}}u_1]\partial^{\alpha}\widetilde{\rho},\partial^{\alpha}\widetilde{\rho})
+\frac{1}{\delta}(\partial^{\alpha}u_1\partial_x\widetilde{\rho},
\frac{2\bar{\theta}}{3\bar{\rho}^{2}}\partial^{\alpha}\widetilde{\rho})
\notag\\
&&\leq  C\eta\frac{\varepsilon}{\delta^{1/2}}\|\partial^{\alpha}\partial_x\widetilde{u}_1\|^2
+C_\eta\mathcal{E}_{2}(t).
\end{eqnarray}
For the last term of \eqref{4.2},  it is clear by \eqref{2.31} and \eqref{1.34} that 
\begin{equation*}
|(\delta^2\partial^{\alpha}\mathcal{R}_1,\frac{2\bar{\theta}}{3\bar{\rho}^{2}}\partial^{\alpha}\widetilde{\rho})|
\leq C\|\partial^{\alpha}\widetilde{\rho}\|^2+C\delta^4\leq C\mathcal{E}_{2}(t)+C\delta^4.
\end{equation*}
Consequently, collecting all the above estimates, we obtain
\begin{equation}
	\label{4.7}
	\frac{1}{2}\frac{d}{dt}(\partial^{\alpha}\widetilde{\rho},\frac{2\bar{\theta}}{3\bar{\rho}^{2}}\partial^{\alpha}\widetilde{\rho})
	+\frac{1}{\delta}(\partial^{\alpha}\partial_x\widetilde{u}_1,\frac{2\bar{\theta}}{3\bar{\rho}}\partial^{\alpha}\widetilde{\rho})
	\leq C\eta\frac{\varepsilon}{\delta^{1/2}}\|\partial^{\alpha}\partial_x\widetilde{u}_1\|^2
	+C_\eta\mathcal{E}_{2}(t)+C\delta^4.
\end{equation}	

\medskip
\noindent{\it Step 2. Estimate on $\|\partial^{\alpha}\widetilde{u}(t,\cdot)\|^2$} for $|\alpha|\leq 1$.

For  $|\alpha|\leq 1$, by taking $\partial^{\alpha}$ derivatives of the second equation  of \eqref{2.33} and then taking the inner product of the resulting equation with $\partial^{\alpha}\widetilde{u}_1$, one obtains
\begin{eqnarray}
	\label{4.8}
	&&\frac{1}{2}\frac{d}{dt}\|\partial^{\alpha}\widetilde{u}_1\|^{2}-A\frac{1}{\delta}(\partial_x\partial^{\alpha}\widetilde{u}_1,\partial^{\alpha}\widetilde{u}_1)
	+\frac{1}{\delta}
	(\partial^{\alpha}[\frac{2\bar{\theta}}{3\bar{\rho}}\partial_x\widetilde{\rho}],\partial^{\alpha}\widetilde{u}_1)
	\notag\\
	=&&-\frac{1}{\delta}(\frac{2}{3}\partial^{\alpha}\partial_x\widetilde{\theta},\partial^{\alpha}\widetilde{u}_1)
	-\frac{1}{\delta}(\partial^{\alpha}(u_1\partial_x\widetilde{u}_1),\partial^{\alpha}\widetilde{u}_1)
	-\frac{1}{\delta}(\partial^{\alpha}(\widetilde{u}_1\partial_x\bar{u}_1),\partial^{\alpha}\widetilde{u}_1)
	\notag\\
	&&-\frac{1}{\delta}(\partial^{\alpha}[\frac{2}{3}(\frac{\theta}{\rho}-\frac{\bar{\theta}}{\bar{\rho}})\partial_x\rho],\partial^{\alpha}\widetilde{u}_1)
	-\delta^2(\partial^{\alpha}[\frac{1}{\bar{\rho}}\mathcal{R}_2],\partial^{\alpha}\widetilde{u}_1)
	-\frac{1}{\delta}(\partial^{\alpha}\partial_x\widetilde{\phi},\partial^{\alpha}\widetilde{u}_1)
	\notag\\
	&&+\frac{\varepsilon}{\delta^{1/2}}(\partial^{\alpha}[\frac{4}{3\rho}\partial_x(\mu(\theta)\partial_xu_1)],\partial^{\alpha}\widetilde{u}_1)
	-\frac{1}{\delta}(\partial^{\alpha}[\frac{1}{\rho}\partial_x(\int_{\mathbb{R}^{3}} v_{1}^2L^{-1}_{M}\Theta\,dv)],\partial^{\alpha}\widetilde{u}_1).
\end{eqnarray}
In the following, we compute \eqref{4.8} term by term.
The second term on the left hand side of \eqref{4.8} vanishes after integration by parts, while the third term  on the left hand side of \eqref{4.8} has lower bound that
\begin{eqnarray*}
\frac{1}{\delta}(\frac{2\bar{\theta}}{3\bar{\rho}}\partial^{\alpha}\partial_x\widetilde{\rho},\partial^{\alpha}\widetilde{u}_1)
+\frac{1}{\delta}(\partial^{\alpha}[\frac{2\bar{\theta}}{3\bar{\rho}}]\partial_x\widetilde{\rho},\partial^{\alpha}\widetilde{u}_1)
\geq \frac{1}{\delta}(\frac{2\bar{\theta}}{3\bar{\rho}}\partial^{\alpha}\partial_x\widetilde{\rho},\partial^{\alpha}\widetilde{u}_1)
-C\mathcal{E}_{2}(t).
\end{eqnarray*}
The similar calculation as \eqref{4.4} implies that
\begin{eqnarray*}
&&\frac{1}{\delta}|(\partial^{\alpha}(u_1\partial_x\widetilde{u}_1),\partial^{\alpha}\widetilde{u}_1)|
+\frac{1}{\delta}|(\partial^{\alpha}(\widetilde{u}_1\partial_x\bar{u}_1),\partial^{\alpha}\widetilde{u}_1)|
\notag\\
&&\leq  C\mathcal{E}_{2}(t)+\eta\frac{\varepsilon}{\delta^{1/2}}\|\partial^{\alpha}\partial_x\widetilde{u}_1\|^2
+C_\eta\widetilde{C}\frac{1}{\varepsilon}\delta^{1/2}\delta^4\|\partial^{\alpha}\widetilde{u}_1\|^2
\notag\\
&&\leq \eta\frac{\varepsilon}{\delta^{1/2}}\|\partial^{\alpha}\partial_x\widetilde{u}_1\|^2
+C_\eta\mathcal{E}_{2}(t).
\end{eqnarray*}
For the fourth and fifth terms on the right hand side of \eqref{4.8}, by \eqref{2.31}, \eqref{3.4}, \eqref{3.1} and \eqref{1.34}, we show that
\begin{equation*}
\frac{1}{\delta}|(\partial^{\alpha}[\frac{2}{3}(\frac{\theta}{\rho}-\frac{\bar{\theta}}{\bar{\rho}})\partial_x\rho],\partial^{\alpha}\widetilde{u}_1)|
+\delta^2|(\partial^{\alpha}[\frac{1}{\bar{\rho}}\mathcal{R}_2],\partial^{\alpha}\widetilde{u}_1)|\leq \eta\frac{\varepsilon}{\delta^{1/2}}\|\partial^{\alpha}\partial_x\widetilde{u}_1\|^2
+C_\eta\mathcal{E}_{2}(t)+C\delta^4.
\end{equation*}
To bound the electric potential term in \eqref{4.8}, we use the integration by parts to write
\begin{multline}
\label{4.9}
-\frac{1}{\delta}(\partial^{\alpha}\partial_x\widetilde{\phi},\partial^{\alpha}\widetilde{u}_1)
=\frac{1}{\delta}(\partial^{\alpha}\widetilde{\phi},\partial^{\alpha}\partial_x\widetilde{u}_1)
\\
=\frac{1}{\delta}(\frac{1}{\rho}\partial^{\alpha}\widetilde{\phi},\partial^{\alpha}[\rho\partial_x\widetilde{u}_1])
-\frac{1}{\delta}\sum_{1\leq \alpha_{1}\leq \alpha}C^{\alpha_{1}}_\alpha
(\frac{1}{\rho}\partial^{\alpha}\widetilde{\phi},\partial^{\alpha_1}\rho\partial_x\partial^{\alpha-\alpha_{1}}\widetilde{u}_1).
\end{multline}
Here if $|\alpha|=0$, the last term of \eqref{4.9} vanishes, if $|\alpha|=1$, it is bounded by $C\mathcal{E}_{2}(t)$.
By the first equation of \eqref{2.33}, the first term on the right hand side of \eqref{4.9} can be written as
\begin{multline}
	\label{4.10}
\frac{1}{\delta}(\frac{1}{\rho}\partial^{\alpha}\widetilde{\phi},\partial^{\alpha}[\rho\partial_x\widetilde{u}_1])=-(\frac{1}{\rho}\partial^{\alpha}\widetilde{\phi},\partial^{\alpha}\partial_t\widetilde{\rho})+\frac{1}{\delta}(\frac{1}{\rho}\partial^{\alpha}\widetilde{\phi},
	A\partial^{\alpha}\partial_x\widetilde{\rho})
\\
	-\frac{1}{\delta}(\frac{1}{\rho}\partial^{\alpha}\widetilde{\phi},\partial^{\alpha}[u_{1}\partial_x\widetilde{\rho}])-\frac{1}{\delta}(\frac{1}{\rho}\partial^{\alpha}\widetilde{\phi},\partial^{\alpha}[\widetilde{u}_{1}\partial_x\bar{\rho}
+\partial_x\bar{u}_1\widetilde{\rho}+\delta^3\mathcal{R}_1]).	
\end{multline}
For the first term on the right hand of \eqref{4.10}, we use the last equation of \eqref{2.33} to write
\begin{eqnarray}
	\label{4.11}
-(\frac{1}{\rho}\partial^{\alpha}\widetilde{\phi},\partial^{\alpha}\partial_t\widetilde{\rho})=&&\delta
(\frac{1}{\rho}\partial^{\alpha}\widetilde{\phi},\partial^{\alpha}\partial^{2}_{x}\partial_t\widetilde{\phi})
-
(\frac{1}{\rho}\partial^{\alpha}\widetilde{\phi},\partial^{\alpha}[e^{\phi}\partial_t\widetilde{\phi}])
\notag\\
&&-
(\frac{1}{\rho}\partial^{\alpha}\widetilde{\phi},\partial^{\alpha}[(e^{\phi}-e^{\bar{\phi}})\partial_t\bar{\phi}])+\delta^3(\frac{1}{\rho}\partial^{\alpha}\widetilde{\phi},\partial^{\alpha}\partial_t\mathcal{R}_{4}).
\end{eqnarray}
Next we estimate each term for \eqref{4.11}.
By integration by parts, one has
\begin{eqnarray}
	\label{4.12}
	\delta(\frac{1}{\rho}\partial^{\alpha}\widetilde{\phi},\partial^{\alpha}\partial^{2}_{x}\partial_t\widetilde{\phi})
	=&&-\delta(\frac{1}{\rho}\partial^{\alpha}\partial_x\widetilde{\phi},\partial^{\alpha}\partial_x\partial_t\widetilde{\phi})
	+\delta(\frac{1}{\rho^2}\partial_x\rho\partial^{\alpha}\widetilde{\phi},\partial^{\alpha}\partial_x\partial_t\widetilde{\phi})
	\notag\\
	=&&-\frac{1}{2}\delta\frac{d}{dt}(\frac{1}{\rho}\partial^{\alpha}\partial_x\widetilde{\phi},\partial^{\alpha}\partial_x\widetilde{\phi})
	+\frac{1}{2}\delta(\partial_t[\frac{1}{\rho}]\partial^{\alpha}\partial_x\widetilde{\phi},\partial^{\alpha}\partial_x\widetilde{\phi})
	\notag\\
	&&+\delta(\frac{1}{\rho^2}\partial_x\widetilde{\rho}\partial^{\alpha}\widetilde{\phi},\partial^{\alpha}\partial_x\partial_t\widetilde{\phi})
	-\delta(\partial_x[\frac{1}{\rho^2}\partial_x\bar{\rho}\partial^{\alpha}\widetilde{\phi}],\partial^{\alpha}\partial_t\widetilde{\phi}).
\end{eqnarray}
For the second term on the right hand side of \eqref{4.12}, we use \eqref{3.67}, \eqref{4.5}, \eqref{3.2a} and \eqref{1.34} to obtain
\begin{eqnarray*}
\delta|(\partial_t[\frac{1}{\rho}]\partial^{\alpha}\partial_x\widetilde{\phi},\partial^{\alpha}\partial_x\widetilde{\phi})|
&&\leq C\delta\|\partial_t\rho\|_{L^\infty}\|\partial^{\alpha}\partial_x\widetilde{\phi}\|^2
\notag\\
&&\leq \widetilde{C}(\delta+\delta^{1/2}\varepsilon+\frac{1}{\varepsilon^{1/2}}\delta^{5/4})\delta\|\partial^{\alpha}\partial_x\widetilde{\phi}\|^2
\notag\\
&&\leq C\delta\|\partial^{\alpha}\partial_x\widetilde{\phi}\|^2
\leq C\mathcal{E}_{2}(t).
\end{eqnarray*}
For the third term on the right hand side of \eqref{4.12}, 
by \eqref{3.2}, \eqref{3.48}, \eqref{3.2a} and \eqref{1.34}, one has
\begin{eqnarray*}
	\delta|(\frac{1}{\rho^2}\partial_x\widetilde{\rho}\partial^{\alpha}\widetilde{\phi},\partial^{\alpha}\partial_x\partial_t\widetilde{\phi})|
	&&\leq C\delta\|\partial^{\alpha}\widetilde{\phi}\|_{L^\infty}\|\partial^{\alpha}\partial_x\partial_t\widetilde{\phi}\|\|\partial_x\widetilde{\rho}\|
	\notag\\
	&&\leq  \widetilde{C}\delta^2\delta^{7/2}\|\partial^{\alpha}\partial_x\partial_t\widetilde{\phi}\|^2+\|\partial_x\widetilde{\rho}\|^2
	\notag\\
	&&\leq \widetilde{C}\delta\delta^{7/2}(\frac{1}{\delta^2}\|\partial^{\alpha}\partial_x(\widetilde{\rho},\widetilde{u})\|^{2}
	+\delta^4)+\|\partial_x\widetilde{\rho}\|^2
	\notag\\
	&&\leq \widetilde{C}\delta^{5/2}\|\partial^{\alpha}\partial_x(\widetilde{\rho},\widetilde{u})\|^{2}+C\mathcal{E}_{2}(t)+C\delta^6.
\end{eqnarray*}
For the last term of \eqref{4.12}, we shall deal with it carefully. First of all, we use the last equation of \eqref{2.33} to write
\begin{equation*}
	-(\partial_x\widetilde{\rho},\partial^{3}_{x}\widetilde{\phi})=\delta(\partial^{3}_{x}\widetilde{\phi},\partial^{3}_{x}\widetilde{\phi})
	-(e^{\phi}\partial_x\widetilde{\phi},\partial^{3}_{x}\widetilde{\phi})
	-((e^{\phi}-e^{\bar{\phi}})\partial_x\bar{\phi},\partial^{3}_{x}\widetilde{\phi})
	+\delta^3(\partial_x\mathcal{R}_{4},\partial^{3}_{x}\widetilde{\phi}).
\end{equation*}
Using \eqref{3.2}, \eqref{3.2a}, \eqref{2.31} and \eqref{3.57}, we obtain the following estimate
\begin{equation}
	\label{4.13}
	\delta(\partial^{3}_{x}\widetilde{\phi},\partial^{3}_{x}\widetilde{\phi})
	+(e^{\phi}\partial^2_x\widetilde{\phi},\partial^{2}_{x}\widetilde{\phi})
	\leq C\delta^4+C\frac{1}{\delta}\|\partial_x\widetilde{\rho}\|^2.
\end{equation}
If $|\alpha|=0$, then by \eqref{3.48}, \eqref{3.4}, \eqref{3.2a} and \eqref{1.34}, we get
\begin{eqnarray*}
	\delta|(\partial_x[\frac{1}{\rho^2}\partial_x\bar{\rho}\partial^{\alpha}\widetilde{\phi}],\partial^{\alpha}\partial_t\widetilde{\phi})|
	&&\leq 	C\delta^2\|\partial^{\alpha}\partial_t\widetilde{\phi}\|^2+C\|\partial_x[\frac{1}{\rho^2}\partial_x\bar{\rho}\partial^{\alpha}\widetilde{\phi}]\|^2
		\notag\\
	&&\leq C\|\partial_x(\widetilde{\rho},\widetilde{u})\|^{2}+\widetilde{C}\delta^6+C\delta^2\|\widetilde{\phi}\|_{H^1}^2
		\notag\\
	&&\leq C\mathcal{E}_{2}(t)+C\delta^5.
\end{eqnarray*}
If $|\alpha|=1$, then by integration by parts, \eqref{3.4}, \eqref{3.48}, \eqref{3.2} and \eqref{3.1}, we show that
\begin{eqnarray*}
	&&\delta|(\partial_x[\frac{1}{\rho^2}\partial_x\bar{\rho}\partial^{\alpha}\widetilde{\phi}],\partial^{\alpha}\partial_t\widetilde{\phi})|
	=\delta|(\partial^2_x[\frac{1}{\rho^2}\partial_x\bar{\rho}\partial^{\alpha}\widetilde{\phi}],\partial_t\widetilde{\phi})|
	\notag\\
	&&\leq C\delta^2\|\partial_t\widetilde{\phi}\|^2+C\delta^2\{\|\partial^{\alpha}\widetilde{\phi}\|^2
	+\|\partial^{\alpha}\partial_x\widetilde{\phi}\|^2+\|\partial^{\alpha}\partial^2_x\widetilde{\phi}\|^2+(\|\partial^2_x\widetilde{\rho}\|^2+\delta^2)
	\|\partial^{\alpha}\widetilde{\phi}\|^2_{L^\infty}\}
	\notag\\
	&&\leq C\mathcal{E}_{2}(t)+C\delta^5+C\delta^2\|\partial^{\alpha}\partial^2_x\widetilde{\phi}\|^2+\widetilde{C}\delta^5(
	\|\partial^2_x\widetilde{\rho}\|^2+\delta^2)
		\notag\\
	&&\leq C\mathcal{E}_{2}(t)+C\|\partial_x\widetilde{\rho}\|^2+\widetilde{C}\frac{\delta^6}{\varepsilon^2}\frac{\varepsilon^2}{\delta}
	\|\partial^2_x\widetilde{\rho}\|^2+C\delta^5\leq C\mathcal{E}_{2}(t)+C\delta^5,
\end{eqnarray*}
where in the last two inequalities we have used \eqref{4.13}, \eqref{1.34} and \eqref{4.5}. With these estimates,
 the last term of \eqref{4.12} can be controlled by
\begin{equation*}
\delta|(\partial_x[\frac{1}{\rho^2}\partial_x\bar{\rho}\partial^{\alpha}\widetilde{\phi}],\partial^{\alpha}\partial_t\widetilde{\phi})|
	\leq C\mathcal{E}_{2}(t)+C\delta^5.
\end{equation*}
Hence, plugging the above estimates into \eqref{4.12} gives 
\begin{eqnarray*}
	\delta(\frac{1}{\rho}\partial^{\alpha}\widetilde{\phi},\partial^{\alpha}\partial^{2}_{x}\partial_t\widetilde{\phi})
	\leq&&-\frac{1}{2}\delta\frac{d}{dt}(\frac{1}{\rho}\partial^{\alpha}\partial_x\widetilde{\phi},\partial^{\alpha}\partial_x\widetilde{\phi})
	\notag\\
	&&+\widetilde{C}\delta^{5/2}\|\partial^{\alpha}\partial_x(\widetilde{\rho},\widetilde{u})\|^{2}+C\mathcal{E}_{2}(t)+C\delta^5.
\end{eqnarray*}
For the second term on the right hand side of \eqref{4.11}, we see easily that
\begin{equation*}
	-(\frac{1}{\rho}\partial^{\alpha}\widetilde{\phi},\partial^{\alpha}(e^{\phi}\partial_t\widetilde{\phi}))
	=-(\frac{1}{\rho}\partial^{\alpha}\widetilde{\phi},e^{\phi}\partial^{\alpha}\partial_t\widetilde{\phi})
-(\frac{1}{\rho}\partial^{\alpha}\widetilde{\phi},\partial^{\alpha}
	(e^{\phi})\partial_t\widetilde{\phi}),
\end{equation*}
where if $|\alpha|=0$, the last term  vanishes, if $|\alpha|=1$, it is bounded by
\begin{eqnarray*}
C\|\partial^{\alpha}\phi\|_{L^{\infty}}\|\partial^{\alpha}\widetilde{\phi}\|\|\partial_t\widetilde{\phi}\|
\leq C\delta^2\|\partial_t\widetilde{\phi}\|^2+C\|\partial^{\alpha}\widetilde{\phi}\|^2	\leq C\mathcal{E}_{2}(t)+C\delta^5.
\end{eqnarray*}
In the above inequality,  we have used \eqref{3.48}, \eqref{3.57} and \eqref{1.34}.
By \eqref{3.67}, \eqref{4.5} and \eqref{1.34}, we get
\begin{eqnarray*}
	-(\frac{1}{\rho}\partial^{\alpha}\widetilde{\phi},e^{\phi}\partial^{\alpha}\partial_t\widetilde{\phi})
	\leq&& -\frac{1}{2}\frac{d}{dt}(\frac{1}{\rho}e^{\phi}\partial^{\alpha}\widetilde{\phi},\partial^{\alpha}\widetilde{\phi})
	+C\|\partial_t(\phi,\rho)\|_{L^{\infty}}\|\partial^{\alpha}\widetilde{\phi}\|^2
	\notag\\
	\leq&&   -\frac{1}{2}\frac{d}{dt}(\frac{1}{\rho}e^{\phi}\partial^{\alpha}\widetilde{\phi},\partial^{\alpha}\widetilde{\phi})
	+\widetilde{C}(\delta+\delta^{1/2}\varepsilon+\frac{1}{\varepsilon^{1/2}}\delta^{5/4})\|\partial^{\alpha}\widetilde{\phi}\|^2
	\notag\\
	\leq&&   -\frac{1}{2}\frac{d}{dt}(\frac{1}{\rho}e^{\phi}\partial^{\alpha}\widetilde{\phi},\partial^{\alpha}\widetilde{\phi})
	+C\mathcal{E}_{2}(t).
\end{eqnarray*}
So, from the above bounds, we get
\begin{equation*}
	-(\frac{1}{\rho}\partial^{\alpha}\widetilde{\phi},\partial^{\alpha}(e^{\phi}\partial_t\widetilde{\phi}))
	\leq   -\frac{1}{2}\frac{d}{dt}(\frac{1}{\rho}e^{\phi}\partial^{\alpha}\widetilde{\phi},\partial^{\alpha}\widetilde{\phi})
	+C\mathcal{E}_{2}(t)+C\delta^5.
\end{equation*}
The last two terms of \eqref{4.11} can be bounded by $C\mathcal{E}_{2}(t)+C\delta^5$.
Hence, we plug the above estimate into \eqref{4.11} to get
\begin{eqnarray*}
-(\frac{1}{\rho}\partial^{\alpha}\widetilde{\phi},\partial^{\alpha}\partial_t\widetilde{\rho})
\leq&&-\frac{1}{2}\frac{d}{dt}(\frac{1}{\rho}e^{\phi}\partial^{\alpha}\widetilde{\phi},\partial^{\alpha}\widetilde{\phi})-\frac{1}{2}\delta\frac{d}{dt}(\frac{1}{\rho}\partial^{\alpha}\partial_x\widetilde{\phi},\partial^{\alpha}\partial_x\widetilde{\phi})
\notag\\
&&+\widetilde{C}\delta^{5/2}\|\partial^{\alpha}\partial_x(\widetilde{\rho},\widetilde{u})\|^{2}+C\mathcal{E}_{2}(t)+C\delta^5.
\end{eqnarray*}

To bound the second term on the right hand side of \eqref{4.10}, we use the last equation of \eqref{2.33} to write
\begin{equation}
	\label{4.14}
A\frac{1}{\delta}(\frac{1}{\rho}\partial^{\alpha}\widetilde{\phi},\partial^{\alpha}\partial_x\widetilde{\rho})
=-A\frac{1}{\delta}(\frac{1}{\rho}\partial^{\alpha}\widetilde{\phi},\partial^{\alpha}[\delta\partial^{3}_{x}\widetilde{\phi}
-e^{\phi}\partial_x\widetilde{\phi}-(e^{\phi}-e^{\bar{\phi}})\partial_x\bar{\phi}+\delta^3\partial_x\mathcal{R}_{4}]).
\end{equation}
By integration by parts, \eqref{4.17a}, \eqref{4.13} and \eqref{1.34}, one obtains
\begin{eqnarray*}
-A(\frac{1}{\rho}\partial^{\alpha}\widetilde{\phi},\partial^{\alpha}\partial^{3}_{x}\widetilde{\phi})
=&&-\frac{1}{2}A(\partial_{x}[\frac{1}{\rho}]\partial^{\alpha}\partial_{x}\widetilde{\phi},\partial^{\alpha}\partial_{x}\widetilde{\phi})
+A(\partial_{x}[\frac{1}{\rho}]\partial^{\alpha}\widetilde{\phi},\partial^{\alpha}\partial^{2}_{x}\widetilde{\phi})
\notag\\
\leq&&
C\|\partial_{x}\rho\|_{L^\infty}\|\partial^{\alpha}\partial_{x}\widetilde{\phi}\|^2
+C\|\partial_{x}\rho\|^2_{L^\infty}\|\partial^{\alpha}\partial^2_{x}\widetilde{\phi}\|^2+C\|\partial^{\alpha}\widetilde{\phi}\|^2
\notag\\
\leq&&  C\delta\|\partial^{\alpha}\partial_{x}\widetilde{\phi}\|^2+C\delta^2\|\partial^{\alpha}\partial^2_{x}\widetilde{\phi}\|^2
+C\|\partial^{\alpha}\widetilde{\phi}\|^2
\notag\\
\leq&& C\mathcal{E}_{2}(t)+C\delta^5.
\end{eqnarray*}
On the other hand, it holds that
\begin{eqnarray*}
A\frac{1}{\delta}(\frac{1}{\rho}\partial^{\alpha}\widetilde{\phi},\partial^{\alpha}
[e^{\phi}\partial_x\widetilde{\phi}]))&&=-\frac{1}{2}
A\frac{1}{\delta}(\partial_x[\frac{1}{\rho}e^{\phi}]\partial^{\alpha}\widetilde{\phi},
\partial^{\alpha}\widetilde{\phi})+A\frac{1}{\delta}(\frac{1}{\rho}\partial^{\alpha}\widetilde{\phi},\partial^{\alpha}
[e^{\phi}]\partial_x\widetilde{\phi})
\notag\\
&&\leq C\|\widetilde{\phi}\|_{H^1}^2\leq C\mathcal{E}_{2}(t),
\end{eqnarray*}
and
\begin{equation*}
A\frac{1}{\delta}|(\frac{1}{\rho}\partial^{\alpha}\widetilde{\phi},\partial^{\alpha}[(e^{\phi}-e^{\bar{\phi}})\partial_x\bar{\phi}]-\delta^3\partial_x\partial^{\alpha}\mathcal{R}_{4})|\leq C\|\widetilde{\phi}\|_{H^1}^2+C\delta^4\leq C\mathcal{E}_{2}(t)+C\delta^4.
\end{equation*}
Hence, we substitute the above estimates into \eqref{4.14}  to obtain
\begin{equation*}
A\frac{1}{\delta}|(\frac{1}{\rho}\partial^{\alpha}\widetilde{\phi},\partial^{\alpha}\partial_x\widetilde{\rho})|\leq C\mathcal{E}_{2}(t)+C\delta^4.
\end{equation*}

For the third term on the right hand side of \eqref{4.10}. We consider two cases  $|\alpha|=0$ and  $|\alpha|=1$.
For the former case, we obtain by \eqref{3.2}, \eqref{3.4} and \eqref{1.34} that
\begin{eqnarray*}
	\frac{1}{\delta}|(\frac{1}{\rho}\partial^{\alpha}\widetilde{\phi},\partial^{\alpha}[u_{1}\partial_x\widetilde{\rho}])|
	&&\leq C\frac{1}{\delta}\|u_1\|_{L^{\infty}}\|\widetilde{\phi}\|\|\partial_x\widetilde{\rho}\|\leq C\mathcal{E}_{2}(t).
\end{eqnarray*}
For the latter case, it holds that
\begin{eqnarray*}
	\frac{1}{\delta}|(\frac{1}{\rho}\partial^{\alpha}\widetilde{\phi},\partial^{\alpha}[u_{1}\partial_x\widetilde{\rho}])|
	=&&\frac{1}{\delta}|(\partial_x[\frac{1}{\rho}]\partial_x\widetilde{\phi},u_{1}\partial_x\widetilde{\rho})+
	(\frac{1}{\rho}\partial^2_x\widetilde{\phi},u_{1}\partial_x\widetilde{\rho})|
	\notag\\
	\leq&& C\|\widetilde{\phi}\|_{H^1}^2+C\|\partial_x\widetilde{\rho}\|^2
	+C\delta\|\partial^2_x\widetilde{\phi}\|^2+C\delta^5
	\notag\\
	\leq&& C\mathcal{E}_{2}(t)+C\delta^4.
\end{eqnarray*}
Here we have used \eqref{4.17a}, \eqref{2.31}, \eqref{3.4} and the last equation of \eqref{2.33} such that
	\begin{eqnarray}
		\label{4.15}
		\frac{1}{\delta}|(\frac{1}{\rho}\partial^2_x\widetilde{\phi},u_{1}\partial_x\widetilde{\rho})|
		=&&\frac{1}{\delta}|(\frac{u_{1}}{\rho}\partial^2_x\widetilde{\phi},\delta\partial^{3}_{x}\widetilde{\phi}
		-e^{\phi}\partial_x\widetilde{\phi}
		-(e^{\phi}-e^{\bar{\phi}})\partial_x\bar{\phi}+\delta^3\partial_x\mathcal{R}_{4})|	
		\notag\\
		\leq&& \frac{1}{2}|(\partial_{x}[\frac{u_{1}}{\rho}]\partial^2_x\widetilde{\phi},\partial^{2}_{x}\widetilde{\phi})|
		+\frac{1}{2}\frac{1}{\delta}|(\partial_{x}[\frac{u_{1}}{\rho}e^{\phi}]\partial_x\widetilde{\phi},\partial_x\widetilde{\phi})|
		\notag\\
		&&+\frac{1}{\delta}|(\frac{u_{1}}{\rho}\partial^2_x\widetilde{\phi},(e^{\phi}-e^{\bar{\phi}})\partial_x\bar{\phi})|
		+\delta^2|(\frac{u_{1}}{\rho}\partial^2_x\widetilde{\phi},\partial_x\mathcal{R}_{4})|
		\notag\\
		\leq&& C\|\widetilde{\phi}\|_{H^1}^2+C\delta\|\partial^2_x\widetilde{\phi}\|^2+C\delta^5.
	\end{eqnarray}
From the above estimates, we see easily that
\begin{equation*}
	\frac{1}{\delta}|(\frac{1}{\rho}\partial^{\alpha}\widetilde{\phi},\partial^{\alpha}[u_{1}\partial_x\widetilde{\rho}])|
		\leq C\mathcal{E}_{2}(t)+C\delta^4.
\end{equation*}
 The last term of \eqref{4.10} can be bounded by $C\mathcal{E}_{2}(t)+C\delta^4$. Hence, plugging these estimates into \eqref{4.10}
and using \eqref{4.9}, we conclude  that
\begin{eqnarray}
	\label{4.16}
-\frac{1}{\delta}(\partial^{\alpha}\partial_x\widetilde{\phi},\partial^{\alpha}\widetilde{u}_1)
\leq&&-\frac{1}{2}\frac{d}{dt}(\frac{1}{\rho}e^{\phi}\partial^{\alpha}\widetilde{\phi},\partial^{\alpha}\widetilde{\phi})-\frac{1}{2}\delta\frac{d}{dt}(\frac{1}{\rho}\partial^{\alpha}\partial_x\widetilde{\phi},\partial^{\alpha}\partial_x\widetilde{\phi})
\notag\\
&&+\widetilde{C}\delta^{5/2}\|\partial^{\alpha}\partial_x(\widetilde{\rho},\widetilde{u})\|^{2}+C\mathcal{E}_{2}(t)+C\delta^4.
\end{eqnarray}
This completes the estimates of the electric potential term in \eqref{4.8}.

To estimate the viscosity coefficient term in \eqref{4.8}, we first write
\begin{equation*}
\frac{\varepsilon}{\delta^{1/2}}(\partial^{\alpha}[\frac{4}{3\rho}\partial_x(\mu(\theta)\partial_xu_1)],\partial^{\alpha}\widetilde{u}_1)
=\frac{\varepsilon}{\delta^{1/2}}(\partial^{\alpha}[\frac{4}{3\rho}\partial_x(\mu(\theta)\partial_x\bar{u}_1)],\partial^{\alpha}\widetilde{u}_1)
+\frac{\varepsilon}{\delta^{1/2}}(\partial^{\alpha}[\frac{4}{3\rho}\partial_x(\mu(\theta)\partial_x\widetilde{u}_1)],\partial^{\alpha}\widetilde{u}_1).
\end{equation*}
Since both $\mu(\theta)$ and $\kappa(\theta)$ are smooth functions of $\theta$, there exists a constant $C>1$ 
such that $\mu(\theta),\kappa(\theta)\in[C^{-1},C]$. Moreover, derivatives of $\mu(\theta)$ and $\kappa(\theta)$ are also bounded.
Using these facts and \eqref{4.17a} as well as \eqref{1.34}, we show that
\begin{eqnarray*}
&&\frac{\varepsilon}{\delta^{1/2}}(\partial^{\alpha}[\frac{4}{3\rho}\partial_x(\mu(\theta)\partial_x\bar{u}_1)],\partial^{\alpha}\widetilde{u}_1)
\notag\\
&&\leq C\|\partial^{\alpha}\widetilde{u}_1\|^2+C\frac{\varepsilon^2}{\delta}(\|\partial^{\alpha}\partial^2_x\bar{u}_1\|^2+
\|\partial^{\alpha}(\rho,\theta)\|_{L^\infty}^2\|\partial^2_x\bar{u}_1\|^2)
\notag\\
&&\hspace{0.5cm}+C\frac{\varepsilon^2}{\delta}(\|\partial_x\theta\|_{L^\infty}^2\|\partial^{\alpha}\partial_x\bar{u}_1\|^2
+\|\partial^{\alpha}\partial_x\theta\|^2\|\partial_x\bar{u}_1\|_{L^\infty}^2
+\|\partial^{\alpha}\rho\|_{L^\infty}^2\|\partial_x\theta\|_{L^\infty}^2\|\partial_x\bar{u}_1\|^2)
\notag\\
&&\leq C\|\partial^{\alpha}\widetilde{u}_1\|^2+C\varepsilon^2\delta\|\partial^{\alpha}\partial_x\widetilde{\theta}\|^2+C\varepsilon^2\delta
\notag\\
&&\leq C\mathcal{E}_{2}(t)+C\varepsilon^2\delta.
\end{eqnarray*}
On the other hand, after integration by parts, we claim that
\begin{eqnarray*}
	\label{4.17}
&&\frac{\varepsilon}{\delta^{1/2}}(\partial^{\alpha}[\frac{4}{3\rho}\partial_x(\mu(\theta)\partial_x\widetilde{u}_1)],\partial^{\alpha}\widetilde{u}_1)
\notag\\
&&=-\frac{\varepsilon}{\delta^{1/2}}(\partial^{\alpha}[\frac{4}{3\rho}\mu(\theta)\partial_x\widetilde{u}_1],\partial^{\alpha}\partial_x\widetilde{u}_1)
-\frac{\varepsilon}{\delta^{1/2}}(\partial^{\alpha}[\partial_x(\frac{4}{3\rho})\mu(\theta)\partial_x\widetilde{u}_1],\partial^{\alpha}\widetilde{u}_1)
	\notag\\
&&\leq -\frac{\varepsilon}{\delta^{1/2}}(\frac{4}{3\rho}\mu(\theta)\partial^{\alpha}\partial_x\widetilde{u}_1,\partial^{\alpha}\partial_x\widetilde{u}_1)
+C\mathcal{E}_{2}(t).
\end{eqnarray*}
Hence, we deduce from the above three estimates that
\begin{equation}
	\label{4.18}
	\frac{\varepsilon}{\delta^{1/2}}(\partial^{\alpha}[\frac{4}{3\rho}\partial_x(\mu(\theta)\partial_xu_1)],\partial^{\alpha}\widetilde{u}_1)
	\leq-\frac{\varepsilon}{\delta^{1/2}}(\frac{4}{3\rho}\mu(\theta)\partial^{\alpha}\partial_x\widetilde{u}_1,\partial^{\alpha}\partial_x\widetilde{u}_1)
+C\mathcal{E}_{2}(t)+C\varepsilon^2\delta.	
\end{equation}

Finally, it remains to compute the last term of \eqref{4.8}. First of all, we 
use the self-adjoint property of $L^{-1}_{M}$ and \eqref{2.15} to get the following two identities that
\begin{eqnarray}
	\label{4.20}
	\int_{\mathbb{R}^{3}}v_{1}v_{j}L^{-1}_{M}\Theta \,dv=&&{\int_{\mathbb{R}^{3}}\frac{v_{1}v_{j}ML^{-1}_{M}\Theta}{M} \,dv}=
	\int_{\mathbb{R}^{3}} \frac{L^{-1}_{M}\{P_{1}(v_{1}v_{j}M)\}\Theta}{M}\,dv
	\notag\\
	=&&\int_{\mathbb{R}^{3}} L^{-1}_{M}\{K\theta\hat{B}_{1j}(\frac{v-u}{\sqrt{K\theta}})M\}\frac{\Theta}{M}\,dv
	=K\theta\int_{\mathbb{R}^{3}}B_{1j}(\frac{v-u}{\sqrt{K\theta}})\frac{\Theta}{M}\,dv
\end{eqnarray}
and 
\begin{eqnarray}
	\label{4.19}
	&&\int_{\mathbb{R}^{3}}(\frac{1}{2}v_{i}|v|^{2}-v_{i}u\cdot v)L^{-1}_{M}\Theta \,dv=
	\int_{\mathbb{R}^{3}} L^{-1}_{M}\{P_{1}(\frac{1}{2}v_{i}|v|^{2}-v_{i}u\cdot v)M\}\frac{\Theta}{M}\,dv
	\notag\\
	&&=\int_{\mathbb{R}^{3}} L^{-1}_{M}\{(K\theta)^{\frac{3}{2}}\hat{A}_{i}(\frac{v-u}{\sqrt{K\theta}})M\}\frac{\Theta}{M}\,dv
	=(K\theta)^{\frac{3}{2}}\int_{\mathbb{R}^{3}}A_{i}(\frac{v-u}{\sqrt{K\theta}})\frac{\Theta}{M}\,dv,
\end{eqnarray}
for $i,j=1,2,3$. Using \eqref{4.20} and the integration by parts, we denote that
\begin{eqnarray}
	\label{4.21}
	&&-\frac{1}{\delta}(\partial^{\alpha}[\frac{1}{\rho}\partial_x(\int_{\mathbb{R}^{3}} v_{1}^2L^{-1}_{M}\Theta\,dv)],\partial^{\alpha}\widetilde{u}_1)
	\notag\\
	=&&-\frac{1}{\delta}(\partial^{\alpha}[\frac{1}{\rho}\partial_{x}(\int_{\mathbb{R}^{3}}K\theta B_{11}(\frac{v-u}{\sqrt{K\theta}})\frac{\Theta}{M}\, dv)],\partial^{\alpha}\widetilde{u}_1)
	\notag\\
	=&&\frac{1}{\delta}(\partial^{\alpha}[\frac{1}{\rho}\int_{\mathbb{R}^{3}}K\theta B_{11}(\frac{v-u}{\sqrt{K\theta}})\frac{\Theta}{M}\, dv],\partial^{\alpha}\partial_{x}\widetilde{u}_1)
	\notag\\
	&&+\frac{1}{\delta}(\partial^{\alpha}[\partial_{x}(\frac{1}{\rho})\int_{\mathbb{R}^{3}}K\theta B_{11}(\frac{v-u}{\sqrt{K\theta}})\frac{\Theta}{M}\, dv],\partial^{\alpha}\widetilde{u}_1)
	\notag\\
	:=&&I_{5}+I_{6}.
\end{eqnarray}
To bound \eqref{4.21}, we first use  \eqref{3.6} and \eqref{3.11} to obtain the following desired estimate
\begin{equation}
	\label{4.22}
	\int_{\mathbb{R}^{3}}\frac{|\langle v\rangle^{b}\sqrt{\mu}
		\partial_{\beta}A_{1}(\frac{v-u}{\sqrt{K\theta}})|^{2}}{M^{2}}\,dv
	+\int_{\mathbb{R}^{3}}\frac{|\langle v\rangle^{b}
		\sqrt{\mu}\partial_{\beta}B_{1j}(\frac{v-u}{\sqrt{K\theta}})|^{2}}{M^{2}}\,dv\leq C,
\end{equation}
for any multi-index $\beta$ and $b\geq 0$. Recall $\Theta$ in \eqref{2.12} given by
\begin{equation*}
	\Theta:=\delta^{3/2}\varepsilon \partial_tG-A\delta^{1/2}\varepsilon \partial_xG+\delta^{1/2}\varepsilon P_{1}(v_{1}\partial_xG)-\delta^{1/2}\varepsilon\partial_x\phi\partial_{v_{1}}G-Q(G,G).
\end{equation*}
Then we easily see that
\begin{eqnarray}
	\label{4.23}
	I_{5}=&&\delta^{1/2}\varepsilon\int_{\mathbb{R}}\int_{\mathbb{R}^{3}}\partial^{\alpha}[\frac{1}{\rho}K\theta B_{11}(\frac{v-u}{\sqrt{K\theta}})\frac{\partial_tG}{M}]\,\partial^{\alpha}\partial_{x}\widetilde{u}_1\,dv\,dx
	\notag\\
	&&-\frac{\varepsilon}{\delta^{1/2}}\int_{\mathbb{R}}\int_{\mathbb{R}^{3}}\partial^{\alpha}[\frac{1}{\rho}K\theta B_{11}(\frac{v-u}{\sqrt{K\theta}})\frac{A \partial_xG}{M}]\,\partial^{\alpha}\partial_{x}\widetilde{u}_1\,dv\,dx
	\notag\\
	&&+\frac{\varepsilon}{\delta^{1/2}}\int_{\mathbb{R}}\int_{\mathbb{R}^{3}}\partial^{\alpha}[\frac{1}{\rho}K\theta B_{11}(\frac{v-u}{\sqrt{K\theta}})\frac{ P_{1}(v_1\partial_xG)}{M}]\,\partial^{\alpha}\partial_{x}\widetilde{u}_1\,dv\,dx
	\notag\\
	&&-\frac{\varepsilon}{\delta^{1/2}}\int_{\mathbb{R}}\int_{\mathbb{R}^{3}}\partial^{\alpha}[\frac{1}{\rho}K\theta B_{11}(\frac{v-u}{\sqrt{K\theta}})
	\frac{\partial_x\phi\partial_{v_{1}}G}{M}]\,\partial^{\alpha}\partial_{x}\widetilde{u}_1\,dv\,dx
	\notag\\
	&&-\frac{1}{\delta}\int_{\mathbb{R}}\int_{\mathbb{R}^{3}}\partial^{\alpha}[\frac{1}{\rho}K\theta B_{11}(\frac{v-u}{\sqrt{K\theta}})\frac{Q(G,G)}{M}]\,\partial^{\alpha}\partial_{x}\widetilde{u}_1\,dv\,dx
	\notag\\
	:=&&I^{1}_{5}+I^{2}_{5}+I^{3}_{5}+I^{4}_{5}+I^5_5.
\end{eqnarray}
In the following we estimate each $I^{i}_{5}$ $(i=1,2,3,4,5)$ in \eqref{4.23}. By $G=\overline{G}+\sqrt{\mu}f$, we write
\begin{eqnarray*}
	I^{1}_{5}=&&\delta^{1/2}\varepsilon\int_{\mathbb{R}}\int_{\mathbb{R}^{3}}\partial^{\alpha}[\frac{1}{\rho}K\theta B_{11}(\frac{v-u}{\sqrt{K\theta}})\frac{\partial_t\overline{G}}{M}]\,\partial^{\alpha}\partial_{x}\widetilde{u}_1\,dv\,dx
	\notag\\
	&&+\delta^{1/2}\varepsilon\int_{\mathbb{R}}\int_{\mathbb{R}^{3}}\partial^{\alpha}[\frac{1}{\rho}K\theta B_{11}(\frac{v-u}{\sqrt{K\theta}})\frac{\sqrt{\mu}\partial_tf}{M}]\,\partial^{\alpha}\partial_{x}\widetilde{u}_1 \,dv\,dx
	\notag\\
	:=&&I^{11}_{5}+I^{12}_{5}.
\end{eqnarray*}
Note that $\partial_t\overline{G}$ has the similar expression as \eqref{3.10}, for $|\alpha|\leq 1$ and any $b\geq0$,
by the similar calculations as \eqref{3.13} and \eqref{3.15}, we deduce from \eqref{3.4}, \eqref{3.45}, \eqref{3.46}, \eqref{3.1}, \eqref{3.2a} and \eqref{1.33} that
\begin{eqnarray}
	\label{4.24}
	\|\langle v\rangle^{b}\frac{\partial^{\alpha}\partial_t\overline{G}}{\sqrt{\mu}}\|^2_{2,w}
		&&\leq  C\delta\varepsilon^2(\delta^2+\delta^4+\delta^2\|\partial_t(u,\theta)\|^2
		+\delta^2\|\partial^{\alpha}\partial_t(u,\theta)\|^2)
		\notag\\
		&&\leq  C\delta^3\varepsilon^2+\widetilde{C}\delta^6\leq C\delta^3(\varepsilon^2+\delta^2),
\end{eqnarray}
which immediately gives 
\begin{eqnarray*}
	|I^{11}_5|&&\leq \eta\frac{\varepsilon}{\delta^{1/2}}\int_{\mathbb{R}}|\partial^{\alpha}\partial_{x}\widetilde{u}_1|^2\,dx+C_\eta\varepsilon\delta^{3/2}
	\int_{\mathbb{R}}|
	\int_{\mathbb{R}^{3}}\partial^{\alpha}[\frac{1}{\rho}K\theta B_{11}(\frac{v-u}{\sqrt{K\theta}})\frac{\partial_t\overline{G}}{M}]\, dv|^2\,dx
	\notag\\
	&&\leq \eta\frac{\varepsilon}{\delta^{1/2}}\|\partial^{\alpha}\partial_{x}\widetilde{u}_1\|^{2}+C_\eta\varepsilon\delta^{9/2}
	 (\varepsilon^2+\delta^2).
\end{eqnarray*}
As for the term $I^{12}_{5}$, we see easily that
\begin{eqnarray}
	\label{4.25}
	I^{12}_{5}=&&\delta^{1/2}\varepsilon\frac{d}{dt}\int_{\mathbb{R}}\int_{\mathbb{R}^{3}}\partial^{\alpha}
	\big\{[\frac{1}{\rho}K\theta B_{11}(\frac{v-u}{\sqrt{K\theta}})\frac{\sqrt{\mu}}{M}]f\big\}\partial^{\alpha}\partial_{x}\widetilde{u}_1\,dv\,dx
	\nonumber\\
	&&-\delta^{1/2}\varepsilon\int_{\mathbb{R}}\int_{\mathbb{R}^{3}}\partial^{\alpha}	\big\{\partial_t[\frac{1}{\rho}K\theta B_{11}(\frac{v-u}{\sqrt{K\theta}})\frac{\sqrt{\mu}}{M}]f\big\}\partial^{\alpha}\partial_{x}\widetilde{u}_1\,dv\,dx
	\nonumber\\
	&&-\delta^{1/2}\varepsilon\int_{\mathbb{R}}\int_{\mathbb{R}^{3}}\partial^{\alpha}
	\big\{[\frac{1}{\rho}K\theta B_{11}(\frac{v-u}{\sqrt{K\theta}})\frac{\sqrt{\mu}}{M}]f\big\}\partial^{\alpha}\partial_{x}\partial_t\widetilde{u}_1\,dv\,dx.
\end{eqnarray}
By integration by parts, \eqref{4.22}, \eqref{4.17a} and \eqref{3.1}, the last term of \eqref{4.25} can be bounded by
\begin{eqnarray*}
	&&\delta^{1/2}\varepsilon\int_{\mathbb{R}}\int_{\mathbb{R}^{3}}\partial^{\alpha}\partial_{x}
	\big\{[\frac{1}{\rho}K\theta B_{11}(\frac{v-u}{\sqrt{K\theta}})\frac{\sqrt{\mu}}{M}]f\big\}\partial^{\alpha}\partial_t\widetilde{u}_1\,dv\,dx
	\notag\\
	&&\leq \eta\frac{\varepsilon}{\delta^{1/2}}\delta^2\|\partial^{\alpha}\partial_{t}\widetilde{u}_1\|^{2}+C_\eta\frac{\varepsilon}{\delta^{1/2}}
	\int_{\mathbb{R}}|\int_{\mathbb{R}^{3}}\partial^{\alpha}\partial_{x}
	\big\{[\frac{1}{\rho}K\theta B_{11}(\frac{v-u}{\sqrt{K\theta}})\frac{\sqrt{\mu}}{M}]f\big\}\,dv|^{2}\,dx
	\notag\\
	&&\leq C\eta\varepsilon\delta^{3/2}\|\partial^{\alpha}\partial_{t}\widetilde{u}_1\|^{2}
	+C_\eta\frac{\varepsilon}{\delta^{1/2}}\|\langle v\rangle^{-\frac{1}{2}}\partial^{\alpha}\partial_xf\|^{2}
	\notag\\
	&&\hspace{0.5cm}+C_\eta\frac{\varepsilon}{\delta^{1/2}}\|\partial_x(\rho,u,\theta)\|^2_{L^\infty}\|\partial^{\alpha}f\|^{2}
	+C_\eta\frac{\varepsilon}{\delta^{1/2}}(\|\partial^{\alpha}\partial_x(\rho,u,\theta)\|^2
	+\|\partial^{\alpha}(\rho,u,\theta)\|^2_{L^\infty}\|\partial_x(\rho,u,\theta)\|^2)\||f|_2\|_{L^\infty}^{2}
\notag\\
&&\leq C\eta\frac{\varepsilon}{\delta^{1/2}}\|\partial^{\alpha}\partial_x(\widetilde{\rho},
\widetilde{u},\widetilde{\theta},\widetilde{\phi})\|^{2}
+C_\eta\frac{\varepsilon}{\delta^{1/2}}\|\partial^{\alpha}\partial_xf\|_\sigma^{2}	
	\notag\\
&&\hspace{0.5cm}
+C_\eta\widetilde{C}\varepsilon\delta^{7/2}\|\partial^{\alpha}\partial_x(\widetilde{\rho},
\widetilde{u},\widetilde{\theta})\|^{2}	+C_\eta\varepsilon\delta^{3/2}
	(\delta\varepsilon^2+\widetilde{C}\delta^4),
\end{eqnarray*}
where in the last inequality we have used \eqref{3.46} and the first inequality in \eqref{3.61B}.
The second term on the right hand side of \eqref{4.25}  can be treated in an almost similar way, and it is bounded by
\begin{eqnarray*}
	&&\eta\frac{\varepsilon}{\delta^{1/2}}\|\partial^{\alpha}\partial_{x}\widetilde{u}_1\|^{2}
	+C_\eta\delta^{3/2}\varepsilon\int_{\mathbb{R}}|\int_{\mathbb{R}^{3}}\partial^{\alpha}	\big\{\partial_t[\frac{1}{\rho}K\theta B_{11}(\frac{v-u}{\sqrt{K\theta}})\frac{\sqrt{\mu}}{M}]f\big\}|\,dv|^2\,dx
		\notag\\
\leq&&
\eta\frac{\varepsilon}{\delta^{1/2}}\|\partial^{\alpha}\partial_{x}\widetilde{u}_1\|^{2}
	+C_\eta\widetilde{C}\varepsilon\delta^{3/2}\{\|\partial^{\alpha}\partial_x(\widetilde{\rho},
	\widetilde{u},\widetilde{\theta},\widetilde{\phi})\|^{2}
	+\|\partial^{\alpha}\partial_xf\|_\sigma^{2}\}
	+C_\eta\widetilde{C}\varepsilon\delta^{3/2}(\delta\varepsilon^2+\delta^4).
\end{eqnarray*}
Hence, substituting the above two estimates into \eqref{4.25} and using \eqref{3.2a}, one gets
\begin{eqnarray*}
I^{12}_{5}\leq&&\delta^{1/2}\varepsilon\frac{d}{dt}\int_{\mathbb{R}}\int_{\mathbb{R}^{3}}\partial^{\alpha}
\big\{[\frac{1}{\rho}K\theta B_{11}(\frac{v-u}{\sqrt{K\theta}})\frac{\sqrt{\mu}}{M}]f\big\}\partial^{\alpha}\partial_{x}\widetilde{u}_1\,dv\,dx	
\notag\\
&&+(C\eta+C_\eta\delta^{3/2})\frac{\varepsilon}{\delta^{1/2}}\|\partial^{\alpha}\partial_x(\widetilde{\rho},
\widetilde{u},\widetilde{\theta},\widetilde{\phi})\|^{2}
+C_\eta\frac{\varepsilon}{\delta^{1/2}}\|\partial^{\alpha}\partial_xf\|_\sigma^{2}+C_\eta\varepsilon\delta
(\delta\varepsilon^2+\delta^4).
\end{eqnarray*}
We combine the estimates $I^{11}_{5}$ with $I^{12}_{5}$ to obtain 
\begin{eqnarray*}
	I^1_{5}\leq&&\delta^{1/2}\varepsilon\frac{d}{dt}\int_{\mathbb{R}}\int_{\mathbb{R}^{3}}\partial^{\alpha}
	\big\{[\frac{1}{\rho}K\theta B_{11}(\frac{v-u}{\sqrt{K\theta}})\frac{\sqrt{\mu}}{M}]f\big\}\partial^{\alpha}\partial_{x}\widetilde{u}_1\,dv\,dx	
	\notag\\
	&&+(C\eta+C_\eta\delta^{3/2})\frac{\varepsilon}{\delta^{1/2}}\|\partial^{\alpha}\partial_x(\widetilde{\rho},
	\widetilde{u},\widetilde{\theta},\widetilde{\phi})\|^{2}
	+C_\eta\frac{\varepsilon}{\delta^{1/2}}\|\partial^{\alpha}\partial_xf\|_\sigma^{2}+C_\eta\varepsilon\delta
	(\delta\varepsilon^2+\delta^4).
\end{eqnarray*}
On the other hand, we can bound $I^{2}_{5}$, $I^{3}_{5}$ and  $I^{4}_{5}$ by
\begin{equation*}
|I^{2}_{5}|+|I^{3}_{5}|+|I^{4}_{5}|\leq C\eta\frac{\varepsilon}{\delta^{1/2}}\|\partial^{\alpha}\partial_{x}\widetilde{u}_1\|^{2}+C_\eta\frac{\varepsilon}{\delta^{1/2}}\|\partial^{\alpha}\partial_xf\|_\sigma^{2}
+C_\eta\varepsilon\delta(\delta\varepsilon^2+\delta^4).
\end{equation*}
For the term $I^{5}_{5}$ in \eqref{4.23}, we have from \eqref{2.38},\eqref{4.22} and \eqref{3.20} that
\begin{eqnarray*}
I^{5}_{5}=&&-\frac{1}{\delta}\int_{\mathbb{R}}\int_{\mathbb{R}^{3}}\partial^{\alpha}[\frac{1}{\rho}K\theta B_{11}(\frac{v-u}{\sqrt{K\theta}})\frac{\sqrt{\mu}}{M}\Gamma(\frac{G}{\sqrt{\mu}},\frac{G}{\sqrt{\mu}})]\,\partial^{\alpha}\partial_{x}\widetilde{u}_1\,dv\,dx
\notag\\
\leq&& C\frac{1}{\delta}\|\partial^{\alpha}(\rho,u,\theta)\|_{L^\infty}
\int_{\mathbb{R}}|\mu^\epsilon\frac{G}{\sqrt{\mu}}|_{2}|\frac{G}{\sqrt{\mu}}|_{\sigma}|\partial^{\alpha}\partial_{x}\widetilde{u}_1|\,dx
\notag\\
&&+C\frac{1}{\delta}
\int_{\mathbb{R}}|\mu^\epsilon\frac{\partial^{\alpha}G}{\sqrt{\mu}}|_{2}|\frac{G}{\sqrt{\mu}}|_{\sigma}|\partial^{\alpha}\partial_{x}\widetilde{u}_1|\,dx
+C\frac{1}{\delta}
\int_{\mathbb{R}}|\mu^\epsilon\frac{G}{\sqrt{\mu}}|_{2}|\frac{\partial^{\alpha}G}{\sqrt{\mu}}|_{\sigma}|\partial^{\alpha}\partial_{x}\widetilde{u}_1|\,dx,
\end{eqnarray*}
which can be further bounded by
\begin{eqnarray*}
	&&  C\eta\frac{\varepsilon}{\delta^{1/2}}\|\partial^{\alpha}\partial_{x}\widetilde{u}_1\|^2+
	C_\eta\frac{1}{\delta^{3/2}\varepsilon}\||\mu^\epsilon\frac{G}{\sqrt{\mu}}|^2_{2}\|_{L^\infty}\|\frac{G}{\sqrt{\mu}}\|^2_{\sigma}
	\notag\\
	&&+
	C_\eta\frac{1}{\delta^{3/2}\varepsilon}\|\mu^\epsilon\frac{\partial^{\alpha}G}{\sqrt{\mu}}\|^2_{2}\||\frac{G}{\sqrt{\mu}}|^2_{\sigma}\|_{L^\infty}
	+C_\eta\frac{1}{\delta^{3/2}\varepsilon}\||\mu^\epsilon\frac{G}{\sqrt{\mu}}|^2_{2}\|_{L^\infty}\|\frac{\partial^{\alpha}G}{\sqrt{\mu}}\|^2_{\sigma}
	\notag\\
	\notag\\
	\leq&&  C\eta\frac{\varepsilon}{\delta^{1/2}}\|\partial^{\alpha}\partial_{x}\widetilde{u}_1\|^2
	+C_\eta[\delta^{3}(\varepsilon+\delta^2)^2+\widetilde{C}\delta^4]\frac{1}{\delta^{3/2}\varepsilon}[\|f\|^2_{\sigma}+\|\partial_xf\|^2_{\sigma}
	+\delta^{3}(\varepsilon+\delta^2)^2]
	\notag\\
	\leq&&  C\eta\frac{\varepsilon}{\delta^{1/2}}\|\partial^{\alpha}\partial_{x}\widetilde{u}_1\|^2
	+C_\eta\delta^{3}\frac{1}{\delta^{3/2}\varepsilon}(\|f\|^2_{\sigma}+\|\partial_xf\|^2_{\sigma})
	+C_\eta\delta^{6}\frac{1}{\delta^{3/2}\varepsilon}(\varepsilon+\delta^2)^2.
\end{eqnarray*}
In the last two inequalities we have used \eqref{3.7}, \eqref{1.35}, \eqref{3.1} and \eqref{3.2a}.
Therefore,  we substitute the estimates on $I^{1}_{5}$, $I^{2}_{5}$, $I^{3}_{5}$, $I^{4}_{5}$  and $I^{5}_{5}$ into \eqref{4.23} to obtain
\begin{eqnarray}
	\label{4.26}
	I_{5}\leq&&\delta^{1/2}\varepsilon\frac{d}{dt}\int_{\mathbb{R}}\int_{\mathbb{R}^{3}}\partial^{\alpha}
	\big\{[\frac{1}{\rho}K\theta B_{11}(\frac{v-u}{\sqrt{K\theta}})\frac{\sqrt{\mu}}{M}]f\big\}\partial^{\alpha}\partial_{x}\widetilde{u}_1\,dv\,dx	
	\notag\\
	&&+(C\eta+C_\eta\delta^{3/2})\frac{\varepsilon}{\delta^{1/2}}\|\partial^{\alpha}\partial_x(\widetilde{\rho},
	\widetilde{u},\widetilde{\theta},\widetilde{\phi})\|^{2}
	+C_\eta\frac{\varepsilon}{\delta^{1/2}}\|\partial^{\alpha}\partial_xf\|_\sigma^{2}
	\notag\\
	&&+C_\eta\delta^{3}\frac{1}{\delta^{3/2}\varepsilon}(\|f\|^2_{\sigma}+\|\partial_xf\|^2_{\sigma}
	)+C_\eta\delta^{6}\frac{1}{\delta^{3/2}\varepsilon}(\varepsilon^2+\delta^4)+C_\eta\delta^2\varepsilon^3.
\end{eqnarray}
The term $I_{6}$ in \eqref{4.21} has the similar structure as the term $I_5$, and it can be done in an  almost similar
way as $I_5$, so it holds that
\begin{eqnarray*}
I_{6}\leq&&\delta^{1/2}\varepsilon\frac{d}{dt}\int_{\mathbb{R}}\int_{\mathbb{R}^{3}}\partial^{\alpha}
	\big\{\partial_{x}[\frac{1}{\rho}]K\theta B_{11}(\frac{v-u}{\sqrt{K\theta}})\frac{\sqrt{\mu}}{M}f\big\}\partial^{\alpha}\widetilde{u}_1\,dv\,dx	
	\notag\\
	&&+(C\eta+C_\eta\delta^{3/2})\frac{\varepsilon}{\delta^{1/2}}\|\partial^{\alpha}\partial_x(\widetilde{\rho},
	\widetilde{u},\widetilde{\theta},\widetilde{\phi})\|^{2}
	+C_\eta\frac{\varepsilon}{\delta^{1/2}}\|\partial^{\alpha}\partial_xf\|_\sigma^{2}
	\notag\\
	&&+C_\eta\delta^{3}\frac{1}{\delta^{3/2}\varepsilon}(\|f\|^2_{\sigma}+\|\partial_xf\|^2_{\sigma}
	)+C_\eta\delta^{6}\frac{1}{\delta^{3/2}\varepsilon}(\varepsilon^2+\delta^4)+C_\eta\delta^2\varepsilon^3.
\end{eqnarray*}
Hence, substituting the estimates of $I_{5}$ and $I_{6}$ into \eqref{4.21} yields that
\begin{eqnarray}
	\label{4.27}	
	&&-\frac{1}{\delta}(\partial^{\alpha}[\frac{1}{\rho}\partial_x(\int_{\mathbb{R}^{3}} v_{1}^2L^{-1}_{M}\Theta\,dv)],\partial^{\alpha}\widetilde{u}_1).
	\notag\\
	\leq&&-\delta^{1/2}\varepsilon\frac{d}{dt}\int_{\mathbb{R}}\int_{\mathbb{R}^{3}}
	\big\{\partial^{\alpha}[\frac{1}{\rho}\partial_{x}(K\theta B_{11}(\frac{v-u}{\sqrt{K\theta}})\frac{\sqrt{\mu}}{M}f)]
	\partial^{\alpha}\widetilde{u}_1\big\}\,dv\,dx
	\notag\\
	&&+(C\eta+C_\eta\delta^{3/2})\frac{\varepsilon}{\delta^{1/2}}\|\partial^{\alpha}\partial_x(\widetilde{\rho},
	\widetilde{u},\widetilde{\theta},\widetilde{\phi})\|^{2}
	+C_\eta\frac{\varepsilon}{\delta^{1/2}}\|\partial^{\alpha}\partial_xf\|_\sigma^{2}
	\notag\\
&&+C_\eta\delta^{3}\frac{1}{\delta^{3/2}\varepsilon}(\|f\|^2_{\sigma}+\|\partial_xf\|^2_{\sigma}
)+C_\eta\delta^{6}\frac{1}{\delta^{3/2}\varepsilon}(\varepsilon^2+\delta^4)+C_\eta\delta^2\varepsilon^3.
\end{eqnarray}
We have required that $\delta\leq \widetilde{C}^{-1}\varepsilon^{2/5}$ in \eqref{4.5}, then it is easy to see that
\begin{equation}
	\label{4.30A}
\delta^{6}\frac{1}{\delta^{3/2}\varepsilon}(\varepsilon^2+\delta^4)\leq \delta^4,\quad 
\widetilde{C}\delta^{5/2}\leq 	\delta^{1/2}\frac{\varepsilon}{\delta^{1/2}}.
\end{equation}
Using \eqref{4.30A} and collecting the above estimates, we have from \eqref{4.8} that for $|\alpha|\leq 1$ and  any small $\eta>0$,
\begin{eqnarray}
	\label{4.28}
	&&\frac{1}{2}\frac{d}{dt}\|\partial^{\alpha}\widetilde{u}_1\|^{2}
	+\delta^{1/2}\varepsilon\frac{d}{dt}\int_{\mathbb{R}}\int_{\mathbb{R}^{3}}
	\big\{\partial^{\alpha}[\frac{1}{\rho}\partial_{x}(K\theta B_{11}(\frac{v-u}{\sqrt{K\theta}})\frac{\sqrt{\mu}}{M}f)]
	\partial^{\alpha}\widetilde{u}_1\big\}\,dv\,dx
	\notag\\
	&&+\frac{1}{2}\frac{d}{dt}(\frac{1}{\rho}e^{\phi}\partial^{\alpha}\widetilde{\phi},\partial^{\alpha}\widetilde{\phi})+\frac{1}{2}\delta\frac{d}{dt}(\frac{1}{\rho}\partial^{\alpha}\partial_x\widetilde{\phi},\partial^{\alpha}\partial_x\widetilde{\phi})
	\notag\\
	&&+\frac{1}{\delta}
	(\frac{2\bar{\theta}}{3\bar{\rho}}\partial^{\alpha}\partial_x\widetilde{\rho},\partial^{\alpha}\widetilde{u}_1)
	+\frac{1}{\delta}(\frac{2}{3}\partial^{\alpha}\partial_x\widetilde{\theta},\partial^{\alpha}\widetilde{u}_1)
	+c\frac{\varepsilon}{\delta^{1/2}}\|\partial^{\alpha}\partial_x\widetilde{u}_1\|^{2}
	\notag\\
	\leq&& (C\eta+C_\eta\delta^{1/2})\frac{\varepsilon}{\delta^{1/2}}\|\partial^{\alpha}\partial_x(\widetilde{\rho},
	\widetilde{u},\widetilde{\theta},\widetilde{\phi})\|^{2}
	+C_\eta\frac{\varepsilon}{\delta^{1/2}}\|\partial^{\alpha}\partial_xf\|_\sigma^{2}
	\notag\\
	&&+C_\eta\delta^{3}\frac{1}{\delta^{3/2}\varepsilon}(\|f\|^2_{\sigma}+\|\partial_xf\|^2_{\sigma})
	+C_\eta\mathcal{E}_{2}(t)+C_\eta\delta^4+C_\eta\delta\varepsilon^2.
\end{eqnarray}
The energy analysis in $\partial^{\alpha}\widetilde{u}_2$ and $\partial^{\alpha}\widetilde{u}_3$
can be done in an almost similar way as $\partial^{\alpha}\widetilde{u}_1$ in \eqref{4.28}, and the details 
for estimating these terms will be omitted for brevity. For this, we
apply $\partial^{\alpha}$ with $|\alpha|\leq 1$ to the third equation of \eqref{2.33} and take the inner product of the resulting equation with 
$\partial^{\alpha}\widetilde{u}_i$ $(i=2,3)$ to get the similar argument as \eqref{4.28} that
\begin{eqnarray}
	\label{4.29}
	&&\frac{1}{2}\frac{d}{dt}\|\partial^{\alpha}\widetilde{u}_i\|^{2}
	+\delta^{1/2}\varepsilon\frac{d}{dt}\int_{\mathbb{R}}\int_{\mathbb{R}^{3}}
	\big\{\partial^{\alpha}[\frac{1}{\rho}\partial_{x}(K\theta B_{1i}(\frac{v-u}{\sqrt{K\theta}})\frac{\sqrt{\mu}}{M}f)]
	\partial^{\alpha}\widetilde{u}_i\big\}\,dv\,dx
	+c\frac{\varepsilon}{\delta^{1/2}}\|\partial^{\alpha}\partial_x\widetilde{u}_i\|^{2}
	\notag\\
&&\leq (C\eta+C_\eta\delta^{1/2})\frac{\varepsilon}{\delta^{1/2}}\|\partial^{\alpha}\partial_x(\widetilde{\rho},
\widetilde{u},\widetilde{\theta},\widetilde{\phi})\|^{2}
+C_\eta\frac{\varepsilon}{\delta^{1/2}}\|\partial^{\alpha}\partial_xf\|_\sigma^{2}
\notag\\
&&\hspace{1cm}+C_\eta\delta^{3}\frac{1}{\delta^{3/2}\varepsilon}(\|f\|^2_{\sigma}+\|\partial_xf\|^2_{\sigma})
+C_\eta\mathcal{E}_{2}(t)+C_\eta\delta^4+C_\eta\delta\varepsilon^2.	
\end{eqnarray}
\medskip
\noindent{\it Step 3. Estimate on $\|\partial^{\alpha}\widetilde{\theta}(t,\cdot)\|^2$} for $|\alpha|\leq 1$. 

The estimate of $\partial^{\alpha}\widetilde{\theta}$ is similar to  $\partial^{\alpha}\widetilde{u}$.  
Applying $\partial^{\alpha}$ with $|\alpha|\leq1$
to the fourth equation of \eqref{2.33} and taking the inner product of the resulting equation with 
$\frac{1}{\bar{\theta}}\partial^{\alpha}\widetilde{\theta}$, one gets
\begin{eqnarray}
	\label{4.30}
	&&\frac{1}{2}\frac{d}{dt}(\partial^{\alpha}\widetilde{\theta},\frac{1}{\bar{\theta}}\partial^{\alpha}\widetilde{\theta})-\frac{1}{2}(\partial^{\alpha}\widetilde{\theta},\partial_{t}(\frac{1}{\bar{\theta}})\partial^{\alpha}\widetilde{\theta})
	-A\frac{1}{\delta}(\partial^{\alpha}\partial_x\widetilde{\theta},\frac{1}{\bar{\theta}}\partial^{\alpha}\widetilde{\theta})
	\notag\\
	=&&-\frac{2}{3}\frac{1}{\delta}(\partial^{\alpha}[\bar{\theta}\partial_x\widetilde{u}_{1}],\frac{1}{\bar{\theta}}\partial^{\alpha}\widetilde{\theta})
	-\frac{1}{\delta}\frac{2}{3}(\partial^{\alpha}(\widetilde{\theta}\partial_xu_1),\frac{1}{\bar{\theta}}\partial^{\alpha}\widetilde{\theta})
	\notag\\
&&-\frac{1}{\delta}(\partial^{\alpha}(u_1\partial_x\widetilde{\theta}),\frac{1}{\bar{\theta}}\partial^{\alpha}\widetilde{\theta})
	-\frac{1}{\delta}(\partial^{\alpha}(\widetilde{u}_1\partial_{x}\bar{\theta}),\frac{1}{\bar{\theta}}\partial^{\alpha}\widetilde{\theta})
	+\frac{\varepsilon}{\delta^{1/2}}(\partial^{\alpha}[\frac{1}{\rho}\partial_x(\kappa(\theta)\partial_x\theta)],\frac{1}{\bar{\theta}}\partial^{\alpha}\widetilde{\theta})
	\notag\\
	&&+(\partial^{\alpha}D,\frac{1}{\bar{\theta}}\partial^{\alpha}\widetilde{\theta})
	+\frac{1}{\delta}(\partial^{\alpha}[\frac{1}{\rho}u\cdot\partial_x(\int_{\mathbb{R}^3} v_{1}v L^{-1}_{M}\Theta\, dv)]
	,\frac{1}{\bar{\theta}}\partial^{\alpha}\widetilde{\theta})
	\notag\\
	&&-\frac{1}{\delta}(\partial^{\alpha}[\frac{1}{\rho}\partial_x(\int_{\mathbb{R}^3}v_{1}\frac{|v|^{2}}{2}L^{-1}_{M}\Theta\, dv)],\frac{1}{\bar{\theta}}\partial^{\alpha}\widetilde{\theta})
	-\delta^2(\partial^{\alpha}\mathcal{R}_3,\frac{1}{\bar{\theta}}\partial^{\alpha}\widetilde{\theta}).
\end{eqnarray}
In the following we estimate each term for \eqref{4.30}. The second term in \eqref{4.30} is bounded by $C\mathcal{E}_{2}(t)$,
while the third term in \eqref{4.30} can be estimated as
\begin{equation*}
\frac{1}{\delta}|(\partial^{\alpha}\partial_x\widetilde{\theta},\frac{1}{\bar{\theta}}\partial^{\alpha}\widetilde{\theta})|
\leq C\frac{1}{\delta}\|\partial_x\bar{\theta}\|_{L^\infty}\|\partial^{\alpha}\widetilde{\theta}\|^2
\leq C\mathcal{E}_{2}(t).
\end{equation*}
Here we have used the integration by parts, \eqref{3.4} and \eqref{1.34}.
The first term on the right hand side of \eqref{4.30} are bounded by
\begin{equation*}
-\frac{2}{3}\frac{1}{\delta}(\partial^{\alpha}\partial_x\widetilde{u}_{1},\partial^{\alpha}\widetilde{\theta})+C\mathcal{E}_{2}(t).
\end{equation*}
Similar to \eqref{4.4} and \eqref{4.6}, we have
\begin{eqnarray*}
&&	\frac{1}{\delta}|(\frac{2}{3}\partial^{\alpha}(\widetilde{\theta}\partial_xu_1),\frac{1}{\bar{\theta}}\partial^{\alpha}\widetilde{\theta})|
	+\frac{1}{\delta}|(\partial^{\alpha}(u_1\partial_x\widetilde{\theta}),\frac{1}{\bar{\theta}}\partial^{\alpha}\widetilde{\theta})|
	+\frac{1}{\delta}|(\partial^{\alpha}(\widetilde{u}_1\partial_{x}\bar{\theta}),\frac{1}{\bar{\theta}}\partial^{\alpha}\widetilde{\theta})|
	\notag\\	
	&&\leq C\eta\frac{\varepsilon}{\delta^{1/2}}\|\partial^{\alpha}\partial_x\widetilde{u}_1\|^2
	+C_\eta\mathcal{E}_{2}(t).
\end{eqnarray*}
For the fifth term on the right hand side of \eqref{4.30}, we perform the similar arguments as \eqref{4.18} to get
\begin{eqnarray*}
	&&\frac{\varepsilon}{\delta^{1/2}}(\partial^{\alpha}[\frac{1}{\rho}\partial_x(\kappa(\theta)\partial_x\theta)],\frac{1}{\bar{\theta}}\partial^{\alpha}\widetilde{\theta})
	\notag\\
	&&\leq-\frac{\varepsilon}{\delta^{1/2}}(\frac{1}{\rho}\kappa(\theta)\partial^{\alpha}\partial_{x}\widetilde{\theta},
	\frac{1}{\bar{\theta}}\partial^{\alpha}\partial_{x}\widetilde{\theta})
	+C\mathcal{E}_{2}(t)+C\delta\varepsilon^2.
\end{eqnarray*}
Recall the definition $D$ in \eqref{2.35a}, we get from \eqref{4.17a}, \eqref{3.4} and \eqref{1.34} that
\begin{equation*}
(\partial^{\alpha}D,\frac{1}{\bar{\theta}}\partial^{\alpha}\widetilde{\theta})
\leq C\|\partial^{\alpha}\widetilde{\theta}\|^2+C\varepsilon^2\delta^3
	+C\varepsilon^2\delta\|\partial^{\alpha}\partial_x\widetilde{u}\|^2\leq C\mathcal{E}_{2}(t)+C\delta^3\varepsilon^2.
\end{equation*}
For the seventh and eighth terms on the right hand side of \eqref{4.30}, we use \eqref{4.19} and \eqref{4.20} to write
\begin{eqnarray*}
&&\frac{1}{\delta}(\partial^{\alpha}[\frac{1}{\rho}u\cdot\partial_x(\int_{\mathbb{R}^3} v_{1}v L^{-1}_{M}\Theta\, dv)]
,\frac{1}{\bar{\theta}}\partial^{\alpha}\widetilde{\theta})
-\frac{1}{\delta}(\partial^{\alpha}[\frac{1}{\rho}\partial_x(\int_{\mathbb{R}^3}v_{1}\frac{|v|^{2}}{2}L^{-1}_{M}\Theta\, dv)],\frac{1}{\bar{\theta}}\partial^{\alpha}\widetilde{\theta})	
\notag\\	
	=&&-\frac{1}{\delta}(\partial^{\alpha}[\frac{1}{\rho}\partial_{x}(\int_{\mathbb{R}^{3}}
	(\frac{1}{2}|v|^{2}v_1-u\cdot vv_{1})L^{-1}_{M}\Theta\,dv)],\frac{1}{\bar{\theta}}\partial^{\alpha}\widetilde{\theta})
	\notag\\
	&&\quad-\frac{1}{\delta}
	(\partial^{\alpha}[\frac{1}{\rho}\int_{\mathbb{R}^{3}}
	\partial_{x}u\cdot vv_{1}L^{-1}_{M}\Theta\,dv],\frac{1}{\bar{\theta}}\partial^{\alpha}\widetilde{\theta})
	\notag\\
	=&&-\int_{\mathbb{R}}\int_{\mathbb{R}^{3}}\partial^{\alpha}[\frac{1}{\rho}\partial_{x}((K\theta)^{\frac{3}{2}}A_{1}(\frac{v-u}{\sqrt{K\theta}})\frac{\Theta}{M})]\,\frac{1}{\bar{\theta}}\partial^{\alpha}\widetilde{\theta}\,dv\,dx
	\notag\\
	&&\quad-\sum^{3}_{j=1}\int_{\mathbb{R}}\int_{\mathbb{R}^{3}}\partial^{\alpha}[\frac{1}{\rho}
	\partial_{x}u_j K\theta B_{1j}(\frac{v-u}{\sqrt{K\theta}})\frac{\Theta}{M}]\,\frac{1}{\bar{\theta}}\partial^{\alpha}\widetilde{\theta}\,dv\,dx,
\end{eqnarray*}
which has almost the same structure  as $I_5$ and $I_{6}$ in \eqref{4.21}, so we  follow
the similar calculations as \eqref{4.27} and use \eqref{4.30A} to bound them as
\begin{eqnarray*}
	&&-\delta^{1/2}\varepsilon\frac{d}{dt}\int_{\mathbb{R}}\int_{\mathbb{R}^{3}}
	\big\{\partial^{\alpha}[\frac{1}{\rho}\partial_{x}
	((K\theta)^{\frac{3}{2}}A_{1}(\frac{v-u}{\sqrt{K\theta}})\frac{\sqrt{\mu}}{M}f)]\frac{1}{\bar{\theta}}\partial^{\alpha}\widetilde{\theta}\big\}\,dv\,dx
	\notag\\	
	&&-\delta^{1/2}\varepsilon\sum^{3}_{j=1}\frac{d}{dt}\int_{\mathbb{R}}\int_{\mathbb{R}^{3}}
	\big\{\partial^{\alpha}[\frac{1}{\rho}
	\partial_{x}u_j K\theta B_{1j}(\frac{v-u}{\sqrt{K\theta}})\frac{\sqrt{\mu}}{M}f]\frac{1}{\bar{\theta}}\partial^{\alpha}\widetilde{\theta}\big\}\,dv\,dx
	\notag\\
	&&+(C\eta+C_\eta\delta^{1/2})\frac{\varepsilon}{\delta^{1/2}}\|\partial^{\alpha}\partial_x(\widetilde{\rho},
	\widetilde{u},\widetilde{\theta},\widetilde{\phi})\|^{2}
	+C_\eta\frac{\varepsilon}{\delta^{1/2}}\|\partial^{\alpha}\partial_xf\|_\sigma^{2}
	\notag\\
	&&+C_\eta\delta^{3}\frac{1}{\delta^{3/2}\varepsilon}(\|f\|^2_{\sigma}+\|\partial_xf\|^2_{\sigma})
	+C_\eta\mathcal{E}_{2}(t)+C_\eta\delta^4+C_\eta\varepsilon^2\delta.
\end{eqnarray*}
The last term of \eqref{4.30} is bounded by $C\mathcal{E}_{2}(t)+C\delta^4$ in terms of \eqref{2.31} and \eqref{1.34}.
In summary, for any small $\eta>0$, we substitute the above estimates into \eqref{4.30} to get
\begin{eqnarray}
	\label{4.31}
	&&\frac{1}{2}\frac{d}{dt}(\partial^{\alpha}\widetilde{\theta},\frac{1}{\bar{\theta}}\partial^{\alpha}\widetilde{\theta})+\frac{2}{3}\frac{1}{\delta}(\partial_x \partial^{\alpha}\widetilde{u}_1,\partial^{\alpha}\widetilde{\theta})
	+c\frac{\varepsilon}{\delta^{1/2}}\|\partial^{\alpha}\partial_{x}\widetilde{\theta}\|^2
	\notag\\
	&&+\delta^{1/2}\varepsilon\frac{d}{dt}\int_{\mathbb{R}}\int_{\mathbb{R}^{3}}
	\big\{\partial^{\alpha}[\frac{1}{\rho}\partial_{x}
	((K\theta)^{\frac{3}{2}}A_{1}(\frac{v-u}{\sqrt{K\theta}})\frac{\sqrt{\mu}}{M}f)]\frac{1}{\bar{\theta}}\partial^{\alpha}\widetilde{\theta}\big\}\,dv\,dx
	\notag\\	
	&&+\delta^{1/2}\varepsilon\sum^{3}_{j=1}\frac{d}{dt}\int_{\mathbb{R}}\int_{\mathbb{R}^{3}}
	\big\{\partial^{\alpha}[\frac{1}{\rho}
	\partial_{x}u_j K\theta B_{1j}(\frac{v-u}{\sqrt{K\theta}})\frac{\sqrt{\mu}}{M}f]\frac{1}{\bar{\theta}}\partial^{\alpha}\widetilde{\theta}\big\}\,dv\,dx
	\notag\\
\leq&& (C\eta+C_\eta\delta^{1/2})\frac{\varepsilon}{\delta^{1/2}}\|\partial^{\alpha}\partial_x(\widetilde{\rho},
\widetilde{u},\widetilde{\theta},\widetilde{\phi})\|^{2}
+C_\eta\frac{\varepsilon}{\delta^{1/2}}\|\partial^{\alpha}\partial_xf\|_\sigma^{2}
\notag\\
&&+C_\eta\delta^{3}\frac{1}{\delta^{3/2}\varepsilon}(\|f\|^2_{\sigma}+\|\partial_xf\|^2_{\sigma})
+C_\eta\mathcal{E}_{2}(t)+C_\eta\delta^4+C_\eta\delta\varepsilon^2.
\end{eqnarray}
\medskip
\noindent{\it Step 4. Estimate on $\|\partial^{\alpha}(\widetilde{\rho},\widetilde{u},\widetilde{\theta})(t,\cdot)\|^2$} for $|\alpha|\leq 1$.

By integration by parts, \eqref{3.4} and \eqref{1.34}, one has that for $|\alpha|\leq 1$,
\begin{eqnarray*}
	&&\frac{1}{\delta}|(\partial_x\partial^{\alpha}\widetilde{u}_1,\frac{2\bar{\theta}}{3\bar{\rho}}\partial^{\alpha}\widetilde{\rho})+	(\frac{2\bar{\theta}}{3\bar{\rho}}\partial^{\alpha}\partial_{x}\widetilde{\rho},\partial^{\alpha}\widetilde{u}_1)+
	\frac{2}{3}(\partial^{\alpha}\partial_{x}\widetilde{\theta},\partial^{\alpha}\widetilde{u}_1)
	+\frac{2}{3}(\partial_{x} \partial^{\alpha}\widetilde{u}_1,\partial^{\alpha}\widetilde{\theta})|
	\notag\\
	&&=\frac{1}{\delta}|(\partial^{\alpha}\widetilde{u}_1,\partial_{x}(\frac{2\bar{\theta}}{3\bar{\rho}})\partial^{\alpha}\widetilde{\rho})|\leq C\frac{1}{\delta}\|\partial_{x}(\bar{\rho},\bar{\theta})\|_{L^{\infty}}
	\|\partial^{\alpha}\widetilde{u}_1\|\|\partial^{\alpha}\widetilde{\rho}\|\leq C\mathcal{E}_{2}(t).
\end{eqnarray*}
Adding \eqref{4.7}, \eqref{4.28}, \eqref{4.29} and \eqref{4.31} together, and using the aforementioned estimate,
then the summation of the resulting equation over $|\alpha|$ with $|\alpha|\leq 1$ through a suitable linear combination gives
\begin{eqnarray}
	\label{4.32}
	&&\frac{1}{2}\sum_{|\alpha|\leq 1}\frac{d}{dt}\int_{\mathbb{R}}( \frac{2\bar{\theta}}{3\bar{\rho}^{2}}|\partial^{\alpha}\widetilde{\rho}|^{2}+|\partial^{\alpha}\widetilde{u}|^{2}
	+\frac{1}{\bar{\theta}}|\partial^{\alpha}\widetilde{\theta}|^{2}+
	\frac{1}{\rho}e^{\phi}|\partial^{\alpha}\widetilde{\phi}|^2
	+\delta\frac{1}{\rho}|\partial^{\alpha}\partial_x\widetilde{\phi}|^2)\,dx
	\notag\\
	&&+\frac{d}{dt}E_\alpha(t)+c\frac{\varepsilon}{\delta^{1/2}}\sum_{|\alpha|\leq 1}\|\partial^{\alpha}\partial_{x}(\widetilde{u},\widetilde{\theta})\|^{2}
	\notag\\
\leq&& (C\eta+C_\eta\delta^{1/2})\frac{\varepsilon}{\delta^{1/2}}\sum_{|\alpha|\leq 1}\|\partial^{\alpha}\partial_x(\widetilde{\rho},
\widetilde{u},\widetilde{\theta},\widetilde{\phi})\|^{2}
+C_\eta\frac{\varepsilon}{\delta^{1/2}}\sum_{|\alpha|\leq 1}\|\partial^{\alpha}\partial_xf\|_\sigma^{2}
\notag\\
&&+C_\eta\delta^{3}\frac{1}{\delta^{3/2}\varepsilon}(\|f\|^2_{\sigma}+\|\partial_xf\|^2_{\sigma})
+C_\eta\mathcal{E}_{2}(t)+C_\eta\delta^4+C_\eta\delta\varepsilon^2.		
\end{eqnarray}
Here we have used the expression of $E_\alpha(t)$ given by \eqref{4.33}.

On the other hand, it holds from \eqref{3.53} that 
	\begin{eqnarray}	
	\label{4.34}
	&&\delta^{1/2}\varepsilon\sum_{|\alpha|\leq 1}\frac{d}{dt}(\partial^{\alpha}\widetilde{u}_1,\partial^{\alpha}\partial_x\widetilde{\rho})
	+c\sum_{|\alpha|\leq 1}\frac{\varepsilon}{\delta^{1/2}}(\|\partial^{\alpha}\partial_x\widetilde{\rho}\|^2+\|\partial^{\alpha}\partial_x\widetilde{\phi}\|^2
	+\delta\|\partial^{\alpha}\partial^2_x\widetilde{\phi}\|^2)
	\notag\\
	&&\leq C\sum_{|\alpha|\leq 1}\frac{\varepsilon}{\delta^{1/2}}(\|\partial^{\alpha}\partial_x(\widetilde{u},\widetilde{\theta})\|^2
	+\|\partial^{\alpha}\partial_xf\|_\sigma^2)
	+C\delta^{5/2}\varepsilon^3+C\delta^5\varepsilon.
\end{eqnarray}	
In summary, adding $\eqref{4.34}\times \kappa_1$ to \eqref{4.32},  and then firstly choosing $\kappa_1>0$ sufficiently small such that
$C\kappa_1<\frac{1}{2}c$, then taking $\eta>0$ and $\delta>0$ small enough  such that $C\eta+C_\eta\delta^{1/2}<\frac{1}{2}\kappa_1c$, 
we can prove \eqref{4.1} holds. This completes the proof of Lemma \ref{lem4.1}.
\end{proof}
\subsection{Estimates on the non-fluid part}\label{sec.4.2} 
In the following we make use of the microscopic equation \eqref{2.39} to derive the 
space derivative estimates  up to one-order for the non-fluid part $f$. 
The main result is given in Lemma \ref{lem4.2} below.
\begin{lemma}\label{lem4.2}
Under the same conditions as in Lemma \ref{lem4.1}.	It holds that
\begin{eqnarray}
	\label{4.36}
	&&\sum_{|\alpha|\leq 1}(\frac{d}{dt}\|\partial^{\alpha}f\|^{2}+\frac{c_1}{2}\frac{1}{\delta^{3/2}\varepsilon}\|\partial^{\alpha}f\|_{\sigma}^{2})
	\notag\\
	\leq&& C\frac{\varepsilon}{\delta^{1/2}}\sum_{|\alpha|\leq 1}(\|\partial^{\alpha}\partial_{x}(\widetilde{u},\widetilde{\theta})\|^{2}+\|\partial^{\alpha}\partial_{x}f\|_\sigma^{2})
	+C\delta^{1/2}\mathcal{D}_{2,l,q_1}(t)
	\notag\\
	&&+C\delta^{5/2}\varepsilon^3+C\delta^{9/2}+C\varepsilon^{1/2}\delta^{1/2}\sum_{|\alpha|\leq 1}\|\langle v\rangle \partial^\alpha f\|^2.
\end{eqnarray}		
\end{lemma}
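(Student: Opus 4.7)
\textbf{Proof plan for Lemma \ref{lem4.2}.} The strategy is to test the microscopic equation \eqref{2.39} against $\partial^{\alpha}f$ for each $|\alpha|\leq 1$, extract the dissipation from $-\mathcal{L}/(\delta^{3/2}\varepsilon)$, and control all remaining source terms using the machinery of Section \ref{sub3}. First I would apply $\partial^{\alpha}$ to \eqref{2.39} and pair with $\partial^{\alpha}f$ in $L^{2}_{x,v}$. The time derivative yields $\tfrac{1}{2}\tfrac{d}{dt}\|\partial^{\alpha}f\|^{2}$. The $1/\delta$-singular transport terms $-\tfrac{A}{\delta}\partial_{x}f$ and $\tfrac{1}{\delta}v_{1}\partial_{x}f$ commute with $\partial^{\alpha}$ (constant or $v$-dependent coefficients), so integration by parts in $x$ makes their contribution vanish identically; this is crucial since no bound on $\|\partial^{\alpha}f\|^{2}$ alone could otherwise control them. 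The linear collision term produces the desired dissipation $\tfrac{c_{1}}{\delta^{3/2}\varepsilon}\|\partial^{\alpha}f\|^{2}_{\sigma}$ via the coercivity estimate \eqref{3.17}, which applies since $\partial^{\alpha}f$ remains in $(\ker\mathcal{L})^{\perp}$ because both $G$ and $\overline{G}$ are microscopic.

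Next I would dispatch the source terms one by one. The $M{-}\mu$ linear pieces $\Gamma(f,\tfrac{M-\mu}{\sqrt{\mu}})+\Gamma(\tfrac{M-\mu}{\sqrt{\mu}},f)$ are controlled by Lemma \ref{lem3.6}, giving $C\delta\mathcal{D}_{2}(t)$. The quadratic collision term $\Gamma(G/\sqrt{\mu},G/\sqrt{\mu})$ is the content of estimate \eqref{3.56a} in Lemma \ref{lem3.9} and contributes $C\eta\tfrac{1}{\delta^{3/2}\varepsilon}\|\partial^{\alpha}f\|^{2}_{\sigma}+C\delta\mathcal{D}_{2}(t)+C_{\eta}\delta^{9/2}\varepsilon^{3}+C_{\eta}\delta^{17/2}/\varepsilon$. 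The electric-field term $-\tfrac{1}{\delta}\partial_{x}\phi\,\partial_{v_{1}}(\sqrt{\mu}f)/\sqrt{\mu}$ is handled by \eqref{3.83}, giving $C\delta^{1/2}\mathcal{D}_{2,l,q_{1}}(t)+C\varepsilon^{1/2}\delta^{1/2}\|\langle v\rangle\partial^{\alpha}f\|^{2}$ (specializing the weighted estimate to $w\equiv1$). The inhomogeneous correction terms $A\tfrac{1}{\delta}\partial_{x}\overline{G}/\sqrt{\mu}$, $\tfrac{1}{\delta}\partial_{x}\phi\,\partial_{v_{1}}\overline{G}/\sqrt{\mu}$, $\tfrac{1}{\delta}P_{1}(v_{1}\partial_{x}\overline{G})/\sqrt{\mu}$ and $\partial_{t}\overline{G}/\sqrt{\mu}$ are all bounded in $L^{2}$ by $C\delta^{3/2}(\varepsilon+\delta^{2})$ via Lemma \ref{lem3.2} and the temporal analogue \eqref{4.24}; pairing with $\partial^{\alpha}f$ via Cauchy--Schwarz and absorbing a small $\sigma$-fraction produces the residuals $\delta^{5/2}\varepsilon^{3}$ and $\delta^{9/2}$.

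The two remaining terms require more attention. The fluid projection $\tfrac{1}{\delta}P_{0}(v_{1}\sqrt{\mu}\partial_{x}f)/\sqrt{\mu}$ is a finite sum involving inner products of $v_{1}\sqrt{\mu}\partial_{x}f$ against the basis $\chi_{i}$, so its $\sigma$-norm is controlled by $\|\partial_{x}f\|_{\sigma}$; after Cauchy--Schwarz with the weight $\tfrac{\varepsilon}{\delta^{1/2}}$ this yields $C\eta\tfrac{1}{\delta^{3/2}\varepsilon}\|\partial^{\alpha}f\|^{2}_{\sigma}+C_{\eta}\tfrac{\varepsilon}{\delta^{1/2}}\|\partial^{\alpha}\partial_{x}f\|^{2}_{\sigma}$. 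The fluid-inhomogeneous term $-\tfrac{1}{\delta\sqrt{\mu}}P_{1}\{v_{1}M(\tfrac{|v-u|^{2}\partial_{x}\widetilde{\theta}}{2K\theta^{2}}+\tfrac{(v-u)\cdot\partial_{x}\widetilde{u}}{K\theta})\}$ is the source responsible for the leading fluid-derivative term on the right of \eqref{4.36}; using $\rho\approx 1$, $\theta\approx 3/2$ and the Gaussian decay of $M/\sqrt{\mu}$, the same Cauchy--Schwarz with weight $\tfrac{\varepsilon}{\delta^{1/2}}$ produces $C\eta\tfrac{1}{\delta^{3/2}\varepsilon}\|\partial^{\alpha}f\|^{2}_{\sigma}+C_{\eta}\tfrac{\varepsilon}{\delta^{1/2}}\|\partial^{\alpha}\partial_{x}(\widetilde{u},\widetilde{\theta})\|^{2}$, matching the target estimate precisely.

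Summing over $|\alpha|\leq 1$ and choosing $\eta$ small enough so that all $\eta$-fractions of $\tfrac{1}{\delta^{3/2}\varepsilon}\|\partial^{\alpha}f\|^{2}_{\sigma}$ are absorbed into $c_{1}/(\delta^{3/2}\varepsilon)\|\partial^{\alpha}f\|^{2}_{\sigma}$ completes \eqref{4.36}. The delicate point, and the main obstacle, is the accounting of the $\delta$--$\varepsilon$ balance on terms carrying the singular $1/\delta$ prefactor: a naive Cauchy--Schwarz on the electric-field or $\overline{G}$-streaming term would produce $\delta^{-2}$-sized remainders. The resolution is the sharp Hölder splitting of the cubic electric structure used in \eqref{3.58} (which is why the term $\varepsilon^{1/2}\delta^{1/2}\|\langle v\rangle\partial^{\alpha}f\|^{2}$ appears on the right of \eqref{4.36} rather than being absorbed), together with measuring $\overline{G}$-contributions in the $\sigma$-norm via Lemma \ref{lem3.2}; only under the scaling constraint \eqref{1.37} do all these residuals collapse into the $O(\delta^{9/2}+\delta^{5/2}\varepsilon^{3})$ error advertised.
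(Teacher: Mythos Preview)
Your proposal is correct and follows essentially the same route as the paper: test \eqref{2.39} against $\partial^{\alpha}f$, kill the transport terms by integration by parts, extract dissipation from \eqref{3.17}, and close the remaining sources with Lemmas \ref{lem3.6}, \ref{lem3.9}, \ref{lem3.2} and the electric-field estimate \eqref{3.83} (or rather its unweighted analogue). The only minor omission is that when $|\alpha|=1$ the derivative on $P_{0}(v_{1}\sqrt{\mu}\partial_{x}f)$ can land on the $(\rho,u,\theta)$-dependent coefficients of $\chi_{i}$, producing a lower-order cross term $C_{\eta}\delta^{2}\tfrac{\varepsilon}{\delta^{1/2}}\|\partial_{x}f\|_{\sigma}^{2}$ (cf.\ \eqref{4.38A}); this is harmless and absorbed into the stated right-hand side.
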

\begin{proof}
	The proof the estimate \eqref{4.36} is based on the microscopic equation \eqref{2.39}. Let $|\alpha|\leq 1$, 
	by taking $\partial^{\alpha}$ derivatives of \eqref{2.39} and taking the inner product of the resulting identity with $\partial^{\alpha}f$, we show that
	\begin{eqnarray}
		\label{4.38}
		&&\frac{1}{2}\frac{d}{dt}\|\partial^{\alpha}f\|^{2}
		-\frac{1}{\delta}(\partial^{\alpha}[\frac{\partial_x\phi\partial_{v_{1}}(\sqrt{\mu}f)}{\sqrt{\mu}}],\partial^{\alpha}f)
		-\frac{1}{\delta^{3/2}\varepsilon}(\mathcal{L}\partial^{\alpha}f,\partial^{\alpha}f)
		\notag\\
		=&&\frac{1}{\delta^{3/2}\varepsilon}(\partial^{\alpha}\Gamma(\frac{M-\mu}{\sqrt{\mu}},f)
		+\partial^{\alpha}\Gamma(f,\frac{M-\mu}{\sqrt{\mu}}),\partial^{\alpha}f)
		+\frac{1}{\delta^{3/2}\varepsilon}(\partial^{\alpha}\Gamma(\frac{G}{\sqrt{\mu}},\frac{G}{\sqrt{\mu}}),\partial^{\alpha}f)
		\notag\\
		&&+\frac{1}{\delta}(\frac{\partial^{\alpha}P_{0}(v_1\sqrt{\mu}\partial_{x}f)}{\sqrt{\mu}},\partial^{\alpha}f)
	-\frac{1}{\delta}(\frac{1}{\sqrt{\mu}}\partial^{\alpha}P_{1}\big\{v_{1}M(\frac{|v-u|^{2}\partial_x\widetilde{\theta}}{2K\theta^{2}}+\frac{(v-u)\cdot\partial_x\widetilde{u}}{K\theta})\big\},\partial^{\alpha}f)
		\notag\\
		&&+A\frac{1}{\delta}(\frac{\partial^{\alpha}\partial_x\overline{G}}{\sqrt{\mu}},\partial^{\alpha}f)
		+\frac{1}{\delta}(\frac{\partial^{\alpha}[\partial_x\phi\partial_{v_{1}}\overline{G}]}{\sqrt{\mu}},\partial^{\alpha}f)
		-\frac{1}{\delta}(\frac{\partial^{\alpha}P_{1}(v_1\partial_x\overline{G})}{\sqrt{\mu}},\partial^{\alpha}f)
		-(\frac{\partial^{\alpha}\partial_{t}\overline{G}}{\sqrt{\mu}},\partial^{\alpha}f).
	\end{eqnarray}
Here we have used the fact that
$$
-A\frac{1}{\delta}(\partial_x\partial^{\alpha}f,\partial^{\alpha}f)+\frac{1}{\delta}(v_{1}\partial_x\partial^{\alpha}f,\partial^{\alpha}f)=0,
$$
due to the integration by parts. Next we compute each term for \eqref{4.38}. Similar arguments as \eqref{3.83} imply that
\begin{equation*}
	\frac{1}{\delta}|(\frac{\partial^{\alpha}[\partial_x\phi\partial_{v_1}(\sqrt{\mu}f)]}{\sqrt{\mu}},\partial^{\alpha}f)|
	\leq  C\delta^{1/2}\mathcal{D}_{2,l,q_1}(t)
	+C\varepsilon^{1/2}\delta^{1/2}\|\langle v\rangle \partial^\alpha f\|^2.
\end{equation*}	
For the third term on the left hand side of \eqref{4.38}, we see easily by \eqref{3.17} that
\begin{equation*}
-\frac{1}{\delta^{3/2}\varepsilon}(\mathcal{L}\partial^{\alpha}f,\partial^{\alpha}f)\geq c_1\frac{1}{\delta^{3/2}\varepsilon}\|\partial^{\alpha}f\|^{2}_{\sigma}.
\end{equation*}
The first term on the right hand side of \eqref{4.38} is bounded by $ C\delta\mathcal{D}_{2}(t)$ in terms of \eqref{3.28a}.
Using \eqref{3.56a} and \eqref{4.5}, we get
\begin{equation*}
\frac{1}{\delta^{3/2}\varepsilon}|(\partial^\alpha\Gamma(\frac{G}{\sqrt{\mu}},\frac{G}{\sqrt{\mu}}), \partial^\alpha f)|
\leq C\eta\frac{1}{\delta^{3/2}\varepsilon}\|\partial^\alpha f\|^{2}_{\sigma}
+C\delta\mathcal{D}_{2}(t)
+C_\eta\delta^{9/2}.
\end{equation*}
For the third term on the right hand side of \eqref{4.38}, we use \eqref{1.20}, \eqref{1.19},
\eqref{1.28}, \eqref{3.11} and \eqref{4.17a} to show that
\begin{eqnarray}
	\label{4.38A}
\frac{1}{\delta}|(\frac{\partial^{\alpha}P_{0}(v_1\sqrt{\mu}\partial_{x}f)}{\sqrt{\mu}},\partial^{\alpha}f)|
	&&=\frac{1}{\delta}|(\frac{1}{\sqrt{\mu}}\sum_{i=0}^{4}\partial^{\alpha}[\langle v_1\sqrt{\mu}\partial_{x}f,\frac{\chi_{i}}{M}\rangle\chi_{i}],\partial^{\alpha}f)|
	\notag\\	
	&&\leq C\frac{1}{\delta}\|\langle v\rangle^{-1/2}\partial^{\alpha}f\|\|\langle v\rangle^{1/2}\frac{1}{\sqrt{\mu}}\sum_{i=0}^{4}\partial^{\alpha}[\langle v_1\sqrt{\mu}\partial_{x}f,\frac{\chi_{i}}{M}\rangle\chi_{i}]\|
	\notag\\
	&&\leq C\frac{1}{\delta}\|\partial^{\alpha}f\|_{\sigma}(\|\partial^{\alpha}\partial_{x}f\|_\sigma+
		\|\partial^{\alpha}(\rho,u,\theta)\|_{L^\infty}\|\partial_{x}f\|_\sigma)
	\notag\\
	&&\leq  \eta\frac{1}{\delta^{3/2}\varepsilon}\|\partial^{\alpha}f\|_{\sigma}^{2}
	+C_{\eta}\frac{\varepsilon}{\delta^{1/2}}\|\partial^{\alpha}\partial_{x}f\|_\sigma^{2}+C_{\eta}\delta^2\frac{\varepsilon}{\delta^{1/2}}\|\partial_{x}f\|_\sigma^{2}.
\end{eqnarray}
Similar to \eqref{4.38A}, the fourth term on the right hand side of \eqref{4.38} can be controlled by
\begin{eqnarray*}	
C \eta\frac{1}{\delta^{3/2}\varepsilon}\|\partial^{\alpha}f\|_{\sigma}^{2}
+ C_{\eta}\frac{\varepsilon}{\delta^{1/2}}\|\partial^{\alpha}\partial_{x}(\widetilde{u},\widetilde{\theta})\|^{2}
+C_{\eta}\delta^2\frac{\varepsilon}{\delta^{1/2}}\|\partial_{x}(\widetilde{u},\widetilde{\theta})\|^{2}.
\end{eqnarray*}
Using \eqref{3.7} and \eqref{1.28}, we know that the fifth term on the right hand side of \eqref{4.38} can be estimated as
\begin{eqnarray}
	\label{4.40A}
	A\frac{1}{\delta}|(\frac{\partial^{\alpha}\partial_x\overline{G}}{\sqrt{\mu}},\partial^{\alpha}f)|
	&&\leq \eta\frac{1}{\delta^{3/2}\varepsilon}\|\langle v\rangle^{-1/2}\partial^{\alpha}f\|^{2}
	+C_{\eta}\frac{\varepsilon}{\delta^{1/2}}\|\langle v\rangle^{1/2}\frac{\partial^{\alpha}\partial_x\overline{G}}{\sqrt{\mu}}\|^{2}
	\notag\\
	&&\leq C\eta\frac{1}{\delta^{3/2}\varepsilon}\|\partial^{\alpha}f\|_{\sigma}^{2}
	+C_{\eta}\varepsilon\delta^{5/2}(\varepsilon+\delta^2)^2.
\end{eqnarray}
The last three terms in \eqref{4.38} can be done in an almost similar way as \eqref{4.40A}. 

Consequently, substituting all the above estimates into \eqref{4.38} and choosing $\eta>0$ small enough, then the summation of the resulting equation over $|\alpha|$ with $|\alpha|\leq 1$, we can prove \eqref{4.36} holds. We thus
end the proof of Lemma \ref{lem4.2}.
\end{proof}
Finally, combining Lemma \ref{lem4.1} and Lemma \ref{lem4.2}, we obtain the following main result in this section. 
\begin{lemma}\label{lem4.3}
There exist some constants $\kappa_1$ and $\kappa_2$ with $\kappa_1\gg \kappa_2>0$ such that
	\begin{eqnarray}
		\label{4.38a}
		&&\frac{d}{dt}\widetilde{E}(t)+\frac{1}{2}c\kappa_1\sum_{|\alpha|\leq 1}\frac{\varepsilon}{\delta^{1/2}}(\|\partial^{\alpha}\partial_x(\widetilde{\rho},\widetilde{u},\widetilde{\theta})\|^2
		+\|\partial^{\alpha}\partial_x\widetilde{\phi}\|^2
		+\delta\|\partial^{\alpha}\partial^2_x\widetilde{\phi}\|^2)+\frac{c_1}{4}\kappa_2\sum_{|\alpha|\leq 1}\frac{1}{\delta^{3/2}\varepsilon}\|\partial^{\alpha}f\|_{\sigma}^{2}
		\notag\\
		&&\leq 
		C\frac{\varepsilon}{\delta^{1/2}}\sum_{|\alpha|=2}\|\partial^{\alpha}f\|_\sigma^{2}+C\delta^{1/2}\mathcal{D}_{2,l,q_1}(t)
		+C\mathcal{E}_{2}(t)+C\delta^4+C\delta\varepsilon^2+C\varepsilon^{1/2}\delta^{1/2}\sum_{|\alpha|\leq 1}\|\langle v\rangle \partial^\alpha f\|^2.
	\end{eqnarray}		
Here $\widetilde{E}(t)$ is given by
	\begin{eqnarray}
	\label{4.39}
\widetilde{E}(t)=&&\frac{1}{2}\sum_{|\alpha|\leq 1}\int_{\mathbb{R}}( \frac{2\bar{\theta}}{3\bar{\rho}^{2}}|\partial^{\alpha}\widetilde{\rho}|^{2}+|\partial^{\alpha}\widetilde{u}|^{2}
	+\frac{1}{\bar{\theta}}|\partial^{\alpha}\widetilde{\theta}|^{2}+
	\frac{1}{\rho}e^{\phi}|\partial^{\alpha}\widetilde{\phi}|^2
	+\delta\frac{1}{\rho}|\partial^{\alpha}\partial_x\widetilde{\phi}|^2)\,dx+E_\alpha(t)
	\notag\\
	&&+\kappa_1\delta^{1/2}\varepsilon\sum_{|\alpha|\leq 1}(\partial^{\alpha}\widetilde{u}_1,\partial^{\alpha}\partial_x\widetilde{\rho})
	+\kappa_2\sum_{|\alpha|\leq 1}\|\partial^{\alpha}f\|^{2},
\end{eqnarray}	
with $E_\alpha(t)$ given in \eqref{4.33}.
\end{lemma}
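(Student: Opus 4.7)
The plan is to prove Lemma \ref{lem4.3} by forming a carefully balanced linear combination of the two previous lemmas, namely \eqref{4.1} from Lemma \ref{lem4.1} and \eqref{4.36} from Lemma \ref{lem4.2}. Concretely, I would add \eqref{4.1} and $\kappa_2 \times$\eqref{4.36}. The left-hand sides of the two estimates already produce the time-derivative of every ingredient in $\widetilde E(t)$: the quadratic fluid energy together with $E_\alpha(t)$ and the Poincar\'e-type cross term $\kappa_1 \delta^{1/2}\varepsilon (\partial^\alpha \widetilde u_1,\partial^\alpha \partial_x \widetilde\rho)$ comes from \eqref{4.1}, while $\kappa_2\sum_{|\alpha|\leq 1}\|\partial^\alpha f\|^2$ comes from multiplying \eqref{4.36} by $\kappa_2$. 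Summing the two dissipation parts produces the bulk of the dissipation on the left-hand side of \eqref{4.38a}.

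The core of the argument is the absorption of the error terms produced on the right of \eqref{4.1} and \eqref{4.36} into the dissipative left-hand side. The dangerous cross-terms on the right of \eqref{4.1} are $C\frac{\varepsilon}{\delta^{1/2}}\sum_{|\alpha|\leq 1}\|\partial^\alpha\partial_x f\|_\sigma^2$ and $C\delta^3\frac{1}{\delta^{3/2}\varepsilon}(\|f\|_\sigma^2+\|\partial_x f\|_\sigma^2)$. The $|\alpha|=0$ piece $\|\partial_x f\|_\sigma^2$ is absorbed by the microscopic dissipation $\frac{c_1\kappa_2}{2}\frac{1}{\delta^{3/2}\varepsilon}\|\partial_x f\|_\sigma^2$ produced by \eqref{4.36}, since $C\frac{\varepsilon}{\delta^{1/2}} \le \frac{c_1\kappa_2}{4}\frac{1}{\delta^{3/2}\varepsilon}$ amounts to $C\varepsilon^2\delta\le \tfrac14 c_1\kappa_2$, which holds for $\varepsilon,\delta$ small. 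The $|\alpha|=1$ piece $\|\partial^\alpha\partial_x f\|_\sigma^2$ involves second-order derivatives of $f$ which lie outside the dissipation of \eqref{4.36}; these are simply carried over as the term $C\frac{\varepsilon}{\delta^{1/2}}\sum_{|\alpha|=2}\|\partial^\alpha f\|_\sigma^2$ on the right of \eqref{4.38a} and will eventually be handled by the second-order estimates of Lemma \ref{lem5.1}. Analogously, the $C\kappa_2 \frac{\varepsilon}{\delta^{1/2}}\|\partial^\alpha\partial_x(\widetilde u,\widetilde\theta)\|^2$ coming from \eqref{4.36} is absorbed by the fluid dissipation $c\kappa_1\frac{\varepsilon}{\delta^{1/2}}\|\partial^\alpha\partial_x(\widetilde u,\widetilde\theta)\|^2$ from \eqref{4.1}, provided $C\kappa_2 \le \tfrac12 c\kappa_1$, i.e.\ $\kappa_1\gg \kappa_2$. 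This is precisely the hierarchy stated in the lemma.

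After these absorptions, the remaining right-hand side terms -- $C\delta^{1/2}\mathcal{D}_{2,l,q_1}(t)$, $C\mathcal{E}_2(t)$, the $O(\delta^4)$ and $O(\delta\varepsilon^2)$ contributions, and the large-velocity term $C\varepsilon^{1/2}\delta^{1/2}\sum_{|\alpha|\le 1}\|\langle v\rangle\partial^\alpha f\|^2$ -- are collected into the right-hand side of \eqref{4.38a}. The $C\kappa_2(\delta^{5/2}\varepsilon^3+\delta^{9/2})$ contributions from \eqref{4.36} are absorbed into $C\delta\varepsilon^2 + C\delta^4$ using $\varepsilon^{2/3}\le\delta\le \widetilde C^{-1}\varepsilon^{2/5}$ from \eqref{1.37}, and likewise $C\delta^3\frac{1}{\delta^{3/2}\varepsilon}\|f\|_\sigma^2$ is absorbed into the $\kappa_2$ dissipation once $C\delta^3\le\tfrac{c_1\kappa_2}{4}$.

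The only subtlety, and the piece I would be most careful about, is verifying that $\widetilde E(t)$ defined by \eqref{4.39} is genuinely coercive: the cross term $\kappa_1\delta^{1/2}\varepsilon(\partial^\alpha\widetilde u_1,\partial^\alpha\partial_x\widetilde\rho)$ is sign-indefinite and can be bounded by $\eta\|\partial^\alpha\widetilde u_1\|^2 + C_\eta\delta\varepsilon^2\|\partial^\alpha\partial_x\widetilde\rho\|^2$, which for small $\kappa_1$ is dominated by the leading $L^2$ energy plus a small multiple of $\mathcal{E}_2(t)$; similarly, $E_\alpha(t)$ is controlled via Lemma \ref{lem3.2} and the Burnett estimates \eqref{4.22} by $O(\varepsilon\delta^{1/2})$ times $\mathcal{E}_2(t)^{1/2}$, which is strictly smaller than the leading quadratic terms. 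Thus by choosing first $\kappa_1$ small enough to keep $\widetilde E(t)$ equivalent to the sum of the unperturbed $L^2$ energies, then $\kappa_2\ll \kappa_1$ so that $C\kappa_2 < \tfrac12 c\kappa_1$, and finally invoking the smallness of $\varepsilon$ and $\delta$ through \eqref{1.37} and \eqref{3.2a}, the desired bound \eqref{4.38a} follows.
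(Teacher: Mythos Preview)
Your proposal is correct and follows essentially the same approach as the paper: add \eqref{4.1} and $\kappa_2\times$\eqref{4.36}, choose $\kappa_2$ small enough that $C\kappa_2<\tfrac12 c\kappa_1$ so the fluid dissipation from \eqref{4.1} absorbs the $\kappa_2\frac{\varepsilon}{\delta^{1/2}}\|\partial^\alpha\partial_x(\widetilde u,\widetilde\theta)\|^2$ term, and take $\delta$ small so that $C\delta^3<\tfrac{c_1}{4}\kappa_2$ so the microscopic dissipation absorbs the $C\delta^3\frac{1}{\delta^{3/2}\varepsilon}(\|f\|_\sigma^2+\|\partial_xf\|_\sigma^2)$ term. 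Your discussion of the coercivity of $\widetilde E(t)$ is correct but goes beyond what is needed here; the paper defers that equivalence to Section~\ref{sec.7} (see \eqref{7.7}), and $\kappa_1$ was already fixed small in Lemma~\ref{lem4.1}, so only $\kappa_2\ll\kappa_1$ is chosen in this lemma.
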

\begin{proof}
Adding $\eqref{4.36}\times \kappa_2$ to \eqref{4.1}, and choosing $\kappa_2>0$ sufficiently small such that
 $C\kappa_2<\frac{1}{2}c\kappa_1$ and $C\delta^{3}<\frac{c_1}{4}\kappa_2$, the estimate \eqref{4.38a} follows.
This completes the proof of Lemma \ref{lem4.3}.
\end{proof}
\section{Second order space derivative estimates}
In this section we derive the highest second-order space
derivatives to complete the estimate on space derivatives of all orders.
The main result is given in Lemma \ref{lem5.1} below. 
\begin{lemma}\label{lem5.1}
Under the same conditions as in Lemma \ref{lem4.1}.	It holds that
\begin{eqnarray}
	\label{5.1}
	&&\frac{1}{2}\frac{\varepsilon^2}{\delta}\frac{d}{dt}\sum_{|\alpha|=2}\{\|\frac{\partial^{\alpha}F}{\sqrt{\mu}}\|^{2}
	+(\frac{1}{K\theta}e^{\phi}\partial^{\alpha}\widetilde{\phi},\partial^{\alpha}\widetilde{\phi})
	+\delta(\frac{1}{K\theta}\partial^{\alpha}\partial_x\widetilde{\phi},\partial^{\alpha}\partial_x\widetilde{\phi})\}
	+c\frac{1}{\delta^{3/2}\varepsilon}\frac{\varepsilon^2}{\delta}\sum_{|\alpha|=2}\|\partial^{\alpha}f\|_{\sigma}^{2}
	\notag\\
	&&\leq C\frac{\varepsilon}{\delta^{1/2}}\sum_{|\alpha|=2}\|\partial^{\alpha}(\widetilde{\rho},\widetilde{u},\widetilde{\theta})\|^2
	+C\varepsilon^{1/2}\delta^{1/2}\frac{\varepsilon^2}{\delta}\sum_{|\alpha|=2}\|\langle v\rangle \partial^{\alpha}f\|^2	
	+C(\frac{\varepsilon}{\delta}+\delta^{1/4}+\frac{\varepsilon^{2}}{\delta^{5/4}})\mathcal{D}_{2,l,q_1}(t)
	\notag\\
	&&\hspace{0.5cm}+C\frac{\varepsilon^2}{\delta^3}\mathcal{E}_{2}(t)+C\delta^{1/4}\varepsilon^3+C\delta^{5/2}\varepsilon+C\frac{\varepsilon^4}{\delta^{1/2}}+C\delta\varepsilon^2.	
\end{eqnarray}		
\end{lemma}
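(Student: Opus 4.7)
The proof will proceed by applying $\partial^{\alpha}$ with $|\alpha|=2$ to the equation \eqref{2.40} for $F/\sqrt{\mu}$, taking the $L^{2}_{x,v}$ inner product with $\partial^{\alpha}F/\sqrt{\mu}$, and then multiplying the resulting identity by $\varepsilon^{2}/\delta$. Testing against $\partial^{\alpha}F/\sqrt{\mu}$ (rather than $\partial^{\alpha}f$) has two immediate payoffs: the $\partial_{t}$ term produces the clean energy $\tfrac{1}{2}\tfrac{d}{dt}\|\partial^{\alpha}F/\sqrt{\mu}\|^{2}$ that appears on the left of \eqref{5.1}, and the transport contributions $-A\delta^{-1}\partial_{x}F/\sqrt{\mu}+\delta^{-1}v_{1}\partial_{x}F/\sqrt{\mu}$ drop out after integration by parts in $x$. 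The nonlinear electric field term $-\delta^{-1}\partial^{\alpha}(\partial_{x}\phi\,\partial_{v_{1}}F)/\sqrt{\mu}$ is exactly the scope of Lemma \ref{lem3.14}; its conclusion, rescaled by $\varepsilon^{2}/\delta$, delivers the time derivatives of $(K\theta)^{-1}e^{\phi}|\partial^{\alpha}\widetilde{\phi}|^{2}$ and $\delta(K\theta)^{-1}|\partial^{\alpha}\partial_{x}\widetilde{\phi}|^{2}$ on the left of \eqref{5.1}, plus remainders compatible with the stated right-hand side.

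The core obstacle is the linearized collision contribution $-\tfrac{1}{\delta^{3/2}\varepsilon}\bigl(\mathcal{L}\partial^{\alpha}f,\,\partial^{\alpha}F/\sqrt{\mu}\bigr)$ which, after the $\varepsilon^{2}/\delta$ scaling, acquires the singular prefactor $\varepsilon/\delta^{5/2}$. Decomposing $F=M+\overline{G}+\sqrt{\mu}f$ splits this into three pieces. On the $\sqrt{\mu}f$ piece the coercivity \eqref{3.17} yields the dissipation $c\|\partial^{\alpha}f\|_{\sigma}^{2}/(\delta^{3/2}\varepsilon)$ that, once multiplied by $\varepsilon^{2}/\delta$, gives exactly the dissipative term on the left of \eqref{5.1}. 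The $\overline{G}$ piece is dominated via Lemma \ref{lem3.2}, since $\|\partial^{\alpha}\overline{G}/\sqrt{\mu}\|_{\sigma}=O(\delta^{3/2}(\varepsilon+\delta^{2}))$, and Young's inequality absorbs it into small multiples of the dissipation. The $M$ piece is the genuinely delicate one, and is the place where the overall scaling of the lemma is forced upon us. Naive Cauchy--Schwarz fails because $\|\partial^{\alpha}M/\sqrt{\mu}\|_{\sigma}\lesssim\|\partial^{\alpha}(\widetilde{\rho},\widetilde{u},\widetilde{\theta})\|+\delta$ (by \eqref{3.28}--\eqref{3.29}), and the singular factor $\varepsilon/\delta^{5/2}$ would be uncontrollable. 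The key is to exploit the algebraic observation that, were $M$ replaced by $\mu$, the principal fluid-derivative part $I_{1}$ of $\partial^{\alpha}M/\sqrt{\mu}$ in \eqref{3.28} would be a linear combination of the collision invariants $\sqrt{\mu}$, $v_{i}\sqrt{\mu}$, $|v|^{2}\sqrt{\mu}$ and therefore lie entirely in $\ker\mathcal{L}$. Subtracting this kernel portion, the genuine microscopic part of $\partial^{\alpha}M/\sqrt{\mu}$ consists only of contributions involving factors of $(M-\mu)/\sqrt{\mu}$ together with the lower-derivative piece $I_{2}$ of \eqref{3.28}; by \eqref{3.24}, its $\sigma$-norm is bounded by $C\delta\bigl(\|\partial^{\alpha}(\widetilde{\rho},\widetilde{u},\widetilde{\theta})\|+\delta\bigr)+C\|\partial_{x}(\rho,u,\theta)\|_{L^{\infty}}\|\partial_{x}(\rho,u,\theta)\|$. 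Young's inequality then gives
\begin{equation*}
\frac{\varepsilon}{\delta^{5/2}}\bigl|\bigl(\mathcal{L}\partial^{\alpha}f,\,\partial^{\alpha}M/\sqrt{\mu}\bigr)\bigr|\le \eta\,\frac{\varepsilon}{\delta^{5/2}}\|\partial^{\alpha}f\|_{\sigma}^{2}+C_{\eta}\frac{\varepsilon}{\delta^{1/2}}\|\partial^{\alpha}(\widetilde{\rho},\widetilde{u},\widetilde{\theta})\|^{2}+\text{l.o.t.},
\end{equation*}
which accounts precisely for the $\frac{\varepsilon}{\delta^{1/2}}\|\partial^{\alpha}(\widetilde{\rho},\widetilde{u},\widetilde{\theta})\|^{2}$ term on the right of \eqref{5.1}. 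This calculation also explains \emph{why} the weighting $\varepsilon^{2}/\delta$ is the correct one at the highest order without weight: any smaller power of $\varepsilon$ would fail to reconcile the Cauchy--Schwarz split against the available fluid dissipation $\tfrac{\varepsilon}{\delta^{1/2}}\|\partial^{\alpha}(\widetilde{\rho},\widetilde{u},\widetilde{\theta})\|^{2}$ in $\mathcal{D}_{2,l,q_{1}}(t)$.

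The residual ingredients are routine. The linear collision terms $\partial^{\alpha}\Gamma(f,(M-\mu)/\sqrt{\mu})$ and $\partial^{\alpha}\Gamma((M-\mu)/\sqrt{\mu},f)$, paired with $\partial^{\alpha}F/\sqrt{\mu}$ and rescaled, are dispatched by Lemma \ref{lem3.8a}; the nonlinear collision term $\partial^{\alpha}\Gamma(G/\sqrt{\mu},G/\sqrt{\mu})$ by Lemma \ref{lem3.9}; and the driving term $(\delta^{3/2}\varepsilon)^{-1}L_{M}\overline{G}/\sqrt{\mu}$ via the explicit expression \eqref{1.32}, which exhibits a gain of $\delta^{1/2}\varepsilon$ together with derivatives of $(\bar{u},\bar{\theta})$, combined with the weighted bound of Lemma \ref{lem3.2}. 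Note that no macroscopic projection $P_{0}(v_{1}\sqrt{\mu}\partial_{x}f)/\sqrt{\mu}$ appears here because we are working with \eqref{2.40} rather than with the $f$-equation \eqref{2.39}. Gathering all contributions, choosing $\eta>0$ sufficiently small to absorb the dissipative remainders into the left-hand side, and invoking the scaling restriction \eqref{1.37} to turn the $\delta$-$\varepsilon$ remainders into the explicit constants on the right of \eqref{5.1}, completes the proof.
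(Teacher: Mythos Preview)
Your overall architecture is correct and matches the paper: test \eqref{2.40} at order $|\alpha|=2$ against $\partial^{\alpha}F/\sqrt{\mu}$, invoke Lemma~\ref{lem3.14} for the field term, split $F=M+\overline{G}+\sqrt{\mu}f$ in the $\mathcal{L}$-pairing, use coercivity on the $f$-piece, and observe that the leading part of $\partial^{\alpha}M/\sqrt{\mu}$ (written with $\mu$ in place of $M$) lies in $\ker\mathcal{L}$. This is exactly the decomposition $I_{1}=\widetilde{I}^{1}+\widetilde{I}^{2}+\widetilde{I}^{3}$ the paper carries out in \eqref{5.4}.

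There is, however, a quantitative gap in your handling of the $I_{2}$ piece (and the $(M-\mu)$-with-$\partial^{\alpha}\bar{\cdot}$ piece $\widetilde{I}^{3}$). You bound the $\sigma$-norm of these by $C\delta^{2}$ and then apply Young's inequality directly, which after the $\varepsilon^{2}/\delta$ rescaling produces a remainder $C_{\eta}\varepsilon\delta^{3/2}$. This term does \emph{not} fit into any of the constants on the right of \eqref{5.1}, and at the endpoint $\delta=\varepsilon^{2/3}$ it equals $\varepsilon^{2}$, strictly larger than the target $\delta^{4}=\varepsilon^{8/3}$; the closure would fail. The paper avoids this by integrating by parts once in $x$: since $\partial^{\alpha}=\partial_{xx}$, one writes $(\mathcal{L}\partial^{\alpha}f,I_{2}/\sqrt{\mu})=-(\mathcal{L}\partial_{x}f,\partial_{x}[I_{2}/\sqrt{\mu}])$ and similarly for $\widetilde{I}^{3}$ (cf.\ the argument around \eqref{3.40a}). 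This trades the second-order $\|\partial^{\alpha}f\|_{\sigma}$ for the first-order $\|\partial_{x}f\|_{\sigma}$, at the cost of one extra derivative on the background; the resulting remainder is $C\varepsilon\delta^{5/4}$ before scaling, hence $C\varepsilon^{3}\delta^{1/4}$ after, which does appear on the right of \eqref{5.1}. Your displayed Young inequality is therefore correct for the $\widetilde{I}^{2}$ part but not for $I_{2}$ and $\widetilde{I}^{3}$.

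A minor point: for the $L_{M}\overline{G}$ term your description is vague. The paper again splits against $\partial^{\alpha}M=I_{1}+I_{2}$ and uses that $\partial^{\alpha}[P_{1}(\cdot)]$ remains orthogonal to the collision invariants (equation \eqref{5.9A}), so its pairing with $\widetilde{I}^{1}$ vanishes identically; Lemma~\ref{lem3.2} alone does not give this.
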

\begin{proof}
Let	$|\alpha|=2$, we take $\partial^{\alpha}=\partial_{xx}$ derivatives of \eqref{2.40} and take the inner product of the resulting equation with $\frac{\partial^{\alpha}F}{\sqrt{\mu}}$  to obtain
\begin{eqnarray}
\label{5.2}
&&\frac{1}{2}\frac{d}{dt}\|\frac{\partial^{\alpha}F}{\sqrt{\mu}}\|^{2}
-\frac{1}{\delta}(\frac{\partial^{\alpha}(\partial_x\phi\partial_{v_{1}}F)}{\sqrt{\mu}},\frac{\partial^{\alpha}F}{\sqrt{\mu}})
-\frac{1}{\delta^{3/2}\varepsilon}(\mathcal{L}\partial^{\alpha}f,\frac{\partial^{\alpha}F}{\sqrt{\mu}})
-\frac{1}{\delta^{3/2}\varepsilon}(\frac{\partial^{\alpha}L_{M}\overline{G}}{\sqrt{\mu}},\frac{\partial^{\alpha}F}{\sqrt{\mu}})
\notag\\
&&=\frac{1}{\delta^{3/2}\varepsilon}(\partial^{\alpha}\Gamma(\frac{M-\mu}{\sqrt{\mu}},f)+\partial^{\alpha}\Gamma(f,\frac{M-\mu}{\sqrt{\mu}}),\frac{\partial^{\alpha}F}{\sqrt{\mu}})
+\frac{1}{\delta^{3/2}\varepsilon}(\partial^{\alpha}\Gamma(\frac{G}{\sqrt{\mu}},\frac{G}{\sqrt{\mu}}),\frac{\partial^{\alpha}F}{\sqrt{\mu}}).
\end{eqnarray}
Here we have used the integration by parts such that following identity 
$$
-A\frac{1}{\delta}(\frac{\partial_x\partial^{\alpha}F}{\sqrt{\mu}},\frac{\partial^{\alpha}F}{\sqrt{\mu}})+\frac{1}{\delta}
(\frac{v_{1}\partial_x\partial^{\alpha}F}{\sqrt{\mu}},\frac{\partial^{\alpha}F}{\sqrt{\mu}})=0.
$$
In the following we compute each term for \eqref{5.2}. From \eqref{3.55}, it obviously holds that
	\begin{multline*}
	\frac{1}{\delta}	(\frac{\partial^{\alpha}(\partial_x\phi\partial_{v_{1}}F)}{\sqrt{\mu}},\frac{\partial^{\alpha}F}{\sqrt{\mu}})
	\leq -\frac{1}{2}\frac{d}{dt}(\frac{1}{K\theta}e^{\phi}\partial^{\alpha}\widetilde{\phi},\partial^{\alpha}\widetilde{\phi})
	-\frac{1}{2}\delta\frac{d}{dt}(\frac{1}{K\theta}\partial^{\alpha}\partial_x\widetilde{\phi},\partial^{\alpha}\partial_x\widetilde{\phi})
	\\
	+C(\frac{1}{\varepsilon}+\frac{\delta^{3/2}}{\varepsilon^2})\mathcal{D}_{2,l,q_1}(t)+C\frac{1}{\delta^2}\mathcal{E}_{2}(t)
	+C\delta^{\frac{1}{2}}\varepsilon^2+C\delta^2+C\varepsilon^{1/2}\delta^{1/2}\|\langle v\rangle \partial^{\alpha}f\|^2.
\end{multline*}	
To bound the third term on the left hand side of \eqref{5.2}, we use the decomposition $F=M+\overline{G}+\sqrt{\mu}f$ to write
\begin{equation}
	\label{5.3}
	-\frac{1}{\delta^{3/2}\varepsilon}(\mathcal{L}\partial^{\alpha}f,\frac{\partial^{\alpha}F}{\sqrt{\mu}})=
	-\frac{1}{\delta^{3/2}\varepsilon}(\mathcal{L}\partial^{\alpha}f,\frac{\partial^{\alpha}M}{\sqrt{\mu}})
	-\frac{1}{\delta^{3/2}\varepsilon}(\mathcal{L}\partial^{\alpha}f,\partial^{\alpha}f)
	-\frac{1}{\delta^{3/2}\varepsilon}(\mathcal{L}\partial^{\alpha}f,\frac{\partial^{\alpha}\overline{G}}{\sqrt{\mu}}).
\end{equation}
Due to singularity, we have to develop delicate estimates for the first term on the right hand side of \eqref{5.3}. Note that
$\partial^{\alpha}M=I_{1}+I_{2}$ in terms of \eqref{3.28}, we further decompose $I_{1}$ as
\begin{eqnarray}
	\label{5.4}
	I_{1}=&&\mu\big(\frac{\partial^{\alpha}\rho}{\rho}+\frac{(v-u)\cdot\partial^{\alpha}u}{K\theta}+(\frac{|v-u|^{2}}{2K\theta}-\frac{3}{2})\frac{\partial^{\alpha}\theta}{\theta}\big)
	\notag\\
	&&+(M-\mu)\big(\frac{\partial^{\alpha}\widetilde{\rho}}{\rho}
	+\frac{(v-u)\cdot\partial^{\alpha}\widetilde{u}}{K\theta}+(\frac{|v-u|^{2}}{2K\theta}
	-\frac{3}{2})\frac{\partial^{\alpha}\widetilde{\theta}}{\theta}\big)
	\notag\\
	&&+(M-\mu)\big(\frac{\partial^{\alpha}\bar{\rho}}{\rho}
	+\frac{(v-u)\cdot\partial^{\alpha}\bar{u}}{K\theta}+(\frac{|v-u|^{2}}{2K\theta}
	-\frac{3}{2})\frac{\partial^{\alpha}\bar{\theta}}{\theta}\big)
	\notag\\
	:=&&\widetilde{I}^{1}+\widetilde{I}^{2}+\widetilde{I}^{3}.
\end{eqnarray}
Since $\frac{\widetilde{I}^{1}}{\sqrt{\mu}}\in\ker{\mathcal{L}}$ in terms of
the properties of $\mathcal{L}$ stated in subsection \ref{seca3.3.1}, it follows that
$(\mathcal{L}f,\frac{\widetilde{I}^{1}}{\sqrt{\mu}})=0$. 
Recall $\mathcal{L}f=\Gamma(\sqrt{\mu},f)+\Gamma(f,\sqrt{\mu})$ given in \eqref{2.38}, we use \eqref{3.20}, \eqref{3.24} and \eqref{3.11} to get
\begin{eqnarray}
	\label{5.5}
\frac{1}{\delta^{3/2}\varepsilon}|(\mathcal{L}\partial^{\alpha}f,\frac{\widetilde{I}^{2}}{\sqrt{\mu}})|&&\leq C\delta\frac{1}{\delta^{3/2}\varepsilon}(\|\partial^{\alpha}f\|^{2}_{\sigma}+\|\partial^{\alpha}(\widetilde{\rho},\widetilde{u},\widetilde{\theta})\|^{2})
	\notag\\
	&&\leq \eta\frac{1}{\delta^{3/2}\varepsilon}\|\partial^{\alpha}f\|^{2}_{\sigma}
	+C_\eta\frac{1}{\delta^{3/2}\varepsilon}\delta^2\|\partial^{\alpha}(\widetilde{\rho},\widetilde{u},\widetilde{\theta})\|^{2}.
\end{eqnarray}
Performing the similar calculations as \eqref{3.40a} and using \eqref{1.35}, one obtains
\begin{equation*}
\frac{1}{\delta^{3/2}\varepsilon}|(\mathcal{L}\partial^{\alpha}f,\frac{\widetilde{I}^{3}}{\sqrt{\mu}})|
\leq C\frac{1}{\delta^{3/2}\varepsilon}(\frac{\delta^{5/4}}{\varepsilon^{2}}\|\partial_xf\|^{2}_{\sigma}+\varepsilon^2\delta^{11/4})
\leq C\frac{\delta^{5/4}}{\varepsilon^{2}}\mathcal{D}_{2}(t)+C\varepsilon\delta^{5/4}.
\end{equation*}
It follows from the  above estimates that
\begin{eqnarray}
	\label{5.6}
\frac{1}{\delta^{3/2}\varepsilon}|(\mathcal{L}\partial^{\alpha}f,\frac{I_{1}}{\sqrt{\mu}})|
	\leq&& \eta\frac{1}{\delta^{3/2}\varepsilon}\|\partial^{\alpha}f\|^{2}_{\sigma}
	+C_\eta\frac{\delta^{1/2}}{\varepsilon}\|\partial^{\alpha}(\widetilde{\rho},\widetilde{u},\widetilde{\theta})\|^{2}
	\notag\\
	&&+C\frac{\delta^{5/4}}{\varepsilon^{2}}\mathcal{D}_{2}(t)+C\varepsilon\delta^{5/4}.
\end{eqnarray}
Recall $I_2$ given in \eqref{3.28} and use the integration by parts, we arrive at
\begin{eqnarray*}
\frac{1}{\delta^{3/2}\varepsilon}|(\mathcal{L}\partial^{\alpha}f,\frac{I_{2}}{\sqrt{\mu}})|
&&=\frac{1}{\delta^{3/2}\varepsilon}|(\mathcal{L}\partial_xf,\partial_x[\frac{I_{2}}{\sqrt{\mu}}])|
\notag\\
&& \leq C\frac{1}{\delta^{3/2}\varepsilon}\|\partial_xf\|_{\sigma}(\|\partial_x(\rho,u,\theta)\|_{L^{\infty}}\|\partial^2_x(\rho,u,\theta)\|
+\|\partial_x(\rho,u,\theta)\|^2_{L^{\infty}}\|\partial_x(\rho,u,\theta)\|)
\notag\\
&& \leq C\frac{1}{\delta^{3/2}\varepsilon}\|\partial_xf\|_{\sigma}(\delta\|\partial^2_x(\widetilde{\rho},\widetilde{u},\widetilde{\theta})\|
+\delta^2+\delta^3)
\notag\\
&&
\leq C\frac{\delta^{1/2}}{\varepsilon}\frac{1}{\delta^{3/2}\varepsilon}\|\partial_xf\|^2_{\sigma}
+C\|\partial^2_x(\widetilde{\rho},\widetilde{u},\widetilde{\theta})\|^2
+C\frac{\delta^{5/4}}{\varepsilon^{2}}\frac{1}{\delta^{3/2}\varepsilon}\|\partial_xf\|^2_{\sigma}+C\varepsilon\delta^{5/4}
\notag\\
&&
\leq C(\frac{\delta^{5/4}}{\varepsilon^{2}}+\frac{\delta^{1/2}}{\varepsilon})\mathcal{D}_{2}(t)+C\varepsilon\delta^{5/4},
\end{eqnarray*}
where we have used \eqref{3.20}, \eqref{3.11}, \eqref{4.17a} and \eqref{1.35}.
This together with \eqref{5.6} and the fact $\partial^{\alpha}M=I_{1}+I_{2}$, one obtains
\begin{eqnarray*}
\frac{1}{\delta^{3/2}\varepsilon}|(\mathcal{L}\partial^{\alpha}f,\frac{\partial^{\alpha}M}{\sqrt{\mu}})|
\leq&& \eta\frac{1}{\delta^{3/2}\varepsilon}\|\partial^{\alpha}f\|^{2}_{\sigma}
+C_\eta\frac{\delta^{1/2}}{\varepsilon}\|\partial^{\alpha}(\widetilde{\rho},\widetilde{u},\widetilde{\theta})\|^{2}
\notag\\
&&+C(\frac{\delta^{5/4}}{\varepsilon^{2}}+\frac{\delta^{1/2}}{\varepsilon})\mathcal{D}_{2}(t)+C\varepsilon\delta^{5/4}.	
\end{eqnarray*}
By \eqref{3.17}, we see easily that
$$
-\frac{1}{\delta^{3/2}\varepsilon}(\mathcal{L}\partial^{\alpha}f,\partial^{\alpha}f)\geq c_1\frac{1}{\delta^{3/2}\varepsilon}\|\partial^{\alpha}f\|_{\sigma}^{2}.
$$
For the last term of \eqref{5.3}, using $\mathcal{L}f=\Gamma(\sqrt{\mu},f)+\Gamma(f,\sqrt{\mu})$, \eqref{3.20} and \eqref{3.7},
one gets
\begin{equation*}
\frac{1}{\delta^{3/2}\varepsilon}|(\mathcal{L}\partial^{\alpha}f,\frac{\partial^{\alpha}\overline{G}}{\sqrt{\mu}})|
	\leq C\frac{1}{\delta^{3/2}\varepsilon}\|\partial^{\alpha}f\|_{\sigma}\|\frac{\partial^{\alpha}\overline{G}}{\sqrt{\mu}}\|_{\sigma}\leq \eta\frac{1}{\delta^{3/2}\varepsilon}\|\partial^{\alpha}f\|^{2}_{\sigma}+C_\eta\frac{1}{\delta^{3/2}\varepsilon}\delta^3(\varepsilon^2+\delta^4).
\end{equation*}
Hence, putting the aforementioned  three estimates into \eqref{5.3}, we obtain 
\begin{eqnarray}
	\label{5.7}
	-\frac{1}{\delta^{3/2}\varepsilon}(\mathcal{L}\partial^{\alpha}f,\frac{\partial^{\alpha}F}{\sqrt{\mu}})
	\geq&& \frac{c_1}{2}\frac{1}{\delta^{3/2}\varepsilon}\|\partial^{\alpha}f\|_{\sigma}^{2}-
	C\frac{\delta^{1/2}}{\varepsilon}\|\partial^{\alpha}(\widetilde{\rho},\widetilde{u},\widetilde{\theta})\|^{2}
	\notag\\
	&&-C(\frac{\delta^{5/4}}{\varepsilon^{2}}+\frac{\delta^{1/2}}{\varepsilon})\mathcal{D}_{2}(t)-C\delta^{5/4}\varepsilon-C\frac{1}{\varepsilon}\delta^{9/2}.
\end{eqnarray}
To bound the last term on the left hand side of \eqref{5.2}, we use the expression of $\overline{G}$ in \eqref{1.32} to write
\begin{equation*}
\partial^{\alpha}L_{M}\overline{G}=\delta^{1/2}\varepsilon\partial^{\alpha}\{ P_{1}v_{1}M(\frac{|v-u|^{2}
		\partial_x\bar{\theta}}{2K\theta^{2}}+\frac{(v-u)\cdot\partial_x\bar{u}}{K\theta})\}.
\end{equation*}
By the expression of $I_{2}$ in \eqref{3.28}, we get from \eqref{1.20}, \eqref{1.19}, \eqref{3.11} and \eqref{4.17a}  that
\begin{eqnarray}
	\label{5.8a}
	\frac{1}{\delta^{3/2}\varepsilon}|(\frac{\partial^{\alpha}L_{M}\overline{G}}{\sqrt{\mu}},\frac{I_2}{\sqrt{\mu}})|
	&&\leq C\frac{1}{\delta}\|\partial_{x}(\rho,u,\theta)\|_{L^{\infty}}\|\partial_{x}(\rho,u,\theta)\|\|\frac{\partial^{\alpha}\{ P_{1}v_{1}M(\frac{|v-u|^{2}
			\partial_x\bar{\theta}}{2K\theta^{2}}+\frac{(v-u)\cdot\partial_x\bar{u}}{K\theta})\}}{\sqrt{\mu}}\|
	\notag\\
	&&\leq C\delta\|\frac{\partial^{\alpha}\{ P_{1}v_{1}M(\frac{|v-u|^{2}
			\partial_x\bar{\theta}}{2K\theta^{2}}+\frac{(v-u)\cdot\partial_x\bar{u}}{K\theta})\}}{\sqrt{\mu}}\|
\notag\\
&&		\leq  C\|\partial^{\alpha}(\widetilde{\rho},\widetilde{u},\widetilde{\theta})\|^2+C\delta^2.
\end{eqnarray}
By the expression of $\widetilde{I}^{2}$ and $\widetilde{I}^{3}$ in \eqref{5.4},
 \eqref{3.24} and \eqref{3.11}, one has
\begin{equation*}
	\frac{1}{\delta^{3/2}\varepsilon}|(\frac{\partial^{\alpha}L_{M}\overline{G}}{\sqrt{\mu}},\frac{\widetilde{I}^{2}+\widetilde{I}^{3}}{\sqrt{\mu}})|
	\leq  C\|\partial^{\alpha}(\widetilde{\rho},\widetilde{u},\widetilde{\theta})\|^2+C\delta^2.
\end{equation*}
Recall $\widetilde{I}^{1}$ given by \eqref{5.4}, from \eqref{1.20} and \eqref{1.21}, we find that
\begin{eqnarray}
	\label{5.9A}
&&\frac{1}{\delta^{3/2}\varepsilon}(\frac{\partial^{\alpha}L_{M}\overline{G}}{\sqrt{\mu}},\frac{\widetilde{I}^{1}}{\sqrt{\mu}})
\notag\\
&&=\frac{1}{\delta}(\partial^{\alpha}P_{1}v_{1}M(\frac{|v-u|^{2}
\partial_x\bar{\theta}}{2K\theta^{2}}+\frac{(v-u)\cdot\partial_x\bar{u}}{K\theta}),
\frac{\partial^{\alpha}\rho}{\rho}+\frac{(v-u)\cdot\partial^{\alpha}u}{K\theta}+(\frac{|v-u|^{2}}{2K\theta}-\frac{3}{2})\frac{\partial^{\alpha}\theta}{\theta})=0.
\end{eqnarray}
With the above two estimates and $I_1=\widetilde{I}^{1}+\widetilde{I}^{2}+\widetilde{I}^{3}$ in hand, we have
\begin{equation*}
	\frac{1}{\delta^{3/2}\varepsilon}|(\frac{\partial^{\alpha}L_{M}\overline{G}}{\sqrt{\mu}},\frac{I_1}{\sqrt{\mu}})|
	\leq  C\|\partial^{\alpha}(\widetilde{\rho},\widetilde{u},\widetilde{\theta})\|^2+C\delta^2.
\end{equation*}
This and \eqref{5.8a} as well as $\partial^{\alpha}M=I_{1}+I_{2}$ together gives
\begin{equation*}
	\frac{1}{\delta^{3/2}\varepsilon}|(\frac{\partial^{\alpha}L_{M}\overline{G}}{\sqrt{\mu}},\frac{\partial^{\alpha}M}{\sqrt{\mu}})|
	\leq  C\|\partial^{\alpha}(\widetilde{\rho},\widetilde{u},\widetilde{\theta})\|^2+C\delta^2.
\end{equation*}
From the Cauchy-Schwarz inequality and \eqref{3.7}, we get
\begin{equation*}
\frac{1}{\delta^{3/2}\varepsilon}|(\frac{\partial^{\alpha}L_{M}\overline{G}}{\sqrt{\mu}},\frac{\partial^{\alpha}f+\partial^{\alpha}\overline{G}}{\sqrt{\mu}})|
\leq \eta\frac{1}{\delta^{3/2}\varepsilon}\|\partial^{\alpha}f\|^2_{\sigma}
+C_\eta\frac{1}{\delta^{3/2}\varepsilon}\delta^3(\varepsilon+\delta^2)^2.
\end{equation*}
Hence, we deduce from the above two estimates that
\begin{equation}
	\label{5.10A}
	\frac{1}{\delta^{3/2}\varepsilon}|(\frac{\partial^{\alpha}L_{M}\overline{G}}{\sqrt{\mu}},\frac{\partial^{\alpha}F}{\sqrt{\mu}})|
	\leq  \eta\frac{1}{\delta^{3/2}\varepsilon}\|\partial^{\alpha}f\|^2_{\sigma}
	+C\|\partial^{\alpha}(\widetilde{\rho},\widetilde{u},\widetilde{\theta})\|^2
	+C\delta^2+C_\eta\frac{1}{\varepsilon}\delta^{3/2}(\varepsilon+\delta^2)^2.
\end{equation}
For the first term on the right hand side of \eqref{5.2}, by \eqref{3.37a}, one gets 
	\begin{eqnarray*}	
	&&	\frac{1}{\delta^{3/2}\varepsilon}|(\partial^\alpha\Gamma(\frac{M-\mu}{\sqrt{\mu}},f)+\partial^\alpha\Gamma(f,\frac{M-\mu}{\sqrt{\mu}}),\frac{\partial^\alpha F}{\sqrt{\mu}})|
	\notag\\
\leq&& C(\eta+\delta)\frac{1}{\delta^{3/2}\varepsilon}\|\partial^{\alpha}f\|^2_{\sigma}
+C_\eta\frac{\delta^{1/2}}{\varepsilon}\|\partial^{\alpha}(\widetilde{\rho},\widetilde{u},\widetilde{\theta})\|^2
\notag\\
&&+C_\eta(\frac{\delta^{5/4}}{\varepsilon^{2}}+\frac{1}{\delta^{1/4}})\mathcal{D}_{2}(t)+C_\eta\varepsilon\delta^{5/4}
+C_\eta\frac{1}{\varepsilon}\delta^{9/2}.	
\end{eqnarray*}
From \eqref{3.57a}, the last term of \eqref{5.2} can be bounded by
\begin{equation*}
	\frac{1}{\delta^{3/2}\varepsilon}|\partial^{\alpha}(\Gamma(\frac{G}{\sqrt{\mu}},\frac{G}{\sqrt{\mu}}),\frac{\partial^{\alpha}F}{\sqrt{\mu}})|
	\leq \widetilde{C}\delta\frac{1}{\delta^{3/2}\varepsilon}\|\partial^{\alpha}f\|^2_{\sigma}+\widetilde{C}(1+\frac{\delta^2}{\varepsilon^{2}})\mathcal{D}_{2}(t)
	+C\frac{1}{\varepsilon}\delta^{7/2}+C\delta^{3/2}\varepsilon^3.
\end{equation*}
In summary, collecting all the above estimates and taking a small $\eta>0$, we have from \eqref{5.2} that
\begin{eqnarray}
	\label{5.8}
	&&\frac{1}{2}\frac{d}{dt}\sum_{|\alpha|=2}\{\|\frac{\partial^{\alpha}F}{\sqrt{\mu}}\|^{2}
	+(\frac{1}{K\theta}e^{\phi}\partial^{\alpha}\widetilde{\phi},\partial^{\alpha}\widetilde{\phi})
	+\delta(\frac{1}{K\theta}\partial^{\alpha}\partial_x\widetilde{\phi},\partial^{\alpha}\partial_x\widetilde{\phi})\}
	+c\frac{1}{\delta^{3/2}\varepsilon}\sum_{|\alpha|=2}\|\partial^{\alpha}f\|_{\sigma}^{2}
	\notag\\
&&\leq C\frac{\delta^{1/2}}{\varepsilon}\sum_{|\alpha|=2}\|\partial^{\alpha}(\widetilde{\rho},\widetilde{u},\widetilde{\theta})\|^2
+C\varepsilon^{1/2}\delta^{1/2}\sum_{|\alpha|=2}\|\langle v\rangle \partial^{\alpha}f\|^2	
+C(\frac{1}{\varepsilon}+\frac{\delta^{5/4}}{\varepsilon^2}+\frac{1}{\delta^{1/4}})\mathcal{D}_{2,l,q_1}(t)
\notag\\
&&\hspace{0.5cm}+C\frac{1}{\delta^2}\mathcal{E}_{2}(t)+C\delta^{5/4}\varepsilon+C\frac{1}{\varepsilon}\delta^{7/2}
+C\delta^{\frac{1}{2}}\varepsilon^2+C\delta^2.	
\end{eqnarray}
Here we have used \eqref{1.36} and the smallness of $\delta$ and $\varepsilon$.

Multiplying \eqref{5.8} by $\frac{\varepsilon^{2}}{\delta}$, 
we obtain \eqref{5.1} and then complete the proof of Lemma \ref{lem5.1}.
\end{proof}
Finally, based on Lemma \ref{lem5.1} and Lemma \ref{lem4.3}, we complete the energy estimates without weight functions.
\begin{lemma}\label{lem5.2}
There exists a constant $\kappa_3>0$ with $\kappa_2\gg \kappa_3$ such that
	\begin{eqnarray}
		\label{5.9}
		&&\frac{d}{dt}\widetilde{E}(t)+\frac{1}{2}\kappa_3\frac{\varepsilon^2}{\delta}\frac{d}{dt}\sum_{|\alpha|=2}\{\|\frac{\partial^{\alpha}F}{\sqrt{\mu}}\|^{2}
		+(\frac{1}{K\theta}e^{\phi}\partial^{\alpha}\widetilde{\phi},\partial^{\alpha}\widetilde{\phi})
		+\delta(\frac{1}{K\theta}\partial^{\alpha}\partial_x\widetilde{\phi},\partial^{\alpha}\partial_x\widetilde{\phi})\}+c\mathcal{D}_{2}(t)
		\notag\\
		\leq&& C\delta^{1/4}\mathcal{D}_{2,l,q_1}(t)
		+C\mathcal{E}_{2}(t)+C\delta^4+C\varepsilon^{1/2}\delta^{1/2}
		\{\sum_{|\alpha|\leq 1}\|\langle v\rangle \partial^\alpha f\|^2+\frac{\varepsilon^2}{\delta}\sum_{|\alpha|=2}\|\langle v\rangle \partial^{\alpha}f\|^2\},
	\end{eqnarray}	
where $\widetilde{E}(t)$ is given in \eqref{4.39} and $\kappa_2$ stated as in \eqref{4.38a}.
\end{lemma}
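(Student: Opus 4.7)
The plan is to derive \eqref{5.9} by an appropriate linear combination of \eqref{4.38a} and \eqref{5.1}, then absorb and rebook the error terms using the parameter restriction \eqref{1.37}, namely $\varepsilon^{2/3}\le\delta\le\widetilde C^{-1}\varepsilon^{2/5}$ (in particular $\varepsilon\le\delta^{3/2}$, $\varepsilon^2\le\delta^3$ and $\varepsilon^4/\delta^{1/2}\le\delta^{11/2}$). I introduce a new small constant $\kappa_3>0$ satisfying $\kappa_2\gg\kappa_3>0$, form the combination $\eqref{4.38a}+\kappa_3\cdot\eqref{5.1}$, and show that after choosing $\kappa_3$ small enough and $\delta,\varepsilon$ sufficiently small, every term on the right-hand side either absorbs into the dissipation $c\mathcal{D}_2(t)$ on the left, or is dominated by $C\mathcal{E}_2(t)+C\delta^4$, or matches the weighted $\langle v\rangle$-terms retained in \eqref{5.9}.

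The crucial absorption is the $|\alpha|=2$ microscopic term $C\frac{\varepsilon}{\delta^{1/2}}\sum_{|\alpha|=2}\|\partial^\alpha f\|_\sigma^2$ sitting on the right of \eqref{4.38a}, which has no counterpart in the dissipation of \eqref{4.38a} itself. However, the combination gains the contribution $c\kappa_3\frac{1}{\delta^{3/2}\varepsilon}\frac{\varepsilon^2}{\delta}\sum_{|\alpha|=2}\|\partial^\alpha f\|_\sigma^2=c\kappa_3\frac{\varepsilon}{\delta^{5/2}}\sum_{|\alpha|=2}\|\partial^\alpha f\|_\sigma^2$ from \eqref{5.1}; since $\delta^2\ll\kappa_3 c$, this absorbs the troublesome term and leaves the proper coefficient for $\mathcal{D}_2(t)$. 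Symmetrically, the fluid term $C\frac{\varepsilon}{\delta^{1/2}}\sum_{|\alpha|=2}\|\partial^\alpha(\widetilde\rho,\widetilde u,\widetilde\theta)\|^2$ appearing on the right of \eqref{5.1} is absorbed by the matching dissipation $\frac{1}{2}c\kappa_1\frac{\varepsilon}{\delta^{1/2}}\sum_{|\alpha|\le 1}\|\partial^\alpha\partial_x(\widetilde\rho,\widetilde u,\widetilde\theta)\|^2$ on the left of \eqref{4.38a} provided $\kappa_3$ is chosen small compared to $\kappa_1$. Combining these with the already-controlled $\phi$-dissipation pieces in \eqref{4.38a} reconstructs the full $\mathcal{D}_2(t)$ of \eqref{1.36}.

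Next I collect the $\mathcal{D}_{2,l,q_1}(t)$ coefficients: from \eqref{4.38a} one has $C\delta^{1/2}$, and $\kappa_3$ times those of \eqref{5.1} yields $C\kappa_3(\varepsilon/\delta+\delta^{1/4}+\varepsilon^2/\delta^{5/4})$. Using $\varepsilon\le\delta^{3/2}$ one has $\varepsilon/\delta\le\delta^{1/2}$ and $\varepsilon^2/\delta^{5/4}\le\delta^{7/4}\le\delta^{1/4}$, so the total is bounded by $C\delta^{1/4}\mathcal{D}_{2,l,q_1}(t)$, matching the target. The $\mathcal{E}_2(t)$ coefficient from \eqref{5.1} is $\kappa_3\varepsilon^2/\delta^3$, which is $O(1)$ since $\varepsilon^2\le\delta^3$, so it combines with $C\mathcal{E}_2(t)$ of \eqref{4.38a}. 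For the pure error terms, $C\delta\varepsilon^2\le C\delta^4$, $C\delta^{1/4}\varepsilon^3\le C\delta^{1/4}\delta^{9/2}\le C\delta^4$, $C\delta^{5/2}\varepsilon\le C\delta^4$, and $C\varepsilon^4/\delta^{1/2}\le C\delta^{11/2}\le C\delta^4$, so all fit inside the retained $C\delta^4$. The weighted large-velocity terms $C\varepsilon^{1/2}\delta^{1/2}\sum_{|\alpha|\le 1}\|\langle v\rangle\partial^\alpha f\|^2$ and $C\varepsilon^{1/2}\delta^{1/2}\frac{\varepsilon^2}{\delta}\sum_{|\alpha|=2}\|\langle v\rangle\partial^\alpha f\|^2$ are kept verbatim and form the last curly-bracket expression on the right of \eqref{5.9}; they will be handled later by the time-velocity weighted estimates of Section \ref{sec.6}.

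The main obstacle is not an analytical step but the careful two-scale book-keeping: several coefficients mix the singular factors $\varepsilon^{-1}$, $\delta^{-1/2}$, $\varepsilon^2/\delta$, $\varepsilon^2/\delta^3$, and one must check that under the restriction \eqref{1.37} they all fall on the favorable side for absorption. The ordering $\kappa_1\gg\kappa_2\gg\kappa_3>0$ has to be respected strictly so that no term contributing to the target dissipation is eaten twice, and so that the highest-order microscopic dissipation gained from \eqref{5.1} is large enough to dominate the singular remainder from \eqref{4.38a}. Once these choices are fixed and the elementary inequalities above are invoked, \eqref{5.9} follows immediately.
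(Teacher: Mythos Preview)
Your proposal is correct and follows essentially the same route as the paper: you form $\eqref{4.38a}+\kappa_3\cdot\eqref{5.1}$, choose $\kappa_3$ small relative to $\kappa_1$ to absorb the $|\alpha|=2$ fluid term into the dissipation of \eqref{4.38a}, use $\delta^2\ll c\kappa_3$ so that the $|\alpha|=2$ microscopic dissipation from \eqref{5.1} swallows the leftover $C\frac{\varepsilon}{\delta^{1/2}}\sum_{|\alpha|=2}\|\partial^\alpha f\|_\sigma^2$ in \eqref{4.38a}, and then reduce all parameter-dependent coefficients via $\varepsilon\le\delta^{3/2}$ from \eqref{1.37}. Your bookkeeping of the $\mathcal D_{2,l,q_1}$, $\mathcal E_2$, and $\delta^4$ terms is accurate and matches the paper's (slightly terser) argument.
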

\begin{proof}
In order to close the energy estimates, from \eqref{5.1}, we need to require that
\begin{equation}
	\label{5.10}
C\delta^{1/4}\varepsilon^3+C\delta^{5/2}\varepsilon+C\frac{\varepsilon^4}{\delta^{1/2}}+C\delta\varepsilon^2
\leq C\delta^4, \quad \mbox{and} \quad \frac{\varepsilon^2}{\delta^3}\mathcal{E}_{2}(t)\leq\mathcal{E}_{2}(t),
\quad \Leftrightarrow\quad  \varepsilon^{2/3}\leq \delta.
\end{equation}
Under the conditions of \eqref{5.10}, we see easily that
$$
\frac{\varepsilon}{\delta}\leq \delta^{1/2}, \quad \frac{\varepsilon^{2}}{\delta^{5/4}}\leq \delta^{7/4}.
$$
Adding \eqref{5.1}$\times\kappa_3$ to \eqref{4.38a}, and firstly choosing $\kappa_3>0$ sufficiently small 
such that $C\kappa_3<\frac{1}{4}c\kappa_1$, then taking $\delta$ small enough such that 
$\frac{1}{2}c\kappa_3\frac{1}{\delta^{3/2}\varepsilon}\frac{\varepsilon^2}{\delta}>C\frac{\varepsilon}{\delta^{1/2}}$,
we can prove the desired estimate \eqref{5.9} holds by using \eqref{5.10}.
This completes the proof of Lemma \ref{lem5.2}.
 \end{proof}
\section{Weighted energy estimates}\label{sec.6}
Since the linearized Landau operator with Coulomb interaction has no
spectral gap that results in the very weak velocity dissipation, the 
large-velocity growth in the nonlinear electric potential terms and
the free streaming term are hard to control. Therefore, in this section we adapt techniques in \cite{Guo-JAMS}
 basing on the velocity weight function $w(\alpha,\beta)$ in \eqref{1.25} to overcome these difficulties.
The weight function will be acted on  the microscopic component $f$ for the equation \eqref{2.39}.
\begin{lemma}\label{lem6.1}
There exists a constant $0<\kappa_4<1$ with $\kappa_3\gg \kappa_4$ such that
\begin{eqnarray}
	\label{6.1}
	&&\frac{d}{dt}\big\{\sum_{|\alpha|\leq 1}\|\partial^\alpha f\|_{w}^2+\varepsilon^2\sum_{|\alpha|=2}\|\frac{\partial^\alpha F}{\sqrt{\mu}}\|_{w}^2+\kappa_4\sum_{|\alpha|+|\beta|\leq 2,|\beta|\geq 1}\|\partial_{\beta}^\alpha f\|_{w}^2\big\}
	\notag\\
		&&+c\frac{1}{\delta^{3/2}\varepsilon}\big\{\sum_{|\alpha|\leq 1}\|\partial^\alpha f\|^2_{\sigma,w}+\varepsilon^2\sum_{|\alpha|=2}\| \partial^\alpha f\|_{\sigma,w}^2+\sum_{|\alpha|+|\beta|\leq 2,|\beta|\geq 1}\|\partial_{\beta}^\alpha f\|_{\sigma,w}^2\big\}
		\notag\\
	&&+\kappa_4q_{1}q_{2}(1+t)^{-(1+q_2)}\mathcal{H}_{2,l,q_1}(t)
	\notag\\
	&&\leq C\mathcal{D}_{2}(t)+C\delta^{1/4}\mathcal{D}_{2,l,q_1}(t)+C\delta^4
	+C\varepsilon^{1/2}\delta^{1/2}\mathcal{H}_{2,l,q_1}(t).
\end{eqnarray}
Here $\kappa_3$ stated as in \eqref{5.9} and $\mathcal{H}_{2,l,q_1}(t)$ is defined by
\begin{eqnarray}
	\label{6.2a}
	\mathcal{H}_{2,l,q_1}(t):=&&\sum_{|\alpha|\leq1}\|\langle v\rangle \partial^{\alpha}f(t)\|_{w}^{2}+
	\varepsilon^2\sum_{|\alpha|=2}\|\langle v\rangle \partial^{\alpha}f(t)\|_{w}^{2}
	\notag\\
	&&\hspace{1cm}+\sum_{|\alpha|+|\beta|\leq 2,|\beta|\geq1}\|\langle v\rangle \partial^{\alpha}_{\beta}f(t)\|_{w}^{2}.
\end{eqnarray}	
\end{lemma}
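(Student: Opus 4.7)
The strategy is to establish three separate weighted energy identities, corresponding respectively to pure space derivatives of order $|\alpha|\leq 1$, the highest order space derivative $|\alpha|=2$, and mixed space-velocity derivatives with $|\beta|\geq 1$, and then combine them with carefully chosen coefficients $1,\varepsilon^2,\kappa_4$. The cornerstone of the argument is that when we differentiate the time-velocity weight $w(\alpha,\beta)$ in time, the factor $e^{q_1(1+t)^{-q_2}\langle v\rangle^2/2}$ produces the favorable term
\[
q_1q_2(1+t)^{-(1+q_2)}\,\tfrac{\langle v\rangle^2}{2}\,w^2(\alpha,\beta),
\]
which upon integration generates precisely the extra large-velocity dissipation $\mathcal{H}_{2,l,q_1}(t)$ that is needed to absorb the $\varepsilon^{1/2}\delta^{1/2}\|\langle v\rangle\,\partial^\alpha f\|_w^2$ contributions arising from the electric-field nonlinearities (Lemmas \ref{lem3.15} and \ref{lem3.17}).

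First, for $|\alpha|\leq 1$, I apply $\partial^\alpha$ to \eqref{2.39} and pair with $w^2(\alpha,0)\,\partial^\alpha f$. The transport terms $-A\delta^{-1}\partial_xf+\delta^{-1}v_1\partial_xf$ integrate by parts to zero, leaving only the weight time derivative contributing $\mathcal{H}_{2,l,q_1}(t)$. The coercivity estimate \eqref{3.19} handles $\mathcal{L}$, while the linear and nonlinear collision terms are controlled by \eqref{3.26} and \eqref{3.42}. The electric-field term is handled by \eqref{3.83}, and the macroscopic source term $\delta^{-1}P_0(v_1\sqrt\mu\,\partial_xf)/\sqrt\mu$ and the right-hand-side source involving $\partial_x(\widetilde u,\widetilde\theta)$ are controlled using \eqref{1.28} after inserting a small $\eta$ with a factor $\varepsilon\delta^{-1/2}$, which then matches the dissipation $\mathcal{D}_2(t)$; the correction-term contributions $\overline G$ are absorbed via Lemma \ref{lem3.2}.

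Second, for $|\alpha|=2$, I work with \eqref{2.40} rather than \eqref{2.39} and pair $\partial^\alpha$ of that equation with $w^2(\alpha,0)\,\partial^\alpha F/\sqrt{\mu}$, multiplied by $\varepsilon^2$ in the final combination. Here Lemma \ref{lem3.7} bounds the linear collision terms, Lemma \ref{lem3.10} handles the nonlinear ones, Lemma \ref{lem3.17} controls the electric field, and the linearized operator $\mathcal{L}\partial^\alpha f$ tested against $\partial^\alpha M/\sqrt\mu + \partial^\alpha\overline G/\sqrt\mu + \partial^\alpha f$ is treated just as in the proof of Lemma \ref{lem5.1}, this time with the weight. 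The reason for the prefactor $\varepsilon^2$ is exactly the one emphasized in the author's discussion: the cancellation \eqref{5.9A} that was available in the unweighted setting fails with $w^2(\alpha,0)$, so the residual must be absorbed through the $\varepsilon^2\cdot\delta^{-3/2}\varepsilon^{-1}$ normalization of the weighted highest-order dissipation inside $\mathcal{D}_{2,l,q_1}(t)$.

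Third, for $|\alpha|+|\beta|\leq 2$ with $|\beta|\geq 1$, I apply $\partial^\alpha_\beta$ to \eqref{2.39} and pair with $w^2(\alpha,\beta)\,\partial^\alpha_\beta f$, using Lemma \ref{lem3.3} (inequality \eqref{3.18}) for $\mathcal{L}$, Lemmas \ref{lem3.5} and \ref{lem3.8} for the linear and nonlinear collision terms, and Lemma \ref{lem3.15} for the electric field. The commutator $[\partial_\beta,v_1]\partial_x f$ produces a lower-order term $\sim \|\partial_x\partial^{\alpha}_{\beta-e_1}f\|_{\sigma,w}\|\partial^\alpha_\beta f\|_{\sigma,w}$; since this term has $|\alpha|+|\beta|-1+1=|\alpha|+|\beta|\leq 2$ space derivatives on the first factor, it is absorbed once we sum over $|\beta|$ and choose $\kappa_4$ small enough. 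The same smallness also absorbs the lower-order $|\beta_1|<|\beta|$ contribution on the right of \eqref{3.18} into the $|\beta|=0$ dissipation established in the first step.

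The main obstacle is the careful bookkeeping of the $\varepsilon,\delta$ powers across the three different estimates so that all bad terms land inside $\mathcal{D}_2(t)+\delta^{1/4}\mathcal{D}_{2,l,q_1}(t)$, plus the $\mathcal{H}_{2,l,q_1}(t)$ term with a coefficient strictly smaller than $\kappa_4 q_1q_2(1+t)^{-(1+q_2)}$. In particular, the $\|\langle v\rangle\partial^\alpha f\|_w^2$ contributions from the electric-field terms in Lemmas \ref{lem3.15}--\ref{lem3.17} are exactly of size $\varepsilon^{1/2}\delta^{1/2}\mathcal{H}_{2,l,q_1}(t)$, which is small compared to the gain from the weight's time derivative once $\varepsilon$ and $\delta$ are small; this is the precise reason the time-decaying factor $e^{q_1(1+t)^{-q_2}\langle v\rangle^2/2}$ is built into $w$. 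After combining the three inequalities with coefficients $1,\ \kappa_3\varepsilon^2,\ \kappa_4$, picking $\kappa_4\ll\kappa_3$ and $\delta,\varepsilon$ sufficiently small yields \eqref{6.1}.
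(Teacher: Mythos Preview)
Your plan is essentially correct and follows the same three-step structure as the paper's proof (pure space derivatives $|\alpha|\leq 1$ via \eqref{2.39}, highest order $|\alpha|=2$ via \eqref{2.40} with the $\varepsilon^2$ prefactor, mixed derivatives via \eqref{2.39}), invoking exactly the same auxiliary lemmas at each stage. One small correction: in the final combination the paper uses coefficients $1,\ 1,\ \kappa_4$ (the $\varepsilon^2$ is already built into the second-order estimate before combining), not $1,\ \kappa_3\varepsilon^2,\ \kappa_4$; the role of $\kappa_3$ is only the external constraint $\kappa_4\ll\kappa_3$ so that the present lemma can later be added to Lemma~\ref{lem5.2}, and the requirement $C\kappa_4<\tfrac{1}{2}c$ is what absorbs the lower-$|\beta|$ terms from \eqref{3.18} and the commutator contribution $C\frac{1}{\delta^{3/2}\varepsilon}\sum_{|\alpha|\leq 1}\|\partial^\alpha f\|^2_{\sigma,w}$ into the Step~1 dissipation.
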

\begin{proof}
The proof is divided into the following four steps.	
	
\noindent{\it \Red{Step 1.}}	
We starting from weighted estimates on zeroth and first order. Let $|\alpha|\leq 1$,
by taking $\partial^{\alpha}$ derivatives of \eqref{2.39}  and  taking
the inner product of the resulting equation with $w^2(\alpha,0)\partial^\alpha f$ over $\mathbb{R}\times{\mathbb R}^3$, one has
	\begin{eqnarray}
	\label{6.2}
	&&(\partial_t\partial^{\alpha}f,w^2(\alpha,0)\partial^{\alpha}f)
-\frac{1}{\delta}(\partial^{\alpha}[\frac{\partial_x\phi\partial_{v_{1}}(\sqrt{\mu}f)}{\sqrt{\mu}}],w^2(\alpha,0)\partial^{\alpha}f)
-\frac{1}{\delta^{3/2}\varepsilon}(\mathcal{L}\partial^{\alpha}f,w^2(\alpha,0)\partial^{\alpha}f)
	\notag\\
	=&&\frac{1}{\delta^{3/2}\varepsilon}(\partial^{\alpha}\Gamma(\frac{M-\mu}{\sqrt{\mu}},f)
	+\partial^{\alpha}\Gamma(f,\frac{M-\mu}{\sqrt{\mu}})+\partial^{\alpha}\Gamma(\frac{G}{\sqrt{\mu}},\frac{G}{\sqrt{\mu}}),w^2(\alpha,0)\partial^{\alpha}f)
	\nonumber\\
	&&+\frac{1}{\delta}(\frac{\partial^{\alpha}P_{0}(v_1\sqrt{\mu}\partial_{x}f)}{\sqrt{\mu}}
	-\frac{1}{\sqrt{\mu}}\partial^{\alpha}P_{1}\big\{v_{1}M(\frac{|v-u|^{2}\partial_x\widetilde{\theta}}{2K\theta^{2}}+\frac{(v-u)\cdot\partial_x\widetilde{u}}{K\theta})\big\},w^2(\alpha,0)\partial^{\alpha}f)
	\notag\\
	&&+(A\frac{1}{\delta}\frac{\partial^{\alpha}\partial_x\overline{G}}{\sqrt{\mu}}+\frac{1}{\delta}\frac{\partial^{\alpha}[\partial_x\phi\partial_{v_{1}}\overline{G}]}{\sqrt{\mu}}-\frac{1}{\delta}\frac{\partial^{\alpha}P_{1}(v_1\partial_x\overline{G})}{\sqrt{\mu}}
	-\frac{\partial^{\alpha}\partial_{t}\overline{G}}{\sqrt{\mu}},w^2(\alpha,0)\partial^{\alpha}f).
\end{eqnarray}
Here we have used the weight function $w(\alpha,0)$ in \eqref{1.25} and the integration by parts such that
$$
-A\frac{1}{\delta}(\partial_x\partial^{\alpha}f,w^2(\alpha,0)\partial^{\alpha}f)+\frac{1}{\delta}(v_{1}\partial_x\partial^{\alpha}f,w^2(\alpha,0)\partial^{\alpha}f)=0.
$$

In the following we shall compute \eqref{6.2} term by term. 
Note from the definition of $w(\alpha,\beta)$ in \eqref{1.25} that
\begin{equation}
\label{6.3}
\partial_{t}[w^{2}(\alpha,\beta)] =-q_{1}q_{2}(1+t)^{-(1+q_2)}\langle v\rangle^2 w^{2}(\alpha,\beta).
\end{equation}
This immediately gives
\begin{eqnarray*}
(\partial_{t}\partial^{\alpha}f,w^2(\alpha,0)\partial^{\alpha}f)=&&\frac{1}{2}\frac{d}{dt}(\partial^\alpha f,w^2(\alpha,0)\partial^\alpha f)
-\frac{1}{2}(\partial^\alpha f,\partial_{t}[w^2(\alpha,0)]\partial^\alpha f)
\notag\\
=&&\frac{1}{2}\frac{d}{dt}\|\partial^\alpha f\|_{w}^2+\frac{1}{2}q_{1}q_{2}(1+t)^{-(1+q_2)}\|\langle v\rangle \partial^\alpha f\|^2_{w}.
\end{eqnarray*}
Note from \eqref{3.83} that the second term on the left hand side of \eqref{6.2} is bounded by
\begin{equation*}
 C\delta^{1/2}\mathcal{D}_{2,l,q_1}(t)
	+C\varepsilon^{1/2}\delta^{1/2}\|\langle v\rangle \partial^\alpha f\|_w^2.
\end{equation*}
For the last term on the left hand side of \eqref{6.2}, we get clearly by \eqref{3.19} that
$$
-\frac{1}{\delta^{3/2}\varepsilon}(\mathcal{L}\partial^\alpha f,w^2(\alpha,0)\partial^\alpha f)
\geq c\frac{1}{\delta^{3/2}\varepsilon}\|\partial^\alpha f\|^2_{\sigma,w}-C\frac{1}{\delta^{3/2}\varepsilon}\|\partial^\alpha f\|^2_{\sigma}.
$$
From \eqref{3.26}, \eqref{3.42} and \eqref{4.5}, 
the first line on the right hand side of \eqref{6.2} can be controlled by
\begin{equation*}
 C\eta\frac{1}{\delta^{3/2}\varepsilon}\|\partial^\alpha f\|^{2}_{\sigma,w}
	+C\delta\mathcal{D}_{2,l,q_1}(t)+C_\eta\delta^{1/2}\delta^4.
\end{equation*}
Performing the similar calculations as \eqref{4.38A} and using \eqref{1.35},
the second line on the right hand side of \eqref{6.2} can be controlled by
\begin{equation*}
C\eta\frac{1}{\delta^{3/2}\varepsilon}\|\partial^\alpha f\|^{2}_{\sigma,w}
+C_{\eta}\frac{\varepsilon}{\delta^{1/2}}(\|\partial^{\alpha}\partial_{x}f\|_\sigma^{2}+\|\partial^{\alpha}\partial_{x}(\widetilde{u},\widetilde{\theta})\|^{2})+C_{\eta}\delta^{1/2}\mathcal{D}_{2,l,q_1}(t).
\end{equation*}
The last line of \eqref{6.2} can be estimated by \eqref{3.7} and \eqref{4.24} that
\begin{eqnarray*}
&& \eta\frac{1}{\delta^{3/2}\varepsilon}\|\langle v\rangle^{-\frac{1}{2}}w(\alpha,0)\partial^{\alpha}f\|^{2}
+C_{\eta}\frac{\varepsilon}{\delta^{1/2}}
\|\langle v\rangle^{\frac{1}{2}}w(\alpha,0)\frac{\partial^{\alpha}[\partial_x\phi\partial_{v_{1}}\overline{G}]}{\sqrt{\mu}}\|^{2}
\notag\\
&&\hspace{0.3cm}+C_{\eta}\frac{\varepsilon}{\delta^{1/2}}
\|\langle v\rangle^{\frac{1}{2}}w(\alpha,0)(\frac{\partial^{\alpha}\partial_x\overline{G}}{\sqrt{\mu}}-\frac{\partial^{\alpha}P_{1}(v_1\partial_x\overline{G})}{\sqrt{\mu}}
-\delta^2\frac{\partial^{\alpha}\partial_{t}\overline{G}}{\sqrt{\mu}})\|^{2}
\notag\\
&&\leq C\eta\frac{1}{\delta^{3/2}\varepsilon}\|\partial^\alpha f\|^{2}_{\sigma,w}+C_\eta\frac{\varepsilon}{\delta^{1/2}}\delta^3(\varepsilon+\delta^2)^2.
\end{eqnarray*}
Here we have used the fact that $\|\partial^{\alpha}\partial_x\phi\|\leq C$.

Hence, for $|\alpha|\leq1$, substituting the above estimates into \eqref{6.2}
and taking a small $\eta>0$, then the summation of the resulting equations over $|\alpha|$
through a suitable linear combination gives
\begin{eqnarray}
	\label{6.4}
	&&\sum_{|\alpha|\leq 1}\big\{\frac{d}{dt}\|\partial^\alpha f\|_{w}^2+
	q_{1}q_{2}(1+t)^{-(1+q_2)}\|\langle v\rangle \partial^\alpha f\|^2_{w}
	+c\frac{1}{\delta^{3/2}\varepsilon}\|\partial^\alpha f\|^2_{\sigma,w}\big\}
	\notag\\
	&&\leq C\frac{1}{\delta^{3/2}\varepsilon}\sum_{|\alpha|\leq 1}\|\partial^\alpha f\|^2_{\sigma}
	+C\frac{\varepsilon}{\delta^{1/2}}\sum_{|\alpha|\leq 1}(\|\partial^\alpha\partial_xf\|^2_{\sigma}
	+\|\partial^\alpha\partial_x(\widetilde{u},\widetilde{\theta})\|^2)
	\notag\\
	&&\hspace{0.5cm}+C\delta^{1/2}\mathcal{D}_{2,l,q_1}(t)+C\delta^{1/2}\delta^4+C\frac{\varepsilon}{\delta^{1/2}}\delta^3\varepsilon^2
	+C\varepsilon^{1/2}\delta^{1/2}\sum_{|\alpha|\leq 1}\|\langle v\rangle \partial^\alpha f\|_w^2
		\notag\\
	&&\leq C\mathcal{D}_{2}(t)+C\delta^{1/2}\mathcal{D}_{2,l,q_1}(t)+C\delta^{1/2}\delta^4
	+C\varepsilon^{1/2}\delta^{1/2}\sum_{|\alpha|\leq 1}\|\langle v\rangle \partial^\alpha f\|_w^2,
\end{eqnarray}
where \eqref{5.10} and \eqref{1.36} have used in the last inequality. 

\noindent{\it \Red{Step 2.}}
As before,  we take $\partial^{\alpha}=\partial_{xx}$ derivatives of \eqref{2.40} and take the inner product of the resulting equation with $w^2(\alpha,0)\frac{\partial^\alpha F}{\sqrt{\mu}}$  over $\mathbb{R}^{3}\times\mathbb{R}^{3}$ to obtain
	\begin{eqnarray}
		\label{6.5}
		&&(\partial_t(\frac{\partial^\alpha F}{\sqrt{\mu}}),w^2(\alpha,0)\frac{\partial^\alpha F}{\sqrt{\mu}})
	-\frac{1}{\delta}(\frac{\partial^\alpha(\partial_x\phi\partial_{v_{1}}F)}{\sqrt{\mu}},w^2(\alpha,0)\frac{\partial^\alpha F}{\sqrt{\mu}})
		\notag\\
		=&&\frac{1}{\delta^{3/2}\varepsilon}(\mathcal{L} \partial^\alpha f,w^2(\alpha,0)\frac{\partial^\alpha F}{\sqrt{\mu}})
		+\frac{1}{\delta^{3/2}\varepsilon}(\partial^\alpha\Gamma(f,\frac{M-\mu}{\sqrt{\mu}})
		+\partial^\alpha\Gamma(\frac{M-\mu}{\sqrt{\mu}},f),w^2(\alpha,0)\frac{\partial^\alpha F}{\sqrt{\mu}})
		\notag\\
		&&+\frac{1}{\delta^{3/2}\varepsilon}(\partial^\alpha\Gamma(\frac{G}{\sqrt{\mu}},\frac{G}{\sqrt{\mu}}),w^2(\alpha,0)\frac{\partial^\alpha F}{\sqrt{\mu}})
		+\frac{1}{\delta^{3/2}\varepsilon}(\frac{\partial^{\alpha}L_{M}\overline{G}}{\sqrt{\mu}},w^2(\alpha,0)\frac{\partial^\alpha F}{\sqrt{\mu}}).
	\end{eqnarray}
Here we have used the integration by parts such that
$$
-A\frac{1}{\delta}(\frac{\partial_x\partial^{\alpha}F}{\sqrt{\mu}},w^2(\alpha,0)\frac{\partial^{\alpha}F}{\sqrt{\mu}})+\frac{1}{\delta}
(\frac{v_{1}\partial_x\partial^{\alpha}F}{\sqrt{\mu}},w^2(\alpha,0)\frac{\partial^{\alpha}F}{\sqrt{\mu}})=0.
$$

In the following we estimate \eqref{6.5} term by term. 
By $F=M+\overline{G}+\sqrt{\mu}f$, \eqref{3.29} and \eqref{3.7}, one obtains
\begin{eqnarray*}
	\|\langle v\rangle \frac{\partial^\alpha F}{\sqrt{\mu}}\|_{w}^2
	\geq&& \frac{1}{2}\|\langle v\rangle \partial^\alpha f\|_{w}^2-C\|\langle v\rangle \frac{\partial^\alpha (M +\overline{G})}{\sqrt{\mu}}\|_{w}^2
	\notag\\
	\geq&& \frac{1}{2}\|\langle v\rangle \partial^\alpha f\|_{w}^2
	-C(\|\partial^\alpha(\widetilde{\rho},\widetilde{u},\widetilde{\theta})\|^2+\delta^2).
\end{eqnarray*}
It follows from this and \eqref{6.3} that
\begin{eqnarray*}
	(\partial_t(\frac{\partial^\alpha F}{\sqrt{\mu}}),w^2(\alpha,0)\frac{\partial^\alpha F}{\sqrt{\mu}})
	=&&\frac{1}{2}\frac{d}{dt}\|\frac{\partial^\alpha F}{\sqrt{\mu}}\|_{w}^2
	+\frac{1}{2}q_{1}q_{2}(1+t)^{-(1+q_2)}\|\langle v\rangle \frac{\partial^\alpha F}{\sqrt{\mu}}\|_{w}^2
	\notag\\
	\geq&& \frac{1}{2}\frac{d}{dt}\|\frac{\partial^\alpha F}{\sqrt{\mu}}\|_{w}^2
	+\frac{1}{4}q_{1}q_{2}(1+t)^{-(1+q_2)}\|\langle v\rangle \partial^\alpha f\|_{w}^2
	\notag\\
	&&-Cq_{1}q_{2}(1+t)^{-(1+q_2)}(\|\partial^\alpha(\widetilde{\rho},\widetilde{u},\widetilde{\theta})\|^2+\delta^2).
\end{eqnarray*}
Note from \eqref{3.84} that the second term on the left hand side of \eqref{6.5} can be bounded by
\begin{equation*}
	C(\frac{\delta^{1/2}}{\varepsilon^2}+\frac{1}{\delta\varepsilon})\mathcal{D}_{2,l,q_1}(t)+C\varepsilon^{1/2}\delta^{1/2}\|\langle v\rangle \partial^{\alpha}f\|_w^2
	+C\delta.
\end{equation*}
To bound the first term on the right hand side of \eqref{6.5}, we first have from \eqref{3.19}  that
$$
-\frac{1}{\delta^{3/2}\varepsilon}(\mathcal{L} \partial^\alpha f,w^2(\alpha,0)\partial^\alpha f)
\geq c\frac{1}{\delta^{3/2}\varepsilon}\|\partial^\alpha f\|_{\sigma,w}^2-C\frac{1}{\delta^{3/2}\varepsilon}\|\partial^\alpha f\|_{\sigma }^2.
$$
By $\mathcal{L}f=\Gamma(f,\sqrt{\mu})+\Gamma(\sqrt{\mu},f)$, \eqref{3.21} and \eqref{3.7}, we have
\begin{eqnarray*}
	\frac{1}{\delta^{3/2}\varepsilon}|(\mathcal{L} \partial^\alpha f,w^2(\alpha,0)\frac{\partial^\alpha \overline{G}}{\sqrt{\mu}})|&&\leq
	C\frac{1}{\delta^{3/2}\varepsilon}\|\partial^\alpha f\|_{\sigma}\|w^2(\alpha,0)\frac{\partial^\alpha \overline{G}}{\sqrt{\mu}}\|_{\sigma}
	\notag\\
	&&\leq C\frac{1}{\delta^{3/2}\varepsilon}\|\partial^\alpha f\|^2_{\sigma}+C\frac{1}{\varepsilon}\delta^{3/2}(\varepsilon+\delta^2)^2.
\end{eqnarray*}
Note that $\mathcal{L}f=\Gamma(f,\sqrt{\mu})+\Gamma(\sqrt{\mu},f)$ and $|M-\mu|_2\leq C\delta$, we
perform the similar calculations as \eqref{3.42a} to get
\begin{eqnarray*}
&&\frac{1}{\delta^{3/2}\varepsilon}|(\mathcal{L} \partial^\alpha f,w^2(\alpha,0)\frac{\partial^\alpha M}{\sqrt{\mu}})
\notag\\
&&\leq C\frac{1}{\delta}(\frac{1}{\delta^{3/2}\varepsilon}\|\partial^{\alpha}f\|^2_{\sigma}+\frac{\delta^{1/2}}{\varepsilon}\|\partial^{\alpha}(\widetilde{\rho},\widetilde{u},\widetilde{\theta})\|^2+\frac{\delta^{5/4}}{\varepsilon^{2}}\mathcal{D}_{2,l,q_1}(t)+\varepsilon\delta^{5/4}
+\frac{1}{\varepsilon}\delta^{9/2}).	
\end{eqnarray*}
Hence, we get from the above three estimates and  $F=M+\overline{G}+\sqrt{\mu}f$ that
\begin{eqnarray}
	\label{6.7AB}
	\frac{1}{\delta^{3/2}\varepsilon}(\mathcal{L} \partial^\alpha f,w^2(\alpha,0)\frac{\partial^\alpha F}{\sqrt{\mu}})\leq&&
	-c\frac{1}{\delta^{3/2}\varepsilon}\|\partial^\alpha f\|_{\sigma,w}^2
	+C\frac{1}{\delta}\frac{1}{\delta^{3/2}\varepsilon}\|\partial^{\alpha}f\|^2_{\sigma}
	\notag\\
	&&+C\delta\frac{1}{\delta^{3/2}\varepsilon}\|\partial^{\alpha}(\widetilde{\rho},\widetilde{u},\widetilde{\theta})\|^2+C\delta^{1/4}\frac{1}{\varepsilon^{2}}\mathcal{D}_{2,l,q_1}(t)+C\varepsilon\delta^{1/4}+C\frac{1}{\varepsilon}\delta^{7/2}.		
\end{eqnarray}
From \eqref{3.27}, we know that the second term on the right hand side of \eqref{6.5} can be controlled by
	\begin{equation*}	
 C\eta\frac{1}{\delta^{3/2}\varepsilon}\|\partial^{\alpha}f\|^2_{\sigma,w}+C_\eta(\frac{1}{\varepsilon}+\frac{\delta}{\varepsilon^{2}})\mathcal{D}_{2,l,q_1}(t)
	+C_\eta\frac{1}{\varepsilon}\delta^{5/2}.
\end{equation*}
Using  \eqref{3.43}, one gets
\begin{eqnarray*}	
&&\frac{1}{\delta^{3/2}\varepsilon}|\partial^{\alpha}(\Gamma(\frac{G}{\sqrt{\mu}},\frac{G}{\sqrt{\mu}}),w^2(\alpha,0)\frac{\partial^{\alpha}F}{\sqrt{\mu}})|
\notag\\
&&\leq  \widetilde{C}\delta\frac{1}{\delta^{3/2}\varepsilon}\|\partial^{\alpha}f\|^2_{\sigma,w}+\widetilde{C}(1+\frac{\delta^2}{\varepsilon^{2}})\mathcal{D}_{2,l,q_1}(t)
+C\frac{1}{\varepsilon}\delta^{7/2}+C\delta^{3/2}\varepsilon^3.
\end{eqnarray*}
Note that some cancellation property in \eqref{5.9A} is again not available for
the last term of \eqref{6.5} due to the weight function. It is hard to obtain a good estimate as \eqref{5.10A}.
We  only obtain  the following estimate
\begin{eqnarray}
	\label{6.7A}
\frac{1}{\delta^{3/2}\varepsilon}|(\frac{\partial^{\alpha}L_{M}\overline{G}}{\sqrt{\mu}},w^2(\alpha,0)\frac{\partial^\alpha F}{\sqrt{\mu}})|
&&\leq C\frac{1}{\delta}\|\langle v\rangle^{-1/2}\frac{\partial^\alpha F}{\sqrt{\mu}}\|^2
+C\frac{1}{\delta^3\varepsilon^2}\delta\||\langle v\rangle^{1/2}\frac{\partial^{\alpha}L_{M}\overline{G}}{\sqrt{\mu}}w^2(\alpha,0)\|^2
\notag\\
	&&\leq 
C\frac{1}{\delta}(\|\partial^{\alpha}f\|^2_{\sigma}+\|\partial^{\alpha}(\widetilde{\rho},\widetilde{u},\widetilde{\theta})\|^2)
+C\frac{1}{\varepsilon^2}\delta^{5}+C\delta,
\end{eqnarray}
where we have used \eqref{3.7} and the similar arguments as \eqref{3.30}.

In summary, substituting all the above estimates into \eqref{6.5} and
multiplying the resulting equations by $\varepsilon^{2}$, then choosing $\eta>0$ small enough and using \eqref{3.2a},
we show that
\begin{eqnarray}
	\label{6.6}
	&&\varepsilon^{2}
	\sum_{|\alpha|=2}\Big\{\frac{d}{dt}\|\frac{\partial^\alpha F}{\sqrt{\mu}}\|_{w}^2+\frac{1}{2}q_{1}q_{2}(1+t)^{-(1+q_2)}\|\langle v\rangle \partial^\alpha f\|_{w}^2+c\frac{1}{\delta^{3/2}\varepsilon}\| \partial^\alpha f\|_{\sigma,w}^2\Big\}
	\notag\\
	\leq&& C\frac{\varepsilon^2}{\delta}\frac{1}{\delta^{3/2}\varepsilon}\sum_{|\alpha|=2}\| \partial^\alpha f\|_{\sigma}^2+C(\frac{\varepsilon}{\delta^{1/2}}+\frac{\varepsilon^2}{\delta})\sum_{|\alpha|=2}\|\partial^\alpha(\widetilde{\rho},\widetilde{u},\widetilde{\theta})\|^2
	+C\varepsilon^{5/2}\delta^{1/2}\sum_{|\alpha|=2}\|\langle v\rangle \partial^{\alpha}f\|_{w}^2
	\notag\\
	&& +C(\delta^{1/4}+\varepsilon^{1/2}+\frac{\varepsilon}{\delta})\mathcal{D}_{2,l,q_1}(t)+C\varepsilon^2\delta+C\delta^5
	+C\varepsilon\delta^{5/2}+\varepsilon^3\delta^{1/4}
		\notag\\
	\leq&& C\mathcal{D}_{2}(t)+C\varepsilon^{5/2}\delta^{1/2}\sum_{|\alpha|=2}\|\langle v\rangle \partial^{\alpha}f\|_{w}^2
	+C\delta^{1/4}\mathcal{D}_{2,l,q_1}(t)+C\delta^4,
\end{eqnarray}
where in the last inequality we have used \eqref{5.10} and $\varepsilon\leq \delta^{3/2}$ in \eqref{1.36}.

\noindent{\it \Red{Step 3.}}
Let $|\alpha|+|\beta|\leq 2$ and $|\beta|\geq1$, we take $\partial^\alpha_\beta$
 derivatives of \eqref{2.39} and take the inner product of the resulting equation with $w^2(\alpha,\beta)\partial_{\beta}^\alpha f$ over $\mathbb{R}^3\times{\mathbb R}$ to get
	\begin{eqnarray}
		\label{6.7}
		&&(\partial^\alpha_\beta \partial_tf,w^2(\alpha,\beta)\partial_{\beta}^\alpha f)-A\frac{1}{\delta}(\partial_x\partial^\alpha_\beta f,w^2(\alpha,\beta)\partial_{\beta}^\alpha f)
		+\frac{1}{\delta}(v_1\partial^\alpha_\beta\partial_xf,w^2(\alpha,\beta)\partial_{\beta}^\alpha f)
		\notag\\
		&&+C_\beta^{\beta-e_1}\frac{1}{\delta}(\delta^{e_1}_{\beta}\partial^{\alpha+e_1}_{\beta-e_1}f,w^2(\alpha,\beta)\partial_{\beta}^\alpha f)
		-\frac{1}{\delta}(\partial_{\beta}^{\alpha}[\frac{\partial_x\phi\partial_{v_{1}}(\sqrt{\mu}f)}{\sqrt{\mu}}],w^2(\alpha,\beta)\partial_{\beta}^{\alpha}f)
		\notag\\
		=&&\frac{1}{\delta^{3/2}\varepsilon}(\partial^\alpha_\beta\mathcal{L}f,w^2(\alpha,\beta)\partial_{\beta}^\alpha f)
		+\frac{1}{\delta^{3/2}\varepsilon}(\partial^\alpha_\beta\Gamma(f,\frac{M-\mu}{\sqrt{\mu}})+
		\partial^\alpha_\beta\Gamma(\frac{M-\mu}{\sqrt{\mu}},f),w^2(\alpha,\beta)\partial_{\beta}^\alpha f)
		\notag\\
		&&+\frac{1}{\delta^{3/2}\varepsilon}(\partial^\alpha_\beta\Gamma(\frac{G}{\sqrt{\mu}},\frac{G}{\sqrt{\mu}}),w^2(\alpha,\beta)\partial_{\beta}^\alpha f)
		+\frac{1}{\delta}(\partial^\alpha_\beta[\frac{P_{0}(v_{1}\sqrt{\mu}\partial_xf)}{\sqrt{\mu}}],w^2(\alpha,\beta)\partial_{\beta}^\alpha f)
		\notag\\
		&&-\frac{1}{\delta}(\partial^\alpha_\beta\{\frac{1}{\sqrt{\mu}}P_{1}v_{1}M(\frac{|v-u|^{2}
			\partial_x\widetilde{\theta}}{2K\theta^{2}}+\frac{(v-u)\cdot\partial_x\widetilde{u}}{K\theta})\},w^2(\alpha,\beta)\partial_{\beta}^\alpha f)
			\notag\\
		&&+A\frac{1}{\delta}(\partial^\alpha_\beta[\frac{\partial_x\overline{G}}{\sqrt{\mu}}]+
		\partial^\alpha_\beta[\frac{\partial_x\phi\partial_{v_{1}}\overline{G}}{\sqrt{\mu}}]
		-\partial^\alpha_\beta[\frac{P_{1}(v_{1}\partial_x\overline{G})}{\sqrt{\mu}}]
		-\delta\partial^\alpha_\beta[\frac{\partial_t\overline{G}}{\sqrt{\mu}}],w^2(\alpha,\beta)\partial_{\beta}^\alpha f).
	\end{eqnarray}
	Here $\delta^{e_1}_{\beta}=1$ if $e_1\leq\beta$ or  $\delta^{e_1}_{\beta}=0$ otherwise.

In the following, we estimate \eqref{6.7} term by term. First note that $|\alpha|\leq 1$ since
we only consider the cases $|\alpha|+|\beta|\leq 2$ and $|\beta|\geq 1$.
For the first term of \eqref{6.7}, one gets from \eqref{6.3}  that
\begin{eqnarray*}
	(\partial^\alpha_\beta \partial_tf,w^2(\alpha,\beta)\partial^\alpha_\beta f)
	=&&\frac{1}{2}\frac{d}{dt}\|\partial^\alpha_\beta f\|_{w}^2
	-\frac{1}{2}(\partial^\alpha_\beta f,\partial_t[w^2(\alpha,\beta)]\partial^\alpha_\beta f)
	\\
	=&&\frac{1}{2}\frac{d}{dt}\|\partial^\alpha_\beta f\|_{w}^2
	+\frac{1}{2}q_{1}q_{2}(1+t)^{-(1+q_2)}\|\langle v\rangle \partial^\alpha_\beta f\|^2_{w}.
\end{eqnarray*}
The second and third terms on the left hand of \eqref{6.7} vanish after integration by parts. The
fourth term on the left hand side of \eqref{6.7} is bounded by
\begin{eqnarray*}
	&&\frac{1}{\delta}|(\delta^{e_1}_{\beta}\partial^{\alpha+e_1}_{\beta-e_1}f,w^2(\alpha,\beta)\partial_{\beta}^\alpha f)|
	\notag\\
	&&\leq C\frac{\varepsilon}{\delta^{1/2}}\|\langle v\rangle^{-\frac{1}{2}}w(\alpha,\beta)\partial^{\alpha+e_1}_{\beta-e_1}f\|^2
	+C\frac{1}{\delta^{3/2}\varepsilon}\|\langle v\rangle^2\langle v\rangle^{-\frac{3}{2}}w(\alpha,\beta) \partial_{e_1}\partial^\alpha_{\beta-e_1}f\|^2
	\notag\\
	&&\leq C\frac{\varepsilon}{\delta^{1/2}}\|w(\alpha+e_1,\beta-e_1)\partial^{\alpha+e_1}_{\beta-e_1}f\|^2_\sigma
	+C\frac{1}{\delta^{3/2}\varepsilon}\|\langle v\rangle^2w(\alpha,\beta) \partial^\alpha_{\beta-e_1}f\|^2_\sigma
	\notag\\
	&&\leq C\frac{\varepsilon}{\delta^{1/2}}\|\partial^{\alpha+e_1}_{\beta-e_1}f\|^2_{\sigma,w(\alpha+e_1,\beta-e_1)}
	+C\frac{1}{\delta^{3/2}\varepsilon}\| \partial^\alpha_{\beta-e_1}f\|^2_{\sigma,w(\alpha,\beta-e_1)}.
\end{eqnarray*}
Here we have used the facts that \eqref{1.28}, $\partial^\alpha_\beta=\partial_{e_1}\partial^\alpha_{\beta-e_1}$ and $\langle v\rangle^2\langle v\rangle^{2(l-|\alpha|-|\beta|)}=\langle v\rangle^{2(l-|\alpha|-|\beta-e_1|)}$.

For the last term on the left hand side of  \eqref{6.7}, we see by \eqref{3.75} that
\begin{equation*}
	\frac{1}{\delta}|(\partial_{\beta}^{\alpha}[\frac{\partial_x\phi\partial_{v_1}(\sqrt{\mu}f)}{\sqrt{\mu}}],w^2(\alpha,\beta)\partial_{\beta}^{\alpha}f)|
	\leq  C\delta^{1/2}\mathcal{D}_{2,l,q_1}(t)
	+C\varepsilon^{1/2}\delta^{1/2}\|\langle v\rangle \partial_{\beta}^\alpha f\|_w^2.
\end{equation*}
For the first term on the right hand side of \eqref{6.7}, we deduce from \eqref{3.18} that
$$
-\frac{1}{\delta^{3/2}\varepsilon}(\partial^\alpha_\beta\mathcal{L} f,w^2(\alpha,\beta)\partial^\alpha_\beta f)
\geq \frac{1}{\delta^{3/2}\varepsilon}\big(c\|\partial^\alpha_\beta f\|^2_{\sigma,w}-\eta\sum_{|\beta_1|=|\beta|}\|\partial^\alpha_{\beta_1} f\|_{\sigma,w}^2
-C_\eta\sum_{|\beta_1|<|\beta|}\|\partial^\alpha_{\beta_1}f\|_{\sigma,w}^2\big).
$$
Using \eqref{3.22} and \eqref{3.35}, the second and third terms on the right hand side of \eqref{6.7} can be controlled by
\begin{equation*}
C\eta\frac{1}{\delta^{3/2}\varepsilon}\|\partial^\alpha_\beta f\|^{2}_{\sigma,w}
	+C\delta\mathcal{D}_{2,l,q_1}(t)
	+C_\eta\delta^{9/2}\varepsilon^3+C_\eta\frac{1}{\varepsilon}\delta^{17/2}\delta^4. 
\end{equation*}
Similar to \eqref{4.38A}, the fourth and fifth terms on the right hand side of \eqref{6.7} can be controlled by
\begin{equation*}
	C\eta\frac{1}{\delta^{3/2}\varepsilon}\|\partial_{\beta}^\alpha f\|_{\sigma,w}^{2}
	+C_{\eta}\frac{\varepsilon}{\delta^{1/2}}(\|\partial^{\alpha}\partial_{x}f\|_\sigma^{2}+\|\partial^{\alpha}\partial_{x}(\widetilde{u},\widetilde{\theta})\|^{2})+C_\eta\delta\mathcal{D}_{2,l,q_1}(t).
\end{equation*}
For the remaining terms on the right hand side of \eqref{6.7}, we use \eqref{3.7} and \eqref{1.28} to bound them as follow
\begin{eqnarray*}
 &&C\eta\frac{1}{\delta^{3/2}\varepsilon}\|\partial_{\beta}^\alpha f\|_{\sigma,w}^{2}
 +C_\eta\varepsilon\delta^{5/2}(\varepsilon+\delta^2)^2.
\end{eqnarray*}
Hence, substituting the above estimates into \eqref{6.7}, we have for any small $\eta>0$ that
\begin{eqnarray}
	\label{6.8}
	&&\frac{d}{dt}\|\partial_{\beta}^\alpha f\|_{w}^2+
	q_{1}q_{2}(1+t)^{-(1+q_2)}\|\langle v\rangle \partial_{\beta}^\alpha f\|^2_{w}
	+c\frac{1}{\delta^{3/2}\varepsilon}\|\partial_{\beta}^\alpha f\|^2_{\sigma,w}
	\nonumber\\
	\leq&&  \frac{1}{\delta^{3/2}\varepsilon}\big(\eta\sum_{|\beta_1|=|\beta|}\|\partial^\alpha_{\beta_1} f\|_{\sigma,w}^2
	+C_\eta\sum_{|\beta_1|<|\beta|}\|\partial^\alpha_{\beta_1}f\|_{\sigma,w}^2\big)
	+C\frac{\varepsilon}{\delta^{1/2}}\|\partial^{\alpha+e_1}_{\beta-e_1}f\|^2_{\sigma,w(\alpha+e_1,\beta-e_1)}
	\notag\\
	&&+C\frac{1}{\delta^{3/2}\varepsilon}\| \partial^\alpha_{\beta-e_1}f\|^2_{\sigma,w(\alpha,\beta-e_1)}
	+C_\eta\frac{\varepsilon}{\delta^{1/2}}(\|\partial^{\alpha}\partial_{x}f\|_\sigma^{2}+\|\partial^{\alpha}\partial_{x}(\widetilde{u},\widetilde{\theta})\|^{2})
	\nonumber\\
	&&\hspace{0.5cm}+C_\eta\delta^{1/2}\mathcal{D}_{2,l,q_1}(t)+C_\eta\varepsilon\delta^{5/2}(\varepsilon+\delta^2)^2
	+C_\eta\frac{1}{\varepsilon}\delta^{17/2}\delta^4
	+C_\eta\varepsilon^{1/2}\delta^{1/2}\|\langle v\rangle \partial_{\beta}^\alpha f\|_w^2.
\end{eqnarray}
Consequently, by choosing $\eta>0$ small enough, then the summation of \eqref{6.8} over $|\alpha|+|\beta|\leq 2$ and $|\beta|\geq 1$ through a suitable linear combination gives
\begin{eqnarray}
	\label{6.9}
	&&\sum_{|\alpha|+|\beta|\leq 2,|\beta|\geq 1}\big\{\frac{d}{dt}\|\partial_{\beta}^\alpha f\|_{w}^2+
	q_{1}q_{2}(1+t)^{-(1+q_2)}\|\langle v\rangle \partial_{\beta}^\alpha f\|^2_{w}
	+c\frac{1}{\delta^{3/2}\varepsilon}\|\partial_{\beta}^\alpha f\|^2_{\sigma,w}\big\}
	\nonumber\\
	&&\leq C\frac{1}{\delta^{3/2}\varepsilon}\sum_{|\alpha|\leq 1}\|\partial^\alpha f\|^2_{\sigma,w}
	+C\frac{\varepsilon}{\delta^{1/2}}\sum_{1\leq|\alpha|\leq2}\|\partial^\alpha f\|^2_{\sigma,w}
	+C\frac{\varepsilon}{\delta^{1/2}}\sum_{|\alpha|\leq 1}(\|\partial^\alpha\partial_xf\|^2_{\sigma}
	+\|(\partial^\alpha\partial_x(\widetilde{u},\widetilde{\theta})\|^2)
	\nonumber\\
	&&\hspace{0.5cm}+C\delta^{1/2}\mathcal{D}_{2,l,q_1}(t)+C\delta^5
	+C\varepsilon^{1/2}\delta^{1/2}\sum_{|\alpha|+|\beta|\leq 2,|\beta|\geq 1}\|\langle v\rangle \partial_{\beta}^\alpha f\|_w^2.
\end{eqnarray}
Here we have used \eqref{5.10} and \eqref{4.5}.

\noindent{\it \Red{Step 4.}}	
In summary, combining \eqref{6.4}, \eqref{6.6} and \eqref{6.9}$\times \kappa_4$, and choosing $\kappa_4>0$ sufficiently small 
such that $C\kappa_4<\frac{1}{2}c$, we thus obtain the desired estimate \eqref{6.1}.
This ends the proof of Lemma \ref{lem6.1}.
\end{proof}

\section{Global existence and KdV limit}\label{sec.7}
Based on the {\it a priori} estimates obtained in previous Sections \ref{sec.4}-\ref{sec.6},
in this section, we are now in a position to complete the proof of the main result Theorem \ref{thm1.1} stated in
Section \ref{sec.1}.

\subsection{ Global existence}
By taking a constant $C_2>1$ large enough, then adding \eqref{5.9}$\times C_2$ to \eqref{6.1} and using \eqref{1.35}, 
there exist some constant $C_3>1$ and $0<c_2<1$ such that
	\begin{eqnarray}
	\label{7.1}
	&&\frac{d}{dt}\widetilde{\mathcal{E}}_{2,l,q_1}(t)+2c_2\mathcal{D}_{2,l,q_1}(t)+2c_2q_{1}q_{2}(1+t)^{-(1+q_2)}\mathcal{H}_{2,l,q_1}(t)
	\notag\\
	\leq&& C_3\delta^{1/4}\mathcal{D}_{2,l,q_1}(t)
	+C_3\mathcal{E}_{2}(t)+C_3\delta^4+C_3\varepsilon^{1/2}\delta^{1/2}\mathcal{H}_{2,l,q_1}(t)
	\notag\\
	&&+C_3\varepsilon^{1/2}\delta^{1/2}
	\{\sum_{|\alpha|\leq 1}\|\langle v\rangle \partial^\alpha f\|^2+\frac{\varepsilon^2}{\delta}\sum_{|\alpha|=2}\|\langle v\rangle \partial^{\alpha}f\|^2\}.
\end{eqnarray}	
Here we used $\widetilde{\mathcal{E}}_{2,l,q_1}(t)$ to denote that
\begin{eqnarray}
	\label{7.2}
\widetilde{\mathcal{E}}_{2,l,q_1}(t)=&&C_2\widetilde{E}(t)
+\frac{1}{2}C_2\kappa_3\frac{\varepsilon^2}{\delta}\sum_{|\alpha|=2}
\{\|\frac{\partial^{\alpha}F}{\sqrt{\mu}}\|^{2}
+(\frac{1}{K\theta}e^{\phi}\partial^{\alpha}\widetilde{\phi},\partial^{\alpha}\widetilde{\phi})
+\delta(\frac{1}{K\theta}\partial^{\alpha}\partial_x\widetilde{\phi},\partial^{\alpha}\partial_x\widetilde{\phi})\}
\notag\\
&&+\sum_{|\alpha|\leq 1}\|\partial^\alpha f\|_{w}^2+\varepsilon^2\sum_{|\alpha|=2}\|\frac{\partial^\alpha F}{\sqrt{\mu}}\|_{w}^2
+\kappa_4\sum_{|\alpha|+|\beta|\leq 2,|\beta|\geq 1}\|\partial_{\beta}^\alpha f\|_{w}^2.
\end{eqnarray}
To further bound \eqref{7.1}, we use \eqref{6.2a} to see that
$$
\sum_{|\alpha|\leq 1}\|\langle v\rangle \partial^\alpha f\|^2\leq \mathcal{H}_{2,l,q_1}(t), \quad
\frac{\varepsilon^2}{\delta}\sum_{|\alpha|=2}\|\langle v\rangle \partial^{\alpha}f\|^2
\leq \frac{1}{\delta}\mathcal{H}_{2,l,q_1}(t).
$$
We further choose $\delta>0$ small enough such that $C_3\delta^{1/4}<c_2$. Then we have from \eqref{7.1} that
\begin{eqnarray}
	\label{7.3}
	&&\frac{d}{dt}\widetilde{\mathcal{E}}_{2,l,q_1}(t)+c_2\mathcal{D}_{2,l,q_1}(t)+2c_2q_{1}q_{2}(1+t)^{-(1+q_2)}\mathcal{H}_{2,l,q_1}(t)
	\notag\\
	&&\leq C_3\mathcal{E}_{2}(t)+C_3\delta^4+3C_3\frac{\varepsilon^{1/2}}{\delta^{1/2}}\mathcal{H}_{2,l,q_1}(t).
\end{eqnarray}
For any given $\tau>0$ as Theorem \ref{thm1.1}, by choosing $\varepsilon>0$ small enough,  we can ensure that
\begin{equation}
	\label{7.5}
	\tau\leq (\frac{2c_2q_{1}q_{2}}{3C_3}\frac{\varepsilon^{1/3}}{\varepsilon^{1/2}})^{\frac{1}{1+q_2}}
	-1\leq (\frac{2c_2 q_{1}q_{2}}{3C_3}\frac{\delta^{1/2}}{\varepsilon^{1/2}})^{\frac{1}{1+q_2}}
	-1,
\end{equation}
where in the second inequality we have used \eqref{5.10}. Estimate \eqref{7.5} together with $t\leq \tau$ gives
\begin{equation}
\label{7.4}
t\leq (\frac{2c_2q_{1}q_{2}}{3C_3}\frac{\delta^{1/2}}{\varepsilon^{1/2}})^{\frac{1}{1+q_2}}
-1\Rightarrow 
3C_3\frac{\varepsilon^{1/2}}{\delta^{1/2}}\leq 2c_2q_{1}q_{2}(1+t)^{-(1+q_2)}.
\end{equation}
It follows from this and \eqref{7.3} that
\begin{equation}
\label{7.6}
\frac{d}{dt}\widetilde{\mathcal{E}}_{2,l,q_1}(t)+c_2\mathcal{D}_{2,l,q_1}(t)\leq C_3\mathcal{E}_{2}(t)+C_3\delta^4.
\end{equation}
Following the method used as in \cite[lemma 5.10]{Duan-Yang-Yu-2} and using  \eqref{3.1},  \eqref{3.4} and \eqref{1.37}, we claim that
\begin{equation*}
\frac{1}{2}\kappa_3C_2\frac{\varepsilon^2}{\delta}\sum_{|\alpha|=2}\|\frac{\partial^{\alpha}F}{\sqrt{\mu}}\|^{2}
\geq c\frac{\varepsilon^2}{\delta}\sum_{|\alpha|=2}(\|\partial^\alpha f\|^2+\|\partial^{\alpha}(\widetilde{\rho},\widetilde{u},\widetilde{\theta})\|^{2})
-C\delta^4.
\end{equation*}
On the other hand, by  \eqref{3.30} and \eqref{1.37}, one obtains
\begin{equation*}
	\frac{1}{2}\kappa_3C_2\frac{\varepsilon^2}{\delta}\sum_{|\alpha|=2}\|\frac{\partial^{\alpha}F}{\sqrt{\mu}}\|^{2}
	\leq C\frac{\varepsilon^2}{\delta}\sum_{|\alpha|=2}(\|\partial^\alpha f\|^2+\|\partial^{\alpha}(\widetilde{\rho},\widetilde{u},\widetilde{\theta})\|^{2})
	+C\delta^4.
\end{equation*}
Similarly, by \eqref{3.29}, \eqref{3.30}, \eqref{3.7}, \eqref{3.1}, \eqref{3.4} and \eqref{1.37}, it holds that
\begin{eqnarray*}
c\varepsilon^2\sum_{|\alpha|=2}\|\partial^\alpha f\|_{w}^2-C\delta^4\leq\varepsilon^2\sum_{|\alpha|=2}\|\frac{\partial^\alpha F}{\sqrt{\mu}}\|_{w}^2\leq 
	C\varepsilon^2\sum_{|\alpha|=2}\|\partial^\alpha f\|_{w}^2
	+C\delta^4.
\end{eqnarray*}
By the definition of $\widetilde{E}(t)$ in \eqref{4.39}, we claim that
\begin{eqnarray*}
\widetilde{E}(t)\geq c\sum_{|\alpha|\leq1}\{\|\partial^{\alpha}(\widetilde{\rho},\widetilde{u},\widetilde{\theta})(t)\|^{2}
+\|\partial^{\alpha}f(t)\|^{2}+\|\partial^{\alpha}\widetilde{\phi}(t)\|^{2}
+\delta\|\partial^{\alpha}\partial_x\widetilde{\phi}(t)\|^{2}\}-C\delta^4,
\end{eqnarray*}
and 
\begin{eqnarray*}
	\widetilde{E}(t)\leq C\sum_{|\alpha|\leq1}\{\|\partial^{\alpha}(\widetilde{\rho},\widetilde{u},\widetilde{\theta})(t)\|^{2}
	+\|\partial^{\alpha}f(t)\|^{2}+\|\partial^{\alpha}\widetilde{\phi}(t)\|^{2}
	+\delta\|\partial^{\alpha}\partial_x\widetilde{\phi}(t)\|^{2}\}+C\delta^4.
\end{eqnarray*}
With the above estimates and  \eqref{1.33} in hand, for $0\leq t\leq T$ with $T>0$,
there exists a constant  $C_4>1$ such that
\begin{equation}
\label{7.7}
 C_4^{-1}\mathcal{E}_{2,l,q_1}(t)-C_4\delta^4\leq	\widetilde{\mathcal{E}}_{2,l,q_1}(t)\leq C_4\mathcal{E}_{2,l,q_1}(t)+C_4\delta^4.
\end{equation}
Therefore, we get from \eqref{7.7}, \eqref{7.6} and \eqref{1.33} that
\begin{equation}
	\label{7.8}
	\frac{d}{dt}(\widetilde{\mathcal{E}}_{2,l,q_1}(t)+C_4\delta^4)
	\leq C_3C_4(\widetilde{\mathcal{E}}_{2,l,q_1}(t)+C_4\delta^4).
\end{equation}
By the Gronwall inequality, we get from \eqref{7.8} that
\begin{equation}
	\label{7.9}
\sup_{0\leq t\leq T}\widetilde{\mathcal{E}}_{2,l,q_1}(t)+C_4\delta^4\leq (\widetilde{\mathcal{E}}_{2,l,q_1}(0)+C_4\delta^4)e^{C_3C_4T}.
\end{equation}
In summary,  letting $C_0=8C_4C_4e^{C_3C_4\tau}$ and $T\leq\tau$, then using \eqref{7.9}, \eqref{7.7} and \eqref{1.39}, we have
\begin{eqnarray}
	\label{7.10}
\sup_{0\leq t\leq T}\mathcal{E}_{2,l,q_1}(t)&&\leq C_4(\sup_{0\leq t\leq T}\widetilde{\mathcal{E}}_{2,l,q_1}(t)+C_4\delta^4)
\notag\\
&&
\leq C_4(\widetilde{\mathcal{E}}_{2,l,q_1}(0)+C_4\delta^4)e^{C_3C_4T}
\notag\\
&&\leq C_4e^{C_3C_4T}C_4(\mathcal{E}_{2,l,q_1}(0)+\delta^4)+(C_4)^2e^{C_3C_4T}\delta^4
\notag\\
&&\leq 4C_4C_4e^{C_3C_4T}\delta^4\leq \frac{1}{2}C_0\delta^4.
\end{eqnarray}
Thus the a priori estimate \eqref{3.1} can be closed by \eqref{7.10}.
By the uniform a priori estimates \eqref{7.10} and the local existence of the solution,
the standard continuity argument gives the existence and uniqueness of
long-time solutions to the VPL system \eqref{1.15} and the initial data \eqref{1.39}.

\subsection{KdV limit}
To complete the proof of Theorem \ref{thm1.1}, we still need to prove the uniform convergence rate in
$\delta$ as in \eqref{1.41} and \eqref{1.42}. Note from \eqref{3.4} and \eqref{3.5} that $(\rho,u,\theta)$ and $(\bar{\rho},\bar{u},\bar{\theta})$
are close enough to the state $(1,0,3/2)$ due to the smallness of $\delta$, we deduce from  these facts, \eqref{3.11} and \eqref{7.10} that
\begin{eqnarray*}
	&&\|\frac{(M_{[\rho,u,\theta]}-M_{[\bar{\rho},\bar{u},\bar{\theta}]})(t)}{\sqrt{\mu}}\|_{L_{x}^{2}L_{v}^{2}}
	+\|\frac{(M_{[\rho,u,\theta]}-M_{[\bar{\rho},\bar{u},\bar{\theta}]})(t)}{\sqrt{\mu}}\|_{L_{x}^{\infty}L_{v}^{2}}
\notag\\
	&&\leq C(\|(\widetilde{\rho},\widetilde{u},\widetilde{\theta})(t)\|+\|(\widetilde{\rho},\widetilde{u},\widetilde{\theta})(t)\|_{L_{x}^{\infty}})\leq C\delta^2,
\end{eqnarray*}
for any $t\in[0,\tau]$. Similarly, it holds from \eqref{7.10} and \eqref{3.7} that
\begin{equation*}
	\sup_{t\in[0,\tau]}(\|f(t)\|_{L^{2}_{x}L^{2}_{v}}+\|f(t)\|_{L^{\infty}_{x}L^{2}_{v}})\leq C\delta^2,
\end{equation*}
and
\begin{equation*}
	\sup_{t\in[0,\tau]}(
	\|\frac{\overline{G}(t)}{\sqrt{\mu}}\|_{L_{x}^{2}L_{v}^{2}}
	+\|\frac{\overline{G}(t)}{\sqrt{\mu}}\|_{L_{x}^{\infty}L_{v}^{2}})
	\leq C\delta^3.
\end{equation*}
With these three estimates in hand, we have from $F=M+\overline{G}+\sqrt{\mu}f$ that
\begin{equation}
	\label{7.11}
	\|\frac{(F-M_{[\bar{\rho},\bar{u},\bar{\theta}]})(t)}{\sqrt{\mu}}\|_{L_{x}^{2}L_{v}^{2}}
	+\|\frac{(F-M_{[\bar{\rho},\bar{u},\bar{\theta}]})(t)}{\sqrt{\mu}}\|_{L_{x}^{\infty}L_{v}^{2}}
	\leq C\delta^2,
\end{equation}
for any $t\in[0,\tau]$. Using \eqref{1.29}, \eqref{3.11}, \eqref{1.13} and $\hat{u}=(u_1^{(1)},0,0)$, we get
\begin{eqnarray*}
	&&\|\frac{(M_{[\bar{\rho},\bar{u},\bar{\theta}]}- M_{[1+\delta\rho_1,\delta \hat{u},\frac{3}{2}+\frac{3}{2}\delta\theta_1]})(t)}{\sqrt{\mu}}\|_{L_{x}^{2}L_{v}^{2}}
	\notag\\
	&&\leq C(\|\bar{\rho}-1-\delta\rho_1\|+\|\bar{u}-\delta \hat{u}\|+\|\bar{\theta}-\frac{3}{2}-\frac{3}{2}\delta\theta_1\|)
	\leq C\delta^2.
\end{eqnarray*}
Likewise, it holds that
\begin{equation*}
\|\frac{(M_{[\bar{\rho},\bar{u},\bar{\theta}]}- M_{[1+\delta\rho_1,\delta \hat{u},\frac{3}{2}+\frac{3}{2}\delta\theta_1]})(t)}{\sqrt{\mu}}\|_{L_{x}^{\infty}L_{v}^{2}}
	\leq C\delta^2.
\end{equation*}
With the above two estimates and \eqref{7.11} in hand, for any $t\in[0,\tau]$, we get
\begin{equation}
	\label{7.13}
	\|\frac{(F-M_{[1+\delta\rho_1,\delta \hat{u},\frac{3}{2}+\frac{3}{2}\delta\theta_1]})(t)}{\sqrt{\mu}}\|_{L_{x}^{2}L_{v}^{2}}
	+\|\frac{(F-M_{[1+\delta\rho_1,\delta \hat{u},\frac{3}{2}+\frac{3}{2}\delta\theta_1]})(t)}{\sqrt{\mu}}\|_{L_{x}^{\infty}L_{v}^{2}}
	\leq C\delta^2.
\end{equation}
On the other hand,  by \eqref{1.29} and \eqref{3.11}, we obtain easily that
\begin{equation}
\label{7.14}
\|(\phi-\delta\phi_{1})(t)\|+\|(\phi-\delta\phi_{1})(t)\|_{L_{x}^{\infty}}\leq C\delta^2.
\end{equation}
This combined with \eqref{7.13} gives \eqref{1.41} and \eqref{1.42}, and hence ends the proof of Theorem \ref{thm1.1}. \qed

\medskip
\noindent {\bf Acknowledgment:}\,
The research of Renjun Duan was partially supported by the General Research Fund (Project No.~14301719) from RGC of Hong Kong and a Direct Grant from CUHK. The research of Hongjun Yu was supported by the GDUPS 2017 and the NNSFC Grant 11371151. Dongcheng Yang would like to thank Department of Mathematics, CUHK  for hosting his visit in the period 2020-2023.

\medskip

\noindent{\bf Conflict of Interest:} The authors declare that they have no conflict of interest.

\vskip 1cm \small

\normalsize

\end{document}